\numberwithin{equation}{section}
\newtheorem{sublem}{Sublemma}
\newtheorem{lem}{Lemma}[section]
\newtheorem{prop}[lem]{Proposition}
\newtheorem{thm}{Theorem}
\theoremstyle{definition}
\theoremstyle{remark}
\newtheorem{rem}{Remark}
\newcommand{\T}{\mathbb{T}}
\newcommand{\la}{\lambda}
\newcommand{\vf}{\varphi}
\newcommand{\co}{\text{const.}}
\newcommand{\tht}{\theta}
\newcommand{\om}{\omega}
\newcommand{\vp}{\varphi}
\newcommand{\ve}{\varepsilon}
\newcommand{\N}{\mathcal{N}}
\newcommand{\Z}{\mathbb{Z}}
\newcommand{\PR}{\widehat{\mathbb{R}}}
\newcommand{\R}{\mathbb{R}}
\title[The dynamics of a class of quasi-periodic Schr\"odinger cocycles]
{The dynamics of a class of quasi-periodic Schr\"odinger cocycles}
\author{Kristian Bjerkl\"ov}
\email{bjerklov@kth.se}
\address{Department of Mathematics, KTH Royal Institute of Technology, 100 44 Stockholm, Sweden}
\thanks{Research supported by the Swedish Research Council (VR)}
\begin{document}
\begin{abstract} Let $f:\mathbb{T}\to\mathbb{R}$ be a Morse function of class $C^2$ with 
exactly two critical points, let $\omega\in\T$ be Diophantine, and let $\lambda>0$ be sufficiently large (depending on $f$ and $\omega$). 
For any value of the parameter $E\in \mathbb{R}$ we make a careful analysis of the dynamics of the 
skew-product map 
$$
\Phi_E(\theta,r)=\left(\theta+\omega,\lambda f(\theta)-E-1/r\right),
$$
acting on the "torus" $\mathbb{T}\times\widehat{\mathbb{R}}$. Here $\PR$ denotes the projective space $\mathbb{R}\cup\{\infty\}$. 

The map $\Phi_E$ is intimately related to the quasi-periodic
Schr\"odinger cocycle $(\omega,A_E): \T\times \mathbb{R}^2 \to \T\times \mathbb{R}^2$,  
$(\theta,x)\mapsto (\theta+\omega, A_E(\theta)\cdot x)$,
where $A_E:\T\to \text{SL}(2,\mathbb{R})$ is given by
$$
A_{E}(\theta)=\left(\begin{matrix}0 & 1 \\ -1 & \lambda f(\theta)-E \end{matrix}
\right), \quad E\in \mathbb{R}.
$$
More precisely, $(\omega,A_E)$ naturally acts on the space $\T\times \PR$, and  $\Phi_E$ is the map thus obtained.   

The cocycle $(\omega,A_E)$ arises when investigating the eigenvalue equation $H_\theta u=Eu$, where $H_\theta$ is the quasi-periodic Schr\"odinger operator
$$
(H_\theta u)_n=-(u_{n+1}+u_{n-1}) + \la f(\theta+(n-1)\omega)u_n,
$$
acting on the space $l^2(\Z)$. It is well-known that the spectrum of $H_\theta$, $\sigma(H)$, is independent of the phase $\theta\in\T$. 
Under our assumptions on $f, \omega$ and $\lambda$, Sinai \cite{Sin} has shown that $\sigma(H)$ is a Cantor set, and the operator $H_\theta$ has
a pure-point spectrum, with exponentially decaying eigenfunctions, for a.e. $\theta\in\T$ 

The analysis of $\Phi_E$ allows us to derive three main results: 

(1) The (maximal) Lyapunov exponent of the Schr\"odinger cocycle $(\omega,A_E)$ 
is $\gtrsim \log \lambda$, uniformly in $E\in \mathbb{R}$. This implies that the map $\Phi_E$ has exactly two ergodic probability measures for all $E\in \mathbb{R}$; 

(2) If $E$ is on the edge of an open gap in the spectrum $\sigma(H)$, then there exist a phase $\theta\in\T$ and a vector $u\in l^2(\Z)$, exponentially decaying at $\pm\infty$,
such that $H_\theta u=Eu$; 

(3) The map $\Phi_E$ is minimal iff $E\in \sigma(H)\setminus\{\text{edges of open gaps}\}$. In particular, $\Phi_E$ is minimal
for all $E\in \R$ for which the fibered rotation number $\alpha(E)$
associated to $(\omega,A_E)$ is irrational with respect to $\omega$.

The assumptions on $f$ are fulfilled for $f(\theta)=\cos 2\pi \theta$; thus the results apply to the so-called almost Mathieu case.
\end{abstract}
\maketitle

\begin{section}{Introduction}
In this paper we shall consider the one-parameter family of quasi-periodic Schr\"odinger cocycle maps, parameterized by the
real number $E$, and given by 
\begin{equation}\label{CC}
\begin{aligned}
(\omega,A_E)&: \mathbb{T}\times \mathbb{R}^2 \to \mathbb{T}\times \mathbb{R}^2\\
&(\theta,x)\mapsto (\theta+\omega,A_E(\theta)x)\quad (\T=\mathbb{R}/\mathbb{Z})
\end{aligned}
\end{equation}
where $\omega\in \T$ is irrational, and $A_E(\theta)$ is defined as
$$
A_{E}(\theta)=\left(\begin{matrix}0 & 1 \\ -1 & \lambda f(\theta)-E \end{matrix}
\right)\in SL(2,\mathbb{R})\quad (E \in\mathbb{R}).
$$
We shall investigate the dynamics of this family of cocycles $(\omega,A_E)$ in the case where 
the \emph{coupling constant} $\lambda\gg 1$ and   
$f:\mathbb{T}\to\mathbb{R}$ belongs to a class of smooth functions. 

We introduce the notation  
$$
A_{E}^n(\theta)=
\begin{cases}
A_{E}(\theta+(n-1)\omega)\cdots A_{E}(\theta+\omega)A_{E}(\theta), & n>0 \\
I, & n=0 \\
A_{E}(\theta+n\omega)^{-1}\cdots A_{E}(\theta-\omega)^{-1}, &n<0
\end{cases}
$$
Then the $n$:th iterate of the cocycle $(\omega,A_E)$ is given by $(\theta+n\omega,A_E^n(\theta)x)$.

We say that the cocycle $(\omega,A_E)$ is \emph{uniformly hyperbolic} if there exists a continuous splitting 
$W^u(\theta)\oplus W^s(\theta)=\mathbb{R}^2$ and constants $C>0$, $\delta>0$ such that for all $\theta\in \T$ and $n\geq1$ we have
$$
\begin{aligned}
|A^n_E(\theta)x|\leq Ce^{-\delta n}|x|, \quad x\in W^s(\theta) \\
|A^{-n}_E(\theta)x|\leq Ce^{-\delta n}|x|, \quad x\in W^u(\theta).
\end{aligned}
$$

For $\theta\in \T$ we let $H_\theta$ be the discrete Schr\"odinger operator, acting on $l^2(\mathbb{Z})$, and defined by
\begin{equation}\label{operator}
(H_{\theta}u)_n =-(u_{n+1}+u_{n-1})+\la f(\theta+(n-1)\omega) u_n.
\end{equation}
This operator has been intensively studied in the literature (see, e.g., \cite{BG, E, FSW, GS2, GS3, Kl, Sin} and references therein).
The link between the operator $H_\theta$ and the cocycle (\ref{CC}) is that $A_E^n(\theta)$ is the fundamental solution of the eigenvalue equation
$$
-(u_{n+1}+u_{n-1})+\la f(\theta+(n-1)\omega) u_n =Eu_n,
$$
i.e., given $x=(u_0,u_1)^T\in \mathbb{R}^2$, we have $(u_n,u_{n+1})^T=A_E^n(\theta)x$ for all $n\in \mathbb{Z}$.

It is well-known that the spectrum of the operator $H_\theta$, which we shall denote by $\sigma(H_\theta)$, does not depend on the phase $\theta$
(under the condition that $f$ is continuous). We shall therefore often write just $\sigma(H)$. It is also a general fact that 
the cocycle $(\ref{CC})$ is uniformly hyperbolic iff $E\notin \sigma(H)$ (see \cite{Jo}). Thus, the spectrum $\sigma(H)$ coincides with the 
dynamical spectrum, or the Sacker-Sell spectrum, that is, the set of $E$ for which the cocycle is not uniformly hyperbolic.

A central quantity in the study of (\ref{CC}) is the (maximal) \emph{Lyapunov exponent} $\gamma(E)$, which is defined by 
$$
\gamma(E)=\lim_{n\to\infty}\frac{1}{n}\int_{\mathbb{T}}\log \|A_{E}^n(\theta)\|d\theta\geq 0
$$
(the limit exists by subadditivity, and is $\geq 0$ since the matrices have determinant $1$). Here
$\|\cdot\|$ denotes the operator norm.
By Kingman's subadditive ergodic theorem, using the fact that $A_{E}\in \text{SL}(2,\mathbb{R})$, we know that 
\begin{equation}\label{kingman}
\lim_{n\to\infty}\frac{1}{n}\log |A_{E}^n(\theta)x|=\pm \gamma(E)\quad \text{for a.e. }\theta\in\T \text{ and all } 
0\neq x\in \mathbb{R}^2.
\end{equation}
The Lyapunov exponent is intimately connected to spectral properties of the (family of) operators $H_\theta$. For example, if $\gamma(E)>0$ for all
$E\in\R$ then, by results of Pastur and Ishii, it follows that $H_\theta$ has no ac-spectrum for a.e., and hence all \cite{LS}, $\theta\in \T$.

It is in  general a very hard task to obtain (non-trivial) lower bounds on $\gamma(E)$. However, this is a central issue.
Not only because of the connections to the Schr\"odinger operator, but since it seems to contain fundamental problems in the general theory of smooth dynamical systems.
Of course the quasi-periodic Schr\"odinger equation is a very special system, but techniques developed for its investigation would likely 
be applicable elsewhere. In his seminal paper \cite{Fu} Furstenberg studies the growth of products of random matrices. This work has motivated
much further research. One of the main purposes of the present paper is to develop techniques suitable, in particular, for the investigation of the cocycle $(\omega,A_E)$.
That is, for fixed $\omega$,$f$ and $\lambda$, we investigate the one-parameter family $(\omega,A_E)$ in the space of (smooth) $SL(2,\mathbb{R})$-cocycles, parameterized by $E\in\R$.  
Avila \cite{A} has recently shown very general results on density (and prevalens) of positive Lyapunov exponents for $SL(2,\R)$-cocycles, including
the Schr\"odinger case. 
Recall also that the measure of the set of 
$E$ for which $\gamma(E)=0$ can be at most $4$ \cite{DS}

\bigskip
Before stating our main results, and giving a brief overview of related works of this well-studied object, 
we shall shortly discuss the setting for our approach. This approach is in the spirit of the theory of non-linear dynamical 
systems and uses philosophy from the study of one-dimensional dynamics \cite{BC, CE, Ja}. Similar techniques has
been used in \cite{Bj1, Bj2, Y}. However, in the latter papers one has to use parameter exclusion, either in $E$ or $\omega$, to avoid having certain resonances.  
A main difference in the present paper is that we will be able to deal with these resonances, which naturally appear, and thus avoid using such parameter exclusion. 
To achieve this we will have to take a different route than the ones used in \cite{Bj1, Bj2}.    

The way we are going to control the dynamics of the cocycle $(\omega,A_E)$ is to 
make a detailed analysis of the induced projective action of the fiber maps $A_E(\theta)$. That is, we study how $(\omega,A_E)$ acts on the  
space $\T\times \widehat{\mathbb{R}}$, where $\widehat{\mathbb{R}}=\mathbb{R}\cup\{\infty\}$ is the projective line. 
Since 
$$
A_{E}(\theta)\binom{1}{r}=\left(\begin{matrix}0 & 1 \\ -1 & \lambda f(\theta)-E \end{matrix}
\right)\binom{1}{r}=r\binom{1}{\lambda f(\theta)-E-1/r},
$$
we thus obtain the family of maps $\Phi_E:\T\times \widehat{\mathbb{R}}\to \T\times \widehat{\mathbb{R}}$ given by
\begin{equation}\label{mapPhi}
\Phi_E(\theta,r)=\left(\theta+\omega,\lambda f(\theta)-E-\frac{1}{r}\right)
\end{equation}
and parameterized by $E\in \R$. This shall be our main object of study. We introduce the following natural notation. Given (initial conditions) $\theta_0, r_0$, we let
\begin{equation}\label{notaIter}
(\theta_n,r_n)=\Phi_E^n(\theta_0,r_0), \quad n\in \mathbb{Z}.
\end{equation}
With this notation we see that we have 
\begin{equation}\label{matrix_r}
A^n_{E}(\theta_0)\binom{1}{r_0}=r_0\cdots r_{n-1}\binom{1}{r_n}=\binom{r_0\cdots r_{n-1}}{r_0\cdots r_{n}}.
\end{equation}
Therefore, recalling (\ref{kingman}), in order to control exponential growth to the matrix products, 
we have to control the iterates of the map (\ref{mapPhi}), and keep track of products $r_0\cdots r_n$, at least
for a set of $\theta_0\in \T$ of positive measure.

Since $\PR \cong S^1$ the maps $\Phi_E$ are in fact a class of quasi-periodically forced circle maps.  
The techniques we use are actually non-linear, in the sense that we do not
really use the fact that the map $\Phi$ comes from a linear system. Therefore one can extend the analysis we develop in this 
paper to other families of quasi-periodically perturbed circle maps. 
Such a route has been taken, for example, in \cite{J1,J2}, where the techniques in \cite{Bj1} and \cite{Y} are adapted to a wider class of circle maps. 
It should also be possible to analyse wider classes of quasi-periodically forced systems.
However, since we here mostly are interested in the phenomenological problems, we shall focus only on the particular maps $\Phi_E$. 

\bigskip
We are now ready to state the results about the cocycle (\ref{CC}) that our analysis will yield.

\begin{thm}\label{Maintheorem}
Assume that the function $f:\mathbb{T}\to\mathbb{R}$ is of class $C^2$, and that it has exactly two critical points, 
both non-degenerate. 
Assume further that the base frequency $\omega\in \T$ satisfies the Diophantine condition 
\begin{equation}\label{DC}
\inf_{p\in\mathbb{Z}}|q\omega-p|>\frac{\kappa}{|q|^\tau}\quad\text{for all } q\in\mathbb{Z}\setminus \{0\}
\end{equation}
for some constants $\kappa>0$ and $\tau \geq 1$. Then there exists a $\lambda_0=\lambda_0(f,\kappa,\tau)>0$ such 
that for any $\lambda>\lambda_0$ we have
$$
\gamma(E)\geq \frac{2\log\lambda}{3} \quad\text{for all } E\in \mathbb{R}.
$$
Moreover, if $E$ is on the edge of an open gap in the spectrum $\sigma(H)$, then there exists a phase $\theta\in \T$
and $u\in l^2(\mathbb{Z})$, exponentially decaying at $\pm\infty$, such that $H_\theta u=Eu$.

\end{thm}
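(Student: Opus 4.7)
The strategy is the careful analysis of $\Phi_E$ advertised in the introduction. For large $\la$ and $\tht$ away from the preimages $\{\tht : \la f(\tht) = E\}$, one iterate sends a bounded $r_0$ to $r_1 = \la f(\tht_0)-E-1/r_0$, of order $\la$. Thus, by (\ref{matrix_r}), the only obstruction to an estimate $|r_0 r_1 \cdots r_{n-1}| \gtrsim \la^{cn}$ is the possibility that the orbit $\tht_k$ visits the (at most two) small intervals where $|\la f - E|$ is small, in a correlated way that also keeps $r_k$ itself near zero. Fix a scale $\beta \in (0,1)$ to be tuned and set the \emph{critical set} $\mathcal{C}_E := \{\tht : |\la f(\tht) - E| < \la^\beta\}$; by the Morse hypothesis it is a union of at most four intervals of length $O(\la^{(\beta-1)/2})$. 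The plan is a multi-scale induction: at scale $n$ one constructs an invariant family of graphs (or cones) on $\T \times \PR$, defined off a small exceptional set of $\tht$, on which the cumulative product $|r_0 \cdots r_{n-1}|$ grows at rate $\la^{2/3}$ per step. The Diophantine condition (\ref{DC}) bounds consecutive returns of $\{\tht+k\om\}$ to $\mathcal{C}_E$ from below by a quantity that dominates the worst-case per-visit loss.

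The main obstacle, and the novelty announced in the introduction, is the treatment of the resonances: for certain $E$ and $\tht$ the orbit re-enters $\mathcal{C}_E$ just when $r$ also happens to be small, producing a near-cancellation in $r_{k+1} = \la f(\tht_k)-E-1/r_k$. The earlier works \cite{Bj1,Bj2} circumvented this by excising small sets of $E$ or $\om$; here one must instead renormalise across such an event, balancing the losses at a resonance against the gain on the long (Diophantine-controlled) interval separating consecutive resonances. Summing contributions over a full period of the multi-scale scheme gives $\tfrac{1}{n}\log\|A_E^n(\tht)\| \geq \tfrac{2}{3}\log\la$ for $\tht$ in a set of positive measure, uniformly in $E\in\R$; integrating and invoking (\ref{kingman}) yields $\gamma(E) \geq (2/3)\log\la$.

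\textbf{Plan for part (2): eigenfunctions at the edges of open gaps.} For $E$ in an open gap $G \subset \R \setminus \sigma(H)$, Johnson's theorem gives continuous stable and unstable line sections $W^s(\tht), W^u(\tht) \subset \PR$, which one realises as graphs $r = s_E(\tht)$ and $r = u_E(\tht)$ of continuous functions. As $E$ moves to an edge $E_0$ of $G$ from inside $G$, uniform hyperbolicity must fail; by part (1) one still has $\gamma(E_0) \geq (2/3)\log\la > 0$, so the only mechanism compatible with positivity of the exponent is that the two graphs coalesce at $E=E_0$ at (at least) one phase $\tht_0 \in \T$. The invariant-graph picture from part (1), being uniform in $E$ in a neighbourhood of $\sigma(H)$, guarantees that $s_{E_0}$ and $u_{E_0}$ exist as limits and touch at such a $\tht_0$, giving a common direction $r^\ast := s_{E_0}(\tht_0) = u_{E_0}(\tht_0)$. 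Setting $\tht_\ast = \tht_0 - \om$ and $x = (1, r^\ast)^T$, by (\ref{matrix_r}) the sequence $u_n :=$ first coordinate of $A_{E_0}^n(\tht_\ast) x$ inherits exponential decay forward from $W^s$ and exponential decay backward from $W^u$; hence $u \in l^2(\Z)$ and $H_{\tht_\ast} u = E_0 u$.

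The main obstacle is rigorously justifying this collapse mechanism: one must exclude the possibilities that as $E\to E_0$ the bundles blow up in the fiber $\PR$, or develop pathological discontinuities, or cross densely over an open set of $\tht$. This is precisely what the quantitative analysis of $\Phi_E$ in part (1) delivers — it produces candidate stable/unstable invariant graphs that depend continuously on both $\tht$ and $E$, so their coalescence at the edge is a geometric necessity: two disjoint continuous sections of $\T\times\PR$ that must merge at some $E_0$ can only do so by touching at a point, which is the desired $\tht_0$.
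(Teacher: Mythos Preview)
Your plan for part (1) is essentially the paper's own strategy: the critical set $\mathcal{C}_E$ is the paper's $J_0$ (with threshold $2\la^{3/4}$), the multi-scale induction with control of returns via the Diophantine condition is exactly the content of Section~\ref{the_induction}, and the treatment of resonances by renormalising across a near-coincidence of the two critical intervals is precisely the paper's ``resonant case''. So for part (1) you have the right architecture, though of course the actual execution (Proposition~\ref{inductive_lemma} and its proof) is where all the work lies.

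For part (2), however, your approach diverges from the paper's and has a genuine gap. You propose to take limits of the continuous stable/unstable graphs $s_E,u_E$ as $E$ approaches the edge $E_0$ from inside the gap, and argue that they must ``touch at a point'' $\tht_0$. The problem is that nothing in your part (1) actually produces these graphs as continuous functions of $(\tht,E)$ up to the edge: part (1) gives growth of products on a positive-measure set $\Theta_\infty$, not a continuous invariant section, and as $E\to E_0$ the uniform hyperbolicity constants degenerate, so the graphs need not converge to anything continuous. The assertion that two disjoint continuous sections ``can only merge by touching at a point'' is a heuristic, not a theorem --- they could fail to converge, or converge to discontinuous measurable sections that coincide on a null set, neither of which yields a specific $\tht_0$ with two-sided exponential decay.

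The paper's route is quite different and avoids this limiting argument entirely. The inductive construction classifies each $E$ by the behaviour of the sets $J_{n+1}$: either the induction stops (then $E$ is in the resolvent set), or the two-interval configuration $\mathcal{B}_n(i)$ recurs infinitely often (then $\Phi_E$ is minimal, hence $E$ is not a gap edge), or the single-interval configuration $\mathcal{B}_n(ii)$ holds for all large $n$. In this last case the nested intervals $I_n$ shrink to a single point $\tht^\ast$, and the paper \emph{constructs} $r^\ast$ directly as $\bigcap_n \pi_2(\Phi^{M_n}(A_n))$ for explicit sets $A_n$; the growth estimates from the induction then give two-sided exponential decay. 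That $E$ is a gap edge is proved separately, by showing that a small one-sided perturbation of $E$ makes the critical set at the next scale empty, forcing uniform hyperbolicity. So the eigenfunction comes out of the geometry of the inductive scheme itself, not from a limiting procedure on the hyperbolic bundles.
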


\begin{rem}
a) Recall that the Diophantine condition (\ref{DC}) is satisfied for (Lebesgue) a.e. $\omega\in \T$.

b) Under the same assumptions on $f$ and $\omega$ as above, Sinai \cite{Sin} has shown that for all sufficiently large $\lambda$,
the discrete Schr\"odinger operator (\ref{operator}) has a pure-point spectrum with exponentially decaying eigenfunctions for a.e. $\theta\in \T$ 
(almost identical results have also been obtained in \cite{FSW}). 
From a theorem by Kotani \cite{K} (see \cite{Sim} for exactly our setting) this implies
that $\gamma(E)>0$ for a.e. $E\in \mathbb{R}$. Moreover, Sinai also shows that the spectrum $\sigma(H)$ is a Cantor set. 

c) The existence of (exponentially decaying) eigenfunctions for energies $E$ at all the gap edges extends results in \cite{So} (see also \cite{Bj2}).  

d) As we mentioned above, the approach we will use to prove Theorem \ref{Maintheorem} is to make a careful 
analysis of the map $\Phi_E$.  
We have used the same approach in \cite{Bj1,Bj2}. However, in the present paper we will have to
handle the problems with resonances that appear, since we want to be able to consider all value of $E$. This is in fact a 
major problem, as we shall see. 

e) By using the same techniques as the ones we develop here it is 
possible to prove the following: if $f:\T\to\R$ is of class $C^2$ and has a unique, non-degenerate, minimum (or maximum), if
$\omega$ is Diophantine, and if $\lambda$ is large, then $\gamma(E)\geq (2/3)\log\la$ for all $E$ in an interval containing the bottom (or the top) of the spectrum $\sigma(H)$.
The techniques should also apply to the investigation of the quasi-periodic continuous Schr\"odinger operator for all energies in an interval containing the bottom of 
the spectrum, thus extending the results in \cite{Bj4}.

\end{rem}

The analysis of the map $\Phi_E$ also gives us some results about its own ergodic properties. Before stating them 
we first recall the well-known fact that there is a fibered rotation number, $\alpha(E)$, associated to the cocycle $(\omega,A_E)$ \cite{DSo, H, JM}. The function 
$E\mapsto \alpha(E)$ is continuous and monotone, maps $\R$ onto $[0,1/2]$, and is constant on open gaps in the resolvent set of $H$  
(in fact, $2\alpha(E)=k(E)$, where $k(E)$ is the integrated density of states of $H$; for finer regularity results, see \cite{GS1,GS2}). 
Moreover, on each such open gap there is an integer $k$ such that  $2\alpha(E)=\{k\omega\}$ (so-called gap labeling; see \cite{JM}). 
We let $$\mathcal{M}=\{\{k\omega\}: k\in \mathbb{Z}\}$$ denote the frequency module.

\begin{thm}\label{thm2}
Let $f,\omega$ and $\lambda$ be as in Theorem \ref{Maintheorem}. For all values of the parameter $E\in\R$ the map
$\Phi_E$ has exactly two ergodic probability measures. Furthermore, 
$$
\Phi_E \text{ is minimal }  \Longleftrightarrow E\in \sigma(H)\setminus\{\text{edges of open gaps}\}. 
$$
In particular, if $2\alpha(E)\notin \mathcal{M}$, then $\Phi_E$ is minimal.
\end{thm}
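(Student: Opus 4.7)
The starting point for both assertions is Theorem~\ref{Maintheorem}: for every $E\in\R$ we have $\gamma(E)\geq(2/3)\log\lambda>0$. Oseledets' theorem then provides two measurable $\Phi_E$-invariant sections $s,u:\T\to\PR$ (the projectivized stable and unstable directions of $(\omega,A_E)$), and the Lebesgue-lifts $\mu_s,\mu_u$ along their graphs $\Gamma_s,\Gamma_u$ are two distinct ergodic $\Phi_E$-invariant probability measures, giving the ``at least two'' half of the first statement. To rule out any further ergodic measure I would apply the standard dichotomy for $\mathrm{SL}(2,\R)$-cocycles with positive Lyapunov exponent: any invariant probability measure projects to Lebesgue on $\T$ (the base rotation is uniquely ergodic), and for a.e.\ $\theta$ and every $[v]\neq s(\theta)$ the iterates $[A_E^n(\theta)v]$ approach $u(\theta+n\omega)$ exponentially fast in $n$. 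So an ergodic $\mu$ giving zero mass to $\Gamma_s$ must be supported on $\Gamma_u$ (hence equals $\mu_u$), while an ergodic $\mu$ giving positive mass to $\Gamma_s$ is forced to equal $\mu_s$.

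For the non-minimal direction of the equivalence: if $E\notin\sigma(H)$ the cocycle is uniformly hyperbolic, so $s,u$ are continuous and $\Gamma_s,\Gamma_u$ are disjoint proper closed invariant subsets. If $E$ is on the edge of an open gap, Theorem~\ref{Maintheorem} supplies a phase $\theta_0$ and a two-sided exponentially decaying $\ell^2$ eigenvector of $H_{\theta_0}$ with eigenvalue $E$; the associated initial direction has a $\Phi_E$-orbit whose closure is a proper closed invariant subset of $\T\times\PR$, arising from the fact that the stable and unstable directions coincide at $\theta_0$.

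The harder half is proving minimality when $E\in\sigma(H)$ is not a gap edge. I would argue by contradiction: suppose $K\subsetneq\T\times\PR$ is a non-empty proper closed $\Phi_E$-invariant subset. Since the base rotation is minimal, $K$ projects onto $\T$, and because each fiber map of $\Phi_E$ is an orientation-preserving homeomorphism of $\PR\cong S^1$, the ``upper'' and ``lower'' envelopes of $K$ yield semi-continuous $\Phi_E$-invariant graphs $\varphi^\pm:\T\to\PR$. If $\varphi^+\equiv\varphi^-$ one obtains a continuous invariant section, hence uniform hyperbolicity, contradicting $E\in\sigma(H)$. If $\varphi^+\not\equiv\varphi^-$, then using the detailed control of orbits of $\Phi_E$ near the critical points of $f$ developed in the body of the paper, together with the Diophantine condition on $\omega$, I would extract a phase $\theta_0$ at which one of the envelopes $\varphi^\pm$ produces an $\ell^2$ eigenvector of $H_{\theta_0}$ with eigenvalue $E$, forcing $E$ to be a gap edge --- contradiction. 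The ``In particular'' claim then follows because gap labelling and continuity of $\alpha(E)$ imply $2\alpha(E)\in\mathcal{M}$ whenever $E$ lies in a gap or on a gap edge. The principal obstacle is exactly this last step: upgrading a merely semi-continuous invariant graph either to a continuous one (giving UH) or to an $\ell^2$ eigenvector (giving a gap edge), and it is here that the fine analysis of $\Phi_E$ from the bulk of the paper must do the heavy lifting.
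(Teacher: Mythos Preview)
Your treatment of the ``two ergodic measures'' claim and of the non-minimal direction (uniform hyperbolicity outside $\sigma(H)$; a proper invariant set at gap edges) is essentially correct and in line with what the paper does (the paper in fact cites external results \cite{Jo4,JNOT} for non-minimality at gap edges rather than arguing via the $\ell^2$ eigenvector directly).

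The gap is in your minimality argument. Two concrete problems. First, the ``upper/lower envelope'' construction is not well-defined on fibers homeomorphic to $S^1$: there is no global order on $\PR$, so one cannot simply extract semi-continuous invariant graphs $\varphi^\pm$ from an arbitrary proper closed invariant set without further structure (e.g.\ a lift, or a priori knowledge that the set misses an invariant graph). Second, even granting such envelopes, your contradiction step --- ``extract from $\varphi^\pm$ a phase $\theta_0$ with an $\ell^2$ eigenvector, forcing a gap edge'' --- is the entire content of the theorem and is not argued at all; you acknowledge this is where ``the fine analysis must do the heavy lifting'' but give no mechanism. The paper does \emph{not} proceed by contradiction via envelopes. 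Instead it uses the inductive machinery (Proposition~\ref{inductive_lemma}) to establish a dichotomy: either condition $\mathcal{B}_n(i)$ holds for infinitely many $n$, in which case one shows \emph{constructively} that the forward orbit of any typical $(\theta_0,r_0)$ accumulates on a full vertical segment over some $\theta^*\in\Theta_\infty$ (Sublemma~\ref{Pf_SL1}), and then that the forward orbit of such a segment is dense (Sublemma~\ref{Pf_SL3}), whence the unique minimal set is all of $\T\times\PR$; or $\mathcal{B}_n(ii)$ holds for all large $n$, in which case one explicitly constructs the $\ell^2$ eigenvector at $\theta^*=\bigcap_n I_n$ and, crucially, uses the $E'$-dependence built into condition $(4)$ of the induction to show $(\omega,A_{E'})$ is uniformly hyperbolic for all $E'$ in a one-sided neighborhood of $E$, proving $E$ is a gap edge. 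Your envelope outline bypasses this dichotomy and does not supply a substitute for either half.
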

\begin{rem}\label{rem_proj}
(a) That $\Phi_E$ has exactly two ergodic probability measures follows from Oseledets' theorem, using that $\gamma(E)>0$ (see, for example, \cite[Section 4.17]{H}),
together with the fact that the transformation $\theta\mapsto \theta+\omega$ on $\T$ is uniquely ergodic.

(b) If the cocycle $(\omega,A_E)$ is not uniformly hyperbolic, and $\gamma(E)>0$, then the map $\Phi_E$ has a unique minimal set $M\subset \T\times \PR$ \cite{H, Jo3}. 
We show that if $E\in \sigma(H)$ (so $(\omega,A_E)$ is not 
uniformly hyperbolic; if it is uniformly hyperbolic it can clearly not be minimal) and $M\neq\T\times \PR$, then $E$ must be on the edge of an open gap. 
Moreover, if $E$ is on the edge of an open gap, then it is known that $M$ cannot be the whole space $\T\times \PR$ (\cite{Jo4, JNOT}).
We could describe finer properties of the set $M$ also for these values of $E$, in the same way as we did in \cite{Bj2}. Moreover, in each open gap we could analyze, as in \cite{BS}, 
the asymptotic of the minimal angle between the stable and unstable bundles $W_E^{s,u}(\theta)$ as the cocycle $(\omega,A_E)$ looses hyperbolicity by letting $E$ approach a gap edge. 
However, to keep the present paper of a reasonable size, we will
postpone such extensions to future publications.

(c) The above results implies the following: if the minimal set $M$ is almost-automorphic (in particular, it is not the whole space), then $2\alpha(E)\in \mathcal{M}$. 
This question was raised by Johnson \cite{Jo2}. In that paper he gives an example of an an almost-periodic (but not quasi-periodic) differential equation for which the projective flow 
has an almost-automorphic covering, and for which the rotation number (and any integer multiple of it) is not in the frequency module. We have thus seen that this cannot happen in our case.
However, it seems to be an open question whether it is possible to construct a quasi-periodic example like the one by Johnson (he asks this question in \cite{Jo2}).

(d) We note that Theorem \ref{thm2} applies to the almost Mathieu case, i.e., the case where $f(\theta)=\cos(2\pi\theta)$. This is by far the most studied Schr\"odinger cocycle 
(see, for example, \cite{Ji} and references therein). In this case Puig \cite{Pu1} has shown, under the conditions that $\omega$ is Diophantine and $\lambda$ is sufficiently large, that
all the gaps are open, meaning that to each frequency in $\mathcal{M}$ there corresponds an open gap. In this case Theorem \ref{thm2} thus implies that $\Phi_E$ is minimal
iff $2\alpha(E)\notin \mathcal{M}$. An interesting question is whether one can have examples (of $f$) where $\Phi_E$ is minimal and $2\alpha(E)\in \mathcal{M}$.

(e) In \cite{Bj1} we showed that $\Phi_E$ "often" is minimal (here we could not consider all parameter values $E$; instead we could handle a more general class of $f$). 
Once we have gained control on the dynamics of $\Phi_E$ (which is the main part of this paper; see Section \ref{the_induction}), 
the way to prove minimality follows the same strategy as in \cite{Bj1}.  

\end{rem}

\bigskip
We shall now briefly recall some related results from the literature. Since we in this
paper only shall treat the situation when the coupling constant $\lambda$ is large, we focus on results in this direction.
There are, of course, many striking results in the case where $\lambda$ is small (for example \cite{E1,Pu2}).

During the last decade there has been a remarkable progress in the understanding of the spectral properties of the quasi-periodic Schr\"odinger operator
(\ref{operator}). In particular in the case where $f$ is very smooth (real-analytic or Gevrey) \cite{BG, E, GS1,GS2,GS3, Ji, Kl}. For the rest of this
summary we focus on the more robust quantity $\gamma(E)$.

It seems a very natural question to ask under which conditions on the function $f:\T\to \mathbb{R}$ (and $\omega$)
one has that the Lyapunov exponent $\gamma(E)\gtrsim \log\la$ for all $E$, provided that $\lambda$ is large. 
This problem has been studied by many authors. Herman \cite{H}, 
using his ``subharmonic trick'', showed that this holds for all (non-constant) trigonometric polynomials. Later
Sorets-Spencer \cite{SS} extended this result to also include all non-constant real-analytic $f$. In both these results,
the frequency $\omega$ plays no role, so they work for any irrational $\omega$. 
After these two fundamental results, further development started to become considerable more technical. 
By extending techniques from \cite{BG,GS1}, Klein \cite{K} showed that $\gamma(E,\lambda)\gtrsim \log\la$ for all 
$E\in\mathbb{R}$ for
``non-flat'' functions $f$ belonging to a Gevrey class, and $\omega$ satisfying a (strong) Diophantine condition.
Earlier Eliasson \cite{E} had shown that for ``non-flat'' $f$ from a Gevrey class, and $\omega$ Diophantine, the Schr\"odinger
operator $H_\theta$ has pure point spectrum for a.e. $\theta\in\mathbb{T}$ for all large $\lambda$. By the Kotani theory \cite{K}
it therefore follows that $\gamma(E)>0$ for a.e. $E$. In \cite{Bj1} we showed that if $f$ is $C^1$ (and non-constant), 
then there is a large set of $\omega$ such that $\gamma(E)\gtrsim \log\la$ for a large set of $E$, under the
condition that $\lambda$ is large. Here we had to exclude certain values of $E$ due to resonances.
By using a method of variation to avoid resonances, Chan \cite{C} has shown that 
$\gamma(E)\gtrsim \log\la$ for all $E$ for a very large class of $C^3$-functions $f$, and a large set of $\omega$. 
The techniques used are in the spirit of those developed in \cite{GS1,GS2,GS3}.  

There are also ``negative'' results. In \cite{BDJ} we show that for any given sequence $0<\lambda_1<\lambda_2<\ldots$ of positive 
real numbers, there is a generic set of $f\in C(\T)$ such that $\inf_{E\in\mathbb{R}}\gamma(E)=0$ for every $\lambda=\lambda_m$, $m\geq 1$. Since $\gamma(E)$ need not be continuous (see, for example,
\cite{WY}), this
does not necessarily mean that $\gamma(E)=0$ for some $E$.
We also remark that one can find arbitrarily large (i.e., $\max f - \min f$ being large) real-analytic $f$ such that, letting $\lambda=1$, one has $\gamma(E)=0$ for certain $E\in \R$ \cite{Bj3}.

We have so far only focused on base dynamics given by irrational translation on $\T$. Of course it is of great importance to investigate Schr\"odinger cocycles 
over translation $T:\theta\mapsto \theta+\omega$ on $\T^d$, $d\geq 2$. However, this is considerable more difficult (especially in the regime of large coupling).
We mention results by Bourgain \cite{B}.

\bigskip
The rest of this paper is organized as follows. In Section \ref{basic_estimates} we introduce some notations and derive
some elementary estimates which are used along the paper. In particular we handle the (trivial) case when the parameter $E$ is large. After this we
begin the harder analysis. This analysis is based on an inductive construction, which is the content of Section \ref{the_induction}. This long section is divided into three subsections:
(\ref{a_few_comments}) comments on the approach, (\ref{base_case}) the base case, and (\ref{induction}) the inductive step. In the first two of these subsections we have tried to 
make the philosophy of our approach as accessible as possible. In the third one we formulate and prove the inductive step.
With the control on the iterates obtained in Section \ref{the_induction}, we can rather easily finish the proof of Theorems \ref{Maintheorem} and \ref{thm2}.     
The details is the content of Section \ref{the_proof}.

The paper also contains two appendices, Appendix A and B. They contain results of a more computational nature. 
The point of collecting such results at the end of the paper
is to make the (already quite involved) steps in the inductive construction more transparent.   
In the first of these appendices we derive derivative estimates which are frequently used in 
Section \ref{the_induction}. It also contains results on how to obtain information on iterates, knowing something about another orbit  
(so-called "shadowing"; see subsection \ref{a_few_comments}). Appendix B is devoted to some geometric results about graphs $r=\psi(\theta)$ 
(for a few classes of functions) on sets of $\theta$ where $\psi(\theta)$ is close to $0$. These results are then used in the geometric parts 
of the inductive construction. The formulae derived and used in the paper are very explicit and are tailor fit for the study of the map $\Phi_E$.
However, the general properties and mechanisms should be the same for other classes of maps.  
\end{section}

\begin{section}{Notations and basic estimates}\label{basic_estimates}

The function $f:\T\to\mathbb{R}$ and the base frequency $\omega$ are from now on assumed to be fixed, being as in the statement of Theorem 
\ref{Maintheorem}. By $\Phi$ we shall always denote the map defined in (\ref{mapPhi}).

\medskip
A convention which we will use in the paper is to denote positive numerical constants by ``$\co$''. These
constants are only allowed to depend on $f$, and $\kappa,\tau$ 
(from the Diophantine condition on $\omega$). They do not depend on $\la$ and $E$. Moreover, if they appear in an inductive argument, they do 
not depend on the step of the induction.

\begin{subsection}{Some notations}

Given as subset $I\subset \T$, we shall denote the supremum norm by $\|\cdot\|_{C^0(I)}$, i.e., 
$$
\|g\|_{C^0(I)}=\sup_{\theta\in I}|g(\theta)|.
$$
Moreover, if $g:\T\to \mathbb{R}$ is $C^2$  we use the notation
$$
\|g\|_{C^2(I)}=\sup_{\theta\in I}\max\{|g(\theta)|,|g'(\theta)|,|g''(\theta)|\}.
$$

If $A$ is a matrix, then $\|A\|$ denotes the operator norm of $A$; 
if $I\subset \T$ is a Lebesgue measurable set, then we denote by $|I|$ the Lebesgue measure of $I$.

By $[\cdot]$ we denote the integer part of a real number.
We also introduce the projections onto the first and second coordinate, respectively, by
$$
\pi_1(\theta,r)=\theta \quad\text{and} \quad \pi_2(\theta,r)=r.
$$

By $\partial I$ we denote the boundary of a set $I\subset \T$.
 
If $I\subset \mathbb{T}$ (any set), and $\theta_0\in \T$, then we let 
\begin{equation}\label{function_N}
\mathcal{N}(\theta_0;I)=\begin{cases}\min\{k\geq 0: \theta_k\in I\}, &\text{if the such a $k$ exists}\\ 
\infty &\text{otherwise.}
\end{cases}
\end{equation}
Thus, $\N(\theta_0;I)$ is the ``first entry time'' of the point $\theta_0$ to the set $I$ under iteration of the map 
$\theta \mapsto \theta+\omega$. Note that $\N(\theta_0;I)=0$ if $\theta_0\in I$.  

\end{subsection}
\begin{subsection}{Basic lemmas and estimates}
We now state a results which relates uniform hyperbolicity of the cocycle $(\omega,A_E)$ with information on products $r_0r_1\cdots r_k$.
If for some parameter value $E$ we can obtain a "uniform" growth on the products for all $\theta$ in a small interval, then the cocycle $(\omega,A_E)$
is in fact uniformly hyperbolic. More precisely:
\begin{lem}\label{BE_UH}
Assume that for some value of $E$ there is an (non-degenerate) interval $I\subset \T$ and constants $c>0, K>1$ such that
for all $\theta_0\in I$ there is an $r_0\neq \infty$ such that
$$
\max\{|r_0\cdots r_{k-1}|, |r_0\cdots r_{k}|\}\geq cK^k \quad\text{ for all } k\geq 0. 
$$ 
Then the cocycle $(\omega,A_E)$ is uniformly hyperbolic.
\end{lem}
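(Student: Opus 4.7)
The plan is to upgrade the hypothesized per-direction growth on $I$ into a uniform matrix-norm lower bound $\|A_E^n(\theta)\| \geq c_0 K^n$ valid for all $\theta \in \T$ and all $n \geq 0$. Over the minimal base $\theta \mapsto \theta+\omega$, this uniform growth is a standard sufficient condition for uniform hyperbolicity of an $SL(2,\R)$-cocycle: one obtains continuous $\Phi_E$-invariant stable and unstable sections as exponentially convergent limits of the least- and most-expanded singular directions of $A_E^n(\theta)$ (see, e.g., \cite{Jo}).

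By (\ref{matrix_r}), the hypothesis reads $|A_E^n(\theta_0)(1, r_0)^T| \geq cK^n$ for $\theta_0 \in I$ and $n \geq 0$, so
\[
\|A_E^n(\theta_0)\| \;\geq\; \frac{cK^n}{\sqrt{1+r_0(\theta_0)^2}}.
\]
Because the singular values of $A_E^n(\theta_0) \in SL(2,\R)$ are $\|A_E^n(\theta_0)\|$ and $1/\|A_E^n(\theta_0)\|$, only one projective direction at $\theta_0$ can sustain $|A_E^n v| \to \infty$, so $r_0(\theta_0)$ is unique on $I$; denote it $r^u(\theta_0)$. To propagate from $I$ to all of $\T$, I use that $\omega$ irrational together with $I$ having non-empty interior forces the first-entry time $\N(\,\cdot\,;I)$ of (\ref{function_N}) to be uniformly bounded by some $N_0 < \infty$. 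Writing $N = \N(\theta;I)$ and $C_1 = \sup_\theta \|A_E(\theta)\|$, the cocycle identity $A_E^n(\theta) = A_E^{n-N}(\theta+N\omega)\,A_E^N(\theta)$ combined with $\|M^{-1}\| = \|M\|$ for $M \in SL(2,\R)$ yields, for $n \geq N$,
\[
\|A_E^n(\theta)\| \;\geq\; \frac{\|A_E^{n-N}(\theta+N\omega)\|}{\|A_E^N(\theta)\|} \;\geq\; \frac{c\,K^{n-N_0}}{C_1^{N_0}\,\sqrt{1 + r^u(\theta+N\omega)^2}},
\]
while for $0 \leq n < N_0$ the trivial bound $\|A_E^n(\theta)\| \geq 1$ suffices after adjusting constants.

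The main obstacle is to show that $r^u(\,\cdot\,)$ stays bounded on $I$; without this, the denominator above is not uniformly controlled. I plan to handle it by exploiting uniqueness of the forward-growing direction together with continuity of the projective action of $\Phi_E$ on $\T \times \PR$: $r^u(\theta_0)$ is quantitatively separated from every other (stable) direction at each $\theta_0 \in I$, and this separation, governed by the growth rate $K$, must rule out $r^u(\theta^{(j)}) \to \infty$ along any sequence in $I$. This is the delicate technical step. Once $\sup_{\theta_0 \in I} |r^u(\theta_0)| < \infty$ is in hand, the displayed estimate is uniform in $\theta$, giving $\|A_E^n(\theta)\| \geq c_0 K^n$ throughout $\T$, and uniform hyperbolicity of $(\omega, A_E)$ follows.
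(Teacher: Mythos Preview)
Your overall strategy matches the paper's: invoke the criterion (cited there as \cite[Proposition~2]{Yoc}) that a uniform lower bound $\|A_E^n(\theta)\|\geq c_0K^n$ for all $\theta\in\T$ and $n\geq 0$ implies uniform hyperbolicity, propagate this bound from $I$ to all of $\T$ via a bounded first-entry time together with the $SL(2,\R)$ identity $\|M^{-1}\|=\|M\|$, and read off the norm growth on $I$ from (\ref{matrix_r}).

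The gap you flag --- boundedness of $r^u(\theta_0)$ over $I$ --- is real, and the paper's proof does not address it either; it simply asserts ``we see that (\ref{BE_eqUH2}) holds,'' tacitly relying on the fact that in every application of the lemma in the paper (Lemma~\ref{BE2} and Section~\ref{the_proof}) one may choose $r_0$ independently of $\theta_0$, hence bounded. Your proposed resolution, however, contains an error: the forward-growing direction is \emph{not} unique. For an $SL(2,\R)$ cocycle, every projective direction except at most one (the contracting one) satisfies $|A_E^n v|\to\infty$; the singular-value observation you make only shows that at most one direction can be contracting, not that at most one can be expanding. Consequently $\theta_0\mapsto r^u(\theta_0)$ is merely a choice function with no regularity imposed, and nothing in the hypothesis prevents it from being unbounded on $I$, so the continuity/separation argument you outline cannot get off the ground. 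The clean fix is to add the harmless hypothesis $\sup_{\theta_0\in I}|r_0(\theta_0)|<\infty$ (or equivalently, that $r_0$ can be chosen in a fixed compact subset of $\PR\setminus\{\infty\}$), which costs nothing in the paper's actual uses of the lemma.
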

\begin{rem}Such a result was also used and proved in \cite{BS}. 
\end{rem}
\begin{proof} It is a general fact that the cocycle $(\omega,A_E)$ is uniformly hyperbolic if (and only if) there exists constants $c'>0, K>1$ such 
that
\begin{equation}\label{BE_eqUH}
\|A^n_E(\theta)\|\geq c'K^n \quad\text{for all } \theta\in \T \text{ and } n\geq 0.
\end{equation}
For a proof see \cite[Proposition 2]{Yoc}. 

We note that if there is an interval $I'\subset \T$ and a constant $c''>0$ such that
\begin{equation}\label{BE_eqUH2}
\|A^n_E(\theta)\|\geq c''K^n \quad\text{for all } \theta\in I' \text{ and } n\geq 0,
\end{equation}
then there is a constant $c'>0$ such that (\ref{BE_eqUH}) holds. 
Indeed, since $\omega$ is irrational there exists an $N>0$ such that $\bigcup_{k=0}^N(I'+k\omega)=\T$. Thus, for any
$\theta\in \T$ we can find $k\in [0,N]$ such that $\theta_k\in I'$. This gives
$$
\|A^n_E(\theta)\|=\|A^{n-k}(\theta_k)A^k(\theta)\|\geq \frac{\|A^{n-k}(\theta_k)\|}{\|A^k(\theta)\|}>c'K^n,
$$  
some $c'>0$.
Here we have used the inequality $\|AB\|\geq \|A\|/\|B\|$ for $SL(2,\mathbb{R})$ matrices (which holds since $\|B^{-1}\|=\|B\|$), and
the fact that $k$ and the $\|A_E(\cdot)\|$ are bounded by constants which are independent of $n$. 

Finally, using the assumptions on the growth of the products together with the relation (\ref{matrix_r}), i.e., 
$$
A^n_E(\theta)\binom{1}{r_0}=\binom{r_0\cdots r_{n-1}}{r_0\cdots r_n},
$$
we see that (\ref{BE_eqUH2}) holds.
\end{proof}

In this section the coupling constant $\lambda$ is assumed to be sufficiently large, 
depending only on the function $f$. Thus, in the lemmas in this section we presuppose that $\lambda$ is a large number.

\bigskip
We shall now define an important set, denoted $J_0$. For each $E\in \mathbb{R}$, let the "critical" set $J_0=J_0(E)$ be defined by
\begin{equation}\label{setI}
J_0=\{\theta\in\T:|\lambda f(\theta)-E|\leq 2\lambda^{3/4}\};
\end{equation}
see Fig. \ref{fig1}.
By the assumptions on $f$ it immediately follows that $|J_0|\to 0$ as $\lambda\to\infty$. Thus, $J_0$ 
is a small set. We shall derive more information about $J_0$ in Lemma \ref{BE3} below, but we first state a trivial, but fundamental, fact
about iterations of the map $\Phi$ for $\theta$ outside the set $J_0$. 
\begin{figure}
\psfrag{E}{$E$}
\psfrag{V}{$\la f(\theta)$}
\psfrag{J}{$J_0$}
\psfrag{J2}{$J_0$}
\psfrag{t}{$\theta$}
\includegraphics[width=8cm]{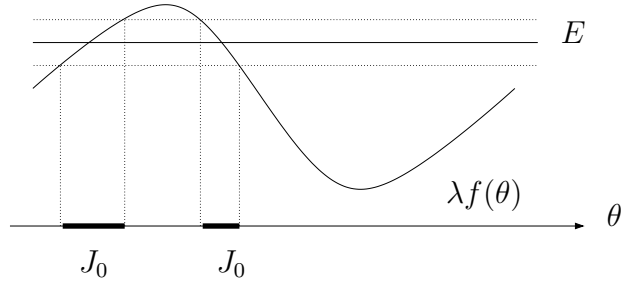}
\caption{The set $J_0$.}\label{fig1}
\end{figure}

\begin{lem}\label{BE1}
For any fixed $E\in \mathbb{R}$, the following holds for iterations of $\Phi$:
\begin{itemize}
\item If $|r_0|> 1/\la^{3/4}$ and $\theta_0\in \overline{\T\setminus J_0}$, then $|r_1|> \la^{3/4}$. 
\item If $|r_0|\geq \la^{3/4}$ and $\theta_0\in \overline{\T\setminus J_0}$, then $|r_1|>(3/2)\la^{3/4}$.
\end{itemize}
\end{lem}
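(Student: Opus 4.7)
The plan is simply to unpack the definition of $\Phi_E$ and apply the reverse triangle inequality; there is no real conceptual content beyond that. Writing out the second coordinate of $\Phi_E$, one has $r_1=\la f(\theta_0)-E-1/r_0$. The hypothesis $\theta_0\in \overline{\T\setminus J_0}$ translates, via the very definition of $J_0$ in (\ref{setI}), to the inequality $|\la f(\theta_0)-E|\geq 2\la^{3/4}$. The reverse triangle inequality therefore gives
$$
|r_1|\;\geq\; |\la f(\theta_0)-E|-\left|\frac{1}{r_0}\right|\;\geq\; 2\la^{3/4}-\left|\frac{1}{r_0}\right|,
$$
so the whole lemma reduces to bounding $|1/r_0|$ in each of the two cases.

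For the first bullet, $|r_0|>1/\la^{3/4}$ yields $|1/r_0|<\la^{3/4}$, and plugging in gives $|r_1|>2\la^{3/4}-\la^{3/4}=\la^{3/4}$. For the second bullet, the stronger assumption $|r_0|\geq \la^{3/4}$ yields $|1/r_0|\leq 1/\la^{3/4}$. I would then invoke the standing assumption that $\la$ is sufficiently large to note $1/\la^{3/4}\leq \la^{3/4}/2$ (which amounts to $\la^{3/2}\geq 2$, a very mild bound), and conclude $|r_1|\geq 2\la^{3/4}-\la^{3/4}/2=(3/2)\la^{3/4}$.

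There is no serious obstacle; the only thing to keep an eye on is the implicit quantitative smallness condition $\la^{3/2}\geq 2$ used in the second bullet, which is absorbed without comment into the section's standing assumption that $\la$ is large depending on $f$. The lemma's role is not its depth but its repeated use: it records the fact that a single iterate of $\Phi_E$ starting outside the critical set $J_0$ automatically drives $r$ to be of size comparable to $\la^{3/4}$, so that the ``small denominator'' $1/r$ in the next step contributes negligibly and one can iterate this observation as long as $\theta_j$ stays in $\overline{\T\setminus J_0}$.
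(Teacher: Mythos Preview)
Your proof is correct and essentially identical to the paper's own argument: both simply invoke the definition of $J_0$ to get $|\la f(\theta_0)-E|\geq 2\la^{3/4}$ and apply the reverse triangle inequality. The paper dispatches the second bullet with ``proved in the same way'' without spelling out the mild condition $\la^{3/2}\geq 2$, which you make explicit.
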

\begin{proof}By definition we have $|\la f(\theta)-E|\geq 2\la^{3/4}$ on $\overline{\T\setminus J_0}$. Using the assumption on $\theta_0, r_0$ we therefore get 
$$
|r_1|=|\lambda f(\theta_0)-E-1/r_0|> 2\la^{3/4}-\la^{3/4}=\la^{3/4}.
$$
The second statement is proved in the same way.
\end{proof}

Next we introduce the interval $\mathcal{E}\subset \mathbb{R}$, which is defined by 
\begin{equation}\label{setE}
\mathcal{E}=(\lambda f_{\text{min}}-2\la^{3/4},\lambda f_{\text{max}}+2\la^{3/4}).
\end{equation}
In fact, for all $E\notin \mathcal{E}$ we can easily estimate the Lyapunov exponent. Moreover, the cocycle $(\omega,A_E)$ is uniformly hyperbolic outside $\mathcal{E}$.
\begin{lem}\label{BE2}
If $E\notin \mathcal{E}$, then the cocycle $(\omega,A_E)$ is uniformly hyperbolic and the Lyapunov exponent $\gamma(E)\geq 3(\log\lambda)/4$.
\end{lem}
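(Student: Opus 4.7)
The plan is to exploit the fact that when $E\notin \mathcal{E}$, the critical set $J_0(E)$ is empty, so Lemma \ref{BE1} applies uniformly on all of $\mathbb{T}$. From the definition of $\mathcal{E}$, if $E\notin \mathcal{E}$ then $|\lambda f(\theta)-E|\geq 2\lambda^{3/4}$ for every $\theta\in\T$, and consequently $J_0(E)=\emptyset$, i.e.\ $\overline{\T\setminus J_0}=\T$.

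Next I would obtain uniform exponential growth of the products $r_0 r_1\cdots r_n$. Fix an arbitrary $\theta_0\in\T$ and choose, for example, $r_0=1$, so that in particular $|r_0|>1/\lambda^{3/4}$. By the first bullet of Lemma \ref{BE1} we get $|r_1|>\lambda^{3/4}$. Then I would argue by induction using the second bullet: if $|r_n|\geq \lambda^{3/4}$, then since $\theta_n\in \overline{\T\setminus J_0}=\T$, we obtain $|r_{n+1}|>(3/2)\lambda^{3/4}\geq \lambda^{3/4}$. Hence $|r_n|\geq (3/2)\lambda^{3/4}$ for every $n\geq 2$, and therefore
\[
|r_0 r_1\cdots r_n|\;\geq\; 1\cdot \lambda^{3/4}\cdot \bigl((3/2)\lambda^{3/4}\bigr)^{n-1}\;=\; \tfrac{2}{3}\bigl((3/2)\lambda^{3/4}\bigr)^{n}, \qquad n\geq 1.
\]

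With this growth in hand, the first conclusion follows from Lemma \ref{BE_UH}: take $I=\T$, $K=(3/2)\lambda^{3/4}$ (which is $>1$ once $\lambda$ is large), and $c=2/3$; the lemma then yields uniform hyperbolicity of $(\omega,A_E)$. For the Lyapunov exponent bound, I would use the identity (\ref{matrix_r}), which gives
\[
\|A_E^n(\theta_0)\|\;\geq\; \max\{|r_0\cdots r_{n-1}|,\,|r_0\cdots r_n|\}\;\geq\; \tfrac{2}{3}\bigl((3/2)\lambda^{3/4}\bigr)^{n}
\]
for every $\theta_0\in\T$ and every $n\geq 1$ (with the chosen $r_0=1$). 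Taking $\tfrac{1}{n}\log$, integrating over $\theta_0\in\T$, and letting $n\to\infty$ yields $\gamma(E)\geq \log(3/2)+(3/4)\log\lambda \geq (3/4)\log\lambda$, as claimed.

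I do not see a genuine obstacle here: the content of the lemma is precisely that outside $\mathcal{E}$ the map $\Phi_E$ is ``non-resonant everywhere,'' so the trivial mechanism in Lemma \ref{BE1} can be iterated without ever needing to re-enter a critical region. The only mild care needed is to verify the base step of the induction (one has to produce a starting $r_0$ with $|r_0|>1/\lambda^{3/4}$, but any fixed value such as $r_0=1$ works for all $\theta_0$), and then to check that the growth rate $(3/2)\lambda^{3/4}$ is indeed $>1$ for large $\lambda$, which is built into the running assumption of the section.
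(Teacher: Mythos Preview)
Your proof is correct and follows essentially the same approach as the paper: observe that $E\notin\mathcal{E}$ forces $J_0=\emptyset$, iterate Lemma~\ref{BE1} to obtain uniform exponential growth of the products $r_0\cdots r_n$, and then invoke Lemma~\ref{BE_UH} for uniform hyperbolicity and formula~(\ref{matrix_r}) for the Lyapunov bound. The only cosmetic differences are your choice of initial condition $r_0=1$ (the paper takes $|r_0|\geq\lambda^{3/4}$) and your direct appeal to the integral definition of $\gamma(E)$ rather than to~(\ref{kingman}); both routes are equally valid here.
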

\begin{proof}Note that the assumption on $E$ implies that we have $|\la f(\theta)-E|\geq 2\la^{3/4}$ for all $\theta\in\T$. Thus $\overline{\T\setminus J_0}=\T$.
Take $r_0$ such that $|r_0|\geq \la^{3/4}$, and take any $\theta_0\in\T$. 
Applying Lemma \ref{BE1} repeatedly shows  
that $|r_k|\geq \la^{3/4}$ for all $k\geq 0$. Hence we have
$$
|r_0\cdots r_{k-1}|\geq \la^{(3/4)k}\quad\text{ for all } k\geq 1,
$$
so
$$
\liminf_{k\to\infty}\frac{1}{k}\log|r_0\cdots r_{k-1}|\geq \frac{3}{4}\log\la. 
$$
By (\ref{matrix_r}) and (\ref{kingman}) this shows that $\gamma(E)\geq (3/4)\log\la$. That the cocycle is uniformly hyperbolic follows from Lemma \ref{BE_UH}.
\end{proof}
Thus we know that Theorem \ref{Maintheorem} holds for all $E\notin\mathcal{E}$. This is of course the trivial case. 
In the sequel we shall therefore focus only on $E\in\mathcal{E}$. 

For $E\in \mathcal{E}$ we have the following obvious result, 
which we often shall use without any reference.  
\begin{lem}\label{BE99}
For any $\theta_0\in\T$, if $E\in \mathcal{E}$ and $|r_0|<\la^{-2}$, then $|r_1|>\la^2/2$.
\end{lem}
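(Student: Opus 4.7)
The plan is straightforward: this is essentially a triangle inequality argument, exploiting that $1/r_0$ is huge (at least $\lambda^2$ in absolute value) while the remaining term $\lambda f(\theta_0) - E$ is only of order $\lambda$ under the assumption $E \in \mathcal{E}$.

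More precisely, I would first bound $|\lambda f(\theta_0) - E|$ from above for $E \in \mathcal{E}$. Since $\lambda f_{\min} - 2\lambda^{3/4} < E < \lambda f_{\max} + 2\lambda^{3/4}$ and $\lambda f(\theta_0) \in [\lambda f_{\min}, \lambda f_{\max}]$, we obtain
\[
|\lambda f(\theta_0) - E| \leq \lambda(\fM - \fm) + 2\lambda^{3/4}.
\]
The right-hand side is at most $\co \cdot \lambda$ (with constant depending only on $f$) once $\lambda$ is large.

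Next, the assumption $|r_0| < \lambda^{-2}$ gives $|1/r_0| > \lambda^2$. Since
\[
r_1 = \lambda f(\theta_0) - E - \frac{1}{r_0},
\]
the reverse triangle inequality yields
\[
|r_1| \geq \frac{1}{|r_0|} - |\lambda f(\theta_0) - E| > \lambda^2 - \co \cdot \lambda.
\]
For $\lambda$ sufficiently large (depending only on $f$), this lower bound exceeds $\lambda^2/2$, which is exactly the desired conclusion.

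The only thing to double-check is that the smallness requirement on $\lambda$ is indeed of the form allowed by the standing assumption of this section (namely, depending only on $f$); here this is clear since the bound $\co \cdot \lambda < \lambda^2/2$ reduces to $\lambda > 2\,\co$. There is no genuine obstacle in this argument — it is the kind of trivial estimate stated as a lemma only to be quoted later without comment, as the surrounding text announces.
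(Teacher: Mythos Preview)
Your proof is correct and follows essentially the same approach as the paper: the paper's one-line proof reads $|r_1|=|\la f(\theta_0)-E-1/r_0|>\la^2-\co\la>\la^2/2$, which is exactly your reverse triangle inequality argument with the bound $|\la f(\theta_0)-E|<\co\la$ for $E\in\mathcal{E}$.
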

\begin{proof}
We have $|r_1|=|\la f(\theta_0)-E-1/r_0|>\la^2-\co\la>\la^2/2$.
\end{proof}

\bigskip

The next lemma contains information about the set $J_0$ defined in (\ref{setI}); see Fig. \ref{fig1}. 
\begin{lem}\label{BE3}
For every $E\in \mathcal{E}$ the set $J_0$ consist of one or two intervals, and 
$$
\begin{aligned}
\min_{\theta\in J_0}&\max\{|\la f'(\theta)|,\la f''(\theta)\}\geq \co \la; \quad\text{ or } 
\\ \min_{\theta\in J_0}&\max\{|\la f'(\theta)|,-\la f''(\theta)\}\geq \co \la.
\end{aligned}
$$

Moreover: 
\begin{enumerate}
\item if $J_0$ consists of two intervals, $J_0^1,J_0^2$, then 
$$
\la f(J^i)-E=[-2\la^{3/4},2\la^{3/4}],
$$
and $f'(\theta)>0$ on $J_0^1$ and $f'(\theta)<0$ on $J_0^2$;

\smallskip
\item if $J_0=[a,b]$, then $\la f(J_0)-E\subset [-2\la^{3/4},2\la^{3/4}]$, and $\la f(a)-E=\la f(b)-E=\pm 2\la^{3/4}$. 
\end{enumerate}
\end{lem}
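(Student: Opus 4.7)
\medskip
\noindent\emph{Proof plan.} The plan is to translate the inequalities defining $J_0$ into level-set conditions on $f$, then perform a case analysis on the position of the level values with respect to $f_{\min}$ and $f_{\max}$. Set
$$
c_{\pm}=\frac{E\pm 2\la^{3/4}}{\la},
$$
so that $J_0=f^{-1}([c_-,c_+])$ and $c_+-c_-=4/\la^{1/4}$. The hypothesis $E\in\mathcal{E}$ is exactly $c_-<f_{\max}$ and $c_+>f_{\min}$. Since $f:\T\to\R$ is $C^2$ Morse with exactly two non-degenerate critical points, these must be one minimum $\theta_{\min}$ (with $f''(\theta_{\min})>0$) and one maximum $\theta_{\max}$ (with $f''(\theta_{\max})<0$), separating $\T$ into two open arcs on which $f$ is strictly monotone.

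For the structural claims, I would enumerate the possibilities. If $c_-\le f_{\min}$, then $J_0=\{f\le c_+\}$ is a connected neighborhood of $\theta_{\min}$, bounded by $\{f=c_+\}$, so $\la f(a)-E=\la f(b)-E=+2\la^{3/4}$. If $c_+\ge f_{\max}$, symmetrically $J_0$ is a neighborhood of $\theta_{\max}$ bounded by $\{f=c_-\}$, giving $\la f(a)-E=\la f(b)-E=-2\la^{3/4}$. The simultaneous case $c_-\le f_{\min}$ and $c_+\ge f_{\max}$ would require $c_+-c_-\ge f_{\max}-f_{\min}$, contradicting $c_+-c_-=4\la^{-1/4}$ for $\la$ large. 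In the remaining case $f_{\min}<c_-<c_+<f_{\max}$, each monotone arc contributes exactly one component of $J_0$ (with $f'>0$ on one, $f'<0$ on the other), and on each such component $f$ traces the full range $[c_-,c_+]$, so $\la f(J_0^i)-E=[-2\la^{3/4},2\la^{3/4}]$. This gives (1) and (2).

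For the derivative estimate, I would fix once and for all a number $\delta=\delta(f)>0$, depending only on $f$, small enough that
$$
f''(\theta)\ge \tfrac{1}{2}f''(\theta_{\min})\ \text{ when } f(\theta)\le f_{\min}+2\delta,\qquad f''(\theta)\le \tfrac{1}{2}f''(\theta_{\max})\ \text{ when } f(\theta)\ge f_{\max}-2\delta,
$$
and $|f'(\theta)|\ge c_1>0$ when $f(\theta)\in[f_{\min}+\delta,f_{\max}-\delta]$; the existence of such $\delta$ is a standard consequence of the Morse hypothesis and continuity of $f,f',f''$, together with compactness of $\T$. Assume $\la$ is large enough that $4/\la^{1/4}<\delta$. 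Then exactly one of the following occurs: either (A) $c_-<f_{\min}+\delta$, in which case $c_+<f_{\min}+2\delta$ and hence $f''\ge\tfrac12 f''(\theta_{\min})$ throughout $J_0$, so the first alternative holds with $\co=\tfrac12 f''(\theta_{\min})$; or (B) $c_+>f_{\max}-\delta$, in which case $c_->f_{\max}-2\delta$ and the second alternative holds symmetrically; or (C) $[c_-,c_+]\subset[f_{\min}+\delta,f_{\max}-\delta]$, in which case $|f'|\ge c_1$ on $J_0$ and both alternatives hold with $\co=c_1$.

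The only place where I expect to be mildly careful is in checking that the chosen $\delta$ genuinely separates the three cases for all sufficiently large $\la$—i.e., that cases (A) and (B) cannot overlap—which reduces to taking $\delta<(f_{\max}-f_{\min})/4$, independent of $\la$ and $E$. No step requires an estimate beyond direct use of the Morse hypothesis, so the argument is essentially bookkeeping once the correct trichotomy on $(c_-,c_+)$ is identified.
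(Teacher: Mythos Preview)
Your proposal is correct and follows essentially the same approach as the paper: both arguments exploit that $f$ has exactly one minimum and one maximum, decompose $\T$ into two monotone arcs, and use the smallness of the window $c_+-c_-=4\la^{-1/4}$ to ensure $J_0$ meets a neighborhood of at most one critical point. The paper's own proof is considerably terser---it records the global bound $\min_\theta\max\{|f'|,|f''|\}\ge c$ and the observation that at most one critical point lies in $J_0$, leaving the sign determination of $f''$ implicit---whereas your trichotomy on $(c_-,c_+)$ relative to $f_{\min}+\delta$ and $f_{\max}-\delta$ makes this step explicit; but the underlying idea is the same.
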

\begin{proof} Since $f$ has exactly two (non-degenerate) critical points there is a constant $c>0$ such that
$\min_{\theta\in\T}\max\{|f'(\theta)|,|f''(\theta)|\}\geq c$, and there are two closed intervals $A,B$, $A\cup B=\T$, such that 
$f$ is strictly increasing on $A$ and $f$ is strictly decreasing on $B$. From this it follows that  
a horizontal line $y=a$ can intersect the curve
$y=f(\theta)$ in $0$ (if $a\notin [f_{\min},f_{\max}]$), $1$ (if $a=f_{\min},f_{\max}$) or $2$ points (if $a\in (f_{\min},f_{\max})$) in $\T$. 
Moreover, since 
$$
J_0=\{\theta\in\T:|f(\theta)-E/\la|\leq 2\lambda^{-1/4}\},
$$
and $\lambda$ is very large, at most one of $f$:s critical points can be in $J_0$.
And since $E\in \mathcal{E}$, it follows that $J_0\neq \emptyset$. The above observations therefore give the result.
\end{proof}

Later in the paper we shall often have lower bounds on either the first or the second derivative of some function (as in the previous lemma, for example), 
together with some information about the
image of the function. In the
following lemma we shall derive some easy estimates which follow from such information.

\begin{lem}\label{BE9}
Assume that $I$ is an interval and that $g$ is a $C^2$-function on $I$ satisfying
$$
\min_{\theta\in I}\max\{|g'(\theta)|,g''(\theta)\}\geq D>0.
$$
Assume further that $\delta$ is a number such that $0<\delta\leq D/4$.
\begin{enumerate}
\item If $g(I)=[-\delta,\delta]$, and $g'(\theta)>0$ on $I$, then 
$$
|I|\leq 2\sqrt{\frac{\delta}{D}}.
$$
Moreover, if $I'=\{\theta\in I: |g(\theta)|\leq \frac{3}{4}\delta\}$, then $I'$ is an interval, and
$$
g'(\theta)\geq \sqrt{\delta D/2} \text{ on } I'.
$$

\item If $g(I)\subset [-\delta,\delta]$, then $|I|\leq 4\sqrt{\frac{\delta}{D}}$.
\end{enumerate}
\end{lem}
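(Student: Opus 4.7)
The plan is to exploit the dichotomy in the hypothesis: at every $\theta \in I$, either $|g'(\theta)| \geq D$ or $g''(\theta) \geq D$. Setting $T = \{\theta \in I : |g'(\theta)| < D\}$, we have $g'' \geq D$ throughout $T$, so $g'$ is strictly increasing on each connected component of $T$. This convexity, together with the smallness assumption $\delta \leq D/4$, drives all the estimates.

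For part (1), write $I = [\alpha,\beta]$, so that $g$ increases strictly from $-\delta$ to $\delta$. I would first classify the components of $T$: strict monotonicity of $g'$ on each component, together with continuity of $g'$ at the boundary of $T$ (where $g' = \pm D$), rules out both interior components and components of the form $(c,\beta]$, leaving only the possibilities $T = \emptyset$, $T = I$, or $T = [\alpha,c)$ with $g'(c) = D$. If $T = \emptyset$, then $g' \geq D$ on $I$, so $|I| \leq 2\delta/D \leq 2\sqrt{\delta/D}$ directly. Otherwise, the Taylor estimate $g(c) - g(\alpha) \geq D(c-\alpha)^2/2$ on $T$ (valid since $g'(\alpha) \geq 0$ and $g'' \geq D$ there), combined with $g(\beta) - g(c) \geq D(\beta - c)$ on $I \setminus T$, yields the constraint $(c-\alpha)^2/2 + (\beta - c) \leq 2\delta/D$; since $\delta \leq D/4$ forces the optimum of $|I|$ to occur at $\beta = c$, this gives $|I| \leq 2\sqrt{\delta/D}$.

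That $I'$ is itself an interval is immediate from the strict monotonicity of $g$. For the derivative bound on $I'$, the key trick is the identity $\frac{d}{d\theta}[g'(\theta)^2] = 2g'(\theta)g''(\theta)$. On $T$ we have $g'(\theta) \geq g'(\alpha) \geq 0$ and $g''(\theta) \geq D$, so integrating from $\alpha$ to any $\theta \in I' \cap T$ gives $g'(\theta)^2 \geq 2D\,(g(\theta) - g(\alpha)) \geq 2D(-3\delta/4 + \delta) = D\delta/2$. For $\theta \in I' \setminus T$ one has $g'(\theta) \geq D \geq \sqrt{D\delta/2}$ (using $\delta \leq D/4$), so the bound $g'(\theta) \geq \sqrt{\delta D/2}$ holds on all of $I'$.

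For part (2), the crucial structural observation is that $g$ has at most one critical point in $I$: any zero of $g'$ forces $g'' \geq D > 0$, so it is a strict local minimum, and a local maximum between two local minima would again satisfy $g'' \geq D > 0$, which is absurd. Hence $I$ splits into at most two maximal monotonic subintervals, and the proof of part (1) adapts to each (now with $|g(J)| \leq 2\delta$ instead of equality) to bound each piece by $2\sqrt{\delta/D}$. Summing gives $|I| \leq 4\sqrt{\delta/D}$. The main obstacle throughout is the structural analysis of $T$ — making the pointwise dichotomy interact cleanly with continuity of $g'$ — after which the Taylor and energy-identity computations are routine.
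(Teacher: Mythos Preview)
Your proof is correct and follows essentially the same route as the paper: decompose $I$ into the region where $|g'| < D$ (an initial segment) and its complement, apply the quadratic Taylor bound on the former and the linear bound on the latter, and for part (2) split at the unique critical point. The one notable variation is your derivation of the bound $g' \geq \sqrt{\delta D/2}$ on $I'$ via the energy identity $(g'^2)' = 2g'g'' \geq 2Dg'$, whereas the paper instead integrates the linear upper bound $g'(\theta) \leq g'(s) - D(s-\theta)$ to get $g'(s) \geq \frac{\delta/4}{s-a} + \frac{D(s-a)}{2}$ and then applies AM--GM; both arrive at the same constant.
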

\begin{rem}In all our applications of this lemma, we shall have $D\gg 1$ and $\delta\ll 1$.
\end{rem}

\begin{proof}[Proof of Lemma \ref{BE9}]
We shall write $I=[a,b]$. 

(1) First we note that if $g'(t)\geq D$ for some $t$, then $g'(\theta)\geq D$ for all $\theta\in I$ such that $\theta\geq t$. 
Thus, if $g'(a)\geq D$, then $g'(\theta)\geq D$ for all $\theta\in I$, and we get $|I|\leq 2\delta/D<2\sqrt{\delta/D}$.

We now assume that $g'(a)<D$, and let $J=\{\theta\in I: g'(\theta)\leq D\}$. Then $J$ is an interval, which the above argument shows, 
and we have $g''(\theta)\geq D$ on $J$.
We write $J=[a,c]$. Since $g(a)=-\delta$ and $g'(a)>0$, it follows that $|J|\leq \sqrt{2(g(c)+\delta)/D}$ (since $g'(\theta)\geq D(\theta-a)$ on $J$). Moreover, since 
$g'(\theta)\geq D$ outside $J$ we have
we have $|I\setminus J|\leq (\delta-g(c))/D$. From the assumption $0<\delta\leq D/4$ it follows that $\sqrt{2(x+\delta)/D}+(\delta-x)/D$ has its largest value on 
$[0,\delta]$ when $x=\delta$. This gives the required upper bound on $|I|$. 

We turn to the estimate on $g'(\theta)$ on $I'$. Let $s\in I$ be any point for which $g(s)\geq -3\delta/4$. If $s\in I\setminus J$ then $g'(s)\geq D>\sqrt{\delta D/2}$. 
If not, then $[a,s]\subset J$. In this case we have $g'(s)-g'(\theta)\geq D(s-\theta)$ for $\theta\leq s$, i.e., $g'(\theta)\leq g'(s)-D(s-\theta)$.
Thus we have $\delta/4\leq g(s)-g(a)\leq (s-a)g'(s)-D(s-a)^2/2$, so
$$
g'(s)\geq \frac{\delta/4}{s-a}+\frac{D(s-a)}{2}.
$$
It is easy to check that the RHS is as small as possible when $s-a=\sqrt{\delta/(2D)}$. Hence it follows that $g'(s)\geq \sqrt{\delta D/2}$.

(2) The extreme case is when $g(a)=g(b)=\delta$, and that there is a $c\in I$ such that $g(c)=-\delta$ (and hence $g'(c)=0$). Similar estimates to the ones above show that
$c-a,b-c\leq 2\sqrt{\delta/D}$. 
\end{proof}

Combining Lemmas \ref{BE3} and \ref{BE9} gives us
\begin{lem}\label{BE10}
For all $E\in \mathcal{E}$ we have that the set $J_0$ consists of one or two intervals, and 
$$
|J_0|\leq \co \la^{-1/8}.
$$
\end{lem}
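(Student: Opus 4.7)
The plan is to combine the two preceding lemmas by taking $g(\theta)=\la f(\theta)-E$ (or its negative), so that the image information in Lemma \ref{BE3} matches the hypotheses of Lemma \ref{BE9}. The first statement of Lemma \ref{BE10}, that $J_0$ consists of one or two intervals, is literally part of Lemma \ref{BE3}, so only the quantitative bound on $|J_0|$ requires work.

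First I would choose the sign. Lemma \ref{BE3} asserts that on $J_0$ one of the two inequalities
$$
\min_{\theta\in J_0}\max\{|\la f'(\theta)|,\la f''(\theta)\}\geq \co\la,\qquad \min_{\theta\in J_0}\max\{|\la f'(\theta)|,-\la f''(\theta)\}\geq \co\la
$$
holds, and I pick $g:=\la f-E$ in the first case and $g:=E-\la f$ in the second. In either case $g$ is a $C^2$ function on $J_0$ satisfying $\max\{|g'|,g''\}\geq \co\la$, which is precisely the hypothesis of Lemma \ref{BE9} with $D=\co\la$. I set $\delta=2\la^{3/4}$, and note that for $\la$ sufficiently large the condition $0<\delta\leq D/4$ is met.

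Next I would treat the two geometric configurations from Lemma \ref{BE3} separately. If $J_0=J_0^1\cup J_0^2$ splits into two components, then on each $J_0^i$ the sign of $f'$ (hence of $g'$) is constant, and $g(J_0^i)$ is a full interval of length $4\la^{3/4}=2\delta$ (possibly after translating so that it equals $[-\delta,\delta]$ — or reversing orientation if $g'<0$, which does not affect the length estimate). Lemma \ref{BE9}(1) then gives $|J_0^i|\leq 2\sqrt{\delta/D}=\co\la^{-1/8}$. If instead $J_0$ is a single interval, then $g(J_0)\subset[-\delta,\delta]$, and Lemma \ref{BE9}(2) gives $|J_0|\leq 4\sqrt{\delta/D}=\co\la^{-1/8}$. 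In both cases the total length is at most $\co\la^{-1/8}$, as claimed.

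There is essentially no obstacle; the only cosmetic point is that Lemma \ref{BE9}(1) is phrased assuming $g'>0$, whereas on one of the two components in the split case we may have $g'<0$. This is handled by applying the lemma to $\theta\mapsto g(a+b-\theta)$, which produces the same length bound. With this the estimate $|J_0|\leq \co\la^{-1/8}$ is established.
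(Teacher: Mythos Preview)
Your proof is correct and follows exactly the same approach as the paper, which simply says to apply Lemma~\ref{BE9} with $\delta=2\la^{3/4}$ and $D=\co\la$. Your write-up merely unpacks the case analysis (two intervals via part~(1), one interval via part~(2)) and the sign/orientation issue that the paper leaves implicit.
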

\begin{proof}Use $\delta=2\la^{3/4}$ and $D=\co \la$ in Lemma \ref{BE9}.
\end{proof}

\bigskip

As we have already seen, one of the main quantities that  we have to keep track of are products of the form $$r_0r_1\cdots r_k$$ (recall, for example,
formula (\ref{matrix_r}), or the proof of Lemma \ref{BE2}). 
A little technicality which arises
is that we sometimes ``multiply with zero'', that is, one $r_j$ could be zero. But if this happens, we have the following ``cancellation'':
$$r_jr_{j+1}=r_j(\la f(\theta_j)-E-1/r_j)=r_j(\la f(\theta_j)-E) -1 \approx -1 \text{ if }  r_j \approx 0.$$ In a product $r_0\cdots r_k$, for which
$r_k$ is not too small (if $r_k=0$, the product is zero) we shall therefore pair two factors $r_j$ and $r_{j+1}$
if $r_j$ is very small, and use the ``cancellation''. This is summarized in the next lemma.

\begin{lem}\label{BE4}
Assume that $E\in \mathcal{E}$. For any $(\theta_0,r_0)\in\T\times \widehat{\mathbb{R}}$ we have the following:  if $|r_k|\geq \la^{-2}$, some $k\geq 0$, then
we can write the product $r_0r_1\cdots r_k$ as
$$r_0r_1\cdots r_k=\rho_0\cdots \rho_l ~(l\leq k),$$ where each $\rho_i$ satisfies
$$
|\rho_i|\geq \la^{-2};\quad \text {and either}
$$
$$
\rho_i=r_j \text{ and } |r_j|\geq \la^{-2}, \quad\text{or} \quad \rho_i=r_jr_{j+1} \text{ and } |r_j|< \la^{-2}.
$$
Moreover, we have the upper bounds
$$
|\rho_i|<\la^2, i\in [1,l].
$$

\end{lem}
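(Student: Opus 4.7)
The plan is to construct the factorization greedily by scanning indices from left to right with a running pointer $j$, starting at $j=0$. At each step, examine $r_j$: if $|r_j|\geq \la^{-2}$, declare the next factor $\rho$ to be $r_j$ and advance $j$ by $1$; if $|r_j|<\la^{-2}$, use the algebraic ``cancellation'' identity
$$
r_j r_{j+1}=r_j\bigl(\la f(\theta_j)-E\bigr)-1
$$
(obtained by multiplying $r_j$ into the definition of $r_{j+1}$ from \eqref{mapPhi}), declare the next factor to be $\rho=r_j r_{j+1}$, and advance $j$ by $2$. Terminate when $j>k$. The hypothesis $|r_k|\geq \la^{-2}$ guarantees the algorithm never attempts to ``pair past the end'': if the pointer ever lands on $j=k$, then $r_k$ qualifies as standalone. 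Conversely, by Lemma \ref{BE99}, the condition $|r_j|<\la^{-2}$ automatically makes $|r_{j+1}|>\la^2/2$, so the factor $r_{j+1}$ inside a pair is well-defined and nonzero.

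Next I would verify the lower bound $|\rho_i|\geq \la^{-2}$ for every $i$. For standalone $\rho_i=r_j$ this is immediate from the rule. For a pair $\rho_i=r_jr_{j+1}$ with $|r_j|<\la^{-2}$, the identity above combined with the estimate $|\la f(\theta_j)-E|\leq \co\,\la$ (valid for $E\in \mathcal{E}$, since $\la f(\theta_j)-E$ lies in a window of width $\la(\fM-\fm)+2\la^{3/4}$) gives $|r_j(\la f(\theta_j)-E)|\leq \co/\la$, so
$$
|\rho_i|\geq 1-\co/\la >1/2>\la^{-2}
$$
for $\la$ large.

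For the upper bound $|\rho_i|<\la^2$ when $i\in[1,l]$, the paired case is handled by the same identity, giving $|\rho_i|\leq 1+\co/\la$. For a standalone factor $\rho_i=r_j$ with $i\geq 1$, I look back one step at $\rho_{i-1}$, which must cover index $j-1$: either $\rho_{i-1}=r_{j-1}$ (so $|r_{j-1}|\geq \la^{-2}$ by the pairing rule) or $\rho_{i-1}=r_{j-2}r_{j-1}$ (so Lemma \ref{BE99} forces $|r_{j-1}|>\la^2/2$). In both cases $|1/r_{j-1}|$ is controlled, and writing $r_j=\la f(\theta_{j-1})-E-1/r_{j-1}$ produces the bound for $\la$ sufficiently large.

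The main obstacle is the upper bound in the standalone case, since one has to consult the preceding factor to control $|1/r_{j-1}|$. The purpose of excluding $i=0$ in the bound $|\rho_i|<\la^2$ is precisely that $\rho_0$ may involve $r_0$, which is arbitrary initial data with no preceding factor to tame it; this is exactly why the restriction $i\in[1,l]$ appears in the statement.
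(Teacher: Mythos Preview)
Your construction is exactly the one the paper uses: a greedy left-to-right scan, taking $r_j$ alone when $|r_j|\ge\la^{-2}$ and pairing $r_jr_{j+1}=r_j(\la f(\theta_j)-E)-1$ otherwise. Your termination argument and your lower-bound computation $|\rho_i|\ge 1-\co/\la>1/2$ for paired factors coincide with the paper's.

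There is, however, a gap in your upper-bound argument for standalone factors. In your first subcase, where $\rho_{i-1}=r_{j-1}$ is itself standalone, the pairing rule gives only $|r_{j-1}|\ge\la^{-2}$, hence $|1/r_{j-1}|\le\la^2$; combined with $|\la f(\theta_{j-1})-E|\le\co\la$ this yields
\[
|r_j|\le \co\la+\la^2,
\]
which is \emph{not} strictly below $\la^2$ for any $\la$. Concretely, if $|r_{j-1}|$ lies in the narrow window $[\la^{-2},\,(\la^2-\co\la)^{-1}]$ then $r_{j-1}$ qualifies as standalone yet $|r_j|$ can exceed $\la^2$. So ``produces the bound for $\la$ sufficiently large'' is not justified here. (Your second subcase, where $\rho_{i-1}$ is a pair and Lemma~\ref{BE99} forces $|r_{j-1}|>\la^2/2$, is fine.) The paper's own proof is terse at this point (``shown similarly'') and the stated constant is in fact marginally too sharp; every downstream use of the bound---for instance the estimate $|\rho_1\cdots\rho_l|<\la^{2l}$ in the proof of Proposition~\ref{A5}---only needs $|\rho_i|\le\co\la^2$, so replacing $\la^2$ by $2\la^2$ in the statement repairs both the lemma and your argument with no effect on the rest of the paper.
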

\begin{proof}
Let $j_0\in [0,k-1]$ be the smallest index such that $|r_{j_0}|<\lambda^{-2}$, if there is one (by assumption we have $|r_k|\geq \la^{-2}$). 
We let $\rho_i=r_i$ for all $i\in [0,j_0-1]$, and let $\rho_{j_0}=r_{j_0}r_{j_0+1}$.
Then
$|\rho_{j_0}|=|r_{j_0}r_{j_0+1}|=|r_{j_0}||\la f -E -1/r_{j_0}|>1/2\gg\la^{-2}$. 
Let $j_1\in [j_0+2,k-1]$ be the smallest integer such that $|r_{j_1}|<\lambda^{-2}$, and proceed as before.
By induction the first statemet of the lemma follows. The second statement is shown similarly.
\end{proof}

This pairing introduced in the previous lemma  will be important when we later shall differentiate expressions like $r_0(\theta)\cdots r_k(\theta)$ 
with respect to $\theta$ (since some of the $r_j$ may be extremely large). 
We then rewrite them as $\rho_0\cdots \rho_l$ as above, and obtain uniform bounds on the $\rho_i$. 
This "problem" is of course an artifact coming from our choice of working whith $\PR$ as fiber space;
had we viewed $\Phi$ as a map on $\T\times S^1$ instead, and used derivatives to estimate the Lyapunov exponent (the contraction in the fiber direction), we would
not have this trouble. However, other things seem to be more convenient working in our coordinates. 

The next lemma gives information about the contraction in the fiber direction under an assumption on the growth of the product of the iterates.
It is formulated in exactly the setting that we will use it in the comming sections.

\begin{lem}\label{BE20}
Assume that $E\in\mathcal{E}$ and that for some $\theta_0\in\T$ the following holds: for all $|r_0|\geq \la^{3/4}$ and all $k\geq 1$ such that $|r_k|\geq \la^{-2}$ we have
$|r_1\cdots r_k|\geq \la^{(2/3)k}$. If $|t_0|\geq \la^{3/4}$ and $|s_0|\geq \la^{3/4}$, and if $|s_{k+1}|<\la^{3/2}$ for some $k\geq 1$, then
$$
|t_{k+1}-s_{k+1}|\leq 2\la^{-((4/3)k+3/4)}.
$$
\end{lem}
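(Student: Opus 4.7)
The starting point is the identity
$$
t_{j+1}-s_{j+1}\;=\;\frac{1}{s_j}-\frac{1}{t_j}\;=\;\frac{t_j-s_j}{t_j\,s_j},
$$
which follows directly from the definition of $\Phi_E$. Iterating this, and using the bound $|t_1-s_1|=|1/s_0-1/t_0|\le 2\lambda^{-3/4}$ coming from $|t_0|,|s_0|\ge\lambda^{3/4}$, one obtains the telescoping estimate
$$
|t_{k+1}-s_{k+1}|\;=\;\frac{|t_1-s_1|}{\prod_{j=1}^{k}|t_j\,s_j|}\;\le\;\frac{2\lambda^{-3/4}}{|t_1\cdots t_k|\cdot|s_1\cdots s_k|}.
$$
The desired bound $2\lambda^{-(4k/3+3/4)}$ therefore reduces to establishing the two product inequalities $|t_1\cdots t_k|\ge\lambda^{2k/3}$ and $|s_1\cdots s_k|\ge\lambda^{2k/3}$.

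For the $s$--orbit this is easy: solving $s_{k+1}=\lambda f(\theta_k)-E-1/s_k$ for $1/s_k$ and using $|\lambda f(\theta_k)-E|\le\co\,\lambda$ (since $E\in\mathcal{E}$) together with $|s_{k+1}|<\lambda^{3/2}$ forces $|s_k|\ge\lambda^{-3/2}/2>\lambda^{-2}$, so the standing hypothesis of the lemma applied with $r_0=s_0$ delivers $|s_1\cdots s_k|\ge\lambda^{2k/3}$.

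The main obstacle is producing the matching lower bound for the $t$--orbit, which rests on showing $|t_k|\ge\lambda^{-2}$ so that the hypothesis also applies with $r_0=t_0$. I would prove this by strong induction on $k$, the inductive statement being the full conclusion of the lemma together with the auxiliary bound $|t_k|\ge\lambda^{-2}$. The base case $k=1$ is direct: the hypothesis gives $|s_1|\ge\lambda^{2/3}$ and then $|t_1-s_1|\le 2\lambda^{-3/4}\ll\lambda^{2/3}$ yields $|t_1|\ge\lambda^{2/3}/2$. For $k\ge 2$ one splits on $|s_k|$: when $|s_k|<\lambda^{3/2}$ the inductive hypothesis at index $k-1$ bounds $|t_k-s_k|$ by $2\lambda^{-(4(k-1)/3+3/4)}$, which is negligible compared to the lower bound $|s_k|\ge\lambda^{-3/2}/2$, so $|t_k|\ge|s_k|/2>\lambda^{-2}$.

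The delicate remaining case is $|s_k|\ge\lambda^{3/2}$: the recurrence then forces $|s_{k-1}|\le 2\lambda^{-3/2}$, so the inductive hypothesis at step $k-2$ (or the direct first step bound $|t_1-s_1|\le 2\lambda^{-3/4}$ when $k=2$) applies and makes $|t_{k-1}|$ comparably small, of order at most $\lambda^{-3/4}$. If $|t_{k-1}|<\lambda^{-2}$ then $|t_k|>\lambda^2/2\gg\lambda^{-2}$ follows automatically from $\Phi_E$; otherwise the hypothesis applied with $r_0=t_0$ at index $k-1$ would force $|t_1\cdots t_{k-1}|\ge\lambda^{2(k-1)/3}$, which combined with the established smallness of $|t_{k-1}|$ and the absolute bound $|t_1|\le\co\,\lambda$ (coming from $|t_0|\ge\lambda^{3/4}$) produces a contradiction for $\lambda$ large. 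Once $|t_k|\ge\lambda^{-2}$ is secured, the hypothesis yields $|t_1\cdots t_k|\ge\lambda^{2k/3}$, and combined with the $s$--product bound and the telescoping estimate, the lemma follows.
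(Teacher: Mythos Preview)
Your overall strategy is sound and different from the paper's: you run an induction on $k$ to secure $|t_k|\ge\lambda^{-2}$, whereas the paper avoids induction entirely by a continuity argument (if $|t_{k+1}|\ge 2\lambda^{3/2}$, then by continuity of $p_0\mapsto p_{k+1}$ on the connected set $\{|p_0|\ge\lambda^{3/4}\}\cup\{\infty\}\subset\widehat{\mathbb{R}}$ there is an intermediate $p_0$ with $|p_{k+1}|=2\lambda^{3/2}$ exactly; then $|p_k|>\lambda^{-2}$, the hypothesis applies to the $p$-orbit, and the contraction formula gives $|p_{k+1}-s_{k+1}|\le 2\lambda^{-3/4-(4/3)k}\ll\lambda^{3/2}$, a contradiction).

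However, your inductive step contains a genuine gap in the last sub-case. When $|s_k|\ge\lambda^{3/2}$ and $|t_{k-1}|\ge\lambda^{-2}$, you claim that $|t_1\cdots t_{k-1}|\ge\lambda^{2(k-1)/3}$ together with the smallness of $|t_{k-1}|$ and the bound $|t_1|\le\co\,\lambda$ yields a contradiction. This works for $k=2,3$, where $t_1$ and $t_{k-1}$ exhaust the factors. For $k\ge 4$ it fails: the intermediate factors $|t_2|,\ldots,|t_{k-2}|$ are not controlled from above (if some $t_j$ happens to be tiny, $t_{j+1}$ can be of order $\lambda^2$), so the product can be large without contradiction.

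The fix is simpler than what you attempted. For $k\ge 3$ you have already shown $|t_{k-1}|\le 4\lambda^{-3/2}$ (via the inductive hypothesis at step $k-2$ and $|s_{k-1}|\le 2\lambda^{-3/2}$). This alone gives
\[
|t_k|\;\ge\;\frac{1}{|t_{k-1}|}-|\lambda f(\theta_{k-1})-E|\;\ge\;\frac{\lambda^{3/2}}{4}-\co\,\lambda\;>\;\lambda^{-2},
\]
so the further split on whether $|t_{k-1}|<\lambda^{-2}$ is unnecessary, and no contradiction is needed. For $k=2$ your contradiction argument (using $|t_1|\ge\lambda^{2/3}$ from the hypothesis at index~$1$) is correct as stated. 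With this repair your induction goes through and gives the lemma; the paper's continuity argument is shorter but yours is equally valid.
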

\begin{proof}Since $|s_{k+1}|<\la^{3/2}$ we must have $|s_k|>\la^{-2}$ (recall Lemma \ref{BE99}). Therefore it follows from the assumptions that
$|s_1\cdots s_k|\geq \la^{(2/3)k}$. Furthermore,
$
|t_1-s_1|=|1/s_0-1/t_1|\leq 2\la^{-3/4}.
$
We now show that we must have $|t_{k+1}|<2\la^{3/2}$. To do this, we assume that there exists a $p_0$, $|p_0|\geq \la^{3/4}$, such that $|p_{k+1}|=2\la^{3/2}$.
Then we would have $|p_k|>\la^{-2}$, and using the assumption we would also have $|p_1\cdots p_k|\geq \la^{(2/3)k}$. Using formula (\ref{contrF}) we get
$$
\la^{3/2}<|p_{k+1}-s_{k+1}|=\frac{|p_1-s_1|}{|p_1\cdots p_k||s_1\cdots s_k|}\leq 2\la^{-3/4}\cdot \la^{-(4/3)k}
$$
which is a contradiction. We conclude that $|r_{k+1}|<2\la^{3/4}$. From the computations above (replacing $p$ by $r$) we get the desired estimate on
$|r_{k+1}-s_{k+1}|$. 
 
\end{proof}

\bigskip
We end this section by stating two elementary facts which follow from the Diophantine condition (\ref{DC}) on $\omega$, that is, the condition
$$
\inf_{p\in\mathbb{Z}}|q\omega-p|>\frac{\kappa}{|q|^\tau}\quad\text{for all } q\in\mathbb{Z}\setminus \{0\}. 
$$
\begin{lem}\label{BE7}
Assume that $J\subset \mathbb{T}$ is an interval of length $\leq\varepsilon$. Then
$$
J\cap\bigcup_{0<|m|\leq N}(J+m\omega)=\emptyset
$$
with $N=[(\kappa/\varepsilon)^{1/\tau}]$.
\end{lem}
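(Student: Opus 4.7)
The plan is a direct contradiction argument based on the Diophantine condition (\ref{DC}). Suppose that the conclusion fails, so that there exists an integer $m$ with $0<|m|\leq N$ and some $\theta\in J\cap (J+m\omega)$. Then both $\theta$ and $\theta-m\omega$ (interpreted modulo $1$) lie in the interval $J$, and since $J$ has length at most $\varepsilon$, the distance on $\T$ between these two points is at most $\varepsilon$. Equivalently,
$$
\inf_{p\in\Z}|m\omega-p|\leq \varepsilon.
$$

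Now I apply the Diophantine condition to $m$: since $0<|m|\leq N$, we have
$$
\inf_{p\in\Z}|m\omega-p| > \frac{\kappa}{|m|^\tau} \geq \frac{\kappa}{N^\tau}.
$$
By the choice $N=[(\kappa/\varepsilon)^{1/\tau}]$, we have $N\leq (\kappa/\varepsilon)^{1/\tau}$, hence $N^\tau\leq \kappa/\varepsilon$, and therefore $\kappa/N^\tau\geq \varepsilon$. Combining the two displayed inequalities gives $\varepsilon < \inf_p|m\omega-p| \leq \varepsilon$, a contradiction.

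There is no serious obstacle here; the only mild point to be careful about is making sure that ``$J\cap(J+m\omega)\neq\emptyset$ in $\T$'' really translates into $\|m\omega\|_{\T}\leq |J|$, which works precisely because $J$ is an interval (so the two representatives of the same torus point differ by an integer $p$, and the Euclidean distance between $\theta$ and $\theta-m\omega+p$ inside $J$ is at most $|J|$).
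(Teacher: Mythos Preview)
Your proof is correct and follows essentially the same approach as the paper: assume a nonempty intersection, deduce that $\inf_p|m\omega-p|\leq\varepsilon$, and combine with the Diophantine bound $\kappa/|m|^\tau$ to force $|m|>(\kappa/\varepsilon)^{1/\tau}\geq N$. The paper's version is slightly terser but the argument is identical.
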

\begin{proof}The proof is standard. If
$(J+m\omega)\cap J\neq \emptyset$, we must have $|m\omega+p|\leq \varepsilon$ for some integer $p$.
Thus we get from (\ref{DC}) 
$$
\frac{\kappa}{|m|^\tau}<|m\omega+p|\leq \varepsilon,
$$ 
which implies that $|m|>(\kappa/\varepsilon)^{1/\tau}$.
\end{proof}
By using the above lemma we easily get the following result, which will be used in the so-called resonant case in the next section (see, for example. Fig \ref{fig9}).

\begin{lem}\label{BE8}Assume that $J^{i}\subset \T$ ($i=1,2$) are two disjoint intervals, each of length $\leq \ve$.
Assume further that there is an integer $\nu\in [1,N]$ such that $J^1\cap(J^2-\nu\omega)\neq \emptyset$, where
$N=[(\kappa/2\varepsilon)^{1/\tau}]$. If we let $I=J^1\cup(J^2-\nu\omega)$, then
$$
(J^1\cup J^2)\cap\bigcup_{m\in [-N+\nu,N]\setminus\{0,\nu\}}(I+m\omega)=\emptyset; \quad \text{and}
$$
$$
I\cap J^2=\emptyset  \quad \text{and} \quad (I+\nu\omega)\cap J^1=\emptyset.
$$
\end{lem}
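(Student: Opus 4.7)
The plan is to exploit two observations: first, that $I$ is essentially a single interval of length at most $2\varepsilon$, so Lemma \ref{BE7} (with $\varepsilon$ replaced by $2\varepsilon$) applies directly to $I$ and rules out self-intersections for translates by $m\omega$ with $0<|m|\leq N$; second, that the two desired emptiness statements can be reduced by routine set-theoretic manipulations to self-intersections of $I$, $J^1$, or $J^2$ under translation.

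More concretely, I would start by noting that since $J^1$ and $J^2-\nu\omega$ are intervals of length $\leq \varepsilon$ whose intersection is nonempty, their union $I$ is an interval with $|I|\leq 2\varepsilon$. Lemma \ref{BE7}, applied to $I$ with the parameter $2\varepsilon$ in place of $\varepsilon$, then gives
$$
I\cap(I+m\omega)=\emptyset\quad\text{for all }m\text{ with }0<|m|\leq N,
$$
since $N=[(\kappa/2\varepsilon)^{1/\tau}]$.

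For the first assertion, I would use the inclusions $J^1\subset I$ and $J^2\subset I+\nu\omega$ to write, for any $m\in[-N+\nu,N]\setminus\{0,\nu\}$,
$$
(I+m\omega)\cap J^1\subset (I+m\omega)\cap I,\qquad (I+m\omega)\cap J^2\subset (I+m\omega)\cap(I+\nu\omega).
$$
The first intersection is empty because $0<|m|\leq N$ (note that $m\geq -N+\nu\geq -N+1$ forces $|m|\leq N$ once $m\neq 0$), while the second equals a translate of $I\cap(I+(m-\nu)\omega)$, which is empty because $m\neq \nu$ and $|m-\nu|\leq N$ (since $m-\nu\in[-N,N-\nu]$). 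Taking the union over $m$ yields the stated identity.

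For the second assertion I would simply expand:
$$
I\cap J^2=(J^1\cap J^2)\cup\bigl((J^2-\nu\omega)\cap J^2\bigr),
$$
where the first piece is empty by the disjointness assumption on $J^1, J^2$, and the second is empty by Lemma \ref{BE7} applied to $J^2$ (which has length $\leq\varepsilon$) and $\nu\in[1,N]\subset[1,[(\kappa/\varepsilon)^{1/\tau}]]$. The identity $(I+\nu\omega)\cap J^1=\emptyset$ is handled symmetrically by expanding $I+\nu\omega=(J^1+\nu\omega)\cup J^2$. The only point that requires any care at all is checking that the index $m$ in the first assertion satisfies both $|m|\leq N$ and $|m-\nu|\leq N$, but this is immediate from the chosen range $[-N+\nu,N]$; I do not expect any real obstacle.
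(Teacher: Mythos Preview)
Your proof is correct and follows essentially the same approach as the paper: both observe that $I$ is a single interval of length at most $2\varepsilon$, apply Lemma~\ref{BE7} with parameter $2\varepsilon$ to get $I\cap(I+m\omega)=\emptyset$ for $0<|m|\leq N$, and then reduce the first assertion to these self-intersections via the inclusions $J^1\subset I$ and $J^2\subset I+\nu\omega$. The only cosmetic difference is in the second assertion, where the paper deduces $I\cap J^2=\emptyset$ and $(I+\nu\omega)\cap J^1=\emptyset$ directly from $I\cap(I+\nu\omega)=\emptyset$, whereas you expand and invoke the disjointness of $J^1,J^2$ together with Lemma~\ref{BE7} applied to a single $J^i$; both routes are equally short.
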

\begin{proof}From the assumptions it follow that the interval $I$ is of length $\leq 2\ve$. Using Lemma \ref{BE7}, recalling the definition of $N$, we get
\begin{equation}\label{BE8_eq1}
I\cap\bigcup_{m\in[-N,N]\setminus\{0\}}(I+m\omega)=\emptyset.
\end{equation}
In particular we have $I\cap (I+\nu\omega)=\emptyset$, since $\nu\in [1,N]$. Therefore,  
since $J^1\subset I$ and $J^2\subset I+\nu\omega$, we see that the two last statements of the lemma hold.
Moreover, (\ref{BE8_eq1}) implies
$$
(I+\nu\omega)\cap\bigcup_{m\in[-N+\nu,N+\nu]\setminus\{\nu\}}(I+m\omega)=\emptyset.
$$
Combining this with (\ref{BE8_eq1}) gives the first statement.

\end{proof}

\end{subsection}
\end{section}

\begin{section}{The induction}\label{the_induction}

In this section we shall make an inductive construction which will give us detailed control 
of the iterates of the map $\Phi$, as defined in (\ref{mapPhi}), for a set of positive measure of initial points $(\theta,r)$. 
This will be the content of Proposition \ref{inductive_lemma} (the inductive step), combined with Lemmas \ref{BC3} and \ref{BC4} (the base case). 
With this information at hand, the statements in Theorems \ref{Maintheorem} and \ref{thm2} will follow easily, as we will see in Section \ref{the_proof}.

\bigskip
We shall from now on always assume that $\lambda$ is sufficiently
large, the size only depending on $f,\kappa$ and $\tau$ (recall that we have fixed the function $f$ and the Diophantine number $\omega$ ). 
In particular, the size of $\lambda$ is not
allowed to depend on on which step of the induction we are, nor on the parameter $E$. 
We stress that there are only finitely many conditions on the size of $\lambda$ (all being of finite size).

For any fixed value of the energy $E\in \mathcal{E}$, $\mathcal{E}$ being the set defined in (\ref{setE}), we shall investigate the dynamics of $\Phi=\Phi_E$. 
We assume from now on that we have have chosen an $E\in \mathcal{E}$.
Thus, all the parameters appearing in the map $\Phi$ are fixed.
 
The global goal is now to construct, by means of an inductive procedure, a set 
$\Theta_\infty=\Theta_\infty(f,\omega,E,\la)\subset \T$ of positive Lebesgue measure such that 
for all $\theta_0\in \Theta_\infty$ and $|r_0|\geq \la^{3/4}$ we have a good control on the iterates $r_k$ for all $k\geq 0$. This will be achieved 
by constructing sets $\Theta_0\supset \Theta_1 \supset \ldots \supset \Theta_\infty$ on which we have control on longer and longer iterates. 
With this information available we will be able to derive the results in Theorems \ref{Maintheorem} and \ref{thm2}. 

\begin{subsection}{A few comments on the approach}\label{a_few_comments}
Before going into the construction, we first comment one some of the main ingredients in the approach that we shall use. In this discussion we focus on 
the base case of the induction, which is the content of the next section, Section \ref{base_case}. However, the overall philosophy is the same also in the inductive step (Section \ref{induction}).

As we already have mentioned, the key quantity to control is products like $r_0\cdots r_k$. We will see that such a control also will give us information about the geometry of
certain objects which will play a crucial role in our analysis. 
First, from Lemma \ref{BE1} it follows that if $\theta_j\notin J_0$ for
$j\in [0,k-1]$ and $|r_0|\geq \la^{3/4}$, then we have $|r_j|\geq\la^{3/4}$ for all $j\in [0,k]$, and $|r_0\cdots r_k|\geq \la^{(3/4)(k+1)}$.
Since $\omega$ is irrational, when iterating any point $(\theta_0,r_0)$ we will always, sooner or later, enter over the set $J_0$, i.e., there is a smallest $k\geq 0$ such that
$\theta_k\in J_0$. After this, a priori,  we loose control on further iterates,
since we have no control on $r_{k+1}$ (if $r_k$ is large, we can have that $r_{k+1}=\la f(\theta_k)-E-1/r_k$ is small). 
To understand what happens further (to most points), after entering over $J_0$, we shall first use what we call probing.

\emph{Probing}: In this discussion we shall, for simplicity, assume that $J_0$ consists of a single (tiny, since $\lambda\gg 1$; see Lemma \ref{BE10}) interval.
(This means that $E$ is close to $\la \max f$ or $\la \min f$; see Fig. \ref{fig1}.) 
From the Diophantine condition on $\omega$ it follows that
if $\theta\in J_0$, then there is an $N\gg 1$ such that $\theta+j\omega\notin J_0$ for all $j$ such that $1\leq |j|\leq N$ (see Lemma \ref{BE7}). The "probe" is defined as follows. 
Let $M$ and $K$ be integers such that $1\ll K\ll M \ll N$, and let 
\begin{equation}\label{FC_eq1}
\vf(\theta)=\pi_2(\Phi^{M+K}(\theta-M\omega,\infty)).
\end{equation}
Letting $\Gamma$ be the segment
$$
\Gamma=\{(\theta,\infty): \theta\in I_0-M\omega\},
$$
we then have 
$$
\Phi^{M+K}(\Gamma)=\{(\theta+K\omega,\vf(\theta)): \theta\in I_0\},
$$
see Fig. \ref{fig2}.
What we are interested in, and what we will need to control, is $\left.\vf\right|_{J_0}$. More precisely, we shall let $J_1\subset J_0$ be the set
$$
J_1=\{\theta\in J_0: |\vf(\theta)|\leq 2\la^{-3/4}\}
$$
and we need estimates on this set. To describe the set $J_1$, and to make use of the probe to obtain control on other iterates, we shall use shadowing. 
\begin{figure}
\psfrag{o}{$\infty$}
\psfrag{f}{$\Phi^{M+K}(\Gamma)$}
\psfrag{i}{$J_0-M\omega$}
\psfrag{i2}{$J_0+K\omega$}
\psfrag{i3}{$J_0$}
\psfrag{t}{$\theta$}
\psfrag{g}{$\Gamma$}
\includegraphics[width=8cm]{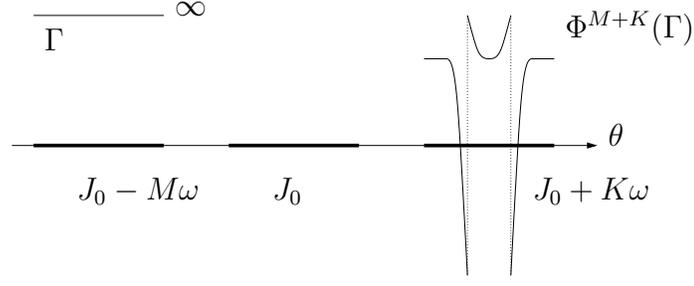}
\caption{The "probe".}\label{fig2}
\end{figure}

\emph{Shadowing}: By shadowing we mean the following: Assume that $\theta_0\in \T$ and that for some $r_0\in \PR$ (we shall often use $r_0=\infty$; cf. the definition of
$\vf$ above) 
we have control on the orbit $(\theta_j,r_j)$ up to some time $j=T$. Given an $s_0\in \PR$ we want to compare the orbit of $(\theta_0,s_0)$ with that of $(\theta_0,r_0)$.  
In fact, the difference $r_0-s_0$ can be computed explicitly (see formula (\ref{A_form1}) in Appendix A), and if we have a good control on the products $r_1\cdots r_j$,
we will have that $r_j-s_j$ are very close for most $s_0$ (see Proposition \ref{A5}). 

By applying shadowing we can use information about the probe $\vf$ to gain control on longer orbits (not only until they hit $J_0$) 
for many points $(\theta_0,r_0)$. We shall "follow the probe through $J_0$".  
This means the following. If $\theta_0\in J_0-M\omega$ and $|r_0|\geq \la^{3/4}$, then $|r_{M+K}|$ will be very close to $\vf(\theta_0)$; see Fig. \ref{fig2b}. 
This holds since $M\gg K$ and since
$\theta_j\notin J_0$ for $j\in [0,M-1]$. Therefore $r_{M+1}$ will be very very close to $\widetilde{\vf}(\theta_0)$, where $\widetilde{\vf}$ 
is defined by 
\begin{equation}\label{FC_eq2}
\widetilde{\vf}(\theta)=\pi_2(\Phi^{M+1}(\theta-M\omega,\infty)).
\end{equation}
With this notation we can write
$$
\Phi^{M+1}(\Gamma)=\{(\theta+\omega,\widetilde{\vf}(\theta)):\theta\in I_0\};
$$
see Fig. \ref{fig2c}. 
Since $K$ is much smaller than
$M$, the two points $(\theta_{M+1},r_{M+1})$ and $(\theta_{M+1},\widetilde{\vf}(\theta_0))$ cannot separate much under $K-1$ further iterations. Thus, 
if $\theta_{M+K}\notin J_1+K\omega$, 
then we know, from the definition of the set $J_1$, that $|r_{M+K}|>\la^{-3/4}$. And since $J_0+K\omega\subset \T\setminus J_0$ we can use Lemma \ref{BE1} to conclude that $|r_{M+K+1}|>\la^{3/4}$.

Let us see how we now can use this information to study longer orbits. The prize we have to pay is that $\theta_0$ has to start outside a certain small set. More precisely,
assume that $\theta_0\in \T\setminus \bigcup_{m=-{M+1}}^0(J_0+m\omega)$ and $|r_0|\geq \la^{3/4}$. Then there is a $k_0\geq 0$ such that $\theta_{k_0}\in J_0-M\omega$ and 
$\theta_j\notin J_0$ for $j\in [0,k_0-1]$. Since the latter holds, we know that $|r_{k_0}|\geq \la^{3/4}$. If $\theta_{k_0}\notin J_1-M\omega$ ($J_1$ will in fact be a extremely small
set, so this is satisfied for most $\theta_{0}$), then  we can use the above argument to the point $(\theta_{k_0},r_{k_0})$, that is, 
follow the probe, and conclude that $|r_{k_0+M+K+1}|\geq \la^{3/4}$. Now, since $0\ll K \ll M \ll N$, we have that $J_0+(K+1)\omega \subset \T\setminus \bigcup_{m=-{M+1}}^0(J_0+m\omega)$.
Thus we can repeat this argument to the point $(\theta_{k_0+K+M+1}, r_{k_0+M+K+1})$, and hence gain control on the iterates of $(\theta_0,r_0)$, until the first time that 
an iterate $\theta_j$ hit the set $J_1$.   
\begin{figure}
\psfrag{o}{$\infty$}
\psfrag{i}{$J_0-M\omega$}
\psfrag{i2}{$J_0+K\omega$}
\psfrag{i3}{$J_0$}
\psfrag{t}{$\theta$}
\psfrag{p1}{$(\theta_0,r_0)$}
\psfrag{p2}{$(\theta_{M+K},r_{M+K})$}
\includegraphics[width=9cm]{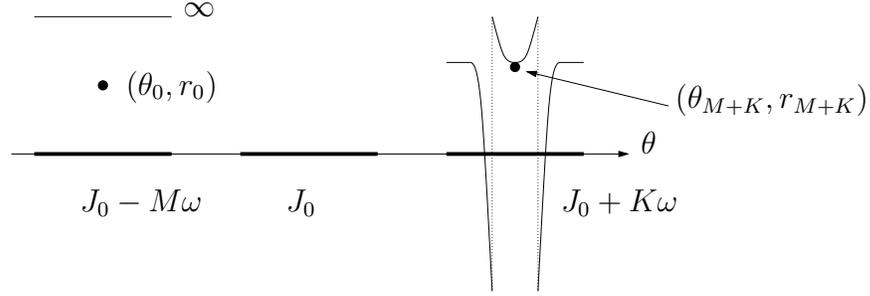}
\caption{"Shadowing the probe"}\label{fig2b}
\end{figure}

We will also need to use shadowing to control the geometry of the probe $\vf$ itself (which in turn is used to control the set $J_1$; without good control on $J_1$ we cannot
go to the next scale of the induction). Let $\vf$ and
$\widetilde{\vf}$ be defined as in (\ref{FC_eq1}) and (\ref{FC_eq2}). Since $J_0\cap (J_0-j\omega)=\emptyset$ for $j\in [1,M]$ it will in fact follow that
$\widetilde{\vf}(\theta)\approx \la f(\theta)-E$ for all $\theta\in J_0$; see Proposition \ref{A1}.  
Thus, from the assumptions on $f$ we can tell something about $\widetilde{\vf}$ over $J_0$.
To carry this information to statements about $\vf$ we let 
$$
\psi(\theta)=\pi_2(\Phi^{K-1}(\theta+\omega,\infty)).
$$
If we let $\Gamma'$ be the segment
$
\Gamma'=\{(\theta,\infty): \theta\in I_0+\omega\},
$
we have
$$
\Phi^{K-1}(\Gamma')=\{(\theta+K\omega,\psi(\theta)) : \theta\in I_0\}.
$$
By using the fact that $J_0\cap (J_0+j\omega)=\emptyset$ for $j\in [1,K]$, we will see, in the next section, that  
$\psi(\theta)\approx\la f(\theta+(K-1)\omega)-E$ for $\theta\in J_0$. Now we can
use shadowing to compare $\psi$ and $\vf$. We will get $$\vf(\theta)=\psi(\theta)-h(\theta)/(\widetilde{\vf}(\theta)-w(\theta)),$$ where $h(\theta)>0$ and $w(\theta)$ are
very small on $J_0$ (see Proposition \ref{A5}). With this information we can derive results about the set $J_1$. Estimates for this is the content of Appendix B.   

A crucial fact is that either $J_1$ consists of two intervals, and $|\vf'|$ is large on both of them, or $J_1$ consists of a single interval, and $\vf$ looks like a "parabola".
The same picture will appear on each scale of the induction, and this is why it is sufficient to assume that the function $f$ is $C^2$.

\begin{figure}
\psfrag{inf1}{$\infty$}
\psfrag{inf2}{$\infty$}
\psfrag{f}{$\Phi^{M+1}(\Gamma)$}
\psfrag{p}{$\Phi^{K-1}(\Gamma')$}
\psfrag{I1}{$J_0-M\omega$}
\psfrag{I2}{$J_0+\omega$}
\psfrag{I3}{$J_0+K\omega$}
\psfrag{t}{$\theta$}
\psfrag{g1}{$\Gamma$}
\psfrag{g2}{$\Gamma'$}
\includegraphics[width=9cm]{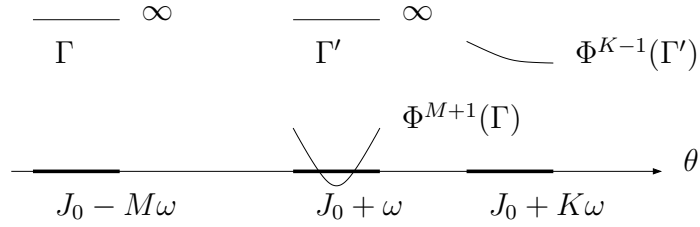}
\caption{Shadowing.}\label{fig2c}
\end{figure}

\emph{Resonance}: Since we want to analyze the map $\Phi$ for any value of $E$ we cannot avoid so-called resonances. On the first scale this means that
the set $J_0$ consists of two intervals, $J_0^1,J_0^2$, and $(J_0^1+\nu\omega)\cap J_0^2\neq \emptyset$ for some small integer $\nu$. If this happens
we shall let $I_0$ be the single interval $J_0^1\cup (J_0^2-\nu\omega)$ and send probes through it (here will have to use shadowing twice).
If there is no resonance (on the first scale), we can treat each $J_0^1$ and $J_0^2$ independently, in the same way as in the argument above. This
is the situation we always had in \cite{Bj1}, on each scale. The price we had to pay to have this was that we could not analyse the dynamics for all values
of the parameter $E$. That we in this paper can analyze the dynamics of $\Phi_E$ for any value of $E$ is the main achievement.

\end{subsection}

\begin{subsection}{The base case}\label{base_case}
We shall fist define a set $I_0\subset \mathbb{T}$ and integers $M_0\gg K_0 \gg 1$. Their definitions will depend on the set
$J_0$, which was defined in the previous section, and on the frequency $\omega$. There will be two cases:
(1) $J_0$ consists of one single interval or two so-called
``non-resonant'' intervals, and (2) $J_0$ consists of two "resonant" intervals.

From Lemma \ref{BE10} we know that the set $J_0$ consists of one or two intervals, each of length $<\co \lambda^{-1/8}$. 
Lemma \ref{BE7} tells us that for any interval $J\subset \T$ of length $<\co\la^{-1/8}$, we always have that
\begin{equation}\label{bc_nonres}
J\cap(J+k\omega)=\emptyset \text{ for all } 0<|k|\leq \co\la^{1/(8\tau)}. 
\end{equation}
However, in the case when $J_0$ consists of two intervals it is certainly possible to have $J_0\cap (J_0+k\omega)\neq \emptyset$ 
for small $k$, since we do not know the location of the two intervals in $J_0$. This "resonance phenomena", which can happen on any step of the induction, 
is a major problem for the analysis of 
the map $\Phi$, as we shall see, and must be handled with care. 
 
We now check for resonance: 

(1) (non-resonant case) If 
\begin{equation}\label{bc_check_res}
J_0\cap\bigcup_{0<|m|\leq [\lambda^{1/(25\tau)}]}(J_0+m\omega)=\emptyset
\end{equation}
we let
$$
I_0=J_0 
$$
and 
$$
K_0=[\lambda^{1/(60\tau)}],~ M_0=K_0^2
$$
(note that, in particular, this condition is satisfied when $J_0$ consists of one single interval, as we see from (\ref{bc_nonres})).
We also let 
$$\nu_0=0,$$
which will indicate that there is no resonance.

(2) (resonant case) If (\ref{bc_check_res}) does not hold, $J_0$ must consist of two intervals, and there must be an integer $$\nu_0\in [1, [\lambda^{1/(25\tau)}]]$$ 
such that $(J_0^1+\nu_0\omega)\cap J_0^2\neq \emptyset$, 
where $J_0^1$ is one of the intervals in $J_0$, and $J_0^2$ the other. In this case we let $I_0$ be the (single) interval 
$$
I_0=J_0^1\cup (J_0^2-\nu_0\omega) 
$$
(see Fig. \ref{fig9}) and define
$$
K_0=[\lambda^{1/(20\tau)}], ~M_0=K_0^2.
$$ 
We note that 
\begin{equation}\label{bc_nu0}
1\leq\nu_0\ll K_0.
\end{equation}
\begin{figure}
\psfrag{t}{$\theta$}
\psfrag{j1}{$J_0^1$}
\psfrag{j2}{$J_0^2$}
\psfrag{j3}{$J_0^2-\nu_0\omega$}
\psfrag{j4}{$I_0$}
\includegraphics[width=9cm]{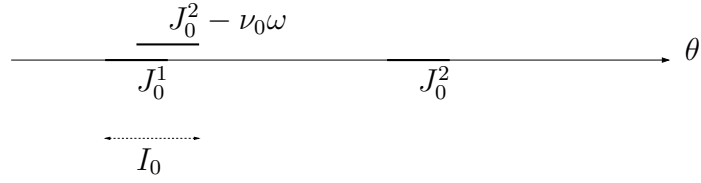}
\caption{Definition of $I_0$ in the resonant case.}\label{fig9}
\end{figure}

By the above definitions, we note that in each of the two cases, the set $I_0$ consists of one or two intervals. Moreover, since $\lambda\gg1$, the
integer $K_0$ is very large (we can make it as large as we want by taking large $\lambda$).

\bigskip
We collect some facts that follow from the above definitions of $I_0$ and $K_0,M_0$:
\begin{lem}\label{BC1} In each case we have
\begin{equation}\label{BC1_eq1}
I_0\cap \bigcup_{0<|m|\leq 2M_0}(I_0+m\omega)=\emptyset.
\end{equation}
Moreover, in the resonant case we also have
$$
J_0\cap \bigcup_{m\in [-2M_0,2M_0]\setminus\{0,\nu_0\}}(I_0+m\omega)=\emptyset, 
$$
$$
I_0\cap J_0^2=\emptyset \text{ and } (I_0+\nu_0\omega)\cap J_0^1=\emptyset,
$$ 
where $1\leq \nu_0\ll K_0$ is as in the definition of case (2).
\end{lem}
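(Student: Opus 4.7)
The plan is to derive the lemma directly from Lemmas \ref{BE7} and \ref{BE8}, combined with an elementary comparison of the various powers of $\lambda$ appearing in the definitions of $I_0$, $K_0$, $M_0$ and $\nu_0$. Throughout the argument I will use Lemma \ref{BE10}, which tells us that every interval comprising $J_0$ has length at most $\co\la^{-1/8}$, and that $J_0$ itself has at most two components.

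First I handle the non-resonant case, where $I_0=J_0$. If $J_0$ is a single interval, Lemma \ref{BE7} applied with $\ve=\co\la^{-1/8}$ gives $I_0\cap(I_0+m\omega)=\emptyset$ for all $0<|m|\le N$, where $N=[(\kappa/\ve)^{1/\tau}]$ is of order $\la^{1/(8\tau)}$. If instead $J_0$ consists of two intervals, the defining condition (\ref{bc_check_res}) of the non-resonant case provides the same disjointness for $0<|m|\le [\la^{1/(25\tau)}]$. In either subcase, since $2M_0=2[\la^{1/(60\tau)}]^2\le 2\la^{1/(30\tau)}$, and both $\la^{1/(30\tau)}\ll\la^{1/(25\tau)}$ and $\la^{1/(30\tau)}\ll\la^{1/(8\tau)}$ hold for $\lambda$ large, the claimed (\ref{BC1_eq1}) follows.

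In the resonant case, $I_0=J_0^1\cup(J_0^2-\nu_0\omega)$, and by the choice of $\nu_0$ these two pieces overlap, so $I_0$ is a single interval of length at most $2\co\la^{-1/8}$. Applying Lemma \ref{BE7} to $I_0$ with $\ve=2\co\la^{-1/8}$ yields $I_0\cap(I_0+m\omega)=\emptyset$ for all $0<|m|\le N'$ with $N'$ still of order $\la^{1/(8\tau)}$. Since in this case $2M_0=2[\la^{1/(20\tau)}]^2\le 2\la^{1/(10\tau)}\ll N'$, we again obtain (\ref{BC1_eq1}). For the two remaining statements, Lemma \ref{BE8} applied with $J^i=J_0^i$, $\ve=\co\la^{-1/8}$ and $\nu=\nu_0$ gives both $(J_0^1\cup J_0^2)\cap(I_0+m\omega)=\emptyset$ for all $m\in[-N'+\nu_0,N']\setminus\{0,\nu_0\}$, and $I_0\cap J_0^2=\emptyset$ together with $(I_0+\nu_0\omega)\cap J_0^1=\emptyset$. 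The window $[-2M_0,2M_0]\setminus\{0,\nu_0\}$ is contained in $[-N'+\nu_0,N']\setminus\{0,\nu_0\}$, because $\nu_0\le\la^{1/(25\tau)}$ and $2M_0\le 2\la^{1/(10\tau)}$ are both dwarfed by $N'\approx\la^{1/(8\tau)}$, so the second claim of the lemma follows.

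There is no serious obstacle here: this is purely a bookkeeping statement about avoiding small shifts of $I_0$ and $J_0$. The only real content is that the integers $K_0$ and $M_0=K_0^2$ have been chosen small enough, relative to $|J_0|\approx \la^{-1/8}$, that the Diophantine lower bound coming from Lemma \ref{BE7} (and its consequence Lemma \ref{BE8}) still covers the whole window $|m|\le 2M_0$. The exponent comparisons $1/(30\tau)<1/(25\tau)<1/(8\tau)$ in the non-resonant case, and $1/(25\tau),\,1/(10\tau)<1/(8\tau)$ in the resonant case, confirm this for every $\lambda$ exceeding a threshold depending only on $f,\kappa,\tau$, which is exactly the standing assumption on $\lambda$.
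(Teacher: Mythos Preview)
Your proof is correct and follows essentially the same approach as the paper's: Lemma~\ref{BE7} (or the non-resonance condition~(\ref{bc_check_res})) for the first claim, and Lemma~\ref{BE8} for the remaining resonant-case statements, together with the exponent comparisons that ensure $2M_0$ (and $2M_0+\nu_0$) are dominated by the relevant return-time bounds of order $\la^{1/(8\tau)}$. The only cosmetic difference is that you split the non-resonant case into the one-interval and two-interval subcases, whereas the paper observes that~(\ref{bc_check_res}) automatically covers the single-interval situation via~(\ref{bc_nonres}) and treats both at once.
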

\begin{proof}In the non-resonant case we have defined $I_0=J_0$ and $M_0\approx \la^{1/(30\tau)}\ll \la^{1/(25\tau)}$. Thus, in this case 
(\ref{BC1_eq1}) follows from (\ref{bc_check_res}). In the resonant case $I_0$ is a single interval of length $<\co \la^{1/8}$, and
$M_0\approx \la^{1/(10\tau)}\ll \la^{1/(8\tau)}$.
Therefore (\ref{BC1_eq1}) follows from Lemma \ref{BE7}.  

To show the second statement we shall apply Lemma \ref{BE8}. The integer $N$ appearing in the lemma is of the size $\co\la^{1/(8\tau)}$.
Since $1\leq\nu_0\leq [\la^{1/(25\tau)}]$ and $M_0\approx \la^{1/(10\tau)}$, the statement follow from Lemma \ref{BE8}.

\end{proof}

Now we shall verify the base case for the inductive construction (contained in Lemma \ref{BC3} and \ref{BC4}) on which our proof
of Theorem \ref{Maintheorem} is based. The inductive step is the content of Proposition \ref{inductive_lemma} in the next subsection. 
As is often the case, the analysis can be split into two separate
parts: an arithmetic part, and a geometric one. The first lemma, together with Lemma \ref{BC1}, will handle the arithmetics (denoted $\mathcal{A}_0$ in Proposition
\ref{inductive_lemma}), and
the second one will take care of the geometry on the first scale (denoted $\mathcal{B}_0$).

We recall the notation (\ref{notaIter}), that is, given $\theta_0,r_0$ we define
$$
(\theta_n,r_n)=\Phi_E^n(\theta_0,r_0).
$$
We also recall that $E\in\mathcal{E}$ is fixed. To show that a certain configuration (more precisely, that condition $\mathcal{B}_n(ii)$ in Proposition \ref{inductive_lemma} holds for but finitely
many $n\geq0$) happens  when $E$ at the edge of an open gap, we will have to slightly move $E$ when such a configuration appears. For this reason we shall use the notation 
\begin{equation}\label{notaIterE}
(\theta_n,r_n(E'))=\Phi_{E'}^n(\theta_0,r_0).
\end{equation} 
to indicate that we are iterating with $\Phi_{E'}$.

\bigskip
First we note that the definition of the set $J_0$, together with Lemma \ref{BE1}, immediately gives the following result (recall the definition of $\N$ in (\ref{function_N})): 
\begin{lem}\label{BC2}
If $\theta_0\in\T$ and $|r_0|\geq \la^{3/4}$, let $N=\N(\theta_0;J_0)$. Then
$$
\begin{aligned}
|r_k|\geq \la^{3/4} \text{ for all } k\in [0,N].
\end{aligned}
$$
\end{lem}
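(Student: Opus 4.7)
The plan is a straightforward induction on $k$ using the second bullet of Lemma \ref{BE1}. First I would dispose of the trivial case $N=0$: here $\theta_0\in J_0$ and the only assertion is $|r_0|\geq \lambda^{3/4}$, which is part of the hypothesis.

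Assuming $N\geq 1$, by the definition of $\mathcal{N}(\theta_0;J_0)$ (see \eqref{function_N}) we have $\theta_k\notin J_0$, hence $\theta_k\in\overline{\T\setminus J_0}$, for every $k\in [0,N-1]$. I then induct on $k$: the base case $k=0$ is the hypothesis $|r_0|\geq\lambda^{3/4}$; for the inductive step, if $|r_k|\geq\lambda^{3/4}$ and $\theta_k\in\overline{\T\setminus J_0}$ with $k\leq N-1$, then the second bullet of Lemma \ref{BE1} yields $|r_{k+1}|>(3/2)\lambda^{3/4}\geq\lambda^{3/4}$. Iterating this $N$ times gives $|r_k|\geq\lambda^{3/4}$ for every $k\in [0,N]$, as required.

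There is no real obstacle: the lemma is essentially a packaging statement that says ``as long as the base orbit has not yet visited $J_0$, the fibre coordinate stays large''. The only (very mild) thing to notice is that Lemma \ref{BE1} is stated with $\overline{\T\setminus J_0}$ rather than $\T\setminus J_0$, but this is irrelevant since we only apply it at indices $k\leq N-1$, at which $\theta_k\notin J_0$ by the minimality of $N$, so in particular $\theta_k$ lies in the closure of the complement.
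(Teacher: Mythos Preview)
Your proof is correct and follows essentially the same route as the paper: both use that $\theta_k\notin J_0$ for $0\leq k<N$ and then iterate Lemma~\ref{BE1} to propagate the bound $|r_k|\geq\lambda^{3/4}$. Your version is just slightly more explicit about the induction and the trivial $N=0$ case.
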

\begin{proof}Since $\theta_k\notin J_0$ for $0\leq k<N$, we get from Lemma \ref{BE1} that $|r_k|\geq \la^{3/4}$ for all $k=0,1,\ldots N$.
\end{proof}
The same statement holds if we iterate with $\Phi_{E'}$ instead of $\Phi_E$, provided that $E'\approx E$.
\begin{lem}\label{BC2.1}
If $\theta_0\in\T$ and $|r_0|\geq \la^{3/4}$, let $N=\N(\theta_0;J_0)$. Then 
$$
|r_k(E')|\geq \la^{3/4} \text{ for all } k\in [0,N] \text{ and all } |E'-E|<1.
$$
\end{lem}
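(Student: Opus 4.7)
The plan is to repeat the proof of Lemma \ref{BC2} almost verbatim, noting that the perturbation $|E'-E|<1$ is negligible compared to the relevant scale $\lambda^{3/4}$, so the argument of Lemma \ref{BE1} still goes through, just with slightly weaker (but still more than sufficient) constants.

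More precisely, the only input from the previous lemma is the following elementary fact: for $\theta_k\notin J_0$ (which, recall, is defined with respect to the fixed parameter $E$, not $E'$) we have by definition $|\lambda f(\theta_k)-E|\geq 2\lambda^{3/4}$, and so by the triangle inequality
$$
|\lambda f(\theta_k)-E'|\;\geq\;2\lambda^{3/4}-|E-E'|\;>\;2\lambda^{3/4}-1\;\geq\;\tfrac{3}{2}\lambda^{3/4}
$$
for $\lambda$ sufficiently large. I would then mimic the proof of Lemma \ref{BE1}: if $|r_k(E')|\geq \lambda^{3/4}$ and $\theta_k\notin J_0$, then
$$
|r_{k+1}(E')|\;=\;\bigl|\lambda f(\theta_k)-E'-1/r_k(E')\bigr|\;\geq\;\tfrac{3}{2}\lambda^{3/4}-\lambda^{-3/4}\;\geq\;\lambda^{3/4}.
$$

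Combining these, I would induct on $k$: the base $k=0$ is the hypothesis $|r_0|\geq \lambda^{3/4}$ (the orbit $r_k(E')$ starts from the same initial data $(\theta_0,r_0)$ as $r_k(E)$, by convention \eqref{notaIterE}). For $0\leq k\leq N-1$, the definition $N=\N(\theta_0;J_0)$ guarantees $\theta_k\notin J_0$, so the inductive step above applies and gives $|r_{k+1}(E')|\geq \lambda^{3/4}$. This yields the estimate for all $k\in[0,N]$.

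There is no real obstacle here: the only thing to check is that the crude bound $2\lambda^{3/4}-1\geq (3/2)\lambda^{3/4}$, and more generally that all implicit constants, are absorbed into the standing assumption that $\lambda$ is large (depending only on $f,\kappa,\tau$). Since the perturbation $|E-E'|<1$ is of order $O(1)$ while the relevant scales are powers of $\lambda$, this is automatic.
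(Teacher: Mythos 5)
Your proof is correct and is essentially the same as the paper's: both use the triangle inequality to show $|\lambda f(\theta)-E'|\geq 2\lambda^{3/4}-|E'-E|$ for $\theta\notin J_0$ and then repeat the argument of Lemma \ref{BE1}. The paper's version is merely more terse, pointing to the definition of $J_0$ and the proof of Lemma \ref{BE1} without spelling out the induction you wrote.
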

\begin{proof}
The statement follows from the defintion of the set $J_0=J_0(E)$ in (\ref{setI}) and (the proof of) Lemma \ref{BE1}. Indeed, we have
$|\la f(\theta)-E'|\geq |\la f(\theta)-E|-|E'-E|>2\la^{3/4}-|E'-E|$ for all $\theta\notin J_0$.
\end{proof}

To simplify the statement of the inductive assumption (see Proposition \ref{inductive_lemma}) we only want to use the set $I_0$, and not have to mention $J_0$ (and
similarly on higher steps of the induction). We therefore introduce the set
$$
\Theta_0=\T\setminus \bigcup_{m=0}^{\nu_0}(I_0+m\omega),
$$
where $\nu_0$ is defined as above. Together with Lemma \ref{BC1} the arithmetic part of the base case for the induction follows from:  

\begin{lem}[Base case, Part 1]\label{BC3}
If $\theta_0\in\Theta_0$ and $|r_0|\geq \la^{3/4}$, let $N=\N(\theta_0;I_0)$. Then
$$
\begin{aligned}
|r_k(E')|\geq \la^{3/4}  \text{ for all } k\in [0,N] \text{ and } |E'-E|<1.
\end{aligned}
$$
\end{lem}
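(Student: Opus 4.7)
The plan is to reduce to Lemma \ref{BC2.1}: it suffices to verify that $\theta_k \notin J_0$ for every $k \in [0, N-1]$, because then iterating Lemma \ref{BE1} (whose proof goes through verbatim with $E$ replaced by any $E'$ satisfying $|E'-E|<1$, since on $\overline{\T \setminus J_0}$ one still has $|\lambda f(\theta) - E'| \geq 2\lambda^{3/4} - 1$) will force $|r_k(E')| \geq \lambda^{3/4}$ for all $k \in [0, N]$. So the whole lemma is really about showing that the forward orbit stays outside $J_0$, not just outside $I_0$, up to time $N-1$.

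In the non-resonant case $\nu_0 = 0$ and $I_0 = J_0$, so $\theta_k \notin I_0$ immediately gives $\theta_k \notin J_0$, and there is nothing to do. The actual content lies in the resonant case, where $I_0 = J_0^1 \cup (J_0^2 - \nu_0 \omega)$. Here the hypothesis $\theta_k \notin I_0$ for $k \in [0, N-1]$ decouples into (a) $\theta_k \notin J_0^1$, and (b) $\theta_{k+\nu_0} \notin J_0^2$. Reindexing, (b) reads $\theta_j \notin J_0^2$ for $j \in [\nu_0, N + \nu_0 - 1]$, which together with (a) disposes of all indices $k \in [\nu_0, N-1]$.

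What remains is to rule out $\theta_k \in J_0^2$ for the early indices $k \in [0, \nu_0 - 1]$, and this is precisely where the extra translates $I_0 + \omega, \ldots, I_0 + \nu_0 \omega$ in the definition of $\Theta_0$ come in. Assume for contradiction that $\theta_k \in J_0^2$ for some $k \in [0, \nu_0 - 1]$. Since $J_0^2 \subset I_0 + \nu_0 \omega$ by the very definition of $I_0$, this yields $\theta_0 \in I_0 + (\nu_0 - k)\omega$ with $\nu_0 - k \in [1, \nu_0]$, contradicting $\theta_0 \in \Theta_0$. Thus $\theta_k \notin J_0$ for every $k \in [0, N-1]$, completing the reduction.

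I do not expect any real obstacle here: this is a bookkeeping lemma whose point is to confirm that $\Theta_0$ has been engineered so that the $\nu_0$-resonance used to glue $J_0^1$ and $J_0^2 - \nu_0 \omega$ into the single interval $I_0$ does not spoil the growth of $|r_k|$ between times $0$ and $N$. The mild subtlety is that we track iterates relative to $I_0$ while applying Lemma \ref{BE1} over $J_0$, and the translates $I_0 + m\omega$ for $1 \leq m \leq \nu_0$ are the exact amount of information needed to bridge the two sets.
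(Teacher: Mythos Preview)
Your proof is correct and follows essentially the same route as the paper: reduce to showing $\theta_k\notin J_0$ for $k\in[0,N-1]$ and then invoke Lemma~\ref{BC2.1}, with the non-resonant case immediate and the resonant case handled by splitting on whether $k\geq\nu_0$ (your reindexed condition~(b)) or $k<\nu_0$ (your $\Theta_0$-contradiction). The only cosmetic difference is that you first package $\theta_k\notin I_0$ into the pair of conditions (a) and (b) and then deduce $\theta_k\notin J_0^2$ for $k\geq\nu_0$ from~(b), whereas the paper argues directly that $\theta_k\in J_0^2$ with $k\geq\nu_0$ would force $\theta_{k-\nu_0}\in I_0$ and contradict the definition of $N$; these are the same observation read in opposite directions.
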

\begin{proof}We will prove that $\theta_k\notin J_0$ for all $k\in [0,N-1]$. From this the statement follows immediately from Lemma \ref{BC2.1}. 
If $I_0=J_0$ then, by definition, we have  $\theta_k\notin J_0$ for all $k\in [0,N-1]$.
In the case when $I_0=J_0^1\cup(J_0^2-\nu_0\omega)$ (the resonant case) it is clear that $\theta_k\notin J_0^1$ for all $k\in [0,N-1]$. If there 
was a $k\in[0,N-1]$ such that $\theta_k\in J_0^2\subset I_0+\nu_0\omega$, we would either have $\theta_{k-\nu_0}\in I_0$, if $k\geq \nu_0$, or, if $k<\nu_0$,
$\theta_0\in \bigcup_{k=1}^{\nu_0}(I_0+k\omega)\subset \T\setminus \Theta_0$. Both situations contradict our assumptions. 
\end{proof}

We turn to the geometric part of the base case. In fact, it will be quite similar to the geometric part of the inductive step.
The analysis will be divided into three different cases, depending on the set $J_0$ (and thus on the choice of $E$).

\begin{lem}[Base case, Part 2]\label{BC4}
Let 
$$
\vf(\theta)=\pi_2(\Phi^{M_0+K_0}(\theta-M_0\omega,\infty)),
$$
and let
$$
J_1=\{\theta\in I_0: |\vf(\theta)|\leq 2\la^{-3/4}\}.
$$
Then the set $J_1$ is either empty, consists of one single point, or consists of one or two intervals. Moreover,
\begin{equation}\label{BC_dist}
\text{dist}(J_1,\partial I_0)>\la^{-K_0/10},
\end{equation}
and we have the bounds 
\begin{equation}\label{BC_UB}
\|\vp\|_{C^2(J_1)}<  \la^{6K_0}; \text{ and } 
\end{equation}
$$
\min_{\theta\in J_1}\max\{|\vp'(\theta)|,\vp''(\theta)\}>\la^{K_0/2}  \text{ or } 
\min_{\theta\in J_1}\max\{|\vp'(\theta)|,-\vp''(\theta)\}>\la^{K_0/2}.
$$ 
Furthermore, 
\begin{itemize}
\item[(i)] if $J_1$ consists of two intervals, $J_1^1$ and $J_1^2$, then we have $|\vp'(\theta)|>0$ on $J_1$, $\vp'(\theta)$ having opposite
signs on $J_1^1$ and $J_1^2$, and $$\vp(J_1^i)=[-2\la^{-3/4},2\la^{-3/4}]~ (i=1,2);$$
\item[(ii)] if $J_1=[a,b]$, then $\vp(a)=\vp(b)=\pm2\la^{-3/4}$; 
\end{itemize}

Finally, there is an $e_0\geq \la^{-4M_0}$ such that for all $E'\in [E-e_0,E+e_0]$ we have
$$
\{\theta\in I_0: |\vp(\theta,E')|\leq 1.99\la^{-3/4}\}\subset J_{1}
$$
where $\vf(\theta,E')=\pi_2\left(\Phi_{E'}^{M_0+K_0}(\theta-M_0\omega,\infty)\right)$.
\end{lem}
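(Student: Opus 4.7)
The plan is to decompose the $M_0+K_0$ iterations defining $\vf$ into a long \emph{approach} block of length $M_0+1$ followed by a short \emph{post-critical} block of length $K_0-1$, then combine shadowing (Appendix A) with the geometric lemmas of Appendix B. Concretely, introduce the two auxiliary probes
$$
\widetilde{\vf}(\theta) = \pi_2\bigl(\Phi^{M_0+1}(\theta-M_0\omega,\infty)\bigr), \qquad \psi(\theta) = \pi_2\bigl(\Phi^{K_0-1}(\theta+\omega,\infty)\bigr),
$$
so that $\vf(\theta)$ is the $(K_0-1)$-th iterate of $(\theta+\omega,\widetilde{\vf}(\theta))$ while $\psi(\theta)$ is the same iterate starting from $(\theta+\omega,\infty)$. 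A direct application of the shadowing identity of Appendix A then yields the decisive formula
$$
\vf(\theta) = \psi(\theta) - \frac{h(\theta)}{\widetilde{\vf}(\theta) - w(\theta)},
$$
where $h(\theta)>0$ and $w(\theta)$ are small errors built out of products $r_1 \cdots r_{K_0-1}$ and bounded via Lemma \ref{BE4}.

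The next step is to estimate $\widetilde{\vf}$ and $\psi$ on $I_0$. By Lemma \ref{BC1}, the points $\theta-j\omega$ for $j=1,\ldots,M_0$ avoid $J_0$ whenever $\theta\in I_0$; in the resonant case the same lemma also ensures that the only excursion of the forward orbit of $\theta+\omega$ to $J_0$ within the next $K_0-1$ steps occurs at time $\nu_0-1$, and only for $\theta$ in the $J_0^1$ piece of $I_0$. Iterating $\Phi$ from $\infty$ through any such $J_0$-free window and applying Lemma \ref{BE1} gives $|r_j|\geq \la^{3/4}$ there, and the recursion $r_{j+1}=\la f(\theta_j)-E-1/r_j$ telescopes to
$$
\widetilde{\vf}(\theta) = \la f(\theta) - E + O(\la^{-1/2}),
$$
with $C^2$-bounds inherited from $f$. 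A symmetric argument yields $\psi(\theta)\approx \la f(\theta+(K_0-1)\omega)-E$ on the relevant part of $I_0$. In the resonant case the shadowing identity is invoked a second time inside the definition of $\psi$, to absorb the single resonant visit to $J_0^2$ into the $h$-term; pairing each small $r_j$ with its successor via Lemma \ref{BE4} keeps all factors in $[\la^{-2},\la^2]$ and produces clean bounds on $h$ and $w$.

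With these approximations, $J_1$ is — up to corrections of order $\la^{-3/4}$ — the level set of an expression with the same qualitative shape as $\la f-E$ on $J_0$. Lemma \ref{BE3} furnishes the dichotomy for $\la f-E$ on $J_0$ (two monotone branches, or a parabolic shape), and the geometric results of Appendix B transport this dichotomy to $\vf$, producing either two monotone arcs of $\vf$ over $J_1$ or a single parabolic arc, with $\max\{|\vp'(\theta)|,\pm\vp''(\theta)\}>\la^{K_0/2}$. The distance estimate \eqref{BC_dist} follows because $\widetilde{\vf}$ jumps to size $\approx\la$ outside the $J_0$-parts of $I_0$, forcing $\vf$ to take values of order $\la^{3/4}$ near $\partial I_0$, while its slope is bounded above. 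The $C^2$-bound \eqref{BC_UB} is proved by differentiating the recursion $(M_0+K_0)$ times and using the $\rho_i$-pairing of Lemma \ref{BE4} to keep each intermediate factor in $[\la^{-2},\la^2]$; each differentiation costs at most a constant power of $\la$ per step, comfortably within the budget $\la^{6K_0}$ since $M_0+K_0\ll K_0^2$.

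For the final $E$-stability statement, since $\Phi_{E'}$ and $\Phi_E$ differ only by a shift of $E-E'$ in the $r$-coordinate, a Gr\"onwall-type propagation using the derivative bounds above gives
$$
|\vf(\theta,E')-\vf(\theta,E)| \leq \la^{C(M_0+K_0)}\,|E'-E|,
$$
and the choice $e_0=\la^{-4M_0}$ makes this error much smaller than $0.01\la^{-3/4}$. Combining this with the slope lower bound on $\vf$ produces the asserted sub-level-set inclusion. The main obstacle throughout is the resonant case: the single interval $I_0 = J_0^1 \cup (J_0^2-\nu_0\omega)$ now encodes two intertwined passes of the orbit near $J_0$, so the shadowing identity must be iterated and the geometric analysis of Appendix B must accommodate a more intricate composite structure for $\vf$ — this is the essential new difficulty compared with \cite{Bj1}.
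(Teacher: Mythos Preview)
Your overall strategy matches the paper's: the same two-block decomposition, the same auxiliary probes $\widetilde\vf=r_{M_0+1}$ and $\psi=s_{K_0-1}$, the shadowing identity $\vf=\psi-h/(\widetilde\vf-w)$, the $C^2$-closeness $\widetilde\vf\approx\la f-E$, and the appeal to the Appendix~B lemmas (Lemmas~\ref{L_B1}, \ref{L_B2}, and in the resonant case the double shadowing leading to Lemma~\ref{L_B5}). The $E$-stability argument via a crude $\partial_{E'}$-bound is also the paper's.

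There is, however, one genuine error in your argument for the $C^2$-bound \eqref{BC_UB}. You write that naive differentiation through all $M_0+K_0$ steps fits ``comfortably within the budget $\la^{6K_0}$ since $M_0+K_0\ll K_0^2$''. But by definition $M_0=K_0^2$, so $M_0+K_0>K_0^2$; a crude step-by-step bound of the type in Proposition~\ref{A_GB} would give at best $\la^{6(M_0+K_0)}\approx\la^{6K_0^2}$, which is hopelessly large. The missing idea is that the first $M_0$ iterations are \emph{not} generic: along that stretch every $|r_j|\ge\la^{3/4}$ (Lemma~\ref{BC1} plus Lemma~\ref{BE1}), and under that hypothesis Proposition~\ref{A1} (equivalently Sublemma~\ref{BC_SL1} in the proof) shows the $C^2$-norm does not grow at all --- one gets $\|r_{M_0+1}\|_{C^2(I_0)}<\text{const.}\,\la$. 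Only the remaining $K_0-1$ steps are treated by the crude Proposition~\ref{A_GB}, and that is what produces exactly $\la^{6K_0}$. Without this two-speed estimate the bound \eqref{BC_UB} fails, and with it the later length estimates on $J_1$ (via Lemma~\ref{BE9}) and hence the whole induction.

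A secondary point: your treatment of the resonant case is too compressed. Saying the extra visit to $J_0^2$ is ``absorbed into the $h$-term'' misses that the composite denominator $r_{M_0+1+\nu_0}-w_2=(s_{\nu_0}-w_2)-h_1/(r_{M_0+1}-w_1)$ now has its own nontrivial zero structure, and one must invoke Lemma~\ref{L_B5} (not just \ref{L_B1}/\ref{L_B2}) to extract the dichotomy and the crucial gap estimate that keeps $J_1$ inside a single subinterval. Also, the resonant visit occurs for $\theta$ anywhere in $I_0$ (since $J_0^2\subset I_0+\nu_0\omega$), not only for $\theta\in J_0^1$.
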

\begin{rem}
The estimate (\ref{BC_dist}) is of course very rough; $J_1$ is actually "well inside" $I_0$. But this is the only bound we need for the inductive step later.
\end{rem}

\begin{proof}[Proof of Lemma \ref{BC4}]
Let $\vp$ and $J_1$ be defined as in the statement of the lemma.
We shall divide the analysis into three cases, depending on the set $J_0$. 
Recall the definitions we made in the beginning of subsection \ref{base_case}. 

We begin with the last statement. Since we have the (very rough) estimate $|\partial_{E'}\vf(\theta,E')|<\la^{3(M_0+K_0+2)}$ for all $\theta$ such that
$|\vf(\theta,E')|<\la^{5/4}$, by Proposition \ref{A_E}, and since $3(M_0+K_0+2)\ll 4M_0$, the statement indeed follows.

The following result shall be used several times in the proof.

\begin{sublem}\label{BC_SL1}
Let $\sigma_j(\theta)=\pi_2(\Phi^j(\theta+m\omega,\infty))$, where $m$ is some integer, and let $k\geq 0$. Assume that $\theta_0\in \T$ is such that
$\theta_j+m\omega\notin J_0$ for $j\in [0,k-1]$. Then $\sigma_{k+1}(\theta_0)$ is close to $\la f(\theta_k+m\omega)-E$, namely
\begin{equation}\label{BC_SL_eq0}
\begin{aligned}
|\sigma_{k+1}(\theta_0)-(\la f(\theta_k+m\omega)-E)|&\leq \la^{-3/4};
\\|\sigma_{k+1}'(\theta_0)-\la f'(\theta_k+m\omega)|&<\la^{-1/4}; ~\text{ and}\\
|\sigma_{k+1}''(\theta_0)-\la f''(\theta_k+m\omega)|&<\la^{1/2}.
\end{aligned}
\end{equation}
Moreover, given any $u_0\in \PR$, we can write $s_{k+1}=\pi_2(\Phi^{k+1}(\theta_0+m\omega,s_0))$ as
\begin{equation}\label{BC_SL_eq1}
u_{k+1}=\sigma_{k+1}(\theta_0)-\frac{h(\theta_0)}{u_0-w(\theta_0)}
\end{equation}
where $h$ and $w$ satisfy
$$
\la^{-4k}\leq h(\theta_0)\leq \la^{-(4/3)k}, ~|h'(\theta_0)|\leq \sqrt{h(\theta_0)}, |h''(\theta_0)|\leq 1;
$$
and 
$$
|w(\theta_0)|<1.1\la^{-3/4}, |w'(\theta_0)|,|w''(\theta_0)|<1.
$$
\end{sublem}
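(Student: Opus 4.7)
\emph{Plan of proof.} I treat the two halves of the sublemma separately.

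For the estimates (\ref{BC_SL_eq0}), observe that $\sigma_1(\theta)=\la f(\theta+m\omega)-E$ and, inductively, $\sigma_{j+1}(\theta)=\la f(\theta+(m+j)\omega)-E-1/\sigma_j(\theta)$. The hypothesis $\theta_j+m\omega\notin J_0$ for $j\in[0,k-1]$ gives $|\sigma_1(\theta_0)|\ge 2\la^{3/4}$ and, by iterated application of Lemma~\ref{BE1}, $|\sigma_j(\theta_0)|\ge(3/2)\la^{3/4}$ for every $j\in[1,k]$; boundedness of $f$ together with $E\in\mathcal{E}$ simultaneously give the upper bound $|\sigma_j|\le\co\la$. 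The first line of (\ref{BC_SL_eq0}) is then just $|1/\sigma_k(\theta_0)|\le\la^{-3/4}$. The derivative estimates follow from the explicit telescoping identity
$$
\sigma_{k+1}'(\theta_0)-\la f'(\theta_k+m\omega)=\sum_{j=0}^{k-1}\frac{\la f'(\theta_j+m\omega)}{(\sigma_{j+1}(\theta_0)\cdots\sigma_k(\theta_0))^2},
$$
obtained by iterating $\sigma_{j+1}'=\la f'(\theta+(m+j)\omega)+\sigma_j'/\sigma_j^2$; the sum is dominated by its last term, which is $O(\la^{-1/2})$. The bound on $\sigma_{k+1}''-\la f''(\theta_k+m\omega)$ is obtained analogously, using in addition the estimate on $\sigma_k'$ just derived; the derivative machinery of Appendix~A makes all of this routine.

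The shadowing identity (\ref{BC_SL_eq1}) I would prove by induction on $j$. For $j=1$ one has $u_1=\la f(\theta_0+m\omega)-E-1/u_0=\sigma_1(\theta_0)-1/u_0$, so the formula holds with $h_1\equiv1$, $w_1\equiv0$. Assuming $u_j=\sigma_j-h_j/(u_0-w_j)$, direct algebra yields
$$
u_{j+1}=\sigma_{j+1}+\frac{u_j-\sigma_j}{\sigma_j u_j}=\sigma_{j+1}-\frac{h_j/\sigma_j^2}{u_0-(w_j+h_j/\sigma_j)},
$$
so the recursion $h_{j+1}=h_j/\sigma_j^2$, $w_{j+1}=w_j+h_j/\sigma_j$ iterates to the closed forms $h_{k+1}(\theta_0)=1/(\sigma_1\cdots\sigma_k)^2$ and $w_{k+1}(\theta_0)=\sum_{j=1}^{k}1/[(\sigma_1\cdots\sigma_{j-1})^2\sigma_j]$. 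The two-sided bound $(3/2)\la^{3/4}\le|\sigma_j|\le\co\la$ then immediately yields $\la^{-4k}\le h(\theta_0)\le\la^{-(4/3)k}$. The sum defining $w$ is geometric with ratio $\le\la^{-3/2}$ and is dominated by its first term $1/\sigma_1$, giving $|w(\theta_0)|<1.1\la^{-3/4}$.

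The main technical obstacle lies in verifying the derivative bounds $|h'|\le\sqrt h$, $|h''|\le 1$ and $|w'|,|w''|<1$. I would proceed by logarithmic differentiation of the closed-form expressions, for instance $h'/h=-2\sum_{j=1}^{k}\sigma_j'/\sigma_j$. Each ratio $\sigma_j'/\sigma_j$ is $O(\la^{1/4})$ (combining $|\sigma_j'|=O(\la)$, which follows from (\ref{BC_SL_eq0}), with $|\sigma_j|\ge\la^{3/4}$), so $|h'|\le\co\, k\la^{1/4}\,h$; the super-geometric smallness $h\le\la^{-(4/3)k}$ absorbs the polynomial factor $k$ as long as $\la$ is taken large enough independently of $k$, yielding $|h'|\le\sqrt h$ uniformly. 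The second-derivative bounds and the bounds on $w$ are analogous and use in addition the estimate on $\sigma_j''$ from (\ref{BC_SL_eq0}). Once these estimates are in place, the sublemma is complete.
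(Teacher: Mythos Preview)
Your proposal is correct and follows essentially the same approach as the paper. The paper's proof is more compact because it simply cites Lemma~\ref{BC2} for the lower bound $|\sigma_j|\ge\la^{3/4}$, Proposition~\ref{A1} for the two derivative estimates in~(\ref{BC_SL_eq0}), and Proposition~\ref{A5} for the shadowing formula and the bounds on $h,w$; those appendix results contain precisely the telescoping identity (\ref{firstDerF}) and the closed forms $h=(\sigma_1\cdots\sigma_k)^{-2}$, $w=\sum_{j=1}^{k}(\sigma_1^2\cdots\sigma_{j-1}^2\sigma_j)^{-1}$ that you rederive by hand, with the derivative bounds on $h$ and $w$ handled in the special case $|\sigma_j|\ge\la^{3/4}$ by Lemma~\ref{A_01}.
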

\begin{proof}
If $k=0$ we have $\sigma_{k+1}(\theta_0)=\sigma_1(\theta_0)=\la f(\theta_0+m\omega)-E$, since $\sigma_0=\infty$. Furthermore, 
$s_1=\la f(\theta_0+m\omega)-E-1/s_0=\sigma_1(\theta_0)-1/s_0$, so (\ref{BC_SL_eq1}) holds with $h=1$ and $w=0$.

We now assume that $k>0$. Since 
$\theta_0+m\omega,\ldots,\theta_{k-1}+m\omega\notin J_0$, and $\sigma_0=\infty$, it follows from Lemma \ref{BC2} that we have 
$$
|\sigma_j(\theta_0)|\geq \la^{3/4} \text{ for all } j\in [0,k].
$$ 
Since $\sigma_{k+1}(\theta_0)=\la f(\theta_0+m\omega)-E-1/\sigma_{k}(\theta_0)$, we immediately get the first estimate in (\ref{BC_SL_eq0}). Moreover, the 
other two follow from Proposition \ref{A1}. The formula (\ref{BC_SL_eq1}), as well as the estimates on $h$ and $w$, follow immediately from Proposition \ref{A5}.  
\end{proof}

We now let
\begin{equation}\label{BC4_r}
r_j=r_j(\theta)=\pi_2(\Phi^j(\theta-M_0\omega,\infty)), \quad j\geq 0.
\end{equation}
In the rest of the proof $r_j$ will always be like this.
Note that we, by definition, have $$\vf(\theta)=r_{M_0+K_0}(\theta).$$
Thus, the aim is to control the iterate $r_{M_0+K_0}(\theta)$ for all $\theta\in I_0$ such that $r_{M_0+K_0}(\theta)$ is close to zero.

We have
\begin{equation}\label{BC_eq99}
\|r_{M_0+1}(\cdot)-(\la f(\cdot)-E)\|_{C^2(I_0)}<\la^{1/2},
\end{equation}
which follows directly from Lemma \ref{BC1} and Sublemma \ref{BC_SL1} (with $m=-M_0$ and $k=M_0$).  
This implies, in particular, that 
$$
\|r_{M_0+1}(\cdot)\|_{C^2(I_0)}<\co\la.
$$
Applying Proposition \ref{A_GB} to $r_{M_0+1}$ gives the upper bound on the derivatives in (\ref{BC_UB}) (since we have
$|\vp(\theta)|\ll 1$ on $J_1$).

Before dividing the analysis into the three cases (depending on $J_0$), we note the following. 
In the case when $J_0$ consists of two intervals, we have that $\lambda |f'(\theta)|$ is large when $\la f(\theta)-E$ is close to zero. More precisely,
let
$$
\widehat{J}_0=\left\{\theta\in J_0: |\la f(\theta)-E|\leq \frac{3}{2}\la^{3/4}\right\}.
$$ 
If $J_0$ consists of two intervals, it follows from Lemmas \ref{BE3}
and \ref{BE9} (with $D=\co \la$ and $\delta=2\la^{3/4}$) that $\widehat{J}_0$ consists of two intervals
$\widehat{J}_0^i\subset J_0^i ~(i=1,2)$, $f'(\theta)$ having opposite sign on the two intervals,
\begin{equation}\label{BC_eq2}
\lambda |f'(\theta)|>\co\lambda^{7/8}\quad \text{on } \widehat{J}_0,
\end{equation}
and
\begin{equation}\label{BC_eq2_1}
\la f(\widehat{J}_0^i)-E=[-(3/2)\la^{3/4},(3/2)\la^{3/4}] \quad (i=1,2).
\end{equation}
We also have, by the definition of $\widehat{J}_0$ and $J_0$, that 
\begin{equation}\label{BC_eq2_2}
|\la f(\theta)-E|>(3/2)\la^{3/4} \text{ for all } \theta\in \T\setminus  \widehat{J}_0;
\end{equation}
see Fig. \ref{fig1}.

With this information we can easily verify (\ref{BC_dist}). We let $\widehat{J}_0\subset J_0$ be defined as above. In the case when $J_0$ consists of a single interval
we can have that $\widehat{J}_0$ is empty. Since $\|f\|_{C^1(\T)}<\co$, we have
\begin{equation}\label{BC_eq2_99}
\text{dist}(\widehat{J}_0,\partial J_0)>\co \la^{-1/4} 
\end{equation}
whenever $\widehat{J}_0\neq \emptyset$. In the non-resonant case we have $I_0=J_0$. If $\theta\in I_0\setminus \widehat{J_0}$ it follows from 
(\ref{BC_eq99}) that $|r_{M_0+1}(\theta)|>\la^{3/4}$. Since (\ref{BC1_eq1}) holds, it thus follows from Lemma \ref{BC2} that $|\vf(\theta)|=|r_{M_0+K_0}(\theta)|\geq \la^{3/4}$.
We conclude that $J_1\subset \widehat{J}_0$ (we have, in particular, that $J_1=\emptyset$ if $\widehat{J}_0=\emptyset$), and therefore (\ref{BC_dist}) follows from (\ref{BC_eq2_99}).
In the resonant case we have defined $I_0=J_0^1\cup(J_0^2-\nu_0\omega)$. It is easy to see that we have 
$\text{dist}(\widehat{J}_0^1\cup (\widehat{J}_0^2-\nu_0\omega),\partial I_0)>\co \la^{-1/4} $. Since 
\begin{equation}\label{BC_eq89}
|r_{M_0+K_0}(\theta)|\geq \la^{3/4} \text{ for all } \theta\in I_0\setminus (\widehat{J}_0^1\cup (\widehat{J}_0^2-\nu_0\omega)),
\end{equation} 
as we will shortly show, we can conclude that (\ref{BC_dist}) also holds in this case.

As in the non-resonant case we get $|r_{M_0+1}(\theta)|>\la^{3/4}$ for all
$\theta\in I_0\setminus \widehat{J}_0^1$. Using Lemmas \ref{BC1} and \ref{BC2} we therefore have $|r_{M_0+\nu_0}(\theta)|\geq \la^{3/4}$ for $\theta\in I_0\setminus \widehat{J}_0^1$.
If $\theta \notin I_0\setminus (\widehat{J}_0^2-\nu_0\omega)$ we have $|\la f(\theta+\nu_0\omega)-E|>(3/2)\la^{3/4}$. Combining these facts we conclude that 
$|r_{M_0+\nu_0+1}(\theta)|\geq |\la f(\theta+\nu_0\omega)-E|-|1/r_{M_0+\nu_0}(\theta)|>\la^{3/4}$ for all $\theta\in I_0\setminus (\widehat{J}_0^1\cup (\widehat{J}_0^2-\nu_0\omega))$.
Lemmas \ref{BC1} and \ref{BC2} now imply that (\ref{BC_eq89}) holds.

\bigskip
Now we divide the analysis into three cases.

\bigskip
\emph{Case I}. We first treat the case when $J_0$ consists of two (disjoint) intervals $J_0^1$, $J_0^2$ 
which are non-resonant (case (1) above). In this case we defined $I_0=J_0$. 
By Lemma \ref{BC1} we have
\begin{equation}\label{BC_eq1_1}
J_0\cap \bigcup_{m\in [-M_0,K_0]\setminus\{0\}}(J_0+m\omega)=\emptyset.
\end{equation}
Combining the above estimates on $\la f-E$ with (\ref{BC_eq99}) we get
$$
\begin{aligned}
r_{M_0+1}&(\widehat{J}_0^i)\supset [-1.4\la^{3/4},1.4\la^{3/4}] \quad (i=1,2); \\
|r_{M_0+1}&(\theta)|\geq 1.4\la^{3/4} \text{ for } \theta\in I_0\setminus \widehat{J}_0; ~\text{ and } \\
|r_{M_0+1}&'(\theta)|>\co \la^{7/8}, \quad \theta\in  \widehat{J}_0.
\end{aligned}
$$
Moreover, $r_{M_0+1}'$ have opposite signs on $\widehat{J}_0^1$ and $\widehat{J}_0^2$.

Now we need to control the iterates $r_{M_0+2},\ldots, r_{M_0+K_0}$. This we will do by comparing (shadowing) them with
some other iterates which we can control. Namely, let
\begin{equation}\label{BC_eq5}
s_j=s_j(\theta)=\pi_2(\Phi^j(\theta+\omega,\infty)), \quad j\geq 0.
\end{equation}
We shall compare $r_{M_0+1+j}(\theta)$ with $s_j(\theta)$ (for $\theta\in I_0$). 
Note that we have, from the definition of the $r_j$ in (\ref{BC4_r}),
$$
\Phi^{M_0+1}(\theta-M_0\omega,\infty)=(\theta+\omega,r_{M_0+1}(\theta)),
$$ and thus
$$
r_{M_0+1+j}(\theta)=\pi_2(\Phi^{j}(\theta+\omega,r_{M_0+1}(\theta))),
$$
which motivates the definition of the $s_j$.
Since (\ref{BC_eq1_1}) holds, we can apply Sublemma \ref{BC_SL1} (with $m=1$ and $k=K_0-2$). We then get
$$
|s_{K_0-1}(\theta)|\geq \la^{3/4} \text{ on } I_0; ~\text{ and}
$$
$$
\|s_{K_0-1}\|_{C^2(I_0)}<\co \la,
$$ 
and we can write $\vp$ as  
\begin{equation}\label{BC_eq3}
\vp(\theta)=r_{M_0+K_0}(\theta)=s_{K_0-1}(\theta)-\frac{h(\theta)}{r_{M_0+1}(\theta)-w(\theta)},
\end{equation}
where the functions $h,w$, for all $\theta\in I_0$, satisfy 
\begin{equation}\label{BC_eq4}
0<h(\theta)\leq \la^{-(4/3)(K_0-2)}, ~|h'(\theta)|\leq \sqrt{h(\theta)}, |h''(\theta)|\leq 1 
\end{equation}
and 
\begin{equation}\label{BC_eq6}
|w(\theta)|<1.1\la^{-3/4}, |w'(\theta)|,|w''(\theta)|<1.
\end{equation}
Since the plan is to apply Lemma \ref{L_B1} to each of the two intervals $\widehat{J}_0^i$ $(i=1,2)$, we let 
\begin{equation}\label{BC_eq7}
g(\theta)=r_{M_0+1}(\theta)-w(\theta)
\end{equation}
denote the denominator in (\ref{BC_eq3}).
From the above estimates on the functions $r_{M_0+1}$ and $w$, we immediately get 
$$
g(\widehat{J}_0^i)\supset [-\la^{3/4},\la^{3/4}] (i=1,2), ~\text{ and }~ |g(\theta)|>\la^{3/4} \text{ for all } \theta\in I_0\setminus \widehat{J}_0,
$$
and 
$$
|g'(\theta)|> \co\la^{7/8} ~\text{ for all } \theta\in \widehat{J}_0,
$$
with opposite signs on $\widehat{J}_0^1$ and $\widehat{J}_0^2$.
Now the statement of the lemma (the lower bound on the derivatives, and the statements in case $(i)$) follows from Lemma \ref{L_B1} (applied to each of the intervals $\widehat{J}_0^1$), since 
$\vp$ can be written as in (\ref{BC_eq3}), and since we have the above estimates on $s_{K_0-1},h$ and $g$.

\bigskip
\emph{Case II}. We now treat the case when $J_0$ consists of one single interval $[a,b]$. 
In this case we defined $I_0=J_0$. The analysis will be very similar to the one we just made for the
non-resonant situation. The difference will mainly be that we shall use Lemma \ref{L_B2} instead of
Lemma \ref{L_B1}. 

We shall assume that 
$$
\min_{\theta\in J_0}\max\{|\la f'(\theta)|,\la f''(\theta)\}\geq \co \la
$$
and
$$
\la f(a)-E=\la f(b)-E=2\la^{3/4},
$$
the other case being analogous; recall Lemma \ref{BE3}.
 
By (\ref{BC_eq99}) we have $\|r_{M_0+1}-(\la f-E)\|_{C^2(I_0)}<\la^{1/2}$.
We let $s_j$ be defined as in (\ref{BC_eq5}), and use them, as before, to shadow $r_{M_0+1+j}$. Since (\ref{BC_eq1_1}) holds also in
this case, we get exactly the same estimates on the $s_j$ as in the previous case. Thus, $\vp$ can be written as in 
(\ref{BC_eq3}), and we get the same estimates on $h$ and $w$ as above, i.e., as in (\ref{BC_eq4}) and (\ref{BC_eq6}).
Therefore, if we let $g(\theta)$ be defined as in (\ref{BC_eq7}) we have the following estimates:
$$
\min_{\theta\in J_0}\max\{|g'(\theta)|, g''(\theta)\}\geq \co \la
$$
and
$$
g(a),g(b)> \la^{3/4}.
$$
Applying Lemma \ref{L_B2} gives that the set $J_1$ is empty, consist of a single point, or one or two intervals. Moreover, we get the
estimates in the lemma.

\bigskip
\emph{Case III}. Finally we consider the resonant case. This is the situation when $J_0$ consists of two intervals, $J_0^1$ and $J_0^2$, and
$(J_0^1+\nu_0\omega)\cap J_0^2\neq \emptyset$ for some $\nu_0\in [1,[\la^{1/(25\tau)}]]$. 
Recall that we in this case have defined the (single) interval $I_0$ by $I_0=J_0^1\cup (J_0^2-\nu_0\omega)$.

What we will have to do in this case is to use the shadowing argument twice; first to get from $I_0+\omega$ to
$I_0+(\nu_0+1)\omega$, and then from $I_0+(\nu_0+1)\omega$ to $I_0+K_0\omega$ (we recall that 
$K_0\gg \nu_0$).

Exactly as in Case I we first get, using (\ref{BC_eq2} -- \ref{BC_eq2_2}), the following estimates on $r_{M_0+1}$: 
$$
\begin{aligned}
r_{M_0+1}&(\widehat{J}_0^1)\supset [-1.4\la^{3/4},1.4\la^{3/4}]; \\
\|r_{M_0+1}&(\theta)\|_{C^2(I_0)}<\co \la ~\text{ and} \\
|r_{M_0+1}'&(\theta)|>\co \la^{7/8} ~\text{ for all } \theta\in \widehat{J}_0^1,
\end{aligned}
$$
where $r_{M_0+1}'$ have the same sign on $\widehat{J}_0^1$ as $f'$.
Moreover, since $I_0\cap J_0^2=\emptyset$ (Lemma \ref{BC1}) we also have, using (\ref{BC_eq2_2}),
$$
|r_{M_0+1}(\theta)|>1.4\la^{3/4} \text{ for all } \theta\in I_0\setminus \widehat{J}_0^1.
$$

Now we want to iterate $r_{M_0+1}$ further (for $\theta\in I_0$). The first step is to get control on $r_{M_0+\nu_0+1}$. 
This will be done by shadowing. 
By Lemma \ref{BC1} we have
\begin{equation}\label{BC_eq8}
J_0\cap \bigcup_{m=1}^{\nu_0-1}(I_0+m\omega)=\emptyset.
\end{equation}
We let $s_k$ be defined as in (\ref{BC_eq5}). Since we have (\ref{BC_eq8}), we can apply Sublemma \ref{BC_SL1} (with $m=1$ and $k=\nu_0-1$) and get
$$
\|s_{\nu_0}(\cdot)-(\la f(\cdot+\nu_0\omega)-E)\|_{C^2(I_0)}<\la^{1/2}.
$$
Since $J_0^2\subset I_0+\nu_0\omega$ and $J_0^1\cap (I_0+\nu_0\omega)=\emptyset$, it follows (exactly as above, using (\ref{BC_eq2} -- \ref{BC_eq2_2})) that 
$$
\begin{aligned}
s_{\nu_0}&(\widehat{J}_0^2-\nu_0\omega)\supset [-1.4\la^{3/4},1.4\la^{3/4}]; \\
\|s_{\nu_0}&(\theta)\|_{C^2(I_0)}<\co \la;  \\
|s_{\nu_0}'&(\theta)|>\co \la^{7/8}  \text{ for all } \theta\in \widehat{J}_0^2-\nu_0\omega,
\end{aligned}
$$
where $s'_{\nu_0}$ has the same sign on $\widehat{J}_0^2$ as $f'$, and
$$
|s_{\nu_0}(\theta)|>1.4\la^{3/4} \text{ for } \theta\in I_0\setminus (\widehat{J}_0^2-\nu_0\omega).
$$
Moreover, by the second part of Sublemma \ref{BC_SL1} we can write $r_{M_0+1+\nu_0}$ as
\begin{equation}\label{BC_eq20}
r_{M_0+1+\nu_0}(\theta)=s_{\nu_0}(\theta)- \frac{h_1(\theta)}{r_{M_0+1}(\theta)-w_1(\theta)},
\end{equation}
where the functions $h_1$ and $w_1$ satisfy
\begin{equation}\label{BC_eq10}
\la^{-4(\nu_0-1)}\leq h_1(\theta)\leq \la^{-(4/3)(\nu_0-1)}, |h_1'(\theta)|\leq \sqrt{h_1(\theta)}, |h_1''(\theta)|\leq 1; ~\text{ and}
\end{equation}
\begin{equation}\label{BC_eq11}
|w_1(\theta)|<1.1\la^{-3/4}, |w_1'(\theta)|,|w_1''(\theta)|<1
\end{equation}
on $I_0$.

To derive information about $r_{M_0+K_0}$, we shall again apply shadowing. To do this, we let
$$
t_j=t_j(\theta)=\pi_2(\Phi^j(\theta+(\nu_0+1)\omega,\infty)), \quad j\geq 0.
$$
By Lemma \ref{BC1} it follows that $\theta_j\notin J_0$ for $j\in [\nu_0+1,K_0]$ if $\theta_0\in I_0$. In particular we have
$$
|\la f(\theta+(K_0-1)\omega)-E|>2\la^{3/4}, \theta\in I_0.
$$
Applying Sublemma \ref{BC_SL1} thus gives us 
the estimates 
\begin{equation}\label{BC_eq15}
\begin{aligned}
\la^{3/4}\leq |&t_{K_0-\nu_0-1}(\theta)|<\co \la;  ~ \text{ and } \\
\|&t_{K_0-\nu_0-1}\|_{C^2(I_0)}<\co \la,
\end{aligned}
\end{equation}
as well as the representation
\begin{equation}\label{BC_eq14}
\varphi(\theta)=r_{M_0+K_0}(\theta)=t_{K_0-\nu_0-1}(\theta)+\frac{h_2(\theta)}{r_{M_0+1+\nu_0}(\theta)-w_2(\theta)}, ~\theta\in I_0,
\end{equation}
where we have the estimates
\begin{equation}\label{BC_eq12}
0<h_2(\theta)\leq \la^{-(4/3)(K_0-\nu_0-2)}, |h_2'(\theta)|\leq \sqrt{h_2(\theta)}, |h_2''(\theta)|\leq 1 
\end{equation}
and 
\begin{equation}\label{BC_eq13}
|w_2(\theta)|<1.1\la^{-3/4}, |w_2'(\theta)|,|w_2''(\theta)|<1
\end{equation}
on $I_0$. Focusing on the denominator in (\ref{BC_eq14}), which we shall denote by $\psi(\theta)$ below, we see that, recalling (\ref{BC_eq20}),
$$
\psi(\theta):=r_{M_0+1+\nu_0}(\theta)-w_2(\theta)=(s_{\nu_0}(\theta)-w_2(\theta))- \frac{h_1(\theta)}{r_{M_0+1}(\theta)-w_1(\theta)}.
$$
The above estimates on $s_{\nu_0}$ and $r_{M_0+1}$ (recall that $s_{\nu_0}'$ on and $r_{M_0+1}'$ have opposite signs, at least when $|s_{\nu_0}|,|r_{M_0+1}|<1.4\la^{3/4}$), together with the ones on $w_1,w_2$ and $h_1$, 
show that we are in the position of applying Lemma \ref{L_B5} 
(with $s=s_{\nu_0}-w_2, h=h_1, \delta=\la^{-4\nu_0}$ and $g=r_{M_0+1}-w_1$). We thus get two intervals $I_0',I_0''\subset I_0$, which we also write $I_0'=[a',b'], I_0''=[a'',b'']$, such that either
\begin{itemize}
\item[(a)] $|\psi'(\theta)|>\sqrt{\la}$ on $I_0'\cup I_0''$, $\psi'(\theta)$ having opposite signs on $I_0'$ and $I_0''$, 
$\psi(I_0')=\psi(I_0'')=[-\sqrt{\la},\sqrt{\la}]$, and $|\psi(\theta)|>\sqrt{\la}$ on $I_0\setminus(I_0'\cup I_0'')$; or 

\bigskip
\item[(b)] $\psi(a')=\psi(b')=\la^{2/3}$ and $\psi(a'')=\psi(b'')=-\la^{2/3}$,
$$
\min_{\theta\in I_0''}\max\{|\psi'(\theta)|,-\psi''(\theta)\}> \sqrt{\la} ~\text{ and }~ \min_{\theta\in I_0'}\max\{|\psi'(\theta)|,\psi''(\theta)\}>\sqrt{\la};
$$
\begin{equation}\label{BC_eq16}
\min_{\theta\in I_0'}\psi(\theta)-\max_{\theta\in I_0''}\psi(\theta)>\co\frac{\sqrt{\la^{-4\nu_0}}}{\la^{1/4}}>\la^{-2(\nu_0+1)};
\end{equation}
and $|\psi(\theta)|>\la^{2/3}$ for all $\theta\in I_0\setminus(I_0'\cup I_0'')$;
\end{itemize}
see Fig. \ref{fig3}.
Note that, with the above definition of $\psi$ it follows from (\ref{BC_eq14}) that we can write
$$
\vp(\theta)=t_{K_0-\nu_0-1}(\theta)+\frac{h_2(\theta)}{\psi(\theta)}.
$$
We recall that we have the estimates (\ref{BC_eq15}) and (\ref{BC_eq12}).

If we are in case $(a)$, then the statement of the lemma follows from Lemma \ref{L_B1} ($J_1$ consists of two intervals, and we have situation $(i)$). 
If $(b)$ holds, we first note the following crucial fact, namely that the "gap" in (\ref{BC_eq16}) is relatively big. 
More precisely, if $|\psi(\theta)|\geq\la^{-(4/3)(K_0-\nu_0-2)}$, then we see that $|\vp(\theta)|>1$ (using the estimates on $h_2$ and $t_{K_0-\nu_0-1}$).
Since $K_0\gg \nu_0$, we have $\la^{-2(\nu_0+1)}\gg \la^{-(4/3)(K_0-\nu_0-2)}$. This implies, by noticing the size of the "gap" in (\ref{BC_eq16}), that
we must have that $$\min_{\theta\in I_0'}|\psi(\theta)|> \la^{-(4/3)(K_0-\nu_0-2)} ~\text{ or }~ \min_{\theta\in I_0''}|\psi(\theta)|> \la^{-(4/3)(K_0-\nu_0-2)}$$ (it is possible that both hold).
Since we are looking for $\theta\in I_0$ such that $|\vp(\theta)|\leq 2\la^{-3/4}$, we see that this can happen on at most one of the intervals $I_0', I_0''$. We can therefore 
apply Lemma \ref{L_B2} to this single interval, and thus obtaining the statements in the lemma. This finishes the proof.    
\end{proof}
\end{subsection}


\begin{subsection}{The inductive step}\label{induction}

In this section we shall inductively define sets $I_1,I_2,\ldots$, and choose integers $M_0<M_1<M_2,\ldots$ and $K_0<K_1<K_2<\ldots$, and successively gain control
on longer and longer orbits of "typical" initial conditions $(\theta_0,r_0)$.
We stress that we sometimes will stop the inductive construction after a finite number of steps (depending on the parameter $E$).
As we shall see later, this happens exactly when the cocycle $(\omega,A_E)$ is uniformly hyperbolic (or, equivalently, $E$ is in the resolvent set of the 
Schr\"odinger operator \ref{operator}).

\bigskip
In the formulation of the inductive assumption we will use the following notation: given sets $I_0,I_1,\ldots,I_n\subset \T$ and integers
$M_j,K_j, \nu_j (j=0,\ldots, n)$ we define the sets (recall that $\Theta_0$ was defined in the previous section).
\begin{equation}\label{setsTGS}
\begin{aligned}
\Theta_n&=\T\setminus\left(\left(\bigcup_{j=0}^{n-1}\bigcup_{m=-M_j+1}^{\nu_j}(I_j+m\om)\right)\cup \bigcup_{m=0}^{\nu_n}(I_n+m\omega)\right);\\
G_n&=\bigcup_{j=0}^n\bigcup_{m=1}^{K_j}(I_j+m\om);\\
S_n&=\T\setminus\bigcup_{j=0}^n\bigcup_{m=1}^{M_j}(I_j+m\om).
\end{aligned}
\end{equation}
Furthermore, if $I\subset \T$ is an interval, 
$$I=[a-\ve,a+\ve],$$ 
and $k>0$, then we denote by $kI$ the 
interval 
$$kI=[a-k\ve,a+k\ve].$$ 
That is, the intervals $I$ and $kI$ have the same center, and $kI$ is $k$ times the size of $I$.
Moreover, if $I=\bigcup_{i=1}^m I^i$, where each $I^i$ is an interval, then we use the notation $kI=\bigcup_{i=1}^m kI^i$.

In order to get consistent notation in the inductive assumption below, we let
$$
S_{-1}=\T, ~G_{-1}=\emptyset \quad\text{and}\quad e_{-1}^\pm=K_{-1}=1.
$$
We also recall that the integers $M_0$ and $K_0$, and the set $I_0\subset \T$, where defined (depending on properties of the set $J_0$) in 
section \ref{base_case}. 

\bigskip
We are now ready to state the inductive step. We again stress that $\lambda>0$ is assumed to be large, depending only on $f,\kappa$ and $\tau$.
\begin{prop}[The inductive step]
\label{inductive_lemma}

For any $n\geq 0$ the following holds: 

\medskip
\noindent\underline{Assumptions:} Assume that the integers $K_j, M_j$ and $\nu_j$  $(0\leq j \leq n)$ have been chosen so that
\begin{equation}\label{il_eq1}
K_j\in \left[[\la^{K_{j-1}/(60\tau)}], [\la^{K_{j-1}/(20\tau)}]\right],~ M_j\in [K_j^2,2K_j^2] \text{ and } \nu_j\in [0,K_j/2], \quad j=0,\ldots, n,
\end{equation}
and that the nested sequence of sets $I_0\supset I_1\supset \cdots \supset I_n$ have been constructed,
where each $I_j$ ($j\in [0,n]$) consists of one or two closed intervals, each of length  $<(1/2)\la^{-K_{j-1}/9}$. 
Moreover, assume that the numbers $0<e_n^+\leq e_{n-1}^+\leq \ldots \leq e_0^+\leq 1$, 
$0<e_n^-\leq e_{n-1}^-\leq \ldots \leq e_0^-\leq 1$, where $e_k^\pm\geq \la^{-4M_k}$ ($k=0,\ldots, n$) have been chosen.
Furthermore, assume that the following two conditions, denoted $\mathcal{A}_n$ and $\mathcal{B}_n$, hold:
\begin{itemize}
\item[$\mathcal{A}_n)$]
\begin{itemize}
\item[$(1)$]
$\displaystyle
I_j\cap\bigcup_{0<|m|\leq 2M_j}(I_j+m\omega)=\emptyset \text{ for each } j\in [0,n].
$
\item[$(2)$]
$\displaystyle
I_n+K_n\omega, I_n-M_n\omega \subset \T\setminus\bigcup_{j=0}^{n-1}\bigcup_{m=-2M_j}^{2M_j}(2I_j+m\om).
$

\item[$(3)$]If $\theta_0\in\Theta_{n}$ and $|r_0|\geq \la^{3/4}$, and if $N=\mathcal{N}(\theta_0;I_n)$ then the following hold:

\medskip
\begin{itemize}
\item[$(a)$] for any $k\in [0,N]$ such that $|r_k|\geq \la^{-2}$, we have \\
$
|\rho_0|^{\alpha_0}\cdots |\rho_l|^{\alpha_l}\geq\la^{(2/3+\left(1/12\right)^{n+1})(k+1)}~ 
\text{for any choices of } \alpha_i\in [1,6]
$\\
where $r_0\cdots r_k=\rho_0\cdots \rho_l$;

\medskip
\item[$(b)$] if $T\in [0,N]$
is such that $\theta_T\in S_{n-1}$, then $|r_T|\geq \la^{3/4}$, and for any $l\in [0,L]$ we have \\
$
|\rho_l|^{\alpha_l}\cdots |\rho_L|^{\alpha_L}\geq \la^{(2/3+\left(1/12\right)^{n+1})(L-l+1)}~\text{for any choices of } 
\alpha_i\in [1,6],
$\\
where $r_0\cdots r_T=\rho_0\cdots \rho_L$;

\medskip
\item[$(c)$] if  $|r_k|<\la^{3/4}$ for some  $k\in [0,N]$,  then  $\theta_k\in G_{n-1}$.

\end{itemize}

\item[$(4)$] For all $E'\in [E-e_{n-1}^-,E+e_{n-1}^+]$ we have: if $\theta_0\in\Theta_{n}$ and $|r_0|\geq \la^{3/4}$, 
and if $N=\mathcal{N}(\theta_0;I_n)$, then the statements in (3)(a-c) hold with all the $r_k$ replaced by $r_k(E')$. 
\end{itemize}

\medskip
\item[$\mathcal{B}_n)$]If we let
$$
\vf_n(\theta)=\pi_2\left(\Phi^{M_n+K_n}(\theta-M_n\omega,\infty)\right),
$$
and define the set $J_{n+1}$ by
$$
J_{n+1}= \{\theta\in I_n: |\vp_n(\theta)|\leq 2\la^{-3/4}\},
$$
then $J_{n+1}$ is either empty, consists of one single point, or consists of one or two intervals. Moreover, 
\begin{equation}\label{IS_eq1}
\text{dist}(J_{n+1},\partial I_n)>\la^{-K_n/10} 
\end{equation}
and we have the bounds
\begin{equation}\label{IS_eq2}
\min_{\theta\in J_{n+1}}\max\{|\vp_n'(\theta)|,\vp_n''(\theta)\}>\la^{K_n/2}  \text{ or } 
\min_{\theta\in J_{n+1}}\max\{|\vp_n'(\theta)|,-\vp_n''(\theta)\}>\la^{K_n/2} .
\end{equation}
Furthermore, 
\begin{itemize}
\item[$(i)_n$] if $J_{n+1}$ consists of two intervals, $J_{n+1}^1$ and $J_{n+1}^2$, then we have $|\vp_n'(\theta)|>0$ on $J_{n+1}$, $\vp_n'(\theta)$ having opposite
signs on $J_{n+1}^1$ and $J_{n+1}^2$, and $$\vp_n(J_{n+1}^i)=[-2\la^{-3/4},2\la^{-3/4}]~ (i=1,2);$$
\item[$(ii)_n$] if $J_{n+1}=[a,b]$, then $\vp_n(a)=\vp_n(b)=\pm2\la^{-3/4}$. 
\end{itemize}

\medskip
Finally, for all $E'\in [E-e_{n}^-,E+e_n^+]$ we have
\begin{equation}\label{IS_eq99}
\{\theta\in I_n: |\vp_n(\theta,E')|\leq 1.99\la^{-3/4}\}\subset J_{n+1}
\end{equation}
where $\vf_n(\theta,E')=\pi_2\left(\Phi_{E'}^{M_n+K_n}(\theta-M_n\omega,\infty)\right)$.
\end{itemize}

\medskip

\noindent\underline{Conclusions:} 
\begin{itemize}
\item If the set $J_{n+1}$ (in condition $\mathcal{B}_{n}$) is empty or consists of one single point, then 
$(3)_{n+1}(a)$ holds with $I_{n+1}=\emptyset$. In this case we stop the induction.

\item Otherwise there exists a set $I_{n+1}\subset I_n$, consisting of one or two (non-degenerate) closed intervals, of length $<(1/2)\la^{-K_n/9}$, 
and there are integers $$K_{n+1}\in \left[[\la^{K_n/(60\tau)}],[\la^{K_n/(20\tau)}]\right], ~M_{n+1}\in [K_{n+1}^2,2K_{n+1}^2], \text{ and } \nu_{n+1}\in [0,K_{n+1}/2]$$ and numbers
$\la^{-4M_n}\leq e_{n+1}^\pm\leq e_n^{\pm}$ such that $\mathcal{A}_{n+1}$ 
and $\mathcal{B}_{n+1}$ hold. 

Moreover, if the set $J_{n+1}$ (in $\mathcal{B}_{n}$) consists of a single interval, then $I_{n+1}=J_{n+1}$ and $\nu_{n+1}=0$.

Finally, if the sets $J_{n+1}$ and $J_{n+2}$ (in $\mathcal{B}_{n}$ and $\mathcal{B}_{n+1}$, respectively) both consists of one single interval 
then either we have that the first condition in $(\ref{IS_eq2})_n$ and $(\ref{IS_eq2})_{n+1}$ both hold, or the second condition holds in both cases 
(i.e., $\vf_n$ and $\vf_{n+1}$ are bent in the same direction on $J_{n+1}$ and $J_{n+2}$, respectively). 
Moreover, if the first condition in $(\ref{IS_eq2})_n$ holds, then we can take $e_{n+1}^-=e_n^-$. Furthermore, there is a $0<\widetilde{e}_{n+1}^-<\la^{-K_{n+1}/2}$ such that
the set in $(\ref{IS_eq99})_{n+1}$ is empty for all $E'\in [E-e_n^-,E-\widetilde{e}_{n+1}]$. Similarly, 
if the second condition in $(\ref{IS_eq2})_n$ holds, then we can take $e_{n+1}^+=e_n^+$, and there is a $0<\widetilde{e}_{n+1}^+< \la^{-K_{n+1}/2}$ such that
the set in $(\ref{IS_eq99})_{n+1}$ is empty for all $E'\in [E+\widetilde{e}_{n+1}^+,E+e_n^+]$.

\end{itemize}
\end{prop}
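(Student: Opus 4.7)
\medskip
\noindent\emph{Proof proposal.} The plan is to mirror, at scale $n+1$, the three-case analysis used in Lemma \ref{BC4} for the base case, replacing $J_0$ by $J_{n+1}$ and the elementary facts about $\la f-E$ by the output of hypothesis $\mathcal{B}_n$. First I would use $\mathcal{B}_n$ together with Lemma \ref{BE9} to deduce that each connected component of $J_{n+1}$ has length $\lesssim \la^{-K_n/4}$, which is much smaller than the resolution $\la^{-K_n/9}$ demanded in the conclusion. Then I would test for resonance: if $J_{n+1}\cap \bigcup_{0<|m|\le[\la^{K_n/(25\tau)}]}(J_{n+1}+m\omega)=\emptyset$ I set $I_{n+1}=J_{n+1}$, $\nu_{n+1}=0$, $K_{n+1}=[\la^{K_n/(60\tau)}]$, $M_{n+1}=K_{n+1}^2$ (non-resonant case); otherwise $J_{n+1}$ must consist of two intervals $J_{n+1}^1,J_{n+1}^2$ and an integer $\nu_{n+1}\in[1,[\la^{K_n/(25\tau)}]]$ exists with $(J_{n+1}^1+\nu_{n+1}\omega)\cap J_{n+1}^2\neq\emptyset$; then $I_{n+1}=J_{n+1}^1\cup(J_{n+1}^2-\nu_{n+1}\omega)$, $K_{n+1}=[\la^{K_n/(20\tau)}]$, $M_{n+1}=K_{n+1}^2$. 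Conditions $\mathcal{A}_{n+1}(1)$ and $\mathcal{A}_{n+1}(2)$ then follow from Lemmas \ref{BE7} and \ref{BE8} combined with the fact that the new scales are much smaller than the windows provided by $\mathcal{A}_n(1)$ at all previous scales.

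The arithmetic condition $\mathcal{A}_{n+1}(3)$ is handled by the probing-and-shadowing philosophy of subsection \ref{a_few_comments}. For $\theta_0\in\Theta_{n+1}$ and $|r_0|\ge\la^{3/4}$, until the orbit first hits $I_{n+1}$ I follow the orbit segment between consecutive visits to $I_n$: on each such excursion the hypothesis $\mathcal{A}_n(3)$ already delivers a growth exponent $\ge 2/3+(1/12)^{n+1}$, and at a visit $\theta_k\in I_n\setminus I_{n+1}$ the orbit, after $M_n+K_n$ steps, shadows the probe $\vf_n$ (Proposition \ref{A5}); the definition of $I_{n+1}$ via $|\vf_n|\le 2\la^{-3/4}$ then guarantees $|r_{k+M_n+K_n}|\ge\la^{3/4}$, allowing us to restart the argument. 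Concatenating these excursions and invoking Lemma \ref{BE4} for the pairings at small $r_j$, the fraction of time spent near $I_n\setminus I_{n+1}$ is negligible (again by the Diophantine gap estimates), so the effective exponent improves from $(1/12)^{n+1}$ to $(1/12)^{n+2}$, yielding (a), (b), (c). The stability condition $\mathcal{A}_{n+1}(4)$ is obtained verbatim from the same argument because the last line of $\mathcal{B}_n$ asserts that the locus $\{|\vf_n(\cdot,E')|\le 1.99\la^{-3/4}\}$ stays inside $J_{n+1}$ for $E'\in[E-e_n^-,E+e_n^+]$, which is exactly what the shadowing estimate needs.

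To install $\mathcal{B}_{n+1}$ I would introduce auxiliary probes $s_j(\theta)=\pi_2(\Phi^j(\theta+\omega,\infty))$ and $t_j(\theta)=\pi_2(\Phi^j(\theta+(\nu_{n+1}+1)\omega,\infty))$ and, using Proposition \ref{A5} again, decompose
\begin{equation*}
\vf_{n+1}(\theta)=s_{K_{n+1}-1}(\theta)-\frac{h(\theta)}{r_{M_{n+1}+1}(\theta)-w(\theta)}
\end{equation*}
in the non-resonant case, and analogously with a double shadowing (through $\nu_{n+1}$ and then $K_{n+1}-\nu_{n+1}-1$ steps) in the resonant case. The crucial point is that, via $\mathcal{A}_n$, the denominator $r_{M_{n+1}+1}-w$ is $C^2$-close to $\vf_n$ on $I_n$, so it inherits from $\mathcal{B}_n$ exactly the parabola-or-monotone-pair structure needed as input to the geometric lemmas: Lemma \ref{L_B1} in case $(i)_n$, Lemma \ref{L_B2} in case $(ii)_n$, and Lemma \ref{L_B5} followed by \ref{L_B1} or \ref{L_B2} in the resonant sub-case. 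The derivative bound \eqref{IS_eq2} at scale $n+1$ comes from the amplification factor $1/h\sim \la^{(4/3)K_{n+1}}$ applied to the bounds on the denominator, while \eqref{IS_eq1} is cheap because $J_{n+2}$ is forced into the region where the denominator is tiny, far from $\partial I_{n+1}$. The concluding clauses about same-direction bending and about $e_{n+1}^\pm=e_n^\pm$ (with the new exclusion intervals $\widetilde e_{n+1}^\pm$) follow because the sign of $\vf_{n+1}''$ near its critical point is determined by the sign of $h$ times the reciprocal's second derivative, whose sign is fixed by the fact that the denominator is forced to stay on one side of zero.

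The main obstacle is the resonant case of the geometric step: one must ensure that the ``gap'' produced by Lemma \ref{L_B5} between the two candidate intervals $I_{n+1}',I_{n+1}''$ is wide enough to exclude one of them from $J_{n+2}$, exactly as in Case III of the base case. This rests on the inequality $\la^{-2(\nu_{n+1}+1)}\gg \la^{-(4/3)(K_{n+1}-\nu_{n+1}-2)}$, which is the reason the scale choices satisfy $\nu_{n+1}\le K_{n+1}/2$ with enormous slack; any deterioration of this slack across the induction would be fatal, and verifying that the iterative choices of $K_{n+1}\in[\la^{K_n/(60\tau)},\la^{K_n/(20\tau)}]$ preserve it is the most delicate bookkeeping in the argument.
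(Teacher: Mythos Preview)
Your overall strategy matches the paper's, but two of the concrete steps you propose would fail.

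\textbf{Choice of $K_{n+1}$ and $M_{n+1}$.} You fix $K_{n+1}=[\la^{K_n/(60\tau)}]$ (or $[\la^{K_n/(20\tau)}]$) and $M_{n+1}=K_{n+1}^2$, and claim that $\mathcal{A}_{n+1}(2)$ then follows from Lemmas \ref{BE7} and \ref{BE8}. It does not. Condition $\mathcal{A}_{n+1}(2)$ asks that the \emph{specific} translates $I_{n+1}+K_{n+1}\omega$ and $I_{n+1}-M_{n+1}\omega$ avoid $\bigcup_{j=0}^{n}\bigcup_{|m|\le 2M_j}(2I_j+m\omega)$. Lemmas \ref{BE7}--\ref{BE8} only control translates of a \emph{single} interval relative to itself; they say nothing about landing outside the accumulated bad set at all previous scales. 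The paper handles this via a separate counting argument (Sublemma \ref{SL1}): the bad set has density $\ll 1$ on any window of length $\sim\la^{K_{n-1}/(9\tau)}$, so one can \emph{choose} $K_{n+1}$ in the stated range (and then $M_{n+1}\in[K_{n+1}^2,2K_{n+1}^2]$) so that $(2)_{n+1}$ holds. Without this freedom the rest of the proof collapses, since every later use of shadowing (e.g.\ Sublemmas \ref{SL10} and \ref{SL12}) relies on $I_{n+1}\pm$(these particular shifts) sitting in the good set.

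\textbf{Decomposition of $\vf_{n+1}$.} You write $\vf_{n+1}=s_{K_{n+1}-1}-h/(r_{M_{n+1}+1}-w)$ with $s_j(\theta)=\pi_2(\Phi^j(\theta+\omega,\infty))$, and assert that $r_{M_{n+1}+1}-w$ is $C^2$-close to $\vf_n$. Both pieces are wrong. First, $\vf_n(\theta)$ is the $(M_n+K_n)$-th iterate of $(\theta-M_n\omega,\infty)$, so it lives over the fiber $\theta+K_n\omega$; the iterate that shadows it is $r_{M_{n+1}+K_n}$, not $r_{M_{n+1}+1}$. The paper's route is $I_{n+1}-M_{n+1}\omega\to I_{n+1}-M_n\omega\to I_{n+1}+K_n\omega\to I_{n+1}+K_{n+1}\omega$: one first shows $|r_{M_{n+1}-M_n}|\ge\la^{3/4}$ (Sublemma \ref{SL10}), then obtains $\|r_{M_{n+1}+K_n}-\vf_n\|_{C^2(J_{n+1})}<1$ (Sublemma \ref{SL11}), and finally shadows from $\theta+K_n\omega$ with a fresh probe (Sublemma \ref{SL12}), yielding
\[
\vf_{n+1}(\theta)=s_{K_{n+1}-K_n}(\theta)-\frac{h(\theta)}{r_{M_{n+1}+K_n}(\theta)-w(\theta)},\qquad s_j(\theta)=\pi_2(\Phi^j(\theta+K_n\omega,\infty)).
\]
Second, your reference orbit $s_j$ starting at $\theta+\omega$ cannot be controlled for $K_{n+1}-1$ steps: the forward orbit of $I_{n+1}+\omega$ will repeatedly meet lower-scale critical sets $I_j$, so the product hypotheses of Proposition \ref{A5} fail. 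Starting instead at $\theta+K_n\omega$ works precisely because $I_{n+1}+K_n\omega\subset I_n+K_n\omega$ lies in the good set by $\mathcal{A}_n(2)$, and one may then invoke the already-verified $(3)_{n+1}$ via Sublemma \ref{SL7}.

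A smaller point: in the resonant geometric step you cite Lemma \ref{L_B5}, but that lemma is tuned to the base case ($\|s\|_{C^2},\|g\|_{C^2}\lesssim\la$). At scale $n+1$ the $C^2$ norms are of order $\la^{6K_n}$, so you need the version with general parameters $d,D$, which is Lemma \ref{L_B6}.
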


\begin{figure}
\psfrag{a}{$-2\la^{-3/4}$}
\psfrag{b}{$2\la^{-3/4}$}
\psfrag{j1}{$J_{n+1}^1$}
\psfrag{j2}{$J_{n+1}^2$}
\psfrag{f1}{$\vf_{n+1}$}
\psfrag{f2}{$\vf_{n+1}$}
\includegraphics[width=10cm]{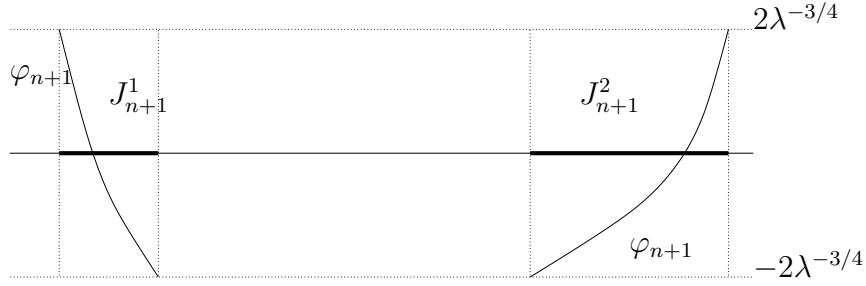}
\caption{Condition $\mathcal{B}(i)_n$}\label{fig10}
\end{figure}

\begin{figure}
\psfrag{a}{$-2\la^{-3/4}$}
\psfrag{b}{$2\la^{-3/4}$}
\psfrag{j}{$J_{n+1}$}
\psfrag{f}{$\vf_{n+1}$}
\includegraphics[width=10cm]{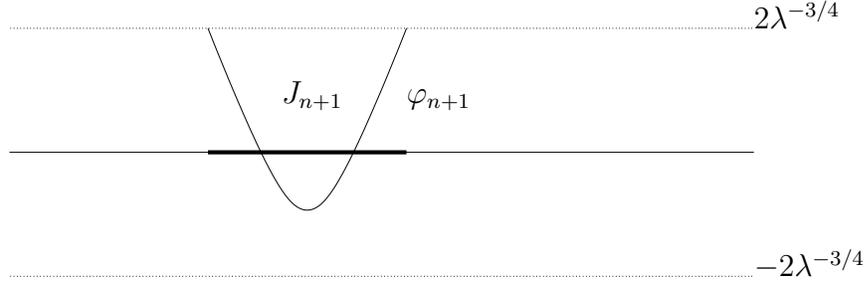}
\caption{Condition $\mathcal{B}(ii)_n$}\label{fig11}
\end{figure}

\begin{rem}a) The proof of this proposition is the heart of our construction. Essentially the proof
consists of two main parts: an arithmetic part ($\mathcal{A}_n$), and a geometric one ($\mathcal{B}_n$). 
These two parts could have been divided into
two separate propositions. However, we think the global picture becomes more transparent having only one inductive statement, 
instead of having to combine two propositions. In the proof we will clearly mark the different steps.

b) We shall see, in the next section, that the induction stops, i.e., $I_n=\emptyset$ for some $n$, exactly when the cocycle $(\omega,A_E)$ is
uniformly hyperbolic.

c) Concerning statement $\mathcal{A}_n$(4), recall the notation (\ref{notaIterE}), and the discussion preceding the notation.

d) The last part of the conclusions will tell us that we are close to an open gap in the case that both $\mathcal{B}_{n}(ii)$ and $\mathcal{B}_{n+1}(ii)$ hold.
\end{rem}

\begin{rem}\label{start_induction}
Note that $\mathcal{A}_0$ follows immediately from Lemmas \ref{BC1} and \ref{BC3} (condition $(2)_0$ is void), and $\mathcal{B}_0$ is verified in Lemma \ref{BC4}. Note also that $M_0,K_0, \nu_0$, 
as defined in section (\ref{base_case}), satisfy condition (\ref{il_eq1}), and the set $I_0$ satisfy the length assumptions. 
Thus the induction starts. It is now possible to go to Section \ref{the_proof} to see how the information gained from the inductive construction
is used to finish the proof of the main theorems.
\end{rem}

\begin{proof}[Proof of Proposition \ref{inductive_lemma}]
We assume that $n\geq0$ is fixed, and that all the inductive assumptions hold. 
The proof consists of several steps. We once again stress that the size of $\lambda$ does not depend on $n$.

Before we begin, it can be instructive to reflect on the sizes of the integers $M_j$ and $K_j$. Let 
$$
Z=[\la^{1/(60\tau)}].
$$
By taking $\lambda$ big, we can make $Z$ as large as we want. 
From (\ref{il_eq1}) we see that
$$
K_0\geq Z,
$$
and
$$
K_j\geq \la^{K_{j-1}/(60\tau)}\geq Z^{K_{j-1}} \text{ for } j=1,2,\ldots,n.
$$
Thus, $K_1\geq Z^Z, K_2\geq Z^{Z^{Z}}$, and so on.

\bigskip
\underline{Part 1 (verification of $\mathcal{A}_{n+1}$)}. As we mentioned above, this is the arithmetic part of the proof.
\bigskip

\noindent\emph{Step 1 (defining the set $I_{n+1}$ and choosing $M_{n+1},K_{n+1}$; verifying $(1-2)_{n+1}$)}. 
We first derive the following result.
\begin{sublem}\label{SL1}
Given any  $I\subset \T$, consisting of one or two intervals of length $\leq \la^{-K_{n-1}/9}$,
there exists an integer $T\in [0,\la^{K_{n-1}/(9\tau)}]$ such that
\begin{equation}\label{SL1_eq1}
I+T\omega\subset \T\setminus \bigcup_{j=0}^{n}\bigcup_{m=-2M_j}^{2M_j}(2I_j+m\omega).
\end{equation}
\end{sublem}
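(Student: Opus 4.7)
The plan is a direct counting argument. Let $T_{\max}=[\la^{K_{n-1}/(9\tau)}]$ and define the ``bad set''
\[
V=\bigcup_{j=0}^{n}\bigcup_{|m|\leq 2M_j}\bigl((2I_j-I)+m\omega\bigr)\subset\T,
\]
so that an integer $T$ violates (\ref{SL1_eq1}) precisely when $T\omega\in V$. My goal is to show that the number of bad $T\in[0,T_{\max}]$ is strictly less than $T_{\max}+1$, which will force the existence of at least one good $T$.

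First I would bound the components of $V$ level by level. Since $I$ consists of at most two intervals of length $\leq \la^{-K_{n-1}/9}$, and $2I_j$ of at most two intervals of length $\leq\la^{-K_{j-1}/9}$, and since $K_{j-1}\leq K_{n-1}$, the Minkowski difference $2I_j-I$ is a union of at most four intervals of length $\leq 2\la^{-K_{j-1}/9}$. Multiplying by the $4M_j+1$ translates and using the bound $M_j\leq 2K_j^2\leq 2\la^{K_{j-1}/(10\tau)}$ from (\ref{il_eq1}), at level $j$ I get at most $\co\,\la^{K_{j-1}/(10\tau)}$ intervals of length $\leq 2\la^{-K_{j-1}/9}$.

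Next, I would invoke the Diophantine condition through Lemma \ref{BE7}: for an interval $J\subset\T$ of length $\leq\ell$, two integers $T_1<T_2$ with $T_i\omega\in J$ force $J\cap(J+(T_2-T_1)\omega)\neq\emptyset$, hence $T_2-T_1>(\kappa/\ell)^{1/\tau}$. Therefore
\[
\#\{T\in[0,T_{\max}]:T\omega\in J\}\leq 1+T_{\max}(\ell/\kappa)^{1/\tau}.
\]
Applying this to each of the $\co\,\la^{K_{j-1}/(10\tau)}$ components of $V$ at level $j$, and using the identity $1/(10\tau)-1/(9\tau)=-1/(90\tau)$, the bad-count contribution from level $j$ is bounded by
\[
\co\bigl(\la^{K_{j-1}/(10\tau)}+T_{\max}\la^{-K_{j-1}/(90\tau)}\bigr).
\]

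Finally, summing over $j\in[0,n]$: the super-exponential growth of the $K_j$ makes the first sum dominated by the $j=n$ term, giving $\co\,\la^{K_{n-1}/(10\tau)}=\co\,T_{\max}^{9/10}$, while the second is a rapidly converging sum dominated by the $j=0$ term $\co\,T_{\max}\la^{-1/(90\tau)}$. Both are $o(T_{\max})$ for $\la$ large, so at least one good $T$ must exist. The only delicate point is that the negative margin $-1/(90\tau)$ has to survive multiplication by $T_{\max}=\la^{K_{n-1}/(9\tau)}$; this is precisely why the exponent $1/(9\tau)$ (strictly larger than the $K_j$-growth exponent $1/(20\tau)$, but strictly smaller than the length exponent $1/9$ after the Diophantine extraction) is used in the definition of $T_{\max}$ in the statement.
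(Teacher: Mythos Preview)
Your argument is correct and is essentially the same counting argument as in the paper. Both bound the number of bad $T\in[0,\la^{K_{n-1}/(9\tau)}]$ level by level using the Diophantine condition, obtaining the same decisive exponent $-K_{j-1}/(90\tau)$ from $1/(10\tau)-1/(9\tau)$. The only cosmetic difference is bookkeeping: you recast ``$(I+T\omega)\cap(2I_j+m\omega)\neq\emptyset$'' as ``$T\omega\in(2I_j-I)+m\omega$'' and count hits in each Minkowski-difference interval directly, whereas the paper observes that any such hit forces $I^\iota+T\omega\subset 4I_j^i$ and then uses Lemma~\ref{BE7} on $4I_j^i$ to space consecutive bad $T$'s by at least $N_j\approx\la^{K_{j-1}/(9\tau)}$, counting in blocks of length $N_j$. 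Both routes give the same bound $\co\,T_{\max}\sum_j\la^{-K_{j-1}/(90\tau)}\ll T_{\max}$.
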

\begin{proof}
Let 
$$
\varepsilon_j=\la^{-K_{j-1}/9}.
$$
We shall consider the worst case, when $I$ and all the $I_j$ ($j=0, 1,\ldots, n$) consists of two intervals, i.e., $I_j=I_j^1\cup I_j^2$ and
$I=I^1\cup I^2$. 
By assumption we have $|I_j^i|\leq \varepsilon_j$, and $|I^i|\leq \ve_n$.

For each $j\in [0,n]$ we have the following.
Let $N_j=(\kappa)^{1/\tau} (1/(4\ve_j))^{1/\tau}$. By Lemma \ref{BE7} we then have
$$
(4I_j^i)\cup (4I_j^i+m\omega)=\emptyset\quad \text{for all } 1\leq |m|\leq N_j ~(i=1,2).
$$
Since $|I^\iota|\leq \ve_n\leq \ve_j$ ($\iota=1,2$), we know that if $(I^\iota+t\omega)\cap 2I_j^i\neq \emptyset$ for some $t\in\mathbb{Z}$, then $I^\iota+t\omega\subset 4I_j^i$.
Hence, for any integers $\nu$ and $m$, there is at most one $p$ in the interval $[\nu,\nu+N_j]$ such that $(I^\iota+p\omega)\cap(2I_j^i+m\omega)$.
Thus it follows that for any integer $\nu$, there are at most $4(4M_j+1)$ integers $p\in [\nu,N_j+\nu]$ such that
$$
(I+p\omega)\cap\bigcup_{m=-2M_j}^{2M_j}(2I_j+m\omega)\neq \emptyset,
$$
and, therefore, at most $4(4M_j+1)(N_n/N_j+1)<40N_n(M_j/N_j)$ integers $p\in [1,N_n]$ such this relation holds. 

Consequently, there
are at most 
$40N_n(M_0/N_0+\ldots +M_n/N_n)$
integers in $[1,N_n]$ such that (\ref{SL1_eq1}) does not hold. Since each $N_j\approx \la^{K_{j-1}/(9\tau)}$, and, by assumption, $M_j\leq 2\la^{K_{j-1}/(10\tau)}$, we see that the
$$
\begin{aligned}
40N_n(M_0/N_0+\ldots +M_n/N_n)<&\co N_n\left(\frac{1}{\la^{1/(90\tau)}}+\frac{1}{\la^{K_0/(90\tau)}}+\ldots+\frac{1}{\la^{K_{n-1}/(90\tau)}}\right) \\& \ll \la^{K_{n-1}/(9\tau)}.
\end{aligned}
$$
Thus we can find (extremely many) $T\in [1,\la^{K_{n-1}/(9\tau)}]$ such that (\ref{SL1_eq1}) holds.
\end{proof}

We can now define $I_{n+1},M_{n+1}$ and $K_{n+1}$, of the right sizes, such that $(1)_{n+1}$, $(2)_{n+1}$ and $(\ref{il_eq1})_{n+1}$ hold. 

(0) First, 
if the set $J_{n+1}$ (in condition $\mathcal{B}_n$) is empty, or contains one single point, then we let $$I_{n+1}=\emptyset.$$ 
If not, that is, $J_{n+1}$ consist of one or two intervals, we have to check for resonances. 
By the derivative assumptions (\ref{IS_eq2}) on the function $\vp(\theta)$ in $\mathcal{B}_n$, and the definition of $J_{n+1}$, we get,
using Lemma \ref{BE9}, that 
\begin{equation}\label{P1S1_eq2}
|J_{n+1}|\leq 4\sqrt{2\la^{-3/4}/\la^{K_n/2}}\ll \la^{-K_n/9}.
\end{equation}
 
We now have two cases:

(1) If
\begin{equation}\label{P1S1_eq1}
J_{n+1}\cap\bigcup_{0<|m|\leq [\la^{K_n/(25\tau)}]}(J_{n+1}+m\omega)=\emptyset,
\end{equation}
we let $$I_{n+1}=J_{n+1}$$
and
$$
\nu_{n+1}=0.
$$ 
In this case we automatically have $I_{n+1}\subset I_n$ since $J_{n+1}\subset I_n$ by definition. 
Moreover, applying Sublemma \ref{SL1} with $I=I_{n+1}+[\la^{K_n/(60\tau)}]\omega$ shows that there exists an
$$K_{n+1}\in [\la^{K_n/(60\tau)},2\la^{K_n/(60\tau)}]$$ such that the first condition in $(2)_{n+1}$ holds. Applying
Sublemma \ref{SL1} with $I=I_{n+1}-2K_{n+1}^2\omega$ shows that there exists an
$M_{n+1}\in [K_{n+1}^2,2K_{n+1}^2]$ such that the second condition in $(2)_{n+1}$ holds. We note that condition $(1)_{n+1}$
follows from (\ref{P1S1_eq1}), since $M_{n+1}\approx \la^{K_n/(30\tau)}\ll \la^{K_n/(25\tau)}$.

\medskip
(2) If (\ref{P1S1_eq1}) does not hold, then $J_{n+1}$ must consists of two intervals (because of the estimate (\ref{P1S1_eq2}) and Lemma \ref{BE7}), 
$J_{n+1}^1$ and $J_{n+1}^2$, and there must be an integer
$$\nu_{n+1}\in [1,\la^{K_n/(25\tau)}]$$ such that $J_{n+1}^1\cap (J_{n+1}^2-\nu_{n+1}\omega)\neq \emptyset$. In this case we let 
$$I_{n+1}=J_{n+1}^1\cup (J_{n+1}^2-\nu_{n+1}\omega),$$ which, recalling (\ref{P1S1_eq2}), is an interval of length $\ll \la^{-K_n/9}$. Using assumption (\ref{IS_eq1}) shows that 
\begin{equation}\label{Step1_In}
I_{n+1}\subset I_n \text{ and } I_{n+1}+\nu_{n+1}\omega\subset I_n.
\end{equation} 
We also note that since $J_{n+1}^i\subset I_{n}$ ($i=1,2$) it follows immediately from $\mathcal{A}_n(1)_n$ that we have the lower bound
\begin{equation}\label{Step1_nu}
\nu_{n+1}>2M_n.
\end{equation}

Applying Sublemma \ref{SL1} with $I=I_{n+1}+[0.5\la^{K_n/(20\tau)}]\omega$
gives us an $$K_{n+1}\in [(1/2)\la^{K_n/(20\tau)},\la^{K_n/(20\tau)}]$$ such that the first condition in $(2)_{n+1}$ holds. Applying 
the Sublemma with $I=I_{n+1}-2K_{n+1}^2\omega$ gives us $M_{n+1}\in [K_{n+1}^2,2K_{n+1}^2]$ such that the second condition in $(2)_{n+1}$ holds. Condition $(1)_{n+1}$ follows
from Lemma \ref{BE7} and the length estimate $|I_{n+1}|\ll \la^{-K_n/9}$ (recall that $M_{n+1}\approx \la^{K_n/(10\tau)}$).

\medskip

From the above definitions we note the we always have the following:
\begin{sublem}\label{SL1.1}
If $\theta_0\in \Theta_{n+1}$ and $I_{n+1}\neq \emptyset$, then $\N(\theta_0;I_{n+1})\leq \N(\theta_0;J_{n+1})$.
\end{sublem}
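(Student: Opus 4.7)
The plan is to unwind the two possible definitions of $I_{n+1}$ constructed in Step 1 and apply one elementary translation trick. Set $N=\N(\theta_0;J_{n+1})$; if $N=\infty$ there is nothing to prove, so assume $N<\infty$ and hence $\theta_N\in J_{n+1}$.

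First I would dispose of the non-resonant subcase: by construction $I_{n+1}=J_{n+1}$ (and $\nu_{n+1}=0$), so $\theta_N\in I_{n+1}$ and $\N(\theta_0;I_{n+1})\leq N$ on the nose. In the resonant subcase $I_{n+1}=J_{n+1}^1\cup (J_{n+1}^2-\nu_{n+1}\omega)$, and I would split on where $\theta_N$ sits. If $\theta_N\in J_{n+1}^1\subset I_{n+1}$ we again conclude directly. If instead $\theta_N\in J_{n+1}^2$, then $\theta_{N-\nu_{n+1}}=\theta_N-\nu_{n+1}\omega\in J_{n+1}^2-\nu_{n+1}\omega\subset I_{n+1}$, which yields $\N(\theta_0;I_{n+1})\leq N-\nu_{n+1}\leq N$ once one knows that $N\geq \nu_{n+1}$, so that the index $N-\nu_{n+1}$ is actually a nonnegative entry time.

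The only non-trivial step, therefore, is verifying the inequality $N\geq \nu_{n+1}$, and this is precisely where the hypothesis $\theta_0\in\Theta_{n+1}$ is used. By the definition of $\Theta_{n+1}$, $\theta_0\notin I_{n+1}+m\omega$ for every $m\in[0,\nu_{n+1}]$. If one had $N<\nu_{n+1}$, then from $\theta_{N-\nu_{n+1}}\in I_{n+1}$ I would read off $\theta_0\in I_{n+1}+(\nu_{n+1}-N)\omega$ with the shift $\nu_{n+1}-N$ lying in $[1,\nu_{n+1}]$, contradicting membership in $\Theta_{n+1}$. This closes the argument; the whole proof is short, but this forbidden-translate observation is the key point that the definition of $\Theta_{n+1}$ was tailored to enable.
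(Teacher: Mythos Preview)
Your proof is correct and follows essentially the same approach as the paper's own proof: dispose of the non-resonant case trivially, and in the resonant case translate back by $\nu_{n+1}\omega$ and use the definition of $\Theta_{n+1}$ to rule out the possibility that the resulting index is negative. The only cosmetic difference is that the paper phrases the contradiction as $\theta_0\in\bigcup_{m=1}^{\nu_{n+1}}(I_{n+1}+m\omega)$ rather than isolating the single shift $\nu_{n+1}-N$, but the content is identical.
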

\begin{proof}In the case when $I_{n+1}=J_{n+1}$ there is nothing to prove. We therefore focus on the resonant case, i.e., the case
when $I_{n+1}=J_{n+1}^1\cup (J_{n+1}^2-\nu_{n+1}\omega)$. Assume that $\theta_k\in J_{n+1}^2$. Then $\theta_{k-\nu_{n+1}}\in I_{n+1}$.
If $k-\nu_{n+1}\geq 0$ we have $\N(\theta_0;I_{n+1})<k$. If $k-\nu_{n+1}<0$, we have that $\theta_0\in \bigcup_{m=1}^{\nu_{n+1}}(I_{n+1}+m\omega)$,
which means that $\theta_0\notin \Theta_{n+1}$ (recall the definition of $\Theta_{n+1}$ in (\ref{setsTGS})), contradicting the assumptions.
\end{proof}

\bigskip
\noindent\emph{Step 2 (verifying $(3)_{n+1}$)}:
To obtain $(3)_{n+1}$ we shall use the assumptions $\mathcal{A}_n$ and $\mathcal{B}_n$. First we derive some facts which
we will need. 

The first three sublemmas will be used repeatedly when we verify $(a), (b)$ and $(c)$ in $(3)_{n+1}$.

\begin{sublem}\label{SL2}
Assume that $|r_0|\geq \la^{3/4}$ and that $\theta_0\in\Theta_{n}$ is such that $\N(\theta_0;I_n)\geq M_n/2$ . 
For every $k\in [0, \N(\theta_0;I_n)+2K_n]$ such that $|r_k|\geq\la^{-2}$
we have the estimate 
$$
|\rho_0|^{\alpha_0}\cdots |\rho_l|^{\alpha_l}\geq 
\la^{(2/3+\left(1/12\right)^{n+2})(k+1)}~\text{for any choices of } \alpha_i\in [1,6],  
$$
where $r_0\cdots r_k=\rho_0\cdots \rho_l$.  
\end{sublem}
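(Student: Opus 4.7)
The plan is purely arithmetic: I will apply $\mathcal{A}_n(3)(a)$ to the initial segment up to step $N=\mathcal{N}(\theta_0;I_n)$, which provides so much exponential growth (because $N\geq M_n/2$ is very large) that the at most $2K_n$ remaining pairing blocks in $[N{+}1,k]$ can be bounded below trivially using $|\rho_i|\geq\la^{-2}$ from Lemma~\ref{BE4}. The resulting loss is fully absorbed by the tiny weakening of the exponent from $(1/12)^{n+1}$ to $(1/12)^{n+2}$, thanks to the quadratic bound $M_n\geq K_n^2$.

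For $k\in[0,N]$ with $|r_k|\geq\la^{-2}$ the conclusion is immediate from $\mathcal{A}_n(3)(a)$, since $(1/12)^{n+1}>(1/12)^{n+2}$. For $k\in(N,N+2K_n]$ with $|r_k|\geq\la^{-2}$, I would write $r_0\cdots r_k=\rho_0\cdots\rho_l$ as in Lemma~\ref{BE4} and partition the $\rho_i$ into an \emph{initial} group consisting of those whose indices are all $\leq N$ (or $\leq N-1$ in the boundary case where $r_N$ is paired with $r_{N+1}$) and a \emph{tail} group containing at most $2K_n$ factors. Applying $\mathcal{A}_n(3)(a)$ to the initial group at the endpoint $k'\in\{N-1,N\}$ (whichever is the last index with $|r_{k'}|\geq\la^{-2}$) gives, for any $\alpha_i\in[1,6]$,
$$
\prod_{i\in\text{initial}}|\rho_i|^{\alpha_i}\;\geq\;\la^{(2/3+(1/12)^{n+1})(k'+1)}\;\geq\;\la^{(2/3+(1/12)^{n+1})(N-1)},
$$
while the tail contributes at least $\la^{-12\cdot 2K_n}=\la^{-24K_n}$ since $|\rho_i|\geq\la^{-2}$ and $\alpha_i\leq 6$.

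Multiplying, it remains to verify the elementary inequality
$$
(2/3+(1/12)^{n+1})(N-1)-24K_n \;\geq\; (2/3+(1/12)^{n+2})(N+2K_n+1),
$$
which rearranges to $11\cdot(1/12)^{n+2}N\gtrsim\co\cdot K_n$ (with an absolute constant on the right). Since $N\geq M_n/2\geq K_n^2/2$, this reduces to $K_n\gtrsim 12^{n+2}$, which is comfortable: $K_n\geq [\la^{K_{n-1}/(60\tau)}]$ grows doubly exponentially in $n$ (for $\la$ large, depending only on $f,\kappa,\tau$), whereas $12^{n+2}$ grows only exponentially.

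The only subtlety, and the place where I expect the bookkeeping to be slightly delicate, is the boundary case $|r_N|<\la^{-2}$: then the $\rho$-block $r_Nr_{N+1}$ straddles the split, forcing me to take $k'=N-1$ and to argue that $|r_{N-1}|\geq\la^{-2}$ (which holds automatically, since otherwise Lemma~\ref{BE4} would have paired $r_{N-1}$ with $r_N$ instead). This adjusts one or two constants but leaves the arithmetic inequality valid with room to spare, so the argument goes through uniformly in $n$.
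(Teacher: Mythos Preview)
Your proposal is correct and follows essentially the same route as the paper: split the $\rho$-decomposition near $N=\mathcal{N}(\theta_0;I_n)$, apply $\mathcal{A}_n(3)(a)$ to the long initial block, bound the short tail trivially by $|\rho_i|\geq\la^{-2}$, and absorb the loss via $N\geq M_n/2\geq K_n^2/2$ together with the super-exponential growth of $K_n$. One small wording issue: your justification that $|r_{N-1}|\geq\la^{-2}$ in the boundary case should cite Lemma~\ref{BE99} (two consecutive iterates cannot both be $<\la^{-2}$) rather than the pairing rule in Lemma~\ref{BE4}, but the conclusion is correct and the argument goes through.
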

\begin{proof}
Let $R=\N(\theta_0;I_n)$. From the assumption on $\theta_0$ we have that $R\geq M_n/2$. If $k\leq R$, then
the statement follows directly from $(3)_n(a)$. We therefore assume that $k\in [R+1,R+2K_n]$. Since not both
$|r_{R}|$ and $|r_{R-1}|$ can be smaller than $\lambda^{-2}$, let us assume that $|r_{R-1}|\geq \la^{-2}$. Then we
can write $r_0\cdots r_{R-1}$ as $\rho_0\cdots \rho_L$, and $r_{R}\cdots r_k$ as $\rho_{L+1}\cdots \rho_l$.  
Since we know that each $|\rho_i|\geq \la^{-2}$ (Lemma \ref{BE4}), we have that 
$$
|\rho_0|^{\alpha_0}\cdots |\rho_L|^{\alpha_L}|\rho_{L+1}|^{\alpha_{L+1}}\cdots |\rho_l|^{\alpha_l}\geq 
\la^{(2/3+\left(1/12\right)^{n+1})R}\cdot \la^{-6\cdot2(k-R+1)}
$$
where we have used $(3)_n(a)$ to estimate the first factor.
An elementary computation, using the fact that $R\geq M_n/2\geq K_n^2/2$ and $0<k-R\leq 2K_n$ shows that
$$(2/3+\left(1/12\right)^{n+1})R -12(k-R+1)>(2/3+\left(1/12\right)^{n+2})(k+1).$$ The crucial fact here is that
$(1/12)^{n+2}M_n/4-13K_n\geq ((1/12)^{n+1}K_n/4-13)K_n\gg 0$
(recall the extreme growth of the sequence $(K_j)_{j\geq 0}$, as discussed in the beginning of the proof).
\end{proof}

\begin{sublem}\label{SL2.5}
Assume that for some $(\theta_0,r_0)$ we have $\theta_{K_n+1}\in \Theta_{n}$ and $|r_{K_n+1}|\geq \la^{3/4}$, and 
there is an integer $T$ such that
$M_n/2\leq T\leq \N(\theta_{K_n+1};I_n)$ and $\theta_T\in S_{n-1}$. Then $|r_T|\geq \la^{3/4}$ and for any $l\in [0,L]$ we have
$$
|\rho_l|^{\alpha_l}\cdots |\rho_L|^{\alpha_L}\geq \la^{(2/3+\left(1/12\right)^{n+2})(L-l+1)}~\text{for any choices of } 
\alpha_i\in [1,6], 
$$
where $r_0\cdots r_T=\rho_0\cdots \rho_L$.
\end{sublem}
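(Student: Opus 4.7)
The plan is to deduce Sublemma \ref{SL2.5} from the inductive hypothesis $\mathcal{A}_n(3)(b)$ applied to the shifted initial condition $(\theta_{K_n+1},r_{K_n+1})$. Since by assumption $\theta_{K_n+1}\in\Theta_n$ and $|r_{K_n+1}|\geq\la^{3/4}$, this application is legitimate. Setting $T'=T-K_n-1$, one has $\theta_{K_n+1+T'}=\theta_T\in S_{n-1}$ and $T'\leq \N(\theta_{K_n+1};I_n)$, so $\mathcal{A}_n(3)(b)$ yields immediately the first conclusion $|r_T|\geq\la^{3/4}$, and moreover the product estimate
$$
|\tilde\rho_p|^{\alpha_p}\cdots |\tilde\rho_{L'}|^{\alpha_{L'}}\geq \la^{(2/3+(1/12)^{n+1})(L'-p+1)},\qquad p\in [0,L'],
$$
where $r_{K_n+1}\cdots r_T=\tilde\rho_0\cdots\tilde\rho_{L'}$ is the rewriting furnished by Lemma \ref{BE4}.

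The remaining work is to transfer this bound to the rewriting $r_0\cdots r_T=\rho_0\cdots\rho_L$ of the full product. Given $l\in[0,L]$, I would split into two cases according to which $r_j$'s the block $\rho_l\cdots\rho_L$ involves. If every factor in $\rho_l\cdots\rho_L$ comes from $\{r_{K_n+1},\ldots,r_T\}$, then the two rewritings agree on this tail, and the desired estimate follows from the bound above together with the trivial observation that $(1/12)^{n+1}\geq (1/12)^{n+2}$. Otherwise one picks the largest index $L_0$ so that $\rho_l\cdots\rho_{L_0}$ only involves $r_j$'s with $j\leq K_n+1$; by Lemma \ref{BE4} we have $L_0\leq K_n+1$, and the prefix satisfies
$$
|\rho_l|^{\alpha_l}\cdots|\rho_{L_0}|^{\alpha_{L_0}}\geq \la^{-12(K_n+2)}
$$
since each $|\rho_i|\geq \la^{-2}$ and each $\alpha_i\leq 6$. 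The suffix $\rho_{L_0+1}\cdots\rho_L$ is, up to possibly dropping one boundary $\tilde\rho$-factor, the corresponding tail of the rewriting of $r_{K_n+1}\cdots r_T$, and is bounded below by $\la^{(2/3+(1/12)^{n+1})(L-L_0)}$ by the first display.

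Combining, one needs the arithmetic inequality
$$
-12(K_n+2)+\Bigl(\tfrac{2}{3}+(1/12)^{n+1}\Bigr)(L-L_0)\geq \Bigl(\tfrac{2}{3}+(1/12)^{n+2}\Bigr)(L-l+1).
$$
Since $L-L_0\geq L-l+1-(K_n+2)$ and $L-l+1\geq T-K_n\geq M_n/2-K_n\geq K_n^2/4$ (using $T\geq M_n/2$ and $M_n\geq K_n^2$), the gap term $\bigl((1/12)^{n+1}-(1/12)^{n+2}\bigr)(L-l+1)=\tfrac{11}{12}(1/12)^{n+2}(L-l+1)$ dominates the $O(K_n)$ losses coming from the boundary and from the prefix — exactly as in the arithmetic at the end of the proof of Sublemma \ref{SL2}, where the decisive inequality is $(1/12)^{n+2}K_n^2/4-13K_n\gg 0$, guaranteed by the super-exponential growth of $(K_j)$.

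The principal obstacle is the bookkeeping at the transition around index $K_n+1$: the pairing rule in Lemma \ref{BE4} depends on the whole context, so the $\rho$-decomposition of $r_0\cdots r_T$ need not align with that of $r_{K_n+1}\cdots r_T$ at the boundary (in particular, $r_{K_n+1}$ could get paired either with $r_{K_n}$ or with $r_{K_n+2}$). This forces one to allow an extra boundary $\tilde\rho$-factor to be absorbed into the bounded prefix, but as the cost is only an additional $\la^{-12}$ it is absorbed harmlessly in the arithmetic above, exactly as in Sublemma \ref{SL2}.
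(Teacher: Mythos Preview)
Your proof is correct and follows essentially the same approach as the paper: apply $\mathcal{A}_n(3)(b)$ to the shifted point $(\theta_{K_n+1},r_{K_n+1})$, then absorb the at most $O(K_n)$ boundary $\rho$-factors via the trivial bound $|\rho_i|\geq\la^{-2}$, using the slack between the exponents $(1/12)^{n+1}$ and $(1/12)^{n+2}$ together with $L\gtrsim M_n\geq K_n^2$. Two minor slips to clean up: the lower bound should read $L-l+1\geq (T-K_n)/2$ rather than $T-K_n$ (each $\rho$ may absorb two $r$'s; compare the paper's $L\geq T/2\geq M_n/4$), and the gap term is $11\cdot(1/12)^{n+2}$, not $\tfrac{11}{12}(1/12)^{n+2}$ --- neither affects the argument.
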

\begin{proof}The proof is similar to the previous one. 
Applying $(3)_n(b)$ to the point $(\theta_{K_n+1},r_{K_n+1})$, using the assumption on $T$, we 
get $|r_T|\geq \la^{3/4}$. Thus we can write $r_0\cdots r_T$ as $\rho_0\cdots \rho_L$. Let $\rho_p$ ($p\leq K_n+1$)
be the $\rho$ containing $r_{K_n+1}$. By  $(3)_n(b)$ we have for any $l\in[p+1,L]$
$$
|\rho_l|^{\alpha_l}\cdots |\rho_L|^{\alpha_L}\geq\la^{(2/3+\left(1/12\right)^{n+1})(L-l+1)}~\text{for any choices of } 
\alpha_i\in [1,6]. 
$$
Since $|\rho_0|^{\alpha_0}\cdots |\rho_p|^{\alpha_p}\geq \la^{-12(p+1)}$ if $\alpha_i\in[1,6]$, and since $p\leq K_n+1$ and $L\geq T/2\geq M_n/4$,
the estimate in the statement follows, as in the proof of the previous lemma
\end{proof}

\begin{sublem}\label{SL3}
Assume that $|r_0|\geq \la^{3/4}$ and $\theta_0\in \Theta_{n}$. Then
$$
|r_k|<\la^{3/4}, k\in [0,\N(\theta_0;I_n)+K_n] \quad\Rightarrow \quad \theta_k\in G_n.
$$
\end{sublem}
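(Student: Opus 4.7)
\medskip
The plan is to split the range $[0,\N(\theta_0;I_n)+K_n]$ at the first return time $N:=\N(\theta_0;I_n)$ and handle the two sub-ranges $[0,N]$ and $[N+1,N+K_n]$ separately. The first uses the inductive hypothesis, the second is purely a combinatorial observation about where the orbit sits under the rigid translation $\theta \mapsto \theta+\omega$.

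For $k\in[0,N]$, the conclusion is essentially the content of $(3)_n(c)$ applied to the orbit of $(\theta_0,r_0)$: since $\theta_0\in\Theta_n\subset\Theta_n$ and $|r_0|\geq\la^{3/4}$, the assumption $|r_k|<\la^{3/4}$ forces $\theta_k\in G_{n-1}$. From the definition of $G_n$ in (\ref{setsTGS}) one has the obvious nesting $G_{n-1}\subset G_n$, so $\theta_k\in G_n$ as required.

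For $k\in[N+1,N+K_n]$ one does not need the hypothesis $|r_k|<\la^{3/4}$ at all, and in fact the conclusion is unconditional. By the very definition of $N=\N(\theta_0;I_n)$ we have $\theta_N\in I_n$, and hence
\[
\theta_k=\theta_N+(k-N)\omega\in I_n+(k-N)\omega
\]
with $k-N\in[1,K_n]$. Since $\bigcup_{m=1}^{K_n}(I_n+m\omega)$ is one of the blocks appearing in $G_n=\bigcup_{j=0}^{n}\bigcup_{m=1}^{K_j}(I_j+m\omega)$, we conclude $\theta_k\in G_n$.

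I expect no real obstacle here: the sublemma is an almost formal bookkeeping step whose role is to upgrade the ``landing set'' for small iterates from $G_{n-1}$ (provided at the previous stage) to $G_n$, at the cost of extending the time window by $K_n$ past the first visit to $I_n$. The only thing one needs to be slightly careful about is the degenerate case $N=\infty$, but there the range reduces to $[0,\infty)$ and only the first sub-case occurs, which is handled directly by $(3)_n(c)$.
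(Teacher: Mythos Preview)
Your proof is correct and follows essentially the same approach as the paper: split at $N=\N(\theta_0;I_n)$, invoke $(3)_n(c)$ together with $G_{n-1}\subset G_n$ on $[0,N]$, and observe that $\theta_k\in\bigcup_{m=1}^{K_n}(I_n+m\omega)\subset G_n$ unconditionally on $[N+1,N+K_n]$. Your remark about $N=\infty$ is harmless extra care, though in the present context $I_n$ is a nonempty union of intervals and $\omega$ irrational, so $N$ is always finite.
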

\begin{proof}For $k\in [0,\N(\theta_0;I_n)]$, it follows from $(3)_n(c)$ that if $|r_k|<\la^{3/4}$, then $\theta_k\in 
G_{n-1}\subset G_n$. If $k\in [\N(\theta_0;I_n)+1,\N(\theta_0;I_n)+K_n]$, then we have, by the definition of $\N(\theta_0;I_n)$, that 
$\theta_k\in (I_n+\omega)\cup \cdots \cup (I+K_n\omega)\subset G_n$. Thus the statement holds.
\end{proof}

\bigskip

In the following two sublemmas we shall make use of the definition of the set $J_{n+1}$ in $\mathcal{B}_n$. Here we are using the idea of "shadowing".  

\begin{sublem}\label{SL5}
If $\theta_0\in (I_n\setminus J_{n+1})-M_n\omega$ (or if $\theta_0\in I_n-M_n\omega$ and the set $J_{n+1}$ consists of one single point) and 
$|r_0|\geq \la^{3/4}$, then $|r_{M_n+K_n+1}|,|r_{M_n+K_n+2}|\geq \la^{3/4}$.
\end{sublem}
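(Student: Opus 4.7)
The strategy is to show that $r_{M_n+K_n}$ shadows $\varphi_n(\theta)$ extremely closely (where $\theta := \theta_0 + M_n\omega \in I_n$), and then, using the lower bound on $|\varphi_n(\theta)|$ coming from $\mathcal{B}_n$, to propagate one and two more steps by applying Lemma \ref{BE1} twice.

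For the shadowing, I first verify that $\theta_0 = \theta - M_n\omega \in \Theta_n$ and that $\mathcal{N}(\theta_0; I_n) = M_n$: this reduces to checking that $\theta \in I_n$ avoids $I_j + p\omega$ for $j < n$ with $p$ in a window of radius $\leq 2M_j$ around $M_n$, and $I_n + p\omega$ for $p \in [M_n, M_n+\nu_n] \subseteq [M_n, 2M_n]$, both of which follow from $\mathcal{A}_n(1)_n$, $\mathcal{A}_n(2)_n$, and the bound $\nu_j \leq K_j/2 \leq M_j$. Hence $\mathcal{A}_n(3)_n(a,b)$ controls the products along the orbit of $(\theta_0, r_0)$ up to time $M_n$. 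An identical analysis, together with the observation that $|\la f(\theta_0)-E| \geq 2\la^{3/4}$ (since $\theta_0 \in I_n - M_n\omega$ is disjoint from $2I_0 \supseteq J_0$ by $\mathcal{A}_n(2)_n$), controls the products along the orbit of $(\theta_0,\infty)$. Feeding these into the general shadowing identity (Proposition \ref{A5}) produces
$$
r_{M_n+K_n} = \varphi_n(\theta) - \frac{h(\theta)}{r_0 - w(\theta)}
$$
with $|h(\theta)|$ super-exponentially small in $M_n$ and $|w(\theta)| < 1$; combined with $|r_0| \geq \la^{3/4}$ this yields $|r_{M_n+K_n} - \varphi_n(\theta)| \ll \la^{-3/4}$.

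For the lower bound on $|r_{M_n+K_n}|$: in the first case of the hypothesis, $\theta \notin J_{n+1}$ gives $|\varphi_n(\theta)| > 2\la^{-3/4}$ directly from the definition of $J_{n+1}$ in $\mathcal{B}_n$; in the second case, $J_{n+1}$ being a single point forces $|\varphi_n(\theta)| \geq 2\la^{-3/4}$ for every $\theta \in I_n$ (otherwise the set $\{|\varphi_n| \leq 2\la^{-3/4}\}$ would, by continuity, contain an interval). Combined with the shadowing estimate, this gives $|r_{M_n+K_n}| > \la^{-3/4}$ in both cases.

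Finally, $\mathcal{A}_n(2)_n$ says $I_n + K_n\omega$ is disjoint from $2I_j + m\omega$ for all $j < n$ and $|m| \leq 2M_j$. Since $J_0 \subseteq \bigcup_{m=0}^{\nu_0}(I_0 + m\omega) \subseteq \bigcup_{|m| \leq 2M_0}(2I_0 + m\omega)$ (using $\nu_0 \leq K_0/2 \leq 2M_0$), this yields $\theta + K_n\omega \notin J_0$; the same inclusion with $m$ shifted by one gives $\theta + (K_n+1)\omega \notin J_0$. Lemma \ref{BE1} applied to $(\theta + K_n\omega, r_{M_n+K_n})$ then gives $|r_{M_n+K_n+1}| > \la^{3/4}$, and a second application gives $|r_{M_n+K_n+2}| > (3/2)\la^{3/4}$. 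The main obstacle is the shadowing: what makes it work is the asymmetry $M_n \geq K_n^2 \gg K_n$, so the exponential gain from $\mathcal{A}_n(3)_n$ over the first $M_n$ iterates overwhelms any correction from the last $K_n$ iterates of the probe.
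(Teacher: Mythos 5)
Your high-level strategy is the same as the paper's (shadow the probe orbit through $I_n$, use $\theta \notin J_{n+1}$ to bound the probe value away from $0$, then push forward with Lemma \ref{BE1}), but your substitution of Proposition \ref{A5} for the tool the paper actually uses, Lemma \ref{BE20}, introduces a genuine gap. Proposition \ref{A5} applied at time $k=M_n+K_n-1$ requires $|r_{M_n+K_n-1}(\theta_0,\infty)|\geq \la^{-2}$ (so that $r_1\cdots r_k$ can be repackaged as $\rho_1\cdots\rho_l$), which is equivalent to an \emph{upper} bound on $|\vf_n(\theta)|$ of order $\la^2$. But the hypothesis $\theta\notin J_{n+1}$ only supplies the \emph{lower} bound $|\vf_n(\theta)|\geq 2\la^{-3/4}$; nothing prevents $\vf_n(\theta)$ from being enormous (the probe could pass very close to $0$ at time $M_n+K_n-1$), in which case your shadowing identity simply does not hold and the claimed estimate $|r_{M_n+K_n}-\vf_n(\theta)|\ll\la^{-3/4}$ is unjustified. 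This is precisely the obstruction that Lemma \ref{BE20} is designed to circumvent: the paper applies it one step later, at time $M_n+K_n+1$, because there the requisite bound $|s_{M_n+K_n+1}|<\la^{3/2}$ is \emph{automatic} from $|s_{M_n+K_n}|\geq 2\la^{-3/4}$ together with $\theta_{M_n+K_n}\in I_n+K_n\omega\subset\T\setminus J_0$ (so $|1/s_{M_n+K_n}|\leq\la^{3/4}/2$ while $|\la f-E|<\co\la$). This is also why the sublemma's conclusion is phrased at times $M_n+K_n+1$ and $M_n+K_n+2$ rather than at $M_n+K_n$. Your argument is rescuable — if $|\vf_n(\theta)|$ is huge then $|r_{M_n+K_n-1}|$ must be tiny and hence $|r_{M_n+K_n}|$ is huge directly — but you need to make that case split explicit, or simply use Lemma \ref{BE20} as the paper does.

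Two smaller points. First, Proposition \ref{A5} (and Lemma \ref{BE20}) need product control up to time $M_n+K_n-1$ (resp. $M_n+K_n$), whereas $\mathcal{A}_n(3)$ only reaches $\N(\theta_0;I_n)=M_n$; the paper bridges the extra $K_n$ steps with Sublemma \ref{SL2}, using exactly the $M_n\gg K_n$ asymmetry you allude to at the end, but you should invoke that sublemma explicitly rather than asserting "the exponential gain overwhelms." Second, your derivation of $\theta_0\in\T\setminus J_0$ and $\theta+K_n\omega, \theta+(K_n+1)\omega\notin J_0$ leans on $\mathcal{A}_n(2)_n$, which is void when $n=0$; for the base case the paper appeals instead to Lemma \ref{BC1}, and your argument needs the same caveat.
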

\begin{proof}
Using the definition of the function $\vp_n(\theta)$ in $\mathcal{B}_n$, we shall let $(\theta_0,r_0)$ ``shadow'' the point 
$(\theta_0,\infty)$ to obtain control on $r_{M_n+K_n+1}$. 

First we note that we have 
\begin{equation}\label{SL5_eq1}
\begin{aligned}
I_n+K_n\omega, I_n+(K_n+1)\omega, I_n-M_n\omega \subset \T\setminus J_0; \quad \text{and} \\
I_n-(M_n-1)\omega\subset \Theta_{n}. 
\end{aligned}
\end{equation}
Indeed, when $n=0$ the statements follow from Lemma \ref{BC1} (recall that $I_0=J_0$ in the non-resonant case). 
If $n\geq 1$ both statements follow from conditions $(1-2)_n$ in $\mathcal{A}_n$ (recall the definition of $\Theta_{n}$ in (\ref{setsTGS}), and 
the fact that we have $J_0=I_0$ or $J_0\subset I_0\cup (I_0+\nu_0\omega)$, where $0<\nu_0\ll K_0\ll M_0$).

Next we note that for all $|t_0|\geq \la^{3/4}$ we have
\begin{equation}\label{SL5_eq3}
|t_1\cdots t_{M_n+K_n}|\geq \la^{(2/3)(M_n+K_n)} \text { if } |t_{M_n+K_n}|\geq \la^{-2}.
\end{equation}
To see this, we first note that $|t_1|>\la^{3/4}$, since $\theta_0\in \T\setminus J_0$ (Lemma \ref{BE1}). Furthermore, by assumption
$\theta_1\in I_n-(M_n-1)\omega\subset \Theta_n$, and thus $\N(\theta_1;I_n)=M_n-1$ (by $(1)_n$). Hence the statement follows by applying 
Sublemma \ref{SL2} to $(\theta_1,t_1)$.

To continue we let $s_0=\infty$, and define $s_k=\pi_2(\Phi^k(\theta_0,s_0))$. The idea is to compare $r_{M_n+K_n+1}$ with $s_{M_n+K_n+1}$. By the definition of 
$\vp_n(\theta)$ in $\mathcal{B}_n$ we have $s_{M_n+K_n}=\vp_n(\theta_0+M_n\omega)$. Since we have assumed that $\theta_0\notin J_{n+1}-M_n\omega$ 
(or that $J_{n+1}$ consists of one single point), we therefore 
have $|s_{M_n+K_n}|\geq 2\la^{-3/4}$. Moreover, $\theta_{K_n+M_n}\in I_n+K_n\omega\subset \T\setminus J_0$ (by (\ref{SL5_eq1})). Thus we have
\begin{equation}\label{SL5_eq2}
\la^{3/2}>|s_{M_n+K_n+1}|=|\la f(\theta_{K_n+M_n})-1/s_{M_n+K_n}|\geq (3/2)\la^{3/4}.
\end{equation}
Since (\ref{SL5_eq3}) holds, we can apply Lemma \ref{BE20} and conclude that $|s_{M_n+K_n+1}-r_{M_n+K_n+1}|\ll 1$, and hence $|r_{M_n+K_n+1}|>\la^{3/4}$.
Finally, $\theta_{K_n+M_n+1}\in I_n+(K_n+1)\omega\subset \T\setminus J_0$, it follows 
that $|r_{K_n+M_n+2}|\geq \la^{3/4}$.
\end{proof}

We also derive the following result, which will be used in the proof of Theorems \ref{Maintheorem} and \ref{thm2} in the next section.
\begin{sublem}\label{SL6}
If $\theta_0\in J_{n+1}-M_n\omega$ and $|r_0|\geq \la^{3/4}$, then $|r_{M_n+K_n}-\vp_n(\theta_0+M_n)|<\la^{-(4/3)M_n}.$
\end{sublem}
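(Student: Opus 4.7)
The plan is to apply the shadowing estimate of Lemma \ref{BE20}, comparing the orbit of $(\theta_0,r_0)$ with that of the reference point $(\theta_0,\infty)$. Set $s_0 = \infty$ and $s_j = \pi_2(\Phi^j(\theta_0,s_0))$; by the very definition of $\vf_n$ one then has $s_{M_n+K_n} = \vf_n(\theta_0 + M_n\omega)$, and the assumption $\theta_0 + M_n\omega \in J_{n+1}$ forces $|s_{M_n+K_n}| \leq 2\la^{-3/4}$. The conclusion of the sublemma thus becomes $|r_{M_n+K_n} - s_{M_n+K_n}| < \la^{-(4/3)M_n}$, an estimate exactly of the type delivered by Lemma \ref{BE20}.

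To set up the application I would first verify, by the same arithmetic bookkeeping already carried out in Sublemma \ref{SL5}, that $(1)_n$ and $(2)_n$ of $\mathcal{A}_n$ imply $\theta_0 \in \overline{\T \setminus J_0}$ and $\theta_1 := \theta_0 + \omega \in I_n - (M_n-1)\omega \subset \Theta_n$ with $\mathcal{N}(\theta_1; I_n) = M_n - 1 \geq M_n/2$. In particular $|r_1| \geq \la^{3/4}$ by Lemma \ref{BE1}, and trivially $|s_1| = |\la f(\theta_0) - E| \geq 2\la^{3/4}$. The key next step is to check the universal products-bound hypothesis of Lemma \ref{BE20} for the base orbit at $\theta_1$: for any seed $\tilde r$ with $|\tilde r| \geq \la^{3/4}$, setting $\tilde r_j = \pi_2(\Phi^j(\theta_1,\tilde r))$ one has $|\tilde r_1| \geq 2\la^{3/4} - \la^{-3/4} \geq \la^{3/4}$ since $\theta_1 \notin J_0$, and Sublemma \ref{SL2} applied to $(\theta_1,\tilde r_1)$, together with the pairing of Lemma \ref{BE4}, yields
\[
|\tilde r_1 \cdots \tilde r_k| \geq \la^{(2/3 + (1/12)^{n+2})k} \geq \la^{(2/3)k}
\]
whenever $k \in [1,\, \mathcal{N}(\theta_1; I_n) + 2K_n + 1]$ and $|\tilde r_k| \geq \la^{-2}$; this range comfortably includes the value $k = M_n + K_n - 2$ needed below.

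Now Lemma \ref{BE20}, applied to the base orbit at $\theta_1$ with the two initial fiber values $r_1$ and $s_1$ and with the integer $k$ of that lemma taken to be $M_n + K_n - 2$, has its smallness hypothesis satisfied because $|s_{M_n+K_n}| \leq 2\la^{-3/4} < \la^{3/2}$, and its conclusion reads
\[
|r_{M_n+K_n} - s_{M_n+K_n}| \leq 2\la^{-((4/3)(M_n+K_n-2) + 3/4)} < \la^{-(4/3)M_n},
\]
the last inequality holding because $(4/3)(K_n - 2) + 3/4 > 0$ for the very large $K_n$ supplied by the induction. The main obstacle is the arithmetic verification that $\theta_1 \in \Theta_n$ with $\mathcal{N}(\theta_1; I_n) = M_n - 1$, so that Sublemma \ref{SL2} may be invoked uniformly in the seed $\tilde r$ and thereby deliver the products hypothesis of Lemma \ref{BE20}; this is the same bookkeeping already performed in Sublemma \ref{SL5} and presents no essentially new difficulty here.
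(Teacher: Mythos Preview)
Your proposal is correct and follows essentially the same route as the paper: introduce the reference orbit $s_0=\infty$, note $s_{M_n+K_n}=\vf_n(\theta_0+M_n\omega)$ is small by definition of $J_{n+1}$, verify the universal product bound via Sublemma~\ref{SL2} (using the arithmetic facts from $\mathcal{A}_n(1)$--$(2)$ exactly as in Sublemma~\ref{SL5}), and then invoke Lemma~\ref{BE20}. The only difference is cosmetic: the paper applies Lemma~\ref{BE20} with base point $\theta_0$ and $k=M_n+K_n-1$, whereas you shift to $\theta_1$ and take $k=M_n+K_n-2$; the resulting bound is the same. One small slip: when you write ``Sublemma~\ref{SL2} applied to $(\theta_1,\tilde r_1)$'' you presumably mean $(\theta_1,\tilde r)$, since in your own indexing $\tilde r=\tilde r_0$ is the value sitting at $\theta_1$ (and $\tilde r_1$ lives over $\theta_2$); with that corrected the argument goes through verbatim.
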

\begin{proof} 
Let $s_k$ be defined as in the proof of the previous sublemma. By definition we have $J_{n+1}\subset I_n$, so $\theta_0\in I_n-M_n\omega$.
Since $s_{M_n+K_n}=\vp_n(\theta_0+M_n)$ and $\theta_0\in J_{n+1}-M_n\omega$,
we have, by the definition of $J_{n+1}$, that $|s_{M_n+K_n}|\leq 2\la^{-3/4}$. 
Therefore we must also have $|s_{M_n+K_n-1}|>\la^{-3/2}$. 
Exactly as in the proof of Sublemma \ref{SL5} we get that for all $|t_0|\geq \la^{3/4}$
$$
|t_1\cdots t_{M_n+K_n-1}|\geq \la^{(2/3)(M_n+K_n-1)} \text { if } |t_{M_n+K_n-1}|\geq \la^{-2}.
$$
Applying Lemma \ref{BE20} to $(\theta_0,r_0)$ and $(\theta_0,s_0)$ shows that
$$
|s_{M_n+K_n}-r_{M_n+K_n}|\leq 2\la^{-((4/3)(M_n+K_n-1)+3/4)}\ll \la^{-(4/3)M_n}.
$$ 
\end{proof}

\bigskip
Now we are ready to show that $(3)_{n+1}$ in $\mathcal{A}_{n+1}$ holds. In fact, this will follow from the next sublemma.

\begin{sublem}\label{SL7}
Assume that $\theta_0\in \Theta_{n}\setminus \bigcup_{m=-M_n+1}^0(I_n+m\omega)$ and $|r_0|\geq \la^{3/4}$. Let $N=\N(\theta_0;J_{n+1})$ if $I_{n+1}\neq \emptyset$; 
let $N=\infty$ if $I_{n+1}= \emptyset$. Then the statements $(a-c)_{n+1}$ in $(3)_{n+1}$ hold. Moreover, $(a)_{n+1}$ holds with $[0,N]$ replaced by $[0,N+K_n]$ (if $N<\infty$). 
\end{sublem}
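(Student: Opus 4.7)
My plan is to partition $[0,N]$ by the visits of the orbit to $I_n$. Let $T_1<T_2<\cdots$ be those $T\in[0,N]$ with $\theta_T\in I_n$; when $I_{n+1}\neq\emptyset$ one has $T_p=N$ with $\theta_{T_p}\in J_{n+1}$, and $\theta_{T_i}\in I_n\setminus J_{n+1}$ for $i<p$. The hypothesis on $\theta_0$ forces $T_1\geq M_n$, while $\mathcal{A}_n(1)$ gives $T_{i+1}-T_i>2M_n$. I will view $[0,N]$ as a chain of ``good segments'' (on which the scale-$n$ control $\mathcal{A}_n$ is directly in force) alternating with short ``gaps'' $[T_i,T_i+K_n+1]$ around each visit, and then stitch the two types of estimates together.

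At each $T_i<N$ I apply Sublemma \ref{SL5} to the base point $(\theta_{T_i-M_n},r_{T_i-M_n})$ to obtain $|r_{T_i+K_n+1}|,\,|r_{T_i+K_n+2}|\geq\la^{3/4}$. The required hypothesis $|r_{T_i-M_n}|\geq\la^{3/4}$ comes from $\mathcal{A}_n(3)(c)$ applied to $(\theta_0,r_0)$ when $i=1$ and to the previous restart otherwise; the point is that $\theta_{T_i-M_n}\in I_n-M_n\omega$ is, by $\mathcal{A}_n(2)$, disjoint from $G_{n-1}$, so $|r_{T_i-M_n}|<\la^{3/4}$ would contradict $(c)_n$. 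The same conditions $\mathcal{A}_n(1)$ and $\mathcal{A}_n(2)$ place the restart point $\sigma_i:=(\theta_{T_i+K_n+2},r_{T_i+K_n+2})$ in $\Theta_n\setminus\bigcup_{m=-M_n+1}^{0}(I_n+m\omega)$, so $\mathcal{A}_n(3)$ re-applied to $\sigma_i$ controls the orbit up to $T_{i+1}$. An induction on $i$ then propagates full control all the way to $T_p$.

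For $(a)_{n+1}$, on each good segment Sublemma \ref{SL2} applied to the current restart gives the product bound with exponent $2/3+(1/12)^{n+2}$; the extra $2K_n$ of reach in Sublemma \ref{SL2} precisely covers the subsequent gap. On each gap, Lemma \ref{BE4}'s pairing yields $|\rho_j|\in[\la^{-2},\la^2]$, so with $\alpha_j\leq 6$ the loss per gap is at most $\la^{12(K_n+2)}$. Since consecutive visits are separated by more than $2M_n\geq 2K_n^2$, and the tower growth of $K_n$ makes $K_n^2$ dwarf $12^{n+2}(K_n+2)$, the per-step gain $11\cdot 12^{-(n+2)}$ on good segments absorbs the gap losses, yielding $(a)_{n+1}$; the reach of the last restart simultaneously extends $(a)_{n+1}$ to $[0,N+K_n]$. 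Statement $(b)_{n+1}$ is obtained identically using Sublemma \ref{SL2.5}: $\theta_T\in S_n$ forces $T>T_i+M_n$ past the most recent visit, exactly the configuration Sublemma \ref{SL2.5} handles.

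For $(c)_{n+1}$ I split on the position of $k$: in a gap $(T_i,T_i+K_n]$, $\theta_k\in I_n+[1,K_n]\omega\subset G_n$ by construction; in a good segment, $\mathcal{A}_n(3)(c)$ for the corresponding restart gives $\theta_k\in G_{n-1}\subset G_n$; the case $k=T_i$ is vacuous since the jumping argument already forces $|r_{T_i}|\geq\la^{3/4}$. The $E'$-statement in $\mathcal{A}_{n+1}(4)$ follows by substituting $\mathcal{A}_n(4)$ and $\mathcal{B}_n$'s robustness clause $(\ref{IS_eq99})$ throughout. The main obstacle I anticipate is the apparent circularity of the second paragraph---Sublemma \ref{SL5} at $T_i$ needs $|r_{T_i-M_n}|\geq\la^{3/4}$, which is delivered by $\mathcal{A}_n(3)(c)$ on the previous restart, itself created by Sublemma \ref{SL5} at $T_{i-1}$---but this loop is broken by inducting on $i$ and carefully tracking which restart controls which segment. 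The secondary challenge is the arithmetic in the third paragraph, which hinges essentially on the super-exponential tower growth of the $K_n$.
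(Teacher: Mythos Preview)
Your proposal is correct and follows essentially the same approach as the paper: partition $[0,N]$ by the visits $R_1<R_2<\cdots$ to $I_n$, use $\mathcal{A}_n(3)(c)$ together with $(I_n-M_n\omega)\cap G_{n-1}=\emptyset$ to feed Sublemma~\ref{SL5} at each visit, restart at $(\theta_{R_j+K_n+2},r_{R_j+K_n+2})\in\Theta_n$, and invoke Sublemmas~\ref{SL2}, \ref{SL2.5}, \ref{SL3} on the resulting segments. The only cosmetic difference is that your separate ``gap-loss'' accounting in paragraph three is unnecessary, since Sublemma~\ref{SL2}'s built-in reach of $2K_n$ past $\mathcal{N}(\cdot;I_n)$ already delivers the exponent $2/3+(1/12)^{n+2}$ across the gap, so the segments overlap and concatenate directly without a further arithmetic trade-off.
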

Indeed, we note, recalling the definition of $\Theta_{n+1}$ in (\ref{setsTGS}), 
that $$\Theta_{n+1}\subset \Theta_{n}\setminus \bigcup_{m=-M_n+1}^0(I_n+m\omega).$$
Moreover, by Sublemma \ref{SL1.1} we 
have $\N(\theta_0;I_{n+1})\leq \N(\theta_0;J_{n+1})$ if $\theta_0\in \Theta_{n+1}$ and $I_{n+1}\neq \emptyset $. Thus we can use the above 
sublemma to conclude that $(3)_{n+1}$ holds.

\begin{proof}[Proof of Sublemma \ref{SL7}]Let $\theta_0, r_0$ and $N$ be as in the statement of the sublemma. By the assumption on $\theta_0$
 we have $N\geq M_n$ (since $J_{n+1}\subset I_n$). 

We first treat the case when $N=\N(\theta_0;I_n)$.
In this case $(3)_n(a - c)$ immediately imply the weaker conditions $(3)_{n+1}(a - c)$ (since $G_n\supset G_{n-1}$ and $S_n\subset S_{n-1}$).  

From now on we assume that $N>\N(\theta_0;I_n)$. Let $0<R_1<R_2<\ldots\leq N$ be the times $k$ in $[0,N]$ when $\theta_k\in I_n$. We then have $R_1\geq M_n$,
and from $(1)_n$ we have $R_{j+1}-R_j>2M_n$. Below we shall prove that
\begin{equation}\label{SL7_eq1}
|r_{R_j-M_n}|\geq \la^{3/4} \text{ for all } j\geq 1.
\end{equation}
From this it follows, using Sublemma \ref{SL5}, that 
\begin{equation}\label{SL7_eq2}
|r_{R_j+K_n+1}|, |r_{R_j+K_n+2}|\geq \la^{3/4} \text{ for all } j\geq 1 \text{ such that } R_j<N.
\end{equation}
Note that we have, by the definition of the $R_j$ and $(1-2)_n$, 
\begin{equation}\label{SL7_eq3}
\theta_{R_j+K_n+i}\in I_n+(K_n+i)\omega\subset \Theta_n~ (i=1,2).
\end{equation}
Applying Sublemmas \ref{SL2} and \ref{SL3} to the points $(\theta_0,r_0)$ and $(\theta_{R_j+K_n+2},r_{R_j+K_n+2}), j\geq 1,$  gives us $(a)_{n+1}$ and $(c)_{n+1}$
(we get control on iterates in $[0, R_1+K_n+1], [R_1+K_n+2,R_2+K_n+1], \ldots$, and thus on the whole interval $[0,N]$; in fact on the interval $[0,N+K_n]$ if $N<\infty$).

To verify $(b)_{n+1}$ we do as follows (the exponents $\alpha_i$ are always assumed to be in $[1,6]$). 
We assume that $T\in [0,N]$ is such that $\theta_T\in S_n$. Note that $G_n\cap S_n=\emptyset$, so
it follows from $(c)_{n+1}$ that $$|r_T|\geq \la^{3/4}.$$ If $T\in [0,R_1]$, the statement about the product in $(b)_{n+1}$ follows from $(b)_{n}$.
We now consider the case when $T\in[R_1,R_2]$. Since $\theta_{R_1}\in I_n\subset S_n\subset S_{n-1}$ (that $I_n\subset S_n$ follows from $(1)_n$), we can use $(b)_n$ to derive
$$
|\rho_l|^{\alpha_l}\cdots |\rho_{L_1}|^{\alpha_{L_1}}\geq \la^{(2/3+(1/12)^{n+1})(L_1-l+1)} \text{ for all } l\in [0,L_1],
$$
where $\rho_0\cdots \rho_{L_1}=r_0\cdots r_{R_1}$.
Applying Sublemma \ref{SL2.5} to $(\theta_{R_1+1},r_{R_1+1})$, recalling (\ref{SL7_eq2}) and (\ref{SL7_eq3}), and noticing that $T-R_1>M_n$ since $\theta_{R_1}\in I_n$,
we get
$$
|\rho_l|^{\alpha_l}\cdots |\rho_{L}|^{\alpha_{L}}\geq \la^{(2/3+(1/12)^{n+2})(L-l+1)} \text{ for all } l\in [L_1+1,L],
$$
where $\rho_{L_1+1}\cdots \rho_{L}=r_{R_1+1}\cdots r_{T}$. Combining the two last estimates yields the estimate in $(b)_{n+1}$.
Now we can proceed analogously, by successively treating $T$ in the intervals $[R_2,R_3], [R_3,R_4], \ldots$. We conclude that $(b)_{n+1}$ holds.

We turn to (\ref{SL7_eq1}). From $(3)_n(c)$ it follows directly (using our assumptions on $(\theta_0,r_0)$) that 
$$
\text{if } |r_k|< \la^{3/4} \text{ for some } k\in [0,R_1], \text{ then } \theta_k\in G_{n-1}. 
$$ 
Combining this with the fact that $(I_n-M_n\omega)\cap G_{n-1}=\emptyset$ (which follows from $(2)_n$; recall that $G_{-1}=\emptyset$) shows that
$|r_{R_1-M_n}|\geq \la^{3/4}$ (recall that $R_1-M_n\geq0$). From Sublemma \ref{SL5} we thus get $|r_{R_1+K_n+2}|\geq \la^{3/4}$. Applying
$(3)_n(c)$ to $(\theta_{R_1+K_n+2},r_{R_1+K_n+2})$, which is possible because of (\ref{SL7_eq3}), gives:
$$
\text{if } |r_k|< \la^{3/4} \text{ for some } k\in [R_1+K_n+2,R_2], \text{ then } \theta_k\in G_{n-1} 
$$ 
(recall that $R_2-R_1>2M_n\gg K_n$). Since, again, $(I_n-M_n\omega)\cap G_{n-1}=\emptyset$ we get $|r_{R_2-M_n}|\geq \la^{3/4}$. Repeating the argument gives us
(\ref{SL7_eq1}) for all $j\geq 1$.
\end{proof} 

\emph{Step 3 (verifying $(4)_{n+1}$)} The verification of $(4)_{n+1}$ is exactly the same as the one for $(3)_{n+1}$, using $(4)_{n}$ instead of $(3)_n$ everywhere.
Indeed, we have $(\ref{IS_eq99})_{n}$ so we can use shadowing as usual. Note that the analysis works for all $E'\in [E-e_n^-,E+e_n^+]$. 
Moreover, as in Sublemma \ref{SL7}, we get that
\begin{sublem}\label{Case4} 
Condition $(4)_{n+1}$ holds with $N=\infty$ if $E'\in [E-e_n^-,E+e_n^+]$ is such that
$$
\{\theta\in I_n: |\vp_n(\theta,E')|\leq 1.99\la^{-3/4}\}=\emptyset.
$$
\end{sublem}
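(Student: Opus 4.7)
The plan is to rerun the proof of Sublemma \ref{SL7} verbatim at energy $E'$, using condition $(4)_n$ in place of $(3)_n$, and using the hypothesis of the sublemma in place of the condition $\theta_0\in(I_n\setminus J_{n+1})-M_n\omega$ that controlled the shadowing step. Because the ``obstruction'' to continuing the induction (the analog of the visit to $J_{n+1}$) is ruled out by the hypothesis, the argument never terminates and one obtains control for all $k\geq 0$, i.e. $N=\infty$.

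More concretely, I would first establish the $E'$-analog of Sublemma \ref{SL5}: for every $\theta_0\in I_n-M_n\omega$ and $|r_0|\geq\la^{3/4}$, one has $|r_{M_n+K_n+i}(E')|\geq\la^{3/4}$ for $i=1,2$. Let $s_k(E')=\pi_2(\Phi^k_{E'}(\theta_0,\infty))$, so that $s_{M_n+K_n}(E')=\vp_n(\theta_0+M_n\omega,E')$. The hypothesis gives $|s_{M_n+K_n}(E')|>1.99\la^{-3/4}$, hence $|1/s_{M_n+K_n}(E')|<\la^{3/4}/1.99$. Since (\ref{SL5_eq1}) places $\theta_{K_n+M_n}$ and $\theta_{K_n+M_n+1}$ in $\T\setminus J_0$, and $|E-E'|\leq e_n^\pm\leq \la^{-4M_n}\ll \la^{3/4}$, the ``off-critical'' lower bound $|\la f(\cdot)-E'|>1.99\la^{3/4}$ survives the perturbation. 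Combining these shows $|s_{M_n+K_n+1}(E')|,|s_{M_n+K_n+2}(E')|\geq (3/2)\la^{3/4}-o(1)>\la^{3/4}$. To transfer this from $s_k(E')$ to $r_k(E')$ I would invoke Lemma \ref{BE20}, whose hypothesis on products of $\rho$'s is precisely what $(4)_n$, together with Sublemma \ref{SL2}'s proof read at $E'$, supplies (the point $(\theta_1,r_1(E'))$ satisfies the orbit estimates of $(3)_n$ with constants slightly reduced, which is enough for Lemma \ref{BE20}).

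Next I would copy the inductive bookkeeping of Sublemma \ref{SL7} at energy $E'$. Let $0<R_1<R_2<\cdots$ be the successive times when $\theta_{R_j}\in I_n$; by $(1)_n$ these are spaced by at least $2M_n$, and between them the shadowing step just established, together with $(4)_n$ (playing the role of $(3)_n$) and Sublemmas \ref{SL2}, \ref{SL2.5}, \ref{SL3} read at $E'$, yields the three conclusions $(a-c)_{n+1}$ on the interval $[R_{j-1}+K_n+2,R_j+K_n+1]$. The gluing of consecutive blocks is identical to that in Sublemma \ref{SL7}. The crucial difference from Sublemma \ref{SL7} is that there is no analog of the stopping time $\N(\theta_0;J_{n+1})$: by hypothesis one always has $|\vp_n(\theta_{R_j},E')|>1.99\la^{-3/4}$, so every visit to $I_n$ can be shadowed through, and the construction never halts. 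Hence $N=\infty$.

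The one place requiring quantitative care is checking that the slack $2\la^{-3/4}\leadsto 1.99\la^{-3/4}$ in the hypothesis and the small error $|E-E'|\leq e_n^\pm\leq \la^{-4M_n}$ do not spoil any of the inequalities invoked from Sublemma \ref{SL5} or from the product estimates in Sublemmas \ref{SL2}-\ref{SL2.5}. This is the only genuine obstacle, and it is harmless: all those inequalities have room of the form $\la^{c K_j}$ (the $\la^{1/12^{n+2}}$ losses per step, and the $\la^{3/4}$ versus $(3/2)\la^{3/4}$ comparison), which comfortably absorb the additive perturbation $e_n^\pm$ and the multiplicative perturbation $1.99/2$. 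Consequently the entire chain of estimates transfers, $(4)_{n+1}(a-c)$ hold for every $k\geq 0$, and the sublemma is proved.
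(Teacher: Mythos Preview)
Your proposal is correct and follows exactly the approach the paper indicates. The paper itself gives no detailed proof of this sublemma: it simply remarks in Step 3 that the verification of $(4)_{n+1}$ repeats that of $(3)_{n+1}$ using $(4)_n$ in place of $(3)_n$, and that ``as in Sublemma \ref{SL7}'' the conclusion holds with $N=\infty$ under the stated emptiness hypothesis. Your write-up fleshes out precisely this sketch --- the $E'$-version of Sublemma \ref{SL5} with the $1.99\la^{-3/4}$ threshold, the block decomposition via the return times $R_j$ to $I_n$, and the observation that the absence of any $\theta$ with $|\vp_n(\theta,E')|\leq 1.99\la^{-3/4}$ removes the stopping time --- and correctly identifies that the slack between $2$ and $1.99$ together with $|E-E'|\leq e_n^\pm\leq\la^{-4M_n}$ is easily absorbed by the existing margins.
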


\bigskip
\underline{Part 2 (verification of $\mathcal{B}_{n+1}$)}.

\bigskip
The first thing we note is that since $|\partial_{E'}\vf_{n+1}(\theta,E')|<\la^{3(M_{n+1}+K_{n+1}+2)}$ (by Proposition \ref{A_E}), it clearly follows
that $(\ref{IS_eq99})_{n+1}$ holds for $|E'-E|<\la^{-4M_{n+1}}$. Thus in any case we can always take $e_{n+1}^\pm=\la^{-4M_{n+1}}$.

Now we shall derive a bound on the function $\vf_n$, as defined in $\mathcal{B}_n$.

\begin{sublem}\label{SL10a}
If $J=\{\theta\in I_n: |\vf_n(\theta)|<\la^{5/4}\}$, then
\begin{equation}\label{IS_eq3}
\|\vf_n\|_{C^2(J)}<\la^{6K_n},
\end{equation}
where $\vf_n(\theta)=\pi_2\left(\Phi^{M_n+K_n}(\theta-M_{n}\omega,\infty)\right)$.
\end{sublem}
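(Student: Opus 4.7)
Plan. Let $r_k(\theta)=\pi_2(\Phi^k(\theta-M_n\omega,\infty))$ for $k\geq 0$, so $r_0\equiv\infty$ and $\vf_n(\theta)=r_{M_n+K_n}(\theta)$. The first step is to verify that for $\theta\in I_n$, the base point $\theta_0:=\theta-M_n\omega$ satisfies the hypotheses needed to invoke $\mathcal{A}_n$. Condition $(2)_n$ gives $I_n-M_n\omega\subset\T\setminus\bigcup_{j=0}^{n-1}\bigcup_{m=-2M_j}^{2M_j}(2I_j+m\omega)$, and since $\nu_j\leq K_j/2\ll M_j$ we get $\theta_0\in\Theta_n$; moreover $(1)_n$ forces $\N(\theta_0;I_n)=M_n$. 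Since $|r_0|=\infty\geq\la^{3/4}$, Sublemma \ref{SL2} applies (with the chosen $\theta_0$ and $r_0$), so for every $k\in[0,M_n+2K_n]$ with $|r_k(\theta)|\geq\la^{-2}$ and every choice of $\alpha_i\in[1,6]$ we have
$$
|\rho_0(\theta)|^{\alpha_0}\cdots|\rho_l(\theta)|^{\alpha_l}\geq \la^{(2/3+(1/12)^{n+2})(k+1)},
$$
where $r_0(\theta)\cdots r_k(\theta)=\rho_0(\theta)\cdots\rho_l(\theta)$ is the Lemma \ref{BE4} pairing.

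Next I would get a preliminary $C^2$ estimate on $r_{M_n+1}$. By $(2)_n$ and the argument used for (\ref{SL5_eq1}), $\theta_j\notin J_0$ for all $j\in[0,M_n-1]$, hence $|r_j|\geq\la^{3/4}$ for $j\in[1,M_n]$ by Lemma \ref{BC2}. Exactly as in Sublemma \ref{BC_SL1} (with $m=-M_n$, $k=M_n$), this yields
$$
\|r_{M_n+1}(\cdot)-(\la f(\cdot)-E)\|_{C^2(I_n)}<\la^{1/2},
$$
so in particular $\|r_{M_n+1}\|_{C^2(I_n)}<\co\la$ on all of $I_n$.

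Finally, to pass from step $M_n+1$ to step $M_n+K_n$ on the set $J$, I would invoke Proposition \ref{A_GB} from Appendix A, exactly as was done in the analogous step of the proof of Lemma \ref{BC4}. For $\theta\in J$ we have $|r_{M_n+K_n}(\theta)|<\la^{5/4}$, which by Lemma \ref{BE99} forces $|r_{M_n+K_n-1}(\theta)|\geq \la^{-2}$; combined with the paired-product lower bounds established above, this places us in the setting of Proposition \ref{A_GB}, whose output is precisely a $C^2$ bound of the form $\la^{6K_n}$ on $r_{M_n+K_n}$ restricted to $J$. Heuristically, the exponent $6K_n$ comes from iterating the recursion $r_k=\la f(\theta_{k-1})-E-1/r_{k-1}$ a total of $K_n-1$ times beyond $r_{M_n+1}$: each such step inflates $C^2$-norms by at most $\la^6$ through the $(r_{k-1})^{-3}$ factor appearing in the second derivative, and this gross upper bound is legitimate thanks to the paired lower bound $|\rho_i|\geq\la^{-2}$.

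The main obstacle, and the reason one phrases this through Proposition \ref{A_GB}, is that the naive formula $D_k=\sum_{j=0}^{k-1}\la f'(\theta_j)/(r_{j+1}\cdots r_{k-1})^2$ involves sub-products indexed from $j+1$ to $k-1$, whereas the induction hypothesis $\mathcal{A}_n(3)(a)$ only furnishes lower bounds on pairings of the full product $r_0\cdots r_k$; one must carefully check that the pairing of Lemma \ref{BE4} can be organized so that each sub-product appearing in the derivative formula (and the two derivatives needed) is controlled by a piece of the global paired bound, with exponents $\alpha_i\in[1,6]$ absorbing the squares and cubes that arise. This bookkeeping is exactly what Proposition \ref{A_GB} is designed to carry out cleanly.
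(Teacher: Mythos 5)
Your plan has the right shape at the start (establish $\theta_0=\theta-M_n\omega\in\Theta_n$ and $\N(\theta_0;I_n)=M_n$) and at the end (Proposition~\ref{A_GB} is indeed what converts a $C^2$ bound at step $M_n+1$ into the bound $\la^{6K_n}$ at step $M_n+K_n$). But the middle step contains a genuine error: you claim, citing $(2)_n$ and the argument for~(\ref{SL5_eq1}), that $\theta_j\notin J_0$ for \emph{all} $j\in[0,M_n-1]$, and from this that $|r_j|\geq\la^{3/4}$ for $j\in[1,M_n]$, so that Sublemma~\ref{BC_SL1} applies with $k=M_n$. This is false for $n\geq 1$. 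Condition $(2)_n$ only says that the \emph{single} set $I_n-M_n\omega$ (together with $I_n+K_n\omega$) misses $\bigcup_{j<n}\bigcup_{|m|\leq 2M_j}(2I_j+m\omega)$; it says nothing about the $M_n$ intermediate translates $I_n+(j-M_n)\omega$, $j\in[1,M_n-1]$. Since $M_n\gg M_0\gg\la^{1/(8\tau)}$, that orbit segment is far longer than the return time to $J_0$, so it certainly re-enters $J_0$, the iterates $|r_j|$ drop below $\la^{3/4}$, and the hypothesis of Sublemma~\ref{BC_SL1} fails. (For $n=0$ your chain is fine, because $M_0\ll\la^{1/(8\tau)}$ and Lemma~\ref{BC1} keeps the segment out of $J_0$; but the statement must hold for all $n$.)

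The paper circumvents this precisely because it never needs the orbit to avoid $J_0$: it uses condition $(3)_n(b)$ with $T=M_n$ (legitimate since $\theta_{M_n}=\theta\in I_n\subset S_{n-1}$, and $I_n\subset S_{n-1}$ follows from $(1)_j$ and the nesting $I_n\subset I_j$), which yields $|r_{M_n}|\geq\la^{3/4}$ together with the \emph{tail-product} lower bounds $|\rho_l|^{\alpha_l}\cdots|\rho_L|^{\alpha_L}\geq\la^{(2/3)(L-l+1)}$ for all $l$, valid even though some $r_j$ are small along the way. These tail bounds are exactly the hypothesis of Proposition~\ref{A1}, which then gives $\|r_{M_n+1}\|_{C^2(I_n)}<\la^2$. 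Note also that your alternative citation of Sublemma~\ref{SL2} does not repair this: \ref{SL2} only supplies full-product bounds $|\rho_0|^{\alpha_0}\cdots|\rho_l|^{\alpha_l}\geq\la^{(2/3+(1/12)^{n+2})(k+1)}$, from which one cannot extract the tail bounds that Proposition~\ref{A1} requires (the head of the product may be as large as $\la^{12i}$). Once you replace the $\mathcal{B}$C\_SL1 step with $(3)_n(b)$ + Proposition~\ref{A1}, the final invocation of Proposition~\ref{A_GB} is correct and gives (\ref{IS_eq3}); your concluding paragraph about "organizing the pairing" is not actually needed, since Proposition~\ref{A_GB} is a self-contained crude estimate requiring only $|r_0'|,|r_0''|<\la^2$ as input.
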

\begin{proof}Let $r_j(\theta)=\pi_2\left(\Phi^{j}(\theta-M_{n}\omega,\infty)\right)$. Take any $\theta_0\in I_n-M_n\omega\subset \Theta_n$ ($r_0=\infty$). From $(1-2)_n$ it follows that
$\N(\theta_0;I_n)=M_n$. Since $I_n\subset S_{n-1}$, it follows from $(3)_n(b)$ that $|r_{M_n}|\geq \la^{3/4}$, and writing $r_0\cdots r_{M_n}=\rho_0\cdots \rho_L$, we have
$$
|\rho_l|^{\alpha_l}\cdots |\rho_L|^{\alpha_L}\geq \la^{(2/3)(L-l+1)}\text { for all } l\in [1,L]
$$ 
and any choices of $\alpha_i\in [1,6]$. Applying Proposition \ref{A1} therefore gives us 
$$
\|r_{M_n+1}\|_{C^2(I_n)}<\la^2.
$$
To get the statement of the sublemma we now just apply Proposition \ref{A_GB}.
\end{proof}

We shall assume that $I_{n+1}\neq \emptyset$ (if it is the empty set we stop the induction).
As in the proof of Lemma \ref{BC4} (the geometric base case) we will get three different cases, depending on
the set $J_{n+1}$. The following three sublemmas will be used in each of the three cases.


\begin{sublem}\label{SL10}
If we let
$
r_j(\theta)=\pi_2\left(\Phi^{j}(\theta-M_{n+1}\omega,\infty)\right), 
$
then we have the estimates
\begin{equation}\label{SL10_eq2}
|r_{M_{n+1}-M_n}(\theta)|\geq \la^{3/4} \text{ for all } \theta\in I_{n+1} \text{ and } \|r_{M_{n+1}-M_n}\|_{C^2(I_{n+1})}<\la^2.
\end{equation}
\end{sublem}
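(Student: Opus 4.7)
The plan is to translate everything into an orbit started at $\theta_0=\theta-M_{n+1}\omega$ (with $r_0=\infty$) and invoke the arithmetic part $\mathcal{A}_{n+1}$ which has already been verified in Part 1. For $\theta\in I_{n+1}$, $r_1=\lambda f(\theta_0)-E$, and $\mathcal{A}_{n+1}(2)$ (applied to $j=0$) combined with the fact that $\nu_0\ll M_0$ places $\theta_0$ outside $I_0\cup(I_0+\nu_0\omega)\supset J_0$; hence Lemma \ref{BE1} gives $|r_1|\geq 2\lambda^{3/4}$. This allows me to regard the iterates $r_j(\theta)$ ($j\geq 1$) as the orbit of the legal starting point $(\theta_1,r_1)$ under $\Phi$.

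Next I would verify that this starting point satisfies the hypotheses of condition $(3)_{n+1}$. Translating the containment in $\mathcal{A}_{n+1}(2)$ by $\omega$ and using $\nu_j\leq K_j/2\leq M_j$ shows that $\theta_1=\theta-(M_{n+1}-1)\omega$ avoids $I_j+m\omega$ for $j\leq n$ and $m\in[-M_j+1,\nu_j]$, while $\mathcal{A}_{n+1}(1)$ (together with $\nu_{n+1}\leq M_{n+1}$) handles $I_{n+1}+m\omega$ for $m\in[0,\nu_{n+1}]$; thus $\theta_1\in\Theta_{n+1}$. Since $\theta\in I_{n+1}$ gives $\theta_1+(M_{n+1}-1)\omega\in I_{n+1}$ while $\mathcal{A}_{n+1}(1)$ forbids earlier returns, the first entry time satisfies $\mathcal{N}(\theta_1;I_{n+1})=M_{n+1}-1$.

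Now I apply $(3)_{n+1}(b)$ at the time $T=M_{n+1}-M_n-1\leq \mathcal{N}(\theta_1;I_{n+1})$, for which $\theta_T=\theta-M_n\omega$. I need $\theta_T\in S_n$: for $j\leq n-1$ this follows from $\mathcal{A}_n(2)$ (since $\theta_T\in I_{n+1}-M_n\omega\subset I_n-M_n\omega$ avoids the set $2I_j+m\omega$ for $|m|\leq 2M_j$, hence a fortiori $I_j+m\omega$ for $m\in[1,M_j]$), and for $j=n$ from $\mathcal{A}_n(1)$ together with $I_{n+1}\subset I_n$ (since the required shifts $m+M_n$ all lie in $[M_n+1,2M_n]$). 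Conclusion $(b)_{n+1}$ then delivers both $|r_{M_{n+1}-M_n}(\theta)|\geq\lambda^{3/4}$ and a product estimate of the form $|\rho_l|^{\alpha_l}\cdots|\rho_L|^{\alpha_L}\geq\lambda^{(2/3+(1/12)^{n+2})(L-l+1)}$ for $r_1\cdots r_T=\rho_0\cdots\rho_L$. Feeding this product estimate into Proposition \ref{A1} (exactly as in the proof of Sublemma \ref{SL10a}) yields the $C^2$-bound $\|r_{M_{n+1}-M_n}\|_{C^2(I_{n+1})}<\lambda^2$.

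The substantive content here is really all bookkeeping: every nontrivial step reduces to checking that the narrow forbidden strips defining $\Theta_{n+1}$, $G_n$, and $S_n$ embed into the wider bands controlled by $\mathcal{A}_n(1,2)$ and $\mathcal{A}_{n+1}(1,2)$, and this works precisely because of the hierarchy $\nu_j\leq K_j/2\leq K_j\leq M_j$ and $K_j\gg K_{j-1}$ imposed in $(\ref{il_eq1})$. The main potential pitfall is the off-by-one index shift between ``iterating from $(\theta_0,\infty)$'' and ``iterating from $(\theta_1,r_1)$,'' which is why I prefer to re-label and invoke $(3)_{n+1}$ at $(\theta_1,r_1)$ rather than build a dedicated lemma.
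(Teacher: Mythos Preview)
Your approach is essentially the paper's: both pass to the controlled starting point $(\theta_1,r_1)$ with $|r_1|\geq\lambda^{3/4}$, verify the relevant $\Theta$- and $S_n$-memberships via $\mathcal{A}_{n+1}(1,2)$ and $\mathcal{A}_n(1,2)$, and then feed the resulting tail-product estimate into Proposition~\ref{A1}. The paper invokes Sublemma~\ref{SL7} rather than $(3)_{n+1}$ directly, but since $(3)_{n+1}$ has already been established in Part~1 of the proof, your direct invocation is legitimate.

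One off-by-one needs tidying. Applying $(3)_{n+1}(b)$ at the time for which (in your relabelled iteration) $\theta_T=\theta-M_n\omega\in S_n$ gives $|r_{M_{n+1}-M_n}|\geq\lambda^{3/4}$ and tail estimates on the product $r_1\cdots r_{M_{n+1}-M_n}$. But Proposition~\ref{A1} with $k=M_{n+1}-M_n$ requires $|r_{k-1}|=|r_{M_{n+1}-M_n-1}|\geq\lambda^{-2}$ and tail estimates on $r_1\cdots r_{k-1}=r_1\cdots r_{M_{n+1}-M_n-1}$; as stated, your data would instead bound $\|r_{M_{n+1}-M_n+1}\|_{C^2}$. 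The paper closes this gap by also checking $\theta-(M_n+1)\omega\in S_n$ (your same bookkeeping, shifted by one), obtaining $|r_{M_{n+1}-M_n-1}|\geq\lambda^{3/4}$ and the product on $r_1\cdots r_{M_{n+1}-M_n-1}$, and then separately getting $|r_{M_{n+1}-M_n}|\geq\lambda^{3/4}$ from $(c)_{n+1}$ via $\theta-M_n\omega\notin G_n$ (automatic here since $S_n\cap G_n=\emptyset$). Once that extra step is inserted, your argument goes through verbatim.
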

\begin{proof}We note that, using $\mathcal{A}_{n+1}(1-2)$, we have
\begin{equation}\label{SL10_eq1}
\begin{aligned}
I_{n+1}-(M_{n+1}-1)\omega&\subset \Theta_n\setminus \bigcup_{m=-M_n+1}^0(I_n+m\omega); 
\\ I_{n+1}-(M_n+1)\omega&\subset S_n\cap (\T\setminus G_n) \text{ and } I_{n+1}-M_n\omega\subset \T\setminus G_{n}.
\end{aligned}
\end{equation}
Moreover, $\N(\theta; J_{n+1})\geq M_{n+1}$ for all $\theta\in I_{n+1}-M_{n+1}\omega$ (recall Sublemma \ref{SL1.1}).

Take $\theta_0\in I_{n+1}-M_{n+1}\omega$ (we have $r_0=\infty$). Then $|r_1|\geq \la^{3/4}$, since $I_{n+1}-M_{n+1}\omega\in \T\setminus J_0$. 
We now apply Sublemma \ref{SL7} to the point $(\theta_1,r_1)$. 
Together with the relations in (\ref{SL10_eq1}) we get the following. First,
$$
|r_{M_{n+1}-M_n-1}|,|r_{M_{n+1}-M_n}|\geq \la^{3/4}.
$$
Writing $r_1\cdots r_{M_{n+1}-M_n-1}=\rho_1\cdots \rho_L$ we also get, for any choices of $\alpha_i\in [1,6]$, 
$$
\begin{aligned}
|\rho_l|^{\alpha_l}\cdots |\rho_L|^{\alpha_L}\geq \la^{(2/3)(L-l+1)}\text { for all } l\in [1,L].
\end{aligned}
$$
Thus we can apply Proposition \ref{A1} and obtain $\|r_{M_{n+1}-M_n}\|_{C^2(I_{n+1})}<\la^2$. This shows (\ref{SL10_eq2}).
\end{proof}

The following sublemma can be viewed as an extension of Sublemma \ref{SL6}.
\begin{sublem}\label{SL11}
Assume that $r_0(\theta)$ satisfies $|r_0(\theta)|\geq \la^{3/4}$ on $I_n$ and $\|r_0\|_{C^2(I_{n})}<\la^2$. 
If we let
$r_j(\theta)=\pi_2\left(\Phi^{j}(\theta-M_{n}\omega,r_0(\theta))\right)$, and if $\vp_n(\theta)$ is the function in $\mathcal{B}_n$, then we have
$$
\|r_{M_n+K_n}-\vp_n\|_{C^0(J_{n+1})}<\la^{-(4/3)M_n} \text{ and } \|r_{M_n+K_n}-\vp_n\|_{C^2(J_{n+1})}<1.
$$
Moreover,
$$
|r_{M_n+K_n}(\theta)|\geq 1.9\la^{-3/4} \text{ on } I_n\setminus J_{n+1}. 
$$
\end{sublem}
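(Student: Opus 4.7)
The plan is a shadowing argument comparing the orbit of $(\theta-M_n\omega,r_0(\theta))$ against the reference orbit of $(\theta-M_n\omega,\infty)$. Set $s_j(\theta)=\pi_2\bigl(\Phi^j(\theta-M_n\omega,\infty)\bigr)$, so that $s_{M_n+K_n}=\vp_n$. By $\mathcal{A}_n(2)$ we have $I_n-M_n\omega\subset\T\setminus J_0$, hence $|s_1(\theta)|\geq 2\la^{3/4}$ on $I_n$. Applying $\mathcal{A}_n(3)(a)$ to $(\theta-(M_n-1)\omega,s_1)$, together with the pairing of Lemma~\ref{BE4}, yields the lower bounds
\[
|\rho_0|^{\alpha_0}\cdots|\rho_l|^{\alpha_l}\geq\la^{(2/3+(1/12)^{n+1})(k+1)},\qquad \alpha_i\in[1,6],
\]
for every $k\in[1,M_n+K_n-1]$ such that $|s_k|\geq\la^{-2}$, where $s_1\cdots s_k=\rho_0\cdots\rho_l$. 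This is exactly the setup needed to feed the shadowing formula.

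With these product bounds, Proposition~\ref{A5} delivers the explicit representation
\[
r_{M_n+K_n}(\theta)=\vp_n(\theta)-\frac{h(\theta)}{r_0(\theta)-w(\theta)},
\]
where
\[
0<h(\theta)\leq\la^{-(4/3)(M_n+K_n-1)},\quad|h'(\theta)|\leq\sqrt{h(\theta)},\quad|h''(\theta)|\leq 1,
\]
\[
|w(\theta)|<1.1\la^{-3/4},\quad|w'(\theta)|,|w''(\theta)|<1.
\]
Since $|r_0|\geq\la^{3/4}$, the denominator satisfies $|r_0-w|\geq(1/2)\la^{3/4}$, so
\[
\left|\frac{h}{r_0-w}\right|\leq 2\la^{-(4/3)(M_n+K_n-1)-3/4}\ll\la^{-(4/3)M_n},
\]
which is the $C^0$ bound on $J_{n+1}$ (in fact, on all of $I_n$). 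For the $C^2$ bound one differentiates the quotient and uses $\|r_0\|_{C^2(I_n)}<\la^2$ together with the derivative bounds on $h,w$; every resulting term acquires at worst a factor $\la^{O(1)}\cdot\sqrt{h}$ or $\la^{O(1)}\cdot h$, so the exponential smallness of $h$ in $M_n$ forces the total to be $\ll 1$.

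For the final assertion on $I_n\setminus J_{n+1}$: by definition $|\vp_n(\theta)|>2\la^{-3/4}$ there. On the sub-portion where $|\vp_n(\theta)|<\la^{5/4}$ the representation above gives
\[
|r_{M_n+K_n}(\theta)|\geq|\vp_n(\theta)|-\la^{-(4/3)M_n}>1.9\la^{-3/4}.
\]
On the (possibly empty) sub-portion where $|\vp_n(\theta)|\geq\la^{5/4}$ the identity still holds verbatim---the bounds on $h$ depend only on $s_1,\ldots,s_{M_n+K_n-1}$, not on $s_{M_n+K_n}=\vp_n$---and hence $|r_{M_n+K_n}|\geq\la^{5/4}-1$, trivially above $1.9\la^{-3/4}$. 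The only delicate point is verifying the product bounds for the reference orbit $s_j$ uniformly on all of $I_n$; this however is already the core content of $\mathcal{B}_n$, so no new work is needed.
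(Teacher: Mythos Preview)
Your approach—shadowing against the reference orbit $s_j(\theta)=\pi_2(\Phi^j(\theta-M_n\omega,\infty))$ and invoking Proposition~\ref{A5}—is the same route the paper takes. Two points need repair.

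First, $\mathcal{A}_n(3)(a)$ alone gives the product bounds only for $k$ up to $\mathcal{N}(\theta-(M_n-1)\omega;I_n)=M_n-1$, not up to $M_n+K_n-1$ as you claim. The extension across the remaining $K_n$ steps is precisely the content of Sublemma~\ref{SL2} (which absorbs the extra factors using $M_n\gg K_n$); you must invoke it. Relatedly, your last sentence attributes the product bounds to $\mathcal{B}_n$, but they come from $\mathcal{A}_n(3)$ together with Sublemma~\ref{SL2}.

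Second, and more substantively, your handling of the region $|\varphi_n(\theta)|\geq\lambda^{5/4}$ has a gap. You assert that the representation $r_{M_n+K_n}=\varphi_n-h/(r_0-w)$ with the stated bound on $h$ ``still holds verbatim''. It does not: the bound $h\leq\lambda^{-(4/3)(M_n+K_n-1)}$ requires the pairing of Lemma~\ref{BE4} to terminate cleanly at $s_{M_n+K_n-1}$, hence $|s_{M_n+K_n-1}|\geq\lambda^{-2}$. When $|\varphi_n|$ is very large (of order $\lambda^2$ or more), one can have $|s_{M_n+K_n-1}|<\lambda^{-2}$, and then $h=1/(s_1^2\cdots s_{M_n+K_n-1}^2)$ need not be small. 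The clean fix is the contrapositive argument via Lemma~\ref{BE20}, as in the paper's reference to Sublemma~\ref{SL5}: if $|r_{M_n+K_n}(\theta)|<1.9\lambda^{-3/4}$, then in particular $|r_{M_n+K_n}|<\lambda^{3/2}$, and since the product hypothesis of Lemma~\ref{BE20} holds for \emph{all} starting values $|t_0|\geq\lambda^{3/4}$, that lemma forces $|\varphi_n(\theta)-r_{M_n+K_n}(\theta)|$ to be exponentially small, whence $|\varphi_n(\theta)|<2\lambda^{-3/4}$ and $\theta\in J_{n+1}$.

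A minor remark on the $C^2$ bound: the term $h''/(r_0-w)$ is bounded only by $2\lambda^{-3/4}$ (since $|h''|\leq 1$, not $\leq\sqrt{h}$), so not every term carries a factor of $\sqrt{h}$ as you say. The conclusion is still $<1$, but the stated reason is off.
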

\begin{proof}The $C^0-$estimate is exactly the same as in Sublemma \ref{SL6}. We turn to the $C^2$-norm. Our estimate is extremely rough (depending on the
way we formulated Proposition \ref{A5}), but it is enough for our purposes.
Let $s_k(\theta)=\pi_2(\Phi^k(\theta-M_n\omega,\infty))$. Then we have, by definition, that $s_{M_n+K_n}(\theta)=\vp_{n}(\theta)$.
Since $|\vp_{n}(\theta)|\leq 2\la^{-3/4}$ on $J_{n+1}$, it follows that $|s_{M_n+K_n-1}(\theta)|>\la^{-2}$ for $\theta\in J_{n+1}$. Thus we can write 
$s_1\cdots s_{M_n+K_n-1}=\rho_1\cdots \rho_L$.
Proceeding as in the proof of Sublemmas \ref{SL5} and \ref{SL6}, we get for any choice of $\alpha_i\in [1,6]$, and for each $\theta\in J_{n+1}$,
$$
|\rho_1|^{\alpha_1}\cdots |\rho_l|^{\alpha_l}\geq \la^{(2/3)l}  \text{ for all } l\in [1,L].
$$
In particular we get $|r_1(\theta)|\geq \la^{3/4}$ on $J_{n+1}$, and thus the estimates on $r_0$ implies that $\|r_1\|_{C^2(J_{n+1})}<\la^2$.

By applying Proposition \ref{A5} we can write $r_{M_n+K_n}(\theta)-\vp_n(\theta)=h(\theta)/(r_1(\theta)-w(\theta))$. By using the estimates
on $h$ and $w$ which we get in Proposition \ref{A5}, in combination with the ones on $r_1$, the desired norm estimate follows.

The last statement is proved by the same approach as the one used in Sublemma \ref{SL5} ($r_{M_n+K_n}$ is very close to $\vp_{n+1}$; if $|\vp(\theta)|\gg \la^{3/2}$ for some $\theta\in I_n$, we must also 
have that $|r_{M_n+K_n}|$ is very large).
\end{proof}

\begin{sublem}\label{SL12}
Let $I=I_{n+1}+K_n\omega$ in the non-resonance case, and $I=I_{n+1}+(\nu_{n+1}+K_n)\omega$ in the resonant case, and let
$$
r_j(\theta)=\pi_2\left(\Phi^{j}(\theta,\infty)\right). 
$$
Then we have
\begin{equation}\label{SL12_eq0}
|r_{K_{n+1}-K_n}(\theta)|\geq \la^{3/4} \text{ for all } \theta\in I \text{ and } \|r_{K_{n+1}-K_n}\|_{C^2(I)}<\la^2.
\end{equation}
Moreover, given any $s_0(\theta)$, we have
$$
\pi_2\left(\Phi^{K_{n+1}-K_n}(\theta,s_0(\theta))\right)=:s_{K_{n+1}-K_n}(\theta)=r_{K_{n+1}-K_n}(\theta)-\frac{h(\theta)}{s_0(\theta)-w(\theta)}
$$
where $h$ and $w$ satisfy the following estimates on $I$:
$$
0<h(\theta)<\la^{-K_{n+1}/2}, ~|h'(\theta)|\leq \sqrt{h(\theta)}, ~|h''(\theta)|<1; \text{ and }
$$
$$
|w(\theta)|<1.1\la^{-3/4} ~ \text{ and } ~|w'(\theta)|, |w''(\theta)|<1.
$$
\end{sublem}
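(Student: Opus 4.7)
The plan is to iterate one step from $(\theta,\infty)$, apply Sublemma \ref{SL7} to the resulting point with $T=K_{n+1}-K_n-1$, then extract the $C^2$-bound from Proposition \ref{A1} and the shadowing representation from Proposition \ref{A5}.

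First I would verify that $(\theta_1,r_1):=(\theta+\omega,\lambda f(\theta)-E)$ satisfies the hypotheses of Sublemma \ref{SL7}. The inclusion $I\subset I_n+K_n\omega$ (which in the resonant case uses $I_{n+1}+\nu_{n+1}\omega\subset I_n$ from (\ref{Step1_In})) combined with $\mathcal{A}_n(2)$ shows that $\theta$ avoids $J_0$, giving $|r_1|\geq 2\lambda^{3/4}$. A parallel combinatorial check, modeled on the verification opening the proof of Sublemma \ref{SL7} and exploiting $\nu_{n+1}\leq\lambda^{K_n/(25\tau)}\ll K_{n+1}\ll M_{n+1}$, shows $\theta_1\in\Theta_n\setminus\bigcup_{m=-M_n+1}^{0}(I_n+m\omega)$.

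I then apply Sublemma \ref{SL7} with $T=K_{n+1}-K_n-1$. The endpoint $\theta+(K_{n+1}-K_n)\omega$ lies in $I_{n+1}+K_{n+1}\omega$ (non-resonant case) or in $I_{n+1}+(\nu_{n+1}+K_{n+1})\omega$ (resonant case), and in both cases sits in $S_n$ by $\mathcal{A}_{n+1}(2)$; in the resonant case this requires, during the choice of $K_{n+1}$ in Step 1, invoking Sublemma \ref{SL1} on the two-interval union $I_{n+1}\cup(I_{n+1}+\nu_{n+1}\omega)$ so that both translates land in the good complement simultaneously. Conclusion $(b)$ of Sublemma \ref{SL7} then delivers $|r_{K_{n+1}-K_n}(\theta)|\geq\lambda^{3/4}$ along with the product growth $|\rho_0|^{\alpha_0}\cdots|\rho_L|^{\alpha_L}\geq\lambda^{(2/3+(1/12)^{n+2})(L-l+1)}$ for any $\alpha_i\in[1,6]$; inserting $\alpha_i=2$ into Proposition \ref{A1} yields $\|r_{K_{n+1}-K_n}\|_{C^2(I)}<\lambda^2$.

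For the shadowing formula, Proposition \ref{A5} applied to the orbits starting at $(\theta,\infty)$ and $(\theta,s_0(\theta))$ gives directly $s_{K_{n+1}-K_n}(\theta)=r_{K_{n+1}-K_n}(\theta)-h(\theta)/(s_0(\theta)-w(\theta))$, with $h(\theta)$ essentially equal to $1/(r_1\cdots r_{K_{n+1}-K_n-1})^2$. The product estimate with $\alpha_i=2$, combined with $L+1\geq(K_{n+1}-K_n)/2\geq K_{n+1}/3$, forces $h(\theta)<\lambda^{-K_{n+1}/2}$; the derivative bounds on $h$ and $w$ follow from the corresponding estimates in Proposition \ref{A5}. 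I expect the main obstacle to be the bookkeeping in the resonant case --- specifically, guaranteeing that $I_{n+1}+(\nu_{n+1}+K_{n+1})\omega$, and not merely $I_{n+1}+K_{n+1}\omega$, lies in $S_n$ --- but this is handled by the enlarged application of Sublemma \ref{SL1} indicated above, rather than by any new dynamical input.
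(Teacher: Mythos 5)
Your strategy matches the paper's: iterate once from $(\theta,\infty)$ so that $\theta_1\in\Theta_n\setminus\bigcup_{m=-M_n+1}^{0}(I_n+m\omega)$ with $|r_1|\geq\la^{3/4}$, apply Sublemma~\ref{SL7}, and close with Propositions~\ref{A1} and~\ref{A5}. The paper compresses all of this into a pointer to Sublemma~\ref{SL10} together with the list of inclusions~(\ref{SL12_eq1}); your write-up expands that argument. Two details, however, do not go through as stated.

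First, an off-by-one that actually matters. You apply $(3)_{n+1}(b)$ at $\theta+(K_{n+1}-K_n)\omega$, getting $|r_{K_{n+1}-K_n}|\geq\la^{3/4}$ and tail growth for $r_1\cdots r_{K_{n+1}-K_n}$. But Propositions~\ref{A1} and~\ref{A5} both require $|r_{K_{n+1}-K_n-1}|\geq\la^{-2}$ together with tail growth for the product $r_1\cdots r_{K_{n+1}-K_n-1}$, which is one block shorter. Discarding the final $\rho$-block from your estimate does not preserve the rate: $|\rho_L|$ can be as large as $\la^2$ while each remaining factor contributes only $\la^{2/3+(1/12)^{n+2}}$, so for tails of moderate length the bound is lost. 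This is precisely why the paper writes $I+(K_{n+1}-K_n-1)\omega\subset S_n$ in~(\ref{SL12_eq1}), applies $(3)_{n+1}(b)$ one step earlier, and then obtains $|r_{K_{n+1}-K_n}|\geq\la^{3/4}$ separately from $I+(K_{n+1}-K_n)\omega\subset\T\setminus G_n$ via $(3)_{n+1}(c)$. You need the same shift.

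Second, the resonant case. You have correctly noticed that $I_{n+1}+(\nu_{n+1}+K_{n+1})\omega$ is not covered by $\mathcal{A}_{n+1}(2)$ as written, and your patch --- enlarging the Sublemma~\ref{SL1} application in Step~1 to $I_{n+1}\cup(I_{n+1}+\nu_{n+1}\omega)$ --- would render the stated inclusion true. But this is not what the paper does, and the obstruction you ran into appears to be a misprint in the iteration count in SL12's statement. In Case~III, formula~(\ref{CIII_eq7}) compares $r_{M_{n+1}+K_{n+1}}(\theta)$ at position $\theta+K_{n+1}\omega$ with $r_{M_{n+1}+\nu_{n+1}+K_n}(\theta)$ at position $\theta+(\nu_{n+1}+K_n)\omega$, so the probe there runs for $K_{n+1}-\nu_{n+1}-K_n$ steps, landing in $I_{n+1}+K_{n+1}\omega$, which $\mathcal{A}_{n+1}(2)$ does cover. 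Correcting the count removes the difficulty outright and keeps the use of SL12 in Case~III consistent; your enlarged-SL1 patch makes the printed statement true but leaves that mismatch in place.
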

\begin{proof}The proof is similar to the previous two. First we note that in both cases we have (using $\mathcal{A}_{n+1}(1-2)$, and the fact that $I_{n+1}+\nu_{n+1}\omega\subset I_n$;
recall (\ref{Step1_In})) 
\begin{equation}\label{SL12_eq1}
\begin{aligned}
I+\omega&\subset \Theta_n\setminus \bigcup_{m=-M_n+1}^0(I_n+m\omega); 
\\ I+(K_{n+1}-K_n-1)\omega&\subset S_n\cap (\T\setminus G_n) \text{ and } I+(K_{n+1}-K_n)\omega\subset \T\setminus G_{n}.
\end{aligned}
\end{equation}
We also have $\N(\theta;J_{n+1}) \gg K_{n+1}-K_n$ for all $\theta\in I$. Proceeding as in the proof of Sublemma \ref{SL10}, that is,  applying Sublemma \ref{SL7}, we get
(\ref{SL12_eq0}). The second statement follow from Proposition \ref{A5} (we get the estimate on the product by applying $(3)_{n+1}$ to points $(\theta+\omega,r_1(\theta)), \theta\in I$).
\end{proof}

We now divide the further analysis into three cases.

\bigskip
\emph{Case I}: Here we assume that $I_{n+1}$ consists of two intervals. This means, by the definitions in step 1 above, that the set $J_{n+1}$ consists of two
non-resonant intervals, and $I_{n+1}=J_{n+1}$. In this case we thus have  $\mathcal{B}_n(i)$. Since the two intervals are non-resonant, we can focus on each 
of them separately.

Let, as usual, 
$$r_j(\theta)=\pi_2(\Phi^j(\theta-M_{n+1}\omega,\infty)),$$
so that $\vp_{n+1}(\theta)=r_{M_{n+1}+K_{n+1}}(\theta)$. Thus, recalling the definition of the set $J_{n+1}$, we need to 
control $r_{M_{n+1}+K_{n+1}}(\theta)$ on $I_{n+1}$. This will be obtained by taking the following route: 
$$
I_{n+1}-M_{n+1}\omega \to I_{n+1}-M_{n}\omega \to I_{n+1}+K_n\omega \to I_{n+1}+K_{n+1}\omega.
$$
Combining Sublemma \ref{SL10} with Sublemma \ref{SL11}, and recalling that $J_{n+1}=I_{n+1}$ in this case,
give us
\begin{equation}\label{CI_eq3}
\|r_{M_{n+1}+K_{n}}-\vp_n\|_{C^0(I_{n+1})}<\la^{-(4/3)M_n} \text{ and } \|r_{M_{n+1}+K_{n}}-\vp_n\|_{C^2(I_{n+1})}<1
\end{equation}
where $\vp_n$ is as in $\mathcal{B}_n$. Since $\mathcal{B}_n(i)$ holds in the present case, we can thus apply Lemma \ref{BE9}(1) to $\vp_n$ on each of the two intervals in $I_{n+1}$ 
(with $\delta=2\la^{-3/4}$ and
$D=\la^{K_n/2}$), and derive 
the following estimates for 
$r_{M_{n+1}+K_{n}}$:
Let 
$$
I'=\{\theta\in I_{n+1}: |r_{M_{n+1}+K_{n}}(\theta)|\leq 1.5\la^{-3/4}\}.
$$ 
Then $I'$ consists of two intervals, one in each interval in $I_{n+1}$, and 
$$
|r_{M_{n+1}+K_{n}}'(\theta)|>\la^{K_n/4-1} \text{ on } I',
$$
the derivative having opposite signs on the two intervals (recall Fig. \ref{fig10}).
Moreover, using the upper bound (\ref{IS_eq3}), it follows that
\begin{equation}\label{CI_eq2}
\text{dist}(I',\partial I_{n+1})>\la^{-6K_n-1}.
\end{equation}
 
To get control on $r_{M_{n+1}+K_{n+1}}(\theta)$ on $I_{n+1}$ we use shadowing, as usual. Therefore we let
\begin{equation}\label{CI_eq1}
s_j(\theta)=\pi_2(\Phi^j(\theta+K_n\omega,\infty)).
\end{equation}
From Sublemma \ref{SL12} we then get (note that $s$ and $r$ have opposite roles there, compared to the present situation)
$$
r_{M_{n+1}+K_{n+1}}(\theta)=s_{K_{n+1}-K_n}(\theta)-\frac{h(\theta)}{r_{M_{n+1}+K_n}(\theta)-w(\theta)}
$$ 
where $s_{K_{n+1}-K_n}$, $h$ and $w$ satisfy the estimates in Sublemma \ref{SL12}. This implies, using Lemma \ref{L_B1}, that
the estimates in $\mathcal{B}_{n+1}(i)$ hold. In particular, since $|w(\theta)|<1.1\la^{-3/4}$, we must have $J_{n+2}\subset I'$
(it is only when $r_{M_{n+1}+K_n}(\theta)-w(\theta)\approx 0$ that $\theta\in J_{n+1}$; cf Lemma \ref{L_B1}).
Thus $(\ref{IS_eq1})_{n+1}$ follows from (\ref{CI_eq2}).

\bigskip
\emph{Case II}: If $J_{n+1}$ consists of a single interval (and thus $I_{n+1}=J_{n+1}$ by definition), we can proceed exactly as in the previous case. 
The only difference is that $\mathcal{B}_n(ii)$ holds in this case. Since (\ref{CI_eq3}) will hold, the estimates on $\vp_n$ in $\mathcal{B}_n(ii)$ give us 
control on $r_{M_{n+1}+K_n}$. By using shadowing as
above (i.e., letting $s_j$ be as in (\ref{CI_eq1}) and applying Sublemma \ref{SL12} to compare $r_{M_{n+1}+K_{n+1}}$ and $s_{K_{n+1}-K_n}$), the statements in 
 $\mathcal{B}_{n+1}$ follows from Lemma \ref{L_B2}. We note that if we have situation $\mathcal{B}_{n+1}(ii)$, then, by Lemma \ref{L_B2}, 
$\vf_n$ on $J_{n+1}$ and $\vf_{n+1}$ on $J_{n+2}$ are both "bent" in the same direction (see Fig. \ref{fig11}). Note that this is a part of the last statement in the conclusions 
in Proposition \ref{inductive_lemma}. To get the other part we do as follows. We handle the case when $\vf_n, \vf_{n+1}$ are both bent upward. As in Case I we get
\begin{equation}\label{CII_eq99}
\vf_{n+1}(\theta,E')=r_{M_{n+1}+K_{n+1}}(\theta,E')=s_{K_{n+1}-K_n}(\theta,E')-\frac{h(\theta,E')}{r_{M_{n+1}+K_n}(\theta,E')-w(\theta,E')}
\end{equation}
where we for each $E'\in [E-e_{n}^-,E+e_n^{+}]$ and $\theta\in I_{n+1}$ have the estimates 
$$0<h(\theta,E')<\la^{-K_{n+1}/2}, |\partial_{E'} h(\theta,E')|\leq \sqrt{h(\theta,E')};$$ 
$$|w(\theta,E')|<1.1\la^{-3/4}, |\partial_{E'}w(\theta,E')|<1/10;$$
and 
$$|s_{K_{n+1}-K_n}(\theta,E')|\geq \la^{3/4}, |\partial_{E'}s_{K_{n+1}-K_n}(\theta,E')|<2.$$ 
Indeed, this follows as in the proof of Sublemma \ref{SL12}, using $(4)_{n+1}$ instead of only $(3)_{n+1}$, and making use of Proposition \ref{A_E}. 
Moreover, the first part of (\ref{CI_eq3}) holds for all $E'\in [E-e_{n}^-,E+e_n^{+}]$, that is,
$$
\|r_{M_{n+1}+K_n}(\cdot,E')-\vf_n(\cdot,E')\|_{C^0(I_{n+1})}<\la^{-(4/3)M_n}.
$$
A trivial fact which follows immediately from formula (\ref{DerE}) is that 
$$
\partial_{E'} r_{K_{n+1}-K_n}(\theta,E')<-1
$$
(in fact it moves much faster, but we do not need it). This implies, using the representation (\ref{CII_eq99}), that if $|\vf_{n+1}(\theta,E')|<1$ (so the absolute value of the last term in 
(\ref{CII_eq99}) must be $>\sqrt{\la}$)
for some $\theta\in I_{n+1}$
and $E'\in  [E-e_{n}^-,E+e_n^{+}]$, then $\partial_{E'}\vf_{n+1}(\theta,E')<-1/h<-\la^{K_{n+1}/2}$. That is, the bend in $\mathcal(B)_{n+1}(ii)$ moves very fast upwards 
when we change $E'<E$.  

Since we have $(\ref{IS_eq99})_n$ it thus follows that 
$$
\{\theta\in I_n: |\vp_n(\theta,E')|\leq 1.99\la^{-3/4}\}=\emptyset
$$
for all $E\in [E-e_n^-, E-\widetilde{e}_{n+1}^-]$, where $\widetilde{e}_{n+1}^-=\la^{-K_{n+1}/2}$. Thus we let $e_{n+1}^-=e_n^-$ and $e_{n+1}^+=\la^{-4M_{n+1}}$ in this case.

\bigskip
\emph{Case III}: It remains to consider the resonant case. In this case
we defined $$I_{n+1}=J^1_{n+1}\cup (J^2_{n+1}-\nu_{n+1}\omega).$$ We note that $\mathcal{B}_n(ii)$ holds, since $J_{n+1}$ consists of two intervals. 
We also recall (\ref{Step1_In}) and (\ref{Step1_nu}), i.e., that 
$$
\nu_{n+1}>2M_n
$$
and
$$
I_{n+1},I_{n+1}+\nu_{n+1}\omega\subset I_n
$$

To simplify the argument slightly, we shall assume that
\begin{equation}\label{CIII_eq1}
|\varphi_n(\theta)|<\la^{5/4} \text{ on } I_{n+1}\cup (I_{n+1}+\nu_{n+1}\omega),
\end{equation}
where $\vf_n$ is the function in $\mathcal{B}_n$ (recall the definition of $J_{n+1}$ there). (If this would not be the case, we would just divide 
$I_{n+1}\cup (I_{n+1}+\nu_{n+1}\omega)$ into parts where $|\varphi_n(\theta)|$ is smaller than $\la^{5/4}$, and parts where $|\varphi_n(\theta)|$ is larger that 
$\la^{5/4}$. For the latter parts we can iterate one more step to have something smaller that $\la^{5/4}$; to avoid this slightly more cumbersome notation, 
we make the above assumption.)  

If (\ref{CIII_eq1}) holds, it follows from Sublemma \ref{SL10a} that
\begin{equation}\label{CIII_eq3}
\|\vf_n(\theta)\|_{C^2(I_{n+1}\cup (I_{n+1}+\nu_{n+1}\omega)}<\la^{6K_n}.
\end{equation}
Moreover, a version of Sublemma \ref{SL11} can be formulated:   
\begin{sublem}\label{SL11p}
 Let $I=I_{n+1}\cup (I_{n+1}+\nu_{n+1}\omega)$, and assume that  (\ref{CIII_eq1}) holds. Assume that $r_0(\theta)$ satisfies $|r_0(\theta)|\geq \la^{3/4}$ on $I$ 
and $\|r_0\|_{C^2(I)}<\la^2$. 
If we let
$r_j(\theta)=\pi_2\left(\Phi^{j}(\theta-M_{n}\omega,r_0(\theta))\right)$, then we have
$$
\|r_{M_n+K_n}-\vp_n\|_{C^0(I)}<\la^{-(4/3)M_n} \text{ and } \|r_{M_n+K_n}-\vp_n\|_{C^2(I)}<1.
$$
\end{sublem}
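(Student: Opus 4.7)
The plan is to adapt the proof of Sublemma \ref{SL11} to the slightly enlarged set $I=I_{n+1}\cup(I_{n+1}+\nu_{n+1}\omega)$. In the proof of Sublemma \ref{SL11} the crucial a priori information on $\vp_n$ was the sharp bound $|\vp_n|\leq 2\la^{-3/4}$ available on $J_{n+1}$; here we only have the cruder bound $|\vp_n|<\la^{5/4}$ on $I$ from assumption (\ref{CIII_eq1}). The key point is that this is still enough to run the shadowing argument, since the only thing actually needed from $\vp_n=s_{M_n+K_n}$ is that $|s_{M_n+K_n-1}|\geq\la^{-2}$, and this follows from Lemma \ref{BE99}.

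First I would set up the shadow sequence $s_k(\theta)=\pi_2(\Phi^k(\theta-M_n\omega,\infty))$, so that by definition $s_{M_n+K_n}(\theta)=\vp_n(\theta)$. That for every $\theta\in I$ the point $\theta-M_n\omega$ lies in $\Theta_n$ and satisfies $\N(\theta-M_n\omega;I_n)=M_n$ is immediate from the inclusion $I\subset I_n$ supplied by (\ref{Step1_In}), together with the arithmetic conditions $\mathcal{A}_n(1)$--$(2)$. Next, the bound $|s_{M_n+K_n}(\theta)|=|\vp_n(\theta)|<\la^{5/4}$ forces $|s_{M_n+K_n-1}(\theta)|\geq\la^{-2}$ by Lemma \ref{BE99}, and then a verbatim repetition of the argument used in Sublemmas \ref{SL5}--\ref{SL6} --- namely applying $\mathcal{A}_n(3)(b)$ to the orbit issuing from $(\theta-(M_n-1)\omega,s_1(\theta))$ --- yields the product estimate
$$
|s_1(\theta)\cdots s_{M_n+K_n-1}(\theta)|\geq \la^{(2/3)(M_n+K_n-1)}\quad\text{for every }\theta\in I.
$$

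The next step is to propagate the hypothesized bounds on $r_0$ one iterate forward: from $|r_0|\geq\la^{3/4}$ and $\|r_0\|_{C^2(I)}<\la^2$, the identity $r_1(\theta)=\la f(\theta-M_n\omega)-E-1/r_0(\theta)$ gives $|r_1|\geq\la^{3/4}$ and $\|r_1\|_{C^2(I)}<\la^2$ on $I$, since the contribution of $1/r_0$ and its derivatives to the $C^2$-norm is $O(\la^{-3/4})$. Proposition \ref{A5} now applies and produces the representation
$$
r_{M_n+K_n}(\theta)-\vp_n(\theta)=\frac{h(\theta)}{r_1(\theta)-w(\theta)}
$$
with $0<h<\la^{-(4/3)(M_n+K_n-1)}$, $|h'|\leq\sqrt{h}$, $|h''|\leq 1$, $|w|<1.1\la^{-3/4}$ and $|w'|,|w''|<1$ on $I$. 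The $C^0$ estimate $\la^{-(4/3)M_n}$ is then immediate. The $C^2$ estimate follows from a direct quotient-rule computation, using that the denominator satisfies $|r_1-w|\geq \la^{3/4}/2$ on $I$ and that $h$ is super-polynomially small in $M_n$, so that the derivative terms, which blow up at worst polynomially in $\la$, are dwarfed by the factor $h$.

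The only point that requires any care is the verification of the arithmetic setup for points in $I\setminus J_{n+1}$: a priori one might fear that, losing the sharp smallness of $\vp_n$ outside $J_{n+1}$, the shadowing bound deteriorates. However, as explained above, the smallness $|\vp_n|\leq 2\la^{-3/4}$ was used in Sublemma \ref{SL11} solely to deduce $|s_{M_n+K_n-1}|\geq\la^{-2}$, and under the weaker hypothesis (\ref{CIII_eq1}) this conclusion still holds by Lemma \ref{BE99}. Everything else --- the arithmetic, the product bound, and the application of Proposition \ref{A5} --- goes through unchanged, since $I\subset I_n$.
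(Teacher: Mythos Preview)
Your approach is correct and matches the paper's, which simply says the proof is identical to that of Sublemma~\ref{SL11}; you have correctly isolated the only new point, namely that the weaker bound $|\vp_n|<\la^{5/4}$ from (\ref{CIII_eq1}) still yields $|s_{M_n+K_n-1}|\geq\la^{-2}$ via Lemma~\ref{BE99}, so the shadowing via Proposition~\ref{A5} goes through on all of $I\subset I_n$. Two small corrections: the product hypothesis of Proposition~\ref{A5} requires \emph{all} partial products $|\rho_1|^{\alpha_1}\cdots|\rho_i|^{\alpha_i}$, which comes from Sublemma~\ref{SL2} (i.e.\ $\mathcal{A}_n(3)(a)$ extended past $I_n$), not from $(3)(b)$; and the $C^2$-contribution of $1/r_0$ is $O(\la^{7/4})$ (from the term $2(r_0')^2/r_0^3$), not $O(\la^{-3/4})$ --- the conclusion $\|r_1\|_{C^2(I)}<\la^2$ is nonetheless correct.
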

\begin{proof}The proof is exactly the same as that of Sublemma \ref{SL11}.
\end{proof}

As in the above cases we first fix
$$
r_j(\theta)=\pi_2(\Phi^j(\theta-M_{n+1}\omega,\infty)).
$$
To control $r_{M_{n+1}+K_{n+1}}$ on $I_{n+1}$ in this case, we will have to go through both $J_{n+1}^1\subset I_{n+1}$ and $J_{n+1}^2\subset I_{n+1}+\nu_{n+1}\omega$, that is, we have to take the rout: 
$$
\begin{aligned}
I_{n+1}-M_{n+1}\omega \to& I_{n+1}-M_{n}\omega \to I_{n+1}+K_n\omega \to I_{n+1}+(\nu_{n+1}-M_n)\omega\to  \\ \to& I_{n+1}+(\nu_{n+1}+K_n)\omega \to I_{n+1}+K_{n+1}\omega
\end{aligned}
$$
(note that $I_{n+1}+(\nu_{n+1}-M_n)\omega\subset I_n-K_n\omega$, so we can use the inductive construction to get to $I_{n+1}+(\nu_{n+1}+K_n)\omega\subset I_n+K_n\omega$).

First, by combining Sublemmas \ref{SL10} and \ref{SL11p}, we get
$$
\|r_{M_{n+1}+K_n}-\vp_n\|_{C^0(I_{n+1})}<\la^{-(4/3)M_n} \text{ and } \|r_{M_{n+1}+K_n}-\vp_n\|_{C^2(I_{n+1})}<1.
$$
From the defintion of $J_{n+1}$ (in $\mathcal{B}_n$) we have, in particular,
$$
r_{M_{n+1}+K_n}(I_{n+1})\supset [-1.99\la^{-3/4},1.99\la^{-3/4}].
$$ 
As in Case I we get the estimate
\begin{equation}\label{CIII_eq4}
|r'_{M_{n+1}+K_n}|>\la^{K_n/4-1} \text{ on } I',
\end{equation}
where $I'=\{\theta\in I_{n+1}: |r_{M_{n+1}+K_n}|\leq 1.5\la^{-3/4}\}$. We note that we have
$I'\subset J_{n+1}^1$.

To get further, we shall, as usual, shadow $r_{M_{n+1}+K_n}$ with $s_{0}=\infty$. Thus we let
$$
s_j(\theta)=\pi_2(\Phi^j(\theta+K_n\omega,\infty)).
$$
This shall give control on $r_{M_{n+1}+\nu_{n+1}+K_n}$ on $I_{n+1}$.
We now state Claim 1, which will be shown below:  
\begin{equation}\label{CIII_eq2}
|s_{\nu_{n+1}-M_n-K_n}(\theta)|\geq \la^{3/4} \text{ for all } \theta\in I_{n+1} \text{ and } \|s_{\nu_{n+1}-M_n-K_n}\|_{C^2(I_{n+1})}<\la^2.
\end{equation}

Combining (\ref{CIII_eq2}) with Sublemma \ref{SL11p} give us
\begin{equation}\label{CIII_eq5}
\begin{aligned}
\|&s_{\nu_{n+1}}(\cdot)-\vp_n(\cdot+\nu_{n+1}\omega)\|_{C^0(I_{n+1})}<\la^{-(4/3)M_n}; \text{ and } \\ \|&s_{\nu_{n+1}}(\cdot)-\vp_n(\cdot+\nu_{n+1}\omega)\|_{C^2(I_{n+1})}<1. 
\end{aligned}
\end{equation}
Thus we have 
$$
s_{\nu_{n+1}}(I_{n+1})\supset [-1.99\la^{-3/4},1.99\la^{-3/4}]
$$ 
and
\begin{equation}\label{CIII_eq8}
|s'_{\nu_{n+1}}|>\la^{K_n/4-1} \text{ on } I'',
\end{equation}
where $I''=\{\theta\in I_{n+1}: |s_{\nu_{n+1}}|\leq 1.5\la^{-3/4}\}$. We also note that $I''\subset J_{n+1}^2-\nu_{n+1}\omega$.

Since $r_{M_{n+1}+K_n}(\theta)\approx \vf_{n+1}$ on $I'\subset J_{n+1}^1$ and $s_{\nu_{n+1}}(\theta)\approx \vp(\theta+\nu_{n+1}\omega)$ on $I''\subset J_{n+1}^2-\nu_{n+1}$, it follows 
from the assumptions in $\mathcal{B}_n$ that $r_{M_{n+1}+K_n}'$ on $I'$ and $s_{\nu_{n+1}}'$ on $I''$ have opposite signs.

To continue, we shall use Claim 2: 
\begin{equation}\label{CIII_eq6}
r_{M_{n+1}+\nu_{n+1}+K_n}(\theta)=s_{\nu_{n+1}}(\theta)-\frac{h_1(\theta)}{r_{M_{n+1}+K_n}(\theta)-w_1(\theta)},
\end{equation}
where $h$ and $w$ satisfy the following bounds on $I_{n+1}$:
$$
\la^{-4\nu_{n+1}}<h_1(\theta)<\la^{-\nu_{n+1}/2}, ~|h_1'(\theta)|\leq \sqrt{h_1(\theta)}, ~|h_1''(\theta)|<1; \text{ and }
$$
$$
|w_1(\theta)|<1.1\la^{-3/4} ~ \text{ and } ~|w_1'(\theta)|, |w_1''(\theta)|<1.
$$
This statement will be proved below.

Finally we use Sublemma \ref{SL12} to get control on $r_{M_{n+1}+K_{n+1}}$. Applying this sublemma we see that we can write
\begin{equation}\label{CIII_eq7}
r_{M_{n+1}+K_{n+1}}(\theta)=t(\theta)- \frac{h_2(\theta)}{r_{M_{n+1}+\nu_{n+1}+K_n}(\theta)-w_2(\theta)}, ~\theta\in I_{n+1},
\end{equation}
where 
$$
|t(\theta)|\geq \la^{3/4} \text{ for all } \theta\in I_{n+1} \text{ and } \|t\|_{C^2(I_{n+1})}<\la^2;
$$
$$
0<h_2(\theta)<\la^{-K_{n+1}/2}, ~|h_2'(\theta)|\leq \sqrt{h_2(\theta)}, ~|h_2''(\theta)|<1; \text{ and }
$$
$$
|w_2(\theta)|<1.1\la^{-3/4} ~ \text{ and } ~|w_2'(\theta)|, |w_2''(\theta)|<1.
$$
Using (\ref{CIII_eq6}), we write the denominator $\psi(\theta):=r_{M_{n+1}+\nu_{n+1}+K_n}(\theta)-w_2(\theta)$ in (\ref{CIII_eq7}) as
$$
r_{M_{n+1}+\nu_{n+1}+K_n}(\theta)-w_2(\theta)=s_{\nu_{n+1}}(\theta)-w_2(\theta)-\frac{h_1(\theta)}{r_{M_{n+1}+K_n}(\theta)-w_1(\theta)},
$$ 
The plan now is to apply Lemma \ref{L_B6} to control  $\psi$ on $I_{n+1}$. To make the presentation more transparent, 
we interpret the above functions in terms of the notation of Lemma \ref{L_B6}. We let 
$$s=s_{\nu_{n+1}}-w_2,~ 
g=r_{M_{n+1}+K_n}-w_1  \text{ and } h=h_1,
$$ 
and $\psi=s-h/g$.
We also let $D=\la^{7K_n}$, $d=\la^{K_n/5}$ and $\delta=\la^{-4\nu_{n+1}}$. From the estimates (\ref{CIII_eq3}), (\ref{CIII_eq4}) and (\ref{CIII_eq8}), and the estimates on $w_1, w_2$,
it follows that $s$ and $g$ satisfy the assumptions in Lemma \ref{L_B6} with $d$ and $D$ as above, and $I=I_{n+1}$. Moreover, by the above estimates on $h_1$ we have 
$$
\delta<h_1<\la^{-\nu_{n+1}/2}\ll D^{-4}
$$
since $\nu_{n+1}>2M_n\geq 2K_n^2$. Thus $h$ also satisfies the assumptions in Lemma \ref{L_B6}. 

Now we apply Lemma \ref{L_B6} to finish the verification of $\mathcal{B}_{n+1}$. From the lemma we get two cases: 

(1) There are two intervals $I^1, I^2 \subset I=I_{n+1}$ such that
$$
|\psi'(\theta)|>d/2=\la^{K_n/5}/2 \text{ on } I^1\cup I^2
$$
$\psi'$ having opposite sign on $I^1$ and $I^2$, $\psi(I^i)=[-1/(3\la),1/(3\la)]$ and $|\psi(\theta)|>1/(3\la)$ outside $I^1\cup I^2$. Since 
$$r_{M_{n+1}+K_{n+1}}=t-h_2/\psi,$$ where $h_2$ and $t$ satisfy the above estimates, the statement of $\mathcal{B}_{n+1}(i)$ follows by applying Lemma \ref{L_B1}.

(2) In the second case, i.e., Lemma \ref{L_B6} (2), we first note that from the fact that the "gap size" $>d\sqrt{\delta}/D^{3/2}$ (which is bigger than $\la^{-3\nu_{n+1}}$), 
it follows that we can have $|\psi(\theta)|\leq \la^{-K_{n+1}/2}$ on at most one of the intervals $I^1,I^2$. Indeed, note that 
$\la^{-K_{n+1}/2}\ll \la^{-3\nu_{n+1}}$ because $K_{n+1}\geq (1/2)\la^{K_n/(20\tau)}$ and $\nu_{n+1}\leq \la^{K_n/(25\tau)}$ (see case (2) in Step 1 of the verification of $\mathcal{A}_{n+1}$).  
Thus, in this case the statement of $\mathcal{B}_{n+1}$ follows by applying Lemma \ref{L_B2} (recall that $|h_2|<\la^{-K_{n+1}/2}$). 

Finally, since the set $J_{n+2}$ must be in $I^1\cup I^2$, which in turn must be in $I'\cup I''$, condition $(\ref{IS_eq2})_{n+1}$ follows (as in Case I).

It remains to verify Claim 1 and 2. To obtain Claim 1, we note that 
$\nu_{n+1}-M_n\gg K_n$, by (\ref{Step1_nu}), and $\N(\theta_0;J_{n+1})\geq \nu_{n+1}-K_n$ for all $\theta_0\in I_{n+1}+K_n\omega$. 
Moreover, by $\mathcal{A}_n$ we have
\begin{equation}\label{CIII_eq20}
I_{n+1}+(K_n+1)\omega\subset I_{n}+(K_n+1)\omega\subset \Theta_n\setminus \bigcup_{m=-M_n+1}^0(I_n+m\omega)
\end{equation}
and, using (\ref{Step1_In}), 
$$
\begin{aligned}
I_{n+1}&+(\nu_{n+1}-M_n-1)\omega\subset I_n-(M_n+1)\omega\subset S_n; \\
I_{n+1}&+(\nu_{n+1}-M_n)\omega\subset I_n-M_n\omega\subset \T\setminus G_n.
\end{aligned}
$$
Thus we can proceed as in the proof of Sublemma \ref{SL10}.

We verify Claim 2. Since (\ref{CIII_eq20}) holds, we can apply Sublemma \ref{SL7} to points $(\theta_1,s_1)$ for $\theta_1\in I_{n+1}+(K_n+1)\omega$.
Thus we get (from the last part of  Sublemma \ref{SL7}), using the fact that $|s_{\nu_{n+1}-1}|>\la^{-2}$ since (\ref{CIII_eq5}) holds, that
$$
|\sigma_1|^{\alpha_1}\cdots |\sigma_l|^{\alpha_l}\geq \la^{(2/3)l}
$$
for all $l\in [1,L]$ and $\alpha_i\in[1,6]$, where $s_1\cdots s_{\nu_{n+1}-1}=\sigma_1\cdots \sigma_L$. Recall that 
$\N(\theta_0;J_{n+1})\geq \nu_{n+1}-K_n$ for all $\theta_0\in I_{n+1}+K_n\omega$.
Now the statement of the claim follows from Proposition \ref{A5}.

\bigskip
This completes the proof of Proposition \ref{inductive_lemma}.
\end{proof}

\end{subsection}

\end{section}
\begin{section}{Proof of Theorems \ref{Maintheorem} and \ref{thm2}}\label{the_proof}
We are finally ready for the proof of Theorems \ref{Maintheorem} and \ref{thm2}. In fact the statements of the theorems will follow easily from the construction in the previous section.
As in that section we assume that $\omega\in \T$ satisfies the 
Diophantine condition (\ref{DC}), for some $\kappa>0$ and $\tau\geq1$, and that the function $f$ is fixed. We also assume that $\lambda$ is sufficiently large, depending only on 
$f,\kappa$ and $\tau$.

Fix any $E\in\mathcal{E}$. We now turn on the inductive machinery from Section \ref{the_induction}, that is, the base case in Lemmas \ref{BC1}, \ref{BC3} and \ref{BC4}, and
the inductive step in Proposition \ref{inductive_lemma} (recall Remark \ref{start_induction} after the statement of the inductive step). 
We recall that if for some $n\geq 0$ the set $J_{n+1}$ (in condition $\mathcal{B}_n$ in Proposition \ref{inductive_lemma}) is empty or 
consists of a single point, then we stop the machine.
As we shall see below, this happens iff the cocycle $(\omega,A_E)$ is uniformly hyperbolic. 

\bigskip
We first deal with the case when the inductive construction stops after a finite number of steps. Thus, we assume that there is an $n\geq 0$ such that 
$I_n\neq \emptyset$ and $I_{n+1}=\emptyset$. By Proposition \ref{inductive_lemma} we then know that condition $(3)_{n+1}(a)$ holds with $N=\infty$.
If we verify that the set $\Theta_{n+1}$, defined as in (\ref{setsTGS}), contains an interval, then it will thus follow from Lemma \ref{BE_UH} that the cocycle $(\omega,A_E)$ is uniformly hyperbolic.
Furthermore, from the growth rate in $(3)_{n+1}(a)$, recalling (\ref{kingman}) and  (\ref{matrix_r}), we will get
$$
\gamma(E)\geq \frac{2\log \lambda}{3}.
$$
We now verify that $\Theta_{n+1}$ contains an interval. Note that  $\Theta_{n+1}$ is defined by removing a finite number of intervals (since each of the sets $I_0, \ldots, I_n$ 
consists of one or two intervals). It is therefore enough to show that the measure of $\Theta_{n+1}$ is positive. From the definition of $I_0$ in Section
\ref{base_case}, and the choice of $K_0,M_0, \nu_0$ there, together with the inductive construction in Proposition \ref{inductive_lemma}, we get (inductively) the following estimates for each
$j\in [0,n]$: 
$$
|I_j|\leq \la^{-K_{j-1}/9},  
$$
where the integers $K_i$ (and $M_i, \nu_i$) satisfy
$$
K_i\in [[\la^{K_{i-1}/(60\tau)}],[\la^{K_{i-1}/(20\tau)}]], ~ M_i\in [K_i^2,2K_i^2] \text{ and } \nu_i\in [0,K_i/2].
$$
Here, as in Proposition \ref{inductive_lemma}, $K_{-1}=1$.
We therefore have 
$$
\begin{aligned}
|\T\setminus \Theta_{n+1}|< \sum_{j=0}^{n+1}(M_j+K_j)|I_j|< 4\sum_{j=0}^{n+1}\la^{K_{j-1}/(10\tau)-K_{j-1}/9} < \co \la^{-1/90}.
\end{aligned}
$$
Thus the set $\Theta_{n+1}$ has measure $\approx 1$.

\bigskip
In the rest of this section we shall assume that the inductive construction goes on forever. In this case we let
$$
\Theta_\infty=\bigcap_{n\geq 0} \Theta_n=\T\setminus \bigcup_{j=0}^{\infty}\bigcup_{m=-M_j+1}^{\nu_j}(I_j+m\omega).
$$
As above, making use of the estimates on $M_j,K_j, \nu_j, |I_j|$ ($j\geq 0$), we get that the measure of the set $\Theta_\infty$ is $\approx 1$.

That we have 
$$
\gamma(E)\geq \frac{2\log \lambda}{3}
$$
also in this case follows easily from $(3)_n(a)$ in Proposition \ref{inductive_lemma}, which we know holds for all $n\geq 0$. 
Indeed, fix any $\theta_0\in \Theta_\infty$ (which is of positive measure) and $|r_0|\geq \la^{3/4}$. Since $\Theta_\infty\subset \Theta_n$ for all $n\geq 0$,
we know that $\N(\theta_0; I_n)\geq M_n$ for each $n\geq 0$. Since $M_n\to\infty$ as $n\to \infty$, it therefore follows from $(3)_n(a)$
that $|r_0\cdots r_k|\geq \la^{(2/3)(k+1)}$ for all $k\geq 0$ such that $r_k\geq \la^{-2}$, and thus 
$$
\max\{|r_0\cdots r_k|,|r_0\cdots r_{k+1}|\}\geq \la^{2/3(k+1)} \text{ for all } k\geq0.
$$ 
We can also state this result as follows:
\begin{equation}\label{Pf_eq0}
\theta_0\in \Theta_\infty, |r_0|\geq \la^{3/4} \Longrightarrow |\rho_0\cdots \rho_l|\geq \la^{(2/3)(k+1)}, l\geq 0,
\end{equation}
where $\rho_0\cdots \rho_l$ is $r_0\cdots r_k$ or $r_0\cdots r_{k+1}$.

\bigskip
We turn to the finer dynamical properties of the map $\Phi=\Phi_E$. Since we have assumed that the induction never stops, we know, in particular, that condition
$\mathcal{B}_n$ holds for all $n\geq 0$ (the set $J_{n+1}$ contains one or two non-degenerate intervals). This means that for each $n\geq 0$ we either have
$\mathcal{B}_n(i)_n$ or $\mathcal{B}_n(ii)_n$. Now the dynamics of $\Phi$ will depend on whether we have $\mathcal{B}_n(i)_n$ for finitely or infinitely many $n$.
We will see that if there are only finitely many such $n$ (i.e., if $\mathcal{B}_n(ii)_n$ holds for all large enough $n$), then $E$ belongs to the edge of an open gap
in the spectrum of the associated Schr\"odinger operator (\ref{operator}) (or, equivalently, the edge of an open interval in the set of parameters $E'$ for which the
cocycle $(\omega,A_{E'})$ is uniformly hyperbolic). 

\bigskip
\emph{Case 1}. We assume here that we have situation $\mathcal{B}_n(i)_n$ for infinitely many $n\geq 0$. The aim is to show that the map $\Phi$ is minimal in this case.
We divide the proof of this fact into a few pieces.

\begin{sublem}\label{Pf_SL1}
Assume that $\theta_0\in \Theta_\infty$ and $|r_0|\geq \la^{3/4}$. If $n\geq 0$ is such that $\mathcal{B}_n(i)_n$ holds, and if $J_{n+1}^1$ is one of the intervals in $J_{n+1}$ such that
$J_{n+1}^1\subset I_{n+1}$ (recall from the definition of $I_{n+1}$ in step 1 of the proof of Proposition \ref{inductive_lemma} that we either have
$I_{n+1}=J_{n+1}$ or $I_{n+1}=J_{n+1}^1\cup(J_{n+1}^2-\nu_{n+1}\omega)$), then the set
$$
\{r_k:\theta_k\in J_{n+1}^1+K_n\omega, k\geq 0\} 
$$  
is $\la^{-M_n}-$dense in the interval $[-2\la^{-3/4},2\la^{-3/4}]$.
\end{sublem}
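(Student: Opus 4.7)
The plan is to combine the shadowing estimate of Sublemma \ref{SL6}, the geometric description of $\vp_n$ on $J_{n+1}$ from condition $\mathcal{B}_n(i)_n$, and quantitative equidistribution of the Diophantine rotation. The first step is to verify that for every $k\geq 0$ with $\theta_k\in J_{n+1}^1-M_n\omega$ (i.e., every return time of the orbit to this set), one has $|r_k|\geq\la^{3/4}$. Since $\theta_0\in\Theta_\infty\subset\Theta_{n+1}$ and $J_{n+1}^1-M_n\omega\subset I_n-M_n\omega\subset S_n$, this follows from statement $(3)_{n+1}(b)$, applied iteratively between successive visits of the orbit to $I_{n+1}$ in the same way as in the proof of Sublemma \ref{SL7}. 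Sublemma \ref{SL6} then yields
$$
|r_{k+M_n+K_n}-\vp_n(\theta_k+M_n\omega)|<\la^{-(4/3)M_n},
$$
and $\theta_{k+M_n+K_n}=\theta_k+(M_n+K_n)\omega\in J_{n+1}^1+K_n\omega$, as required.

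By $\mathcal{B}_n(i)_n$, $\vp_n$ is a diffeomorphism of $J_{n+1}^1$ onto $[-2\la^{-3/4},2\la^{-3/4}]$, and $\|\vp_n\|_{C^2(J_{n+1})}<\la^{6K_n}$ by Sublemma \ref{SL10a}. Consequently any $\delta$-dense subset of $J_{n+1}^1$ is carried by $\vp_n$ to a $\delta\la^{6K_n}$-dense subset of $[-2\la^{-3/4},2\la^{-3/4}]$. Setting $\delta=\tfrac{1}{2}\la^{-M_n-6K_n}$ and adding the shadowing error $\la^{-(4/3)M_n}\ll\la^{-M_n}$, it suffices to show that the set
$$
\{\theta_0+(k+M_n)\omega:k\geq 0\}\cap J_{n+1}^1
$$
is $\delta$-dense in $J_{n+1}^1$. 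Using the derivative upper bound above together with $\vp_n(J_{n+1}^1)=[-2\la^{-3/4},2\la^{-3/4}]$, one has $|J_{n+1}^1|\geq 4\la^{-3/4-6K_n}\gg\delta$. Hence it is enough that the orbit of $\theta_0+M_n\omega$ be $\delta$-dense in $\T$, which follows from the Diophantine condition (\ref{DC}) once the orbit length exceeds a polynomial in $1/\delta$ depending only on $\kappa$ and $\tau$ (cf.\ Lemma \ref{BE7}).

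The main technical obstacle I anticipate is the first step: verifying $|r_k|\geq\la^{3/4}$ at \emph{every} $k$ for which $\theta_k\in J_{n+1}^1-M_n\omega$. Although conceptually clear because such $\theta_k$ sit well inside the ``good'' set $S_n\setminus G_n$, a clean proof requires iterating the conclusions of Proposition \ref{inductive_lemma} across successive visits to $I_{n+1}$, in the same spirit as the decomposition used in the proof of Sublemma \ref{SL7}. Once this is in place, the remaining pieces assemble routinely from the shadowing estimate, the bounded distortion of $\vp_n$ on $J_{n+1}^1$, and quantitative Weyl equidistribution.
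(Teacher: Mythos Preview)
The gap is in your first step: the claim that $|r_k|\geq\la^{3/4}$ at \emph{every} return to $J_{n+1}^1-M_n\omega$ is not correct, and the proposed iterative argument cannot establish it. The iteration in Sublemma~\ref{SL7} works only up to the first entry into $J_{n+1}$, because the restart after each visit to $I_n$ relies on Sublemma~\ref{SL5}, which requires $\theta\in I_n\setminus J_{n+1}$. Running the same scheme at level $n+1$ likewise halts at the first entry into $J_{n+2}$; pushing to level $n+2$ halts at $J_{n+3}$; and so on. The net conclusion of iterating through all levels is not that every return is good, but only that a bad return (one with $|r_k|<\la^{3/4}$) forces $\theta_k\in G_\infty=\bigcup_{j\geq 0}\bigcup_{m=1}^{K_j}(I_j+m\omega)$. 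Since $J_{n+1}^1-M_n\omega$ generically meets $\bigcup_{m=1}^{K_{n+2}}(I_{n+2}+m\omega)$ (the translates $I_{n+2}+m\omega$ sweep densely through $\T$ as $m$ ranges over $[1,K_{n+2}]$), bad returns do occur, and your appeal to $(3)_{n+1}(b)$ with $\theta_T\in S_n$ gives no information at those times.

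The paper's proof embraces this. It invokes condition $(3)_p(c)$ for \emph{all} $p$ (legitimate since $\theta_0\in\Theta_\infty\subset\Theta_p$ and $\N(\theta_0;I_p)\geq M_p\to\infty$) to obtain the implication $|r_k|<\la^{3/4}\Rightarrow\theta_k\in G_\infty$, and then verifies $(J_{n+1}^1-M_n\omega)\cap G_{n+1}=\emptyset$. Hence the bad returns are confined to $\bigcup_{j\geq n+2}\bigcup_{m=1}^{K_j}(I_j+m\omega)$, of total length at most $2K_{n+2}|I_{n+2}|\leq 2\la^{-K_{n+1}(1/9-1/(20\tau))}$. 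Density of the rotation then makes the \emph{good} returns already $2\la^{-K_{n+1}(1/9-1/(20\tau))}$-dense in $J_{n+1}^1-M_n\omega$, which is far finer than the $\la^{-M_n-6K_n}$ you were aiming for; from that point your shadowing step (Sublemma~\ref{SL6}) and the Lipschitz bound on $\vp_n$ finish the argument exactly as you outline.
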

\begin{proof}
Assume that $(\theta_0,r_0)$ are fixed as in the assumptions of the lemma.
We let 
$$
G_\infty=\bigcup_{k\geq 0}G_k=\bigcup_{j=0}^\infty\bigcup_{m=1}^{K_j}(I_j+m\omega).
$$
Since $\mathcal{A}_p(3)_p(c)$ holds for all $p\geq 1$ (recall that $\N(\theta_0; I_p)\geq M_p$ for all $p$), we conclude that
\begin{equation}\label{Pf_SL1_eq1}
|r_k|<\la^{3/4}, k\geq 0 \Longrightarrow \theta_k\in G_\infty.
\end{equation}

By definition we have $J_{n+1}\subset I_n$.
From $\mathcal{A}_n(2)_n$ it follows that $(I_n-M_n\omega)\cap G_{n-1}=\emptyset$. Moreover, $\mathcal{A}_n(1)_n$ implies that
$$
(I_n-M_n\omega)\cap\bigcup_{m=1}^{K_n}(I_n+m\omega)=\emptyset,
$$
and thus $(I_n-M_n\omega)\cap G_{n}=\emptyset$. Since we have $J_{n+1}^1\subset I_{n+1}$ it also follows from $\mathcal{A}_{n+1}(1)_{n+1}$ that
$$
(J_{n+1}^1-M_n\omega)\cap\bigcup_{m=1}^{K_{n+1}}(I_{n+1}+m\omega)=\emptyset.
$$ 
Combining these facts now give us
$$
(J_{n+1}^1-M_n\omega)\cap G_{n+1}=\emptyset.
$$
From (\ref{Pf_SL1_eq1}) it therefore follows that if $|r_k|<\la^{3/4}$ for some $k\geq 0$ such that $\theta_k\in J_{n+1}^1-M_n\omega$, then we must in fact have
$\theta_k\in \bigcup_{j=n+2}^\infty\bigcup_{m=1}^{K_j}(I_j+m\omega)$. Using the upper bounds on the derivative of $\vp_n$ in $(\ref{IS_eq3})_n$, and recalling the properties of $J_{n+1}$ in 
$\mathcal{B}_n(i)_n$, we see that $|J_{n+1}^1|>\la^{-6K_n}$. From the estimates on the $K_j,I_j$ we thus get
$$
\left|\bigcup_{j=n+2}^\infty\bigcup_{m=1}^{K_j}(I_j+m\omega)\right|<2 K_{n+2}|I_{n+2}|\leq 2\la^{-K_{n+1}(1/9-1/(20\tau))}.
$$
Therefore we conclude that the set $\{\theta_k: |r_k|\geq \la^{3/4}, k\geq0\}$ is  $2\la^{-K_{n+1}(1/9-1/(20\tau))}-$dense in the interval $J_{n+1}^1-M_n\omega$.
Applying Sublemma \ref{SL6} (where the function  $\vp_n$ is as in $\mathcal{B}_n$), and using that $\mathcal{B}_n(ii)_{n}$ holds, i.e., $\vp_n(J_{n+1}^1)=[-2\la^{-3/4},2\la^{-3/4}]$, together
with the upper bound $|\vp'_n(\theta)|<\la^{6K_n}$ on $J_{n+1}$, now gives that
the set $\{r_k:\theta_k\in J_{n+1}^1+K_n\omega, k\geq 0\}$  
is $\left(\la^{6K_n}\cdot2\la^{-K_{n+1}(1/9-1/(20\tau))}+2\la^{-(4/3)M_n}\right)-$dense in the interval $[-2\la^{-3/4},2\la^{-3/4}]$. Since this last expression is much smaller than $\la^{-M_n}$, 
the statement of the lemma holds.
\end{proof}

\begin{sublem}\label{Pf_SL2}
Let $n_p$ $(p\in\mathbb{N})$ be any subsequence of $\mathbb{N}$ and assume that the points $b_p\in I_{n_p}+K_{n_p}\omega$. If 
$b\in \T$ is an accumulation point of $(b_p)_{p\geq 1}$, then $b, b+\omega\in \Theta_\infty$.  
\end{sublem}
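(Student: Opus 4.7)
The plan is to exploit the strong geometric separation built into $\mathcal{A}_n(2)$, namely that $I_n+K_n\omega$ lies in the complement of the (enlarged) translates $2I_j+m\omega$ for every earlier scale $j<n$ and every $|m|\leq 2M_j$. Since $I_j$ sits strictly inside $2I_j$ with a definite gap (roughly $|I_j|/2$), a point $b_p\in I_{n_p}+K_{n_p}\omega$ stays uniformly away from each $I_j+m\omega$, and this separation survives when we pass to the accumulation point $b$.

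More concretely, first I would fix an arbitrary $j\geq 0$ and an arbitrary $m_0\in[-M_j+1,\nu_j]$, and observe that $\nu_j\leq K_j/2$ together with $M_j\geq K_j^2$ gives $[-M_j+1,\nu_j]\subset[-2M_j,2M_j]$. Since $(n_p)$ is a subsequence of $\mathbb{N}$ it tends to infinity, so for all sufficiently large $p$ we have $n_p>j$; then $\mathcal{A}_{n_p}(2)$, applied to $b_p\in I_{n_p}+K_{n_p}\omega$, forces
$$
b_p\notin 2I_j+m_0\omega.
$$
Writing each component of $I_j$ as $[a-\ve,a+\ve]$ and the corresponding component of $2I_j$ as $[a-2\ve,a+2\ve]$, one gets $\mathrm{dist}(I_j,\,\partial(2I_j))=\ve>0$, so that $b_p$ is bounded away from $I_j+m_0\omega$ by a positive constant (independent of $p$). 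Selecting a subsequence of $(b_p)$ converging to the accumulation point $b$ and passing to the limit then yields $b\notin I_j+m_0\omega$. Since $j$ and $m_0\in[-M_j+1,\nu_j]$ were arbitrary, this proves $b\in\Theta_\infty$.

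For the second claim, I would simply reindex: $b+\omega\in I_j+m\omega$ is equivalent to $b\in I_j+(m-1)\omega$. As $m$ ranges over $[-M_j+1,\nu_j]$, the shifted index $m-1$ ranges over $[-M_j,\nu_j-1]$, which is still contained in $[-2M_j,2M_j]$, so the identical argument applies and gives $b+\omega\in\Theta_\infty$.

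There is essentially no obstacle here; the entire content of the lemma is packaged in $\mathcal{A}_n(2)$, whose point is precisely to provide the doubled ``buffer'' $2I_j$ (instead of $I_j$) so that closedness/accumulation arguments like this one go through. The only minor care needed is the book-keeping to verify $[-M_j+1,\nu_j]\subset[-2M_j,2M_j]$ (and its shift by $-1$), which follows from the inductive bounds $\nu_j\leq K_j/2$ and $M_j\geq K_j^2$.
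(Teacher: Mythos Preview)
Your proposal is correct and takes essentially the same approach as the paper: both arguments hinge on the buffer built into $\mathcal{A}_n(2)$, namely that $I_n+K_n\omega$ avoids the \emph{doubled} translates $2I_j+m\omega$ for $j<n$, which furnishes a positive gap between $b_p$ and each $I_j+m\omega$ that survives in the limit. The paper packages this slightly differently---introducing auxiliary open sets $U_j$ with $I_j\subset U_j\subset 2I_j$ and a closed set $\Theta_n'\subset\Theta_n$---but the content is identical to your direct distance argument.
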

\begin{proof}By definition the sets $I_k$ $(k\geq 0)$ are all closed, and thus the sets $\Theta_n$ are open. This causes some (artificial) problems when taking limits.
We therefore take open sets $U_k$ such that $I_k\subset U_k \subset 2I_k$, $k\geq 0$, and such that $\mathcal{A}_n(1)_n$ in Proposition \ref{inductive_lemma}, 
with $I_j$ replaced by $U_j$, holds for all $n\geq 0$. 
This is indeed possible, since each $I_k$ consists of one or two non-degenerate closed intervals.
Let
$$
\Theta'_n=\T\setminus\left(\left(\bigcup_{j=0}^{n-1}\bigcup_{m=-M_j+1}^{\nu_j}(U_j+m\om)\right)\cup \bigcup_{m=1}^{\nu_n}(U_n+m\omega)\right).
$$ 
Then $\Theta_n'$ is closed and (recall the definition of $\Theta_n$ in (\ref{setsTGS})) $\Theta'_n\subset \Theta_n$. Hence the set $\Theta'_\infty=\bigcap_{n\geq0}\Theta'_n$ is closed,
and it contains $\Theta_\infty$.
By $\mathcal{A}_n(2)_n$ 
(and $\mathcal{A}_n(1)_n$ with $I_j$ replaced by $U_j$) it follows that
$$
I_n+K_n\omega, I_n+(K_n+1)\omega \subset \Theta_n' \text{ for all } n\geq 0.
$$
Since $\Theta_0'\supset\Theta_1'\supset \Theta_2'\supset \cdots$, it thus follows that $b,b+\omega\in \Theta'_\infty\subset \Theta_\infty$.
\end{proof}

\begin{sublem}\label{Pf_SL3}
If $\theta_0\in \T\setminus J_0$ and $\theta_1\in \Theta_\infty$, and if $R$ is the vertical segment $$R=\{(\theta_0,r):|r|\leq 2\la^{-3/4}\},$$ then
\begin{equation}\label{Pf_SL3_eq1}
\overline{\bigcup_{k=0}^\infty \Phi^k(R)}=\T\times \widehat{\mathbb{R}}.
\end{equation}
\end{sublem}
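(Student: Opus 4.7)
The plan is to set $\Omega := \overline{\bigcup_{k \geq 0} \Phi^k(R)}$ and show $\Omega = \T \times \widehat{\R}$. I proceed in three stages: first produce a full vertical segment $\{b\} \times [-2\la^{-3/4}, 2\la^{-3/4}]$ inside $\Omega$ for some $b \in \Theta_\infty$; second, recursively obtain many such segments; third, iterate forward to fill in the remaining fibers.

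For the first stage, I pick $r_0 \in R$ with $|r_1| \geq \la^{3/4}$, where $r_1 = \la f(\theta_0) - E - 1/r_0$; since $\theta_0 \notin J_0$ implies $|\la f(\theta_0) - E| \geq 2\la^{3/4}$, the excluded $r_0$ form a single short sub-interval of $R$, so this is possible. The point $(\theta_1, r_1)$ then satisfies the hypotheses of Sublemma \ref{Pf_SL1}, since $\theta_1 \in \Theta_\infty$. By the Case~1 assumption there is an increasing sequence $n_p \to \infty$ along which $\mathcal{B}_{n_p}(i)_{n_p}$ holds, and for each $p$ Sublemma \ref{Pf_SL1} gives that the set $\{r_k : \theta_k \in J_{n_p+1}^1 + K_{n_p}\omega\}$ is $\la^{-M_{n_p}}$-dense in $[-2\la^{-3/4}, 2\la^{-3/4}]$. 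Choosing $b_p \in J_{n_p+1}^1 + K_{n_p}\omega$ actually attained by the orbit, passing to a subsequence with $b_p \to b$, and noting $J_{n_p+1}^1 \subset I_{n_p}$, Sublemma \ref{Pf_SL2} yields $b, b+\omega \in \Theta_\infty$. Combined with $M_{n_p} \to \infty$, the orbit accumulates on every point of $\{b\} \times [-2\la^{-3/4}, 2\la^{-3/4}]$, placing this entire segment in $\Omega$.

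For the second stage, since $b \notin J_0$ and $b+\omega \in \Theta_\infty$, the segment $\{b\} \times [-2\la^{-3/4}, 2\la^{-3/4}]$ plays the same role as $R$ with $\theta_0$ replaced by $b$. Because $\Omega$ is closed and forward-$\Phi$-invariant, the first-stage argument can be rerun from this segment to produce more full vertical segments at other base points $b' \in \Theta_\infty$. For the third stage, apply $\Phi$ forward: $\Phi^k(\{b\} \times [-2\la^{-3/4}, 2\la^{-3/4}]) = \{b+k\omega\} \times V_k \subset \Omega$, where $V_k$ is an arc of $\widehat{\R}$; already $V_1$ is near-full in $\widehat{\R}$, its complement being the interval of length $\la^{3/4}$ around $\la f(b) - E$. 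Given a target $(\vartheta^*, r^*)$, I would pick $k$ with $b+k\omega$ close to $\vartheta^*$ (possible by irrationality of $\omega$) and apply Sublemma \ref{Pf_SL1} once more to a point of $V_k$ with $|r| \geq \la^{3/4}$ and $\theta = b+k\omega \in \Theta_\infty$ to populate the fiber over $\vartheta^*$ and reach $r^*$.

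The main obstacle is the third stage, where one must guarantee that the arcs $V_k$ together with recursive reapplications of Sublemma \ref{Pf_SL1} actually cover $\widehat{\R}$ densely at the target base point $\vartheta^*$, rather than contracting onto a proper invariant subset. The key point is that any vertical segment of the form $\{b'\} \times [-2\la^{-3/4}, 2\la^{-3/4}] \subset \Omega$ restarts the entire construction from scratch, so $\Omega$ absorbs enough such segments to eventually equal all of $\T \times \widehat{\R}$.
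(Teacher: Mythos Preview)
Your proposal has a genuine gap in the third stage (which you yourself flag), and it misses the much simpler idea that the paper uses.

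The paper does not use Sublemmas~\ref{Pf_SL1} or~\ref{Pf_SL2} at all in proving Sublemma~\ref{Pf_SL3}. Instead, it looks at the \emph{complement} $L=\{(\theta_0,r):|r|>2\la^{-3/4}\}$ of $R$ in the full fiber $\{\theta_0\}\times\widehat{\R}$. Since each fiber map is a M\"obius transformation (hence a bijection of $\widehat{\R}$), one has $\Phi^k(R)=\{\theta_k\}\times\widehat{\R}\setminus\Phi^k(L)$ for every $k$. Now $\Phi(L)=\{\theta_1\}\times A$ where $A$ is an arc of length $\la^{3/4}$ contained in $\{|r|\geq\la^{3/4}\}$ (because $\theta_0\notin J_0$). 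Since $\theta_1\in\Theta_\infty$, the growth estimate~(\ref{Pf_eq0}) and the contraction formula~(\ref{contrF}) give $|\pi_2(\Phi^k(\{\theta_1\}\times A))|\leq\la^{3/4-(4/3)k}$ whenever $|t_k|\geq 2\la^{-2}$ for a reference orbit. Thus $\Phi^{k+1}(R)$ covers the entire fiber over $\theta_{k+1}$ except for an arc whose length tends to $0$. Combined with density of $(\theta_k)_{k\geq0}$ in $\T$, this immediately yields $\overline{\bigcup_k\Phi^k(R)}=\T\times\widehat{\R}$.

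Your recursive scheme, by contrast, produces full short segments only over very special base points (accumulation points of $J_{n_p+1}^1+K_{n_p}\omega$), and there is no mechanism guaranteeing that these, together with their forward images, densely populate an \emph{arbitrary} fiber over $\vartheta^*$; in particular, the step ``pick $k$ with $b+k\omega$ close to $\vartheta^*$ and $b+k\omega\in\Theta_\infty$ and then apply Sublemma~\ref{Pf_SL1}'' does not land you in the fiber over $\vartheta^*$ at all---Sublemma~\ref{Pf_SL1} populates fibers over the specific points $J_{n_p+1}^1+K_{n_p}\omega$, not over $\vartheta^*$. Note also that your argument imports the Case~1 hypothesis (infinitely many $n$ with $\mathcal{B}_n(i)_n$) through Sublemma~\ref{Pf_SL1}, whereas the stated sublemma needs only $\theta_1\in\Theta_\infty$ and is proved in the paper without that hypothesis.
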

\begin{proof}Let $L=\{(\theta_0,r):|r|> 2\la^{-3/4}\}$ (so $L\cup R=\{\theta_0\}\times \PR$). Then $\pi_2(\Phi(L))$ is the interval
$$A=(\la f(\theta_0)-E-\la^{3/4}/2, \la f(\theta_0)-E+\la^{3/4}/2)\subset \widehat{\mathbb{R}}.$$ 
We note that $|A|=\la^{3/4}$. Since $\theta_0\in \T\setminus J_0$ we have that $|r|\geq \la^{3/4}$ for all $r\in A$.

We have assumed that $\theta_1\in \Theta_\infty$. Therefore it follows from (\ref{Pf_eq0}) that
if $r_1\geq \la^{3/4}$, then $|r_1\cdots r_k|\geq \la^{2/3k}$ for all $k\geq 1$ such that $|r_k|\geq \la^{-2}$.

Fix any $t_1\in A$ (in particular we have $|t_1|\geq \la^{3/4}$). The orbit of $(\theta_k,t_k)_{k\geq 1}$ will be used as a reference. As in the proof of Lemma \ref{BE20} we easily get 
\begin{equation}\label{Pf_SL3_eq2}
|\pi_2(\Phi^{k}(\theta_1,A))|\leq \la^{3/4-(4/3)k}\text { for all } k\geq 1 \text{ such that } |t_k|\geq 2\la^{-2}.
\end{equation}
Since the orbit $(\theta_1+j\omega)_{j\geq1}$ is dense in $\T$, and since $|t_{k+1}|\gg \la$ if $|t_k|< 2\la^{-2}$, this result will give (\ref{Pf_SL3_eq1}).
\end{proof}

Now we have the tools needed to prove that the map $\Phi$ is minimal. Let $(n_p)$ be the (infinite) subsequence of $\mathbb{N}$ such that
$\mathcal{B}_{n_p}(i)_{n_p}$ holds for all $p\geq 1$. For each such $n_p$ we choose one of the two intervals, $J_{n_p+1}^1$, in the set $J_{n_p+1}$ such that $J_{n_p+1}^1\subset I_{n_p+1}$.
Since $|J_{n_p+1}^1|\to 0$ as $n\to\infty$, there must be a point $\theta^*\in \T$ and a subsequence of $(n_p)$ (which we also denote by $(n_p)$ for simplicity)
such that $\theta^*$ is accumulated by the intervals $J_{n_p+1}^1+K_{n_p}\omega$ (that is, $J_{n_p+1}^1+K_{n_p}\omega\to \{\theta^*\}$ in the Hausdorff metric). 
Thus it follows from Sublemma \ref{Pf_SL1} (applied for each $n=n_p$) and the definition of $\theta^*$ that 
$$
\theta_0\in \Theta_\infty, |r_0|\geq \la^{3/4} \Longrightarrow \overline{\{(\theta_k,r_k):k\geq 0\}}\supset \{\theta^*\}\times
[-2\la^{-3/4},2\la^{-3/4}].
$$
By Sublemma \ref{Pf_SL2} it follows that $\theta^*, \theta^*+\omega\in \Theta_\infty$. We can therefore apply Sublemma \ref{Pf_SL3} to the segment $\{\theta^*\}\times
[-2\la^{-3/4},2\la^{-3/4}]$ and conclude that we also have
$$
\theta_0\in \Theta_\infty, |r_0|\geq \la^{3/4} \Longrightarrow \overline{\{(\theta_k,r_k):k\geq 0\}}=
\T\times\widehat{\mathbb{R}}.
$$
Combining this information with the estimate (\ref{Pf_eq0}) and Lemma \ref{A_Osc}, shows that there for each $\theta\in \Theta_\infty$ exists a $w(\theta)$ 
(one of the Oseledets' directions) such that
$$
\theta_0\in \Theta_\infty, r_0\neq w(\theta_0) \Longrightarrow \overline{\{(\theta_k,r_k):k\geq 0\}}=
\T\times\widehat{\mathbb{R}}.
$$ 
Furthermore, since the set $\Theta^*=\{\theta\in\T: \theta_k\in \Theta_\infty \text{ for some } j\geq 0\}$ has full measure, we conclude that for all $\theta\in\Theta^*$ there is $w(\theta)\in 
\widehat{\mathbb{R}}$ such that
\begin{equation}\label{Pf_minimal}
\theta_0\in \Theta^*, r_0\neq w(\theta_0) \Longrightarrow \overline{\{(\theta_k,r_k):k\geq 0\}}=
\T\times\widehat{\mathbb{R}}.
\end{equation}
It is clear that the cocycle cannot be uniformly hyperbolic (recall the definition in the introduction; if the cocycle is uniformly hyperbolic, then there 
exists continuous functions $w^{\pm}:\T\to \PR$ (the directions of the subspaces $W^\pm$) such that $\Phi(\theta,w^{\pm}(\theta))=(\theta+\omega,w^{\pm}(\theta+\omega))$).
Since $\gamma(E)>0$ we thus know that $\Phi_E$ has a unique minimal set $M$ (recall the discussion in Remark \ref{rem_proj} after Theorem \ref{thm2}).
From (\ref{Pf_minimal}) it thus follows that we must have $M=\T\times\widehat{\mathbb{R}}$.

\bigskip
\emph{Case 2}. Now we treat the case when there is an $n_0\geq 0$ such that we have $\mathcal{B}_n(ii)_n$ for all $n\geq n_0$. When this holds, $E$ must in fact
be a boundary point of an open gap in the spectrum, that is, there exists some $\ve>0$ such that $(\omega,A_{E'})$ is uniformly hyperbolic for all $E'\in (E-\ve,E)$
or for all $E'\in (E,E+\ve)$. In \cite{Bj2} we had a situation when $\mathcal{B}_n(ii)_n$ holds for all $n\geq 0$; in that case $E$ was the lowest energy in the spectrum, i.e.,
$(\omega,A_{E'})$ is uniformly hyperbolic for all $E'<E$.

The first thing we shall show is that there exists a phase $\theta^*\in \T$ and a vector $u\in l^2(\Z)$, exponentially decaying at $\pm\infty$, such that 
\begin{equation}\label{Pf_op}
H_{\theta^*}u=Eu
\end{equation}
where $H_{\theta}$ is defined as in (\ref{operator}). 

The $\theta^*$ is defined in the following way. 
Since $I_{n+1}\subset I_n$ and $|I_n|<\la^{K_{n-1}/9}$ for all $n\geq 0$ (we have defined $K_{-1}=1$), and 
since $I_{n}=J_n$ consists of one single interval for all $n\geq n_0+1$, it follows that 
$$
\bigcap_{n\geq0} I_n=\{\theta^*\}
$$
for some $\theta^*\in\T$. 

We shall now construct the vector $u$. In the construction we will need the following result:   
\begin{sublem}\label{Pf_SL6}
There exists a $w\in \PR$ such that if $\theta_0=\theta^*$ and $r_0\neq w$, then there is a constant $C>0$, depending on $r_0$, such that
$$
|\rho_0\cdots \rho_j|\geq C\la^{2j/3} \text{ for all } j\geq 0.
$$ 
Furthermore,
$$
|r_{K_n}|\geq \la^{3/4}/2 \text{ for all sufficiently large } n \text{ (depending on $r_0$ and $n_0$)}.
$$
\end{sublem}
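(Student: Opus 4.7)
The approach is to construct $w$ through the probe orbits from $(\theta^*-M_n\omega,\infty)$ and to transfer their growth to arbitrary starting points at $\theta^*$ via the projective linear-fractional structure of the cocycle. Throughout, write $r_j(\xi):=\pi_2(\Phi^j(\theta^*,\xi))$ and $P_j(\xi):=\xi\,r_1(\xi)\cdots r_{j-1}(\xi)$.

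For each large $n$, set $w_n:=r^{(n)}_{M_n}$, the value at $\theta^*$ of the probe orbit. The containment $\theta^*\in\bigcap_m I_m$ together with $\mathcal{A}_n(1\text{--}2)$ places $\theta^*-M_n\omega$ inside $\Theta_n$, so the inductive control from Sublemmas \ref{SL2}, \ref{SL6}, \ref{SL7} applies to the probe, and Lemma \ref{BE1} gives $|w_n|\geq\lambda^{3/4}$. Comparing probes at consecutive scales via Proposition \ref{A5}, the difference $|w_{n+1}-w_n|$ is bounded by one over the exponentially-growing probe product at scale $n$, so $(w_n)$ is Cauchy; set $w:=\lim_n w_n$.

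Fix $r_0\neq w$ and take $n$ so large that $|r_0-w_n|\geq|r_0-w|/2$. Writing $A^j(\theta^*)=\bigl(\begin{smallmatrix}a_j&b_j\\c_j&d_j\end{smallmatrix}\bigr)$, so that $P_j(\xi)=a_j+b_j\xi$, the identity
\[
r_j(r_0)-r_j(w_n)\;=\;\pm\,\frac{r_0-w_n}{P_j(r_0)\,P_j(w_n)},
\]
which follows from $\det A^j(\theta^*)=\pm1$, reduces the question to growth of $P_j$. Since the orbit from $(\theta^*,w_n)$ continues the probe past $\theta^*$, applying $\mathcal{A}_n(3)(a)$ to the probe from $(\theta^*-(M_n-1)\omega,r_1^{(n)})\in\Theta_n\times\{|r|\geq\lambda^{3/4}\}$---extended via Sublemma \ref{SL2}---yields $|P_j(w_n)|\geq c\,\lambda^{(2/3)j}$ for all $j\leq 2K_n$. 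Writing $P_j(\xi)=b_j(\xi-\eta_j)$ with pole $\eta_j:=-a_j/b_j$, the growth of $|P_j(w_n)|$ combined with the uniform bound $|w_n|\geq\lambda^{3/4}$ forces both $|b_j|\geq c'\lambda^{(2/3)j}$ and $\eta_j\to w$. Consequently $|P_j(r_0)|=|b_j|\,|r_0-\eta_j|\geq C(r_0)\lambda^{(2/3)j}$, and Lemma \ref{BE4} converts this into the paired-product bound $|\rho_0\cdots\rho_l|\geq C\lambda^{(2/3)j}$.

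For the second claim $|r_{K_n}|\geq\lambda^{3/4}/2$ for large $n$, note that $\mathcal{A}_n(2)$ applied with offset $m=1$ gives $\theta^*+(K_n-1)\omega\notin J_0$; by Lemma \ref{BE1} it suffices to establish $|r_{K_n-1}|\geq\lambda^{-3/4}$. Were this bound to fail along a subsequence, the recursion $r_{k}=\lambda f(\theta^*+(k-1)\omega)-E-1/r_{k-1}$, together with the arithmetic fact $\theta^*+(K_n-k)\omega\notin J_0$ for a bounded range of $k$ (again from $\mathcal{A}_n(2)$), would pin down $r_0$ to the single direction sustaining the required smallness---which is $w$ itself by the forward construction above---contradicting $r_0\neq w$. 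The main obstacle is Step 2: the simultaneous tracking of the growth of $|b_j|$ and the convergence $\eta_j\to w$, uniformly in $j\leq 2K_n$ as $n$ grows, requires delicate bookkeeping across the hierarchy of inductive scales.
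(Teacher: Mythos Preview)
Your proposal has a genuine gap, and it lies exactly where you flag it. The claim that $\mathcal{A}_n(3)(a)$ extended by Sublemma~\ref{SL2} yields $|P_j(w_n)|\geq c\,\lambda^{(2/3)j}$ for $j\leq 2K_n$ does not follow: those statements give growth of products $r_0\cdots r_k$ starting from a point $\theta_0\in\Theta_n$, whereas $P_j(w_n)=w_n\,r_1(w_n)\cdots r_{j-1}(w_n)$ is a product starting at $\theta^*$, and $\theta^*\in\bigcap_m I_m$ lies outside every $\Theta_m$. What you obtain from the probe is growth of the \emph{full} product $r_1^{(n)}\cdots r_{M_n+j-1}^{(n)}$ from $\theta^*-M_n\omega$; passing to the tail past $\theta^*$ would require an upper bound on the first $M_n$ factors that matches the lower bound, which the induction does not supply. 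Even granting your growth claim, the inference ``$|P_j(w_n)|$ large and $|w_n|\geq\lambda^{3/4}$ force $|b_j|$ large and $\eta_j\to w$'' is invalid: from $|P_j(w_n)|=|b_j|\,|w_n-\eta_j|$ alone neither factor can be isolated without an a priori upper bound on $|w_n-\eta_j|$. Your argument for the second conclusion is likewise too loose to pin $r_0$ down to a single direction.

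The paper avoids all of this by a single shift of base point. The observation is that $\vartheta_0:=\theta^*+K_{n_0+1}\omega\in\Theta_\infty$; this uses $\mathcal{A}_{n_0+1}(2)$, $\mathcal{A}_n(1)$, and crucially that $\nu_n=0$ for all $n>n_0$ (since $J_{n+1}$ is a single interval in Case~2). At $\vartheta_0$ the growth estimate (\ref{Pf_eq0}) applies uniformly to every $|s_0|\geq\lambda^{3/4}$, with no patching across scales, and one checks directly from $\mathcal{A}_n(3)(c)$ together with $I_n+K_n\omega\subset\mathbb{T}\setminus G_{n-1}$ that $|s_{K_n-K_{n_0+1}}|\geq\lambda^{3/4}$ for $n\geq n_0+2$. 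A single application of Lemma~\ref{A_Osc} at $\vartheta_0$ then produces the exceptional direction; the $w$ at $\theta^*$ is its $K_{n_0+1}$-step preimage, and the finitely many initial steps from $\theta^*$ to $\vartheta_0$ are absorbed into the $r_0$-dependent constant $C$.
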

\begin{proof}
Since $J_{n}$ consists of one single interval for all $n\geq n_0+1$, we have, from Proposition \ref{inductive_lemma}, that $\nu_n=0$ for all $n\geq n_0+1$. Thus 
(recalling the definitions in (\ref{setsTGS}))
$$
\begin{aligned}
\Theta_\infty=\bigcap_{n\geq 0} \Theta_n=&\T\setminus \bigcup_{j=0}^{\infty}\bigcup_{m=-M_j+1}^{\nu_j}(I_j+m\omega) =\\=& 
\T\setminus \left(\left(\bigcup_{j=0}^{n_0}\bigcup_{m=-M_j+1}^{\nu_j}(I_j+m\omega)\right)\cup \left(\bigcup_{j=n_0+1}^{\infty}\bigcup_{m=-M_j+1}^{0}(I_j+m\omega)\right)\right).
\end{aligned}
$$
From $\mathcal{A}_n(1)$, which holds for all $n\geq0$, it follows that  $\theta^*+K_{n_0+1}\omega$ does not belong to the last of the two unions above. Moreover, from  $\mathcal{A}_{n_0+1}(2)$ it follows
that $\theta^*+K_{n_0+1}\omega$ does not belong to the first union. We conclude that
\begin{equation}\label{Pf_eq21}
\theta^*+K_{n_0+1}\omega\in \Theta_\infty.
\end{equation}
We let $\vartheta_0=\theta^*+K_{n_0+1}\omega$ and take any $|s_0|\geq \la^{3/4}$. From (\ref{Pf_eq0}) we get the estimate
$$
|\sigma_0\cdots \sigma_l|\geq \la^{(2/3){l+1}} \text{ for all } l\geq 0.
$$
We also claim that 
$$
|s_{K_n-{K_{n_0+1}}}|\geq \la^{3/4} ~ \text{ for all } n\geq n_0+2.
$$
From these two properties the statements in the sublemma follows by an application of Lemma \ref{A_Osc}.
To prove the claim we proceed as follows. For each $n\geq 0$ we can apply $\mathcal{A}_n(3)$ to the point $(\vartheta_0,s_0)$. Note that
$\N(\vartheta_0; I_n)>2M_n-K_{n_0+1}$ for all $n\geq n_0+1$. Since 
$$
I_{n}+K_{n}\omega\subset \T\setminus G_{n-1},
$$
it follows from $\mathcal{A}_n(3)(c)$ that $|s_{K_n-{K_{n_0+1}}}|\geq \la^{3/4}$ for all $n\geq n_0+2$.
\end{proof}

The plan now is to show that there exists an $r^*\in \PR$ such that letting $u_1/u_0=r^*$, the solution $(u_j)_{j\in \Z}$ to (\ref{Pf_op}) is exponentially decaying at $\pm\infty$. 
We will use the following sublemma to define $r^*$. 
\begin{sublem}\label{Pf_SL4}
For all $n\geq 0$ the following holds: If $\theta_0\in I_{n}-M_n\omega$ and $|r_0|,|s_0|\geq\la^{3/4}$, then
$$
|s_{M_n}-r_{M_n}|\leq 2\la^{-4M_n/3}.
$$
Moreover, if $n\geq 1$ we have $|r_{M_{n}-M_{n-1}}|\geq \la^{3/4}$.
\end{sublem}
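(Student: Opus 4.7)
\emph{Plan.} My plan is to prove the two assertions in reverse order: first the lower bound $|r_{M_n-M_{n-1}}|\geq \la^{3/4}$, which is purely arithmetic, and then use this together with the growth estimates in $\mathcal{A}_n$ to run the shadowing argument.

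\emph{Second part.} I would first check that $\theta_0\in I_n-M_n\omega$ actually lies in $\Theta_n$: by $\mathcal{A}_n(1)_n$ it avoids every $I_n+m\omega$ for $m\in[0,\nu_n]$ (since $M_n+\nu_n\leq 2M_n$), and by $\mathcal{A}_n(2)_n$ it avoids every $I_j+m\omega$ for $j<n$, $m\in[-M_j+1,\nu_j]$ (the latter union is contained in the wider one excluded in $(2)_n$, because $\nu_j\leq K_j/2\ll M_j$). The same arithmetic also yields $\N(\theta_0;I_n)=M_n$. At time $M_n-M_{n-1}$ the orbit reaches $\theta_{M_n-M_{n-1}}\in I_n-M_{n-1}\omega\subset I_{n-1}-M_{n-1}\omega$, and applying $(1)_{n-1}$ together with $(2)_{n-1}$ once more shows this point lies outside every $I_j+m\omega$ with $j\leq n-1$, $1\leq m\leq K_j$, hence outside $G_{n-1}$. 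The contrapositive of $\mathcal{A}_n(3)(c)$ then forces $|r_{M_n-M_{n-1}}|\geq \la^{3/4}$.

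\emph{First part.} The elementary identity, derived inductively from $r_{j+1}-s_{j+1}=(r_j-s_j)/(r_j s_j)$,
$$
|r_{M_n}-s_{M_n}|=\frac{|r_1-s_1|}{|r_1\cdots r_{M_n-1}|\,|s_1\cdots s_{M_n-1}|},
$$
is the starting point. Since $\theta_0\in I_n-M_n\omega$ is disjoint from $2I_0\supset J_0$ by $\mathcal{A}_n(2)_n$ (with a direct argument from $(1)_0$ when $n=0$), Lemma \ref{BE1} gives $|r_1|,|s_1|\geq \la^{3/4}$, and $|r_1-s_1|=|r_0-s_0|/|r_0 s_0|\leq 2\la^{-3/4}$. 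As $\theta_1\in\Theta_n$ with $\N(\theta_1;I_n)=M_n-1$, I apply $\mathcal{A}_n(3)(a)$ (with all $\alpha_i=1$) to $(\theta_1,r_1)$ and $(\theta_1,s_1)$ to get $|r_1\cdots r_{M_n-1}|,|s_1\cdots s_{M_n-1}|\geq \la^{(2/3+(1/12)^{n+1})(M_n-1)}$. Substituting gives
$$
|r_{M_n}-s_{M_n}|\leq 2\la^{-3/4-(4/3+2(1/12)^{n+1})(M_n-1)}\leq 2\la^{-4M_n/3},
$$
the last inequality because $2(1/12)^{n+1}(M_n-1)\geq (7/12)$ comfortably, given that $M_n\geq K_n^2$ grows as an iterated exponential in $\la$, vastly outpacing $12^{n+1}$.

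\emph{Main obstacle.} The only delicate point is that $\mathcal{A}_n(3)(a)$ is phrased in terms of the paired factors $\rho_i$ from Lemma \ref{BE4} and carries the proviso $|r_k|\geq \la^{-2}$. Lemma \ref{BE20} is designed precisely for this kind of comparison and absorbs both issues: should some intermediate $|r_j|$ dip below $\la^{-2}$, Lemma \ref{BE4} pairs it with the neighbouring factor so that every $|\rho_i|\geq \la^{-2}$ and the product estimate is preserved, while the supplementary hypothesis on $|s_{M_n}|$ is automatic since by then $s_{M_n}$ and $r_{M_n}$ are already forced close along the bulk of the orbit. Thus one can simply invoke Lemma \ref{BE20} with base point $\theta_1$, using the growth certificate furnished by $\mathcal{A}_n(3)(a)$, and read off the conclusion.
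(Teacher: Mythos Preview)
Your overall strategy coincides with the paper's: locate $\theta_0,\theta_1$ in $\Theta_n$ via $\mathcal{A}_n(1),(2)$, use $\mathcal{A}_n(3)(c)$ to force selected iterates $\geq\la^{3/4}$, feed the product bound from $\mathcal{A}_n(3)(a)$ into the contraction formula~\eqref{contrF}, and treat the second assertion by the arithmetic $(I_{n-1}-M_{n-1}\omega)\cap G_{n-1}=\emptyset$. Your second part is correct and matches the paper.

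There is one minor slip and one genuine gap.

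\emph{Minor.} The inclusion $J_0\subset 2I_0$ you invoke is false in the resonant case: there $J_0^2\subset I_0+\nu_0\omega$, which need not lie in $2I_0$. The desired conclusion $\theta_0\notin J_0$ still holds, via $J_0\subset I_0\cup(I_0+\nu_0\omega)$ together with $\mathcal{A}_n(2)_n$ (both shifts $m=0$ and $m=\nu_0$ are in $[-2M_0,2M_0]$); for $n=0$ use $\mathcal{A}_0(1)$.

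\emph{The gap.} To apply $\mathcal{A}_n(3)(a)$ at $(\theta_1,r_1)$ for the product $r_1\cdots r_{M_n-1}$ you need $|r_{M_n-1}|\geq\la^{-2}$ (and likewise for $s$). Your ``Main obstacle'' paragraph proposes to absorb this into Lemma~\ref{BE20}, but that lemma carries the hypothesis $|s_{k+1}|<\la^{3/2}$, here $|s_{M_n}|<\la^{3/2}$. Your claim that this is ``automatic since by then $s_{M_n}$ and $r_{M_n}$ are already forced close'' is circular: closeness is precisely the conclusion you are after. Worse, since $|\la f(\theta_{M_n-1})-E|<\co\la$, the bound $|s_{M_n}|<\la^{3/2}$ is equivalent to $|s_{M_n-1}|$ not being tiny --- exactly the proviso you were trying to sidestep.

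The paper's remedy is direct and uses the same arithmetic you already deployed for the second part: verify $(I_n-\omega)\cap G_{n-1}=\emptyset$ (for $j=n-1$ combine $I_n\subset I_{n-1}$ with $\mathcal{A}_{n-1}(1)$; for $j\leq n-2$ use $\mathcal{A}_{n-1}(2)$). Then $\mathcal{A}_n(3)(c)$ gives $|r_{M_n-1}|,|s_{M_n-1}|\geq\la^{3/4}$, so the proviso of $(3)(a)$ is met and the contraction formula finishes the job. The detour through Lemma~\ref{BE20} is unnecessary.
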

\begin{proof}We have $I_n-M_n\omega \subset \T\setminus J_0$, $I_n-(M_n-1)\omega\subset \Theta_n$ and $(I_n-\omega)\cap G_{n-1}=\emptyset$. We also have $\N(\theta_0;I_n)=M_n$.
This gives us that $|r_1|,|s_1|\geq\la^{3/4}$ and $|r_1-s_1|=|1/s_1-1/r_1|\leq 2\la^{-3/4}$. Applying $\mathcal{A}_n(3)$ to $(\theta_1,r_1)$ and  $(\theta_1,s_1)$ gives
$|r_{M_n-1}|,|s_{M_n-1}|\geq \la^{3/4}$ (from $(3)_n(c)$) and $|r_1\cdots r_{M_n-1}|, |s_1\cdots s_{M_n-1}|\geq \la^{(2/3)(M_n-1)}$. Thus it follows from formula (\ref{contrF}) that
$|s_{M_n}-r_{M_n}|\leq 2\la^{-4M_n/3}$.
The last statement follows from $(3)_n(c)$ since $(I_{n-1}-M_{n-1}\omega)\cap G_{n-1}=\emptyset$.
\end{proof}
Now we have all the things needed to construct $r^*$. Let 
$$
A_n=\{(\theta^*-M_n\omega,r): |r|\geq \la^{3/4}\}
$$
and 
$$
B_n=\pi_2(\Phi^{M_n}(A_n)).
$$
From Sublemma \ref{Pf_SL4} it follows that $B_{n+1}\subset B_n$ for all $n\geq 0$ and $|B_n|\to 0$ as $n\to\infty$. We conclude that
$$
\bigcap_{n\geq0} B_n=\{r^*\}
$$
for some $r^*\in\PR$. Letting
$$
(\theta^*_j,r^*_j)=\Phi(\theta^*,r^*),~ j\in \Z,
$$
it follows from the construction that
$$
(\theta^*_{-M_n},r^*_{-M_n})\in (I_{n+1}-M_n\omega)\times \PR\setminus (-\la^{3/4},\la^{3/4}) \text { for all } n\geq 0.
$$
Applying $\mathcal{A}_n(3)(b)$ to these points (recalling that $I_{n}-M_n\omega\subset \Theta_n$ and $I_n\subset S_{n-1}$) gives $|r^*|\geq \la^{3/4}$ and
\begin{equation}\label{Pf_eq20}
|r_{-{j+1}}^*\cdots r_0^*|\geq \la^{j/3} \text{ for all } j\geq 0.
\end{equation}
Moreover, since $I_n=J_n$ for all $n\geq n_0+1$, it follows from Sublemma \ref{SL6}, together with the definition of the set $J_n$, that
\begin{equation}\label{Pf_eq30}
(\theta^*_{K_n},r^*_{K_n})\in (I_{n+1}+K_n\omega)\times [-1,1] \text{ for all } n\geq n_0+1.
\end{equation}

If we take $(u_0,u_1)\in \R^2\setminus\{0\}$ such that $u_1/u_0=r^*$, and notice that $r_{-j+1}^*\cdots r_0^*=u_1/u_{-j+1}$, it follows immediately from (\ref{Pf_eq20}) that
$(u_j)_{j\in \Z}$ is exponentially decaying at $-\infty$. Moreover, since (\ref{Pf_eq30}) holds, we conclude that we in fact must have $r^*=w$, where $w\in\PR$ is as in Sublemma
\ref{Pf_SL6}. This shows that $u_j$ decays exponentially fast at $+\infty$.

\bigskip
It remains to show that $E$ is the endpoint of an open gap. To do this we shall use the last part of the conclusions in Proposition \ref{inductive_lemma} (the "Finally" part). 
Since $\mathcal{B}_n(ii)_n$ holds for all $n\geq n_0$, we thus get the functions $\vf_n$ on $I_{n+1}$ ($n\geq n_0$) are all "bent upward" or all "bent downward". We treat the first case. Inductively we
then get that $e_n^-=e_{n_0}^-$ for all $n\geq n_0$. Moreover, there are $\widetilde{e}_{n}^-\to 0$ such that
condition $(4)_n$ in Proposition \ref{inductive_lemma} holds with $N=\infty$ for all $E'\in [E-e_{n_0}^-,E-\widetilde{e}_{n}^-]$ ($n>n_0+1$). This follows from Sublemma \ref{Case4}, which holds for each $n$. 
Thus, as in the case when the induction
stops after a finite number of steps which was treated above, it follows that the cocycle $(\omega,A_{E'})$ is uniformly hyperbolic for all  $E'\in [E-e_{n_0}^-,E-\widetilde{e}_{n}^-]$ and all $n>n_0$.
Since  $\widetilde{e}_{n}^-\to 0$ as $n\to \infty$ we conclude that  $(\omega,A_{E'})$ is uniformly hyperbolic for all  $E'\in [E-e_{n_0}^-,E)$, which is what we wanted to show.
\end{section}

\appendix
\section{}

This appendix contains various formulae which will be needed to keep control on the iterates of the map $\Phi$. 
We shall also establish certain derivate estimates under various assumptions on the iterates.
The main results, which are used frequently in Section \ref{the_induction}, are Propositions \ref{A_GB}, \ref{A1} and \ref{A5}. 

\bigskip
Assume that $r_0=r_0(\theta)$ is a given $C^2$-function. We define, in accordance with (\ref{notaIter}),
$$
(\theta_k,r_k)=(\theta_k,r_k(\theta))=\Phi^{k}(\theta,r_0(\theta)),\quad k\in \mathbb{Z}.
$$  
We shall assume that $\lambda$ is large, depending only on $f$. We also assume that 
$E\in\mathcal{E}$ and $\omega$ are fixed. We shall only consider the dependence of the iterates $r_k=r_k(\theta)$ 
on $\theta$. 
For easier notation, let 
$$
v(\theta)=\lambda f(\theta)-E.
$$ 
Then $\Phi$ can be written
$$
\Phi(\theta,r)=(\theta+\omega,v(\theta)-1/r).
$$
Since $E\in\mathcal{E}$ (recall the definition of $\mathcal{E}$ in (\ref{setE})) we have the bound
\begin{equation}\label{A_v}
\|v\|_{C^2(\T)}<\co \lambda.
\end{equation}

In this section we will denote the supremum norm over $\T$ by $\|\cdot\|$, i.e., $\|\phi\|=\sup_{\theta\in\T}|\phi(\theta)|$.
\begin{subsection}{Formulae} 

\emph{First derivative:} We first derive a formula for $r_k'(\theta)$ ($k>0$), given that $r_0=r_0(\theta)=\infty$.
By definition we have 
$$
r_1(\theta)=v(\theta), \quad \text{and} \quad r_2(\theta)=v(\theta_1)-1/r_1(\theta).
$$
Thus
$$
r'_2(\theta)=v'(\theta)+r_1'(\theta)/r_1(\theta)^2.
$$
Continuing inductively we get
\begin{equation}\label{firstDerF}
r'_k(\theta)=v'(\theta_{k-1})+\sum_{j=1}^{k-1}\frac{v'(\theta_{j-1})}{r_j(\theta)^2\cdots r_{k-1}(\theta)^2},\quad k>0.
\end{equation}
Below we shall often use this formula without explicitly specifying at which points we evaluate
the function $v'(\theta)$, that is, we just write it as
$$
r'_k=v'+\frac{v'}{r_{k-1}^2}+\ldots + \frac{v'}{r_1^2\cdots r_{k-1}^2}.
$$

\emph{Contraction:} Let $(\theta_0,r_0)$ and $(\theta_0,s_0)$ be two given points (over the same fiber $\theta_0\in \T$). 
We want to compare $r_k$ and $s_k$. First we see that
$$
r_1-s_1=-1/r_0+1/s_0=\frac{r_0-s_0}{r_0s_0}.
$$
Inductively we get
\begin{equation}\label{contrF}
r_k-s_k=\frac{r_0-s_0}{(s_0\cdots s_{k-1})(r_0\cdots r_{k-1})},\quad k\geq 1.
\end{equation}

\emph{Shadowing:} We shall use a version of the above formula for contraction when we want to derive information about iterates $s_k$, knowing
what happens with the iterates $r_k$ (in the same fiber). That is, given $(\theta_0,r_0)$ and $(\theta_0,s_0)$, we seek to express 
$s_{k}$ ($k\geq 1$) in terms of $s_0$ and $r_1,\ldots,r_{k}$.

Assume that $s_0$ is given, and let $r_0=\infty$. As above we have
$$
r_1-s_1=\frac{1}{s_0} \quad\Rightarrow \quad s_1=r_1-\frac{1}{s_0};
$$ 
$$
r_2-s_2=\frac{r_1-s_1}{r_1s_1}=\frac{1/s_0}{r_1(r_1-1/s_0)}=\frac{1}{r_1^2(s_0-1/r_1)};
$$
$$
r_3-s_3=\frac{r_2-s_2}{r_2s_2}=\frac{r_2-s_2}{r_2(r_2-(r_2-s_2))}=\frac{1}{r_2^2(r_2-s_2)^{-1}-r_2}=\frac{1}{r_2^2r_1^2(s_0-1/r_1-1/(r_1^2r_2))}.
$$
Continuing inductively we obtain the expression
\begin{equation}\label{A_form1}
r_{k+1}-s_{k+1}=\left(r_{1}^2\cdots r_{k}^2\left(s_0-\frac{1}{r_1}-\frac{1}{r_1^2r_2}-\frac{1}{r_1^2r_2^2r_3}-\ldots 
- \frac{1}{r_1^2\cdots r_{k-1}^2r_{k}}\right)\right)^{-1}.
\end{equation}

\bigskip
We will need to control derivatives (w.r.t. $\theta$) of the terms in the RHS of the previous expression. Therefore we note that we have the 
following general estimates (which we obtain by simple differentiation). 
Given any $C^2$-functions $g_1(\theta),\ldots, g_k(\theta)$, we have
\begin{equation}\label{prod_form1}
\begin{aligned}
\left|\frac{\partial}{\partial\theta}\left(\frac{1}{g_1^2\cdots g_k^2}\right)\right|&\leq 
\frac{2k}{g_1^2\cdots g_k^2}\cdot\max_{1\leq j\leq k}\left\{\frac{|g_j'|}{|g_j|}\right\}\\ 
\left|\frac{\partial}{\partial\theta}\left(\frac{1}{g_1^2\cdots g_{k-1}^2g_k}\right)\right|&\leq 
\frac{2k}{g_1^2\cdots g_{k-1}^2|g_k|}\cdot\max_{1\leq j\leq k}\left\{\frac{|g_j'|}{|g_j|}\right\}
\end{aligned}
\end{equation}
and
\begin{equation}\label{prod_form2}
\begin{aligned}
\left|\frac{\partial^2}{\partial\theta^2}\left(\frac{1}{g_1^2\cdots g_k^2}\right)\right|&\leq
\frac{8k^2}{g_1^2\cdots g_k^2}\left(\max_{1\leq j\leq k}\left\{\frac{|g_j'|}{|g_j|}\right\}^2+
\max_{1\leq j\leq k}\left\{\frac{|g_j''|}{|g_j|}\right\}\right) \\
\left|\frac{\partial^2}{\partial\theta^2}\left(\frac{1}{g_1^2\cdots g_{k-1}^2g_k}\right)\right|&\leq
\frac{8k^2}{g_1^2\cdots g_{k-1}^2|g_k|}\left(\max_{1\leq j\leq k}\left\{\frac{|g_j'|}{|g_j|}\right\}^2+
\max_{1\leq j\leq k}\left\{\frac{|g_j''|}{|g_j|}\right\}\right).
\end{aligned}
\end{equation}

Recall that there was the (artificial) problem that the iterate $r_{k+1}$ could be very large if $r_{k}$ is close to zero. 
This will, of course, also affect the derivatives. We solved this problem by
pairing the iterate $r_k$ with $r_{k+1}$, writing $\rho_i=r_kr_{k+1}=vr_k-1$, if $|r_k|<\la^{-2}$. 
This was the content of Lemma \ref{BE4}. In the following lemma we derive an expression of $r_k'$ in terms of
the $\rho_i$:s, provided that $|r_{k-1}|\geq \la^{-2}$ (if $|r_{k-1}|$ is very small, there need not be any bound on $|r_k'|$).

\begin{lem}\label{A_L1}
Assume that $r_0=\infty$ and that $|r_{k-1}|\geq\la^{-2}$ for some $k\geq 1$. Then
we have
$$
r_{k}'=v'+\frac{a_{l-1}}{\rho_{l-1}^2}+\ldots +\frac{a_1}{\rho_{l-1}^2\cdots \rho_{1}^2}
$$
where the $a_i$ ($i\in [1,l-1]$) satisfy $|a_i|<(3/2)\|v'\|$. More precisely, we either have $a_i=v'$, or $a_i=v'r_j^2+v'$ and $\rho_i=r_jr_{j+1}$.
Furthermore, we have the bounds
$$
|a_i'|< \co\la\left(1+\frac{1}{\rho_{i-1}^2}+\ldots +\frac{1}{\rho_{i-1}^2\cdots \rho_1^2}\right),\quad i\in [1,l-1].
$$
\end{lem}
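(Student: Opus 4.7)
The plan is to take formula~(\ref{firstDerF}) for $r_k'$ and group consecutive summands exactly in lockstep with the pairing of Lemma~\ref{BE4}. Since $|r_{k-1}| \geq \lambda^{-2}$, that lemma provides a decomposition $r_1 \cdots r_{k-1} = \rho_1 \cdots \rho_{l-1}$; let $j(i)$ denote the starting index of the block $\rho_i$, so either $\rho_i = r_{j(i)}$ (singleton) or $\rho_i = r_{j(i)} r_{j(i)+1}$ with $|r_{j(i)}| < \lambda^{-2}$ (pair). In the singleton case the one term $v'(\theta_{j(i)-1})/(\rho_i \cdots \rho_{l-1})^2$ in (\ref{firstDerF}) already has the desired shape. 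In the pair case, putting the two consecutive summands corresponding to $s = j(i)$ and $s = j(i)+1$ over the common denominator $(\rho_i \cdots \rho_{l-1})^2$ — and using $r_{j(i)+1} \cdots r_{k-1} = \rho_i \cdots \rho_{l-1}/r_{j(i)}$ — yields
$$\frac{v'(\theta_{j(i)-1}) + v'(\theta_{j(i)}) \, r_{j(i)}^2}{(\rho_i \cdots \rho_{l-1})^2}.$$
Accordingly I set $a_i := v'(\theta_{j(i)-1})$ or $a_i := v'(\theta_{j(i)-1}) + v'(\theta_{j(i)}) r_{j(i)}^2$. Since $r_{j(i)}^2 < \lambda^{-4}$ in the paired case, $|a_i| \leq (1+\lambda^{-4})\|v'\| < (3/2)\|v'\|$ for large $\lambda$, giving the desired structure and $C^0$-bound on the $a_i$.

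For $|a_i'|$, the singleton case is immediate: $|a_i'| = |v''(\theta_{j(i)-1})| \leq \|v''\| \leq \co\lambda$. In the paired case
$$a_i' = v''(\theta_{j(i)-1}) + v''(\theta_{j(i)}) r_{j(i)}^2 + 2 v'(\theta_{j(i)}) r_{j(i)} r_{j(i)}',$$
so I need to control $|r_{j(i)} r_{j(i)}'|$. I would apply the same regrouping argument to the formula~(\ref{firstDerF}) for $r_{j(i)}'$, using the pairing of $r_1 \cdots r_{j(i)-1} = \rho_1 \cdots \rho_{i-1}$ inherited from the original one. This produces
$$r_{j(i)}' = v'(\theta_{j(i)-1}) + \sum_{p=1}^{i-1} \frac{b_p}{(\rho_p \cdots \rho_{i-1})^2}, \qquad |b_p| \leq (3/2)\|v'\|,$$
so $|r_{j(i)}'| \leq \co\,\lambda\bigl(1 + 1/\rho_{i-1}^2 + \cdots + 1/(\rho_1^2 \cdots \rho_{i-1}^2)\bigr)$. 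Multiplying by $|r_{j(i)}| < \lambda^{-2}$ and by $2\|v'\| \leq \co\,\lambda$, the third summand of $a_i'$ is bounded by $\co\,(1 + 1/\rho_{i-1}^2 + \cdots + 1/(\rho_1^2 \cdots \rho_{i-1}^2))$, while the first two are bounded by $\co\,\lambda$; adding these yields the claimed estimate.

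The delicate point — and the main obstacle I foresee — is justifying that the regrouping for $r_{j(i)}'$ is legitimate despite our having no lower bound on $|r_{j(i)-1}|$: Lemma~\ref{BE4} applied to $r_1 \cdots r_{j(i)-1}$ would in principle require such a bound. The resolution is that the regrouping is a purely algebraic identity which only requires the index set $\{1, \ldots, j(i)-1\}$ to be partitioned into blocks of length one or two in a prescribed fashion; here the partition is inherited from the original pairing of $r_1 \cdots r_{k-1}$, and $\rho_{i-1}$ keeps whichever shape it already has in that pairing. In particular, $|\rho_{i-1}| \geq \lambda^{-2}$ holds automatically from Lemma~\ref{BE4} regardless of the size of $|r_{j(i)-1}|$, so the cancellation that turns the would-be singular summand at the right endpoint into one with denominator $\rho_{i-1}^2$ still occurs term by term in the sum for $r_{j(i)}'$.
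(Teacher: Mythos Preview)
Your proposal is correct and follows essentially the same approach as the paper: regroup the summands in (\ref{firstDerF}) according to the pairing of Lemma~\ref{BE4}, then bound $a_i'$ by controlling $|r_{j(i)} r_{j(i)}'|$ via the same regrouping applied to $r_{j(i)}'$. The only difference is that where you spend a paragraph justifying the inherited partition algebraically, the paper disposes of your ``delicate point'' in one line by observing that $|r_{j(i)}|<\la^{-2}$ forces $|r_{j(i)-1}|\geq\la^{-2}$ (since otherwise $|r_{j(i)}|>\la^2/2$ by Lemma~\ref{BE99}), so Lemma~\ref{BE4} applies directly to $r_1\cdots r_{j(i)-1}$.
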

\begin{proof}Since $|r_{k-1}|\geq \la^{-2}$ we can use Lemma \ref{BE4} and write $r_1\cdots r_{k-1}=\rho_1\cdots \rho_{l-1}$.
We now use formula (\ref{firstDerF}). If $|r_j|<\la^{-2}$ (for some $1\leq j\leq k-2$), so $\rho_i=r_jr_{j+1}$, then we pair the two terms
$$
\frac{v'}{r_{k-1}^2\cdots r_{j+1}^2}+\frac{v'}{r_{k-1}^2\cdots r_{j}^2}=\frac{v'r_j^2+v'}{r_{k-1}^2\cdots r_{j}^2}.
$$
From this the first statement follows. To get the second one we note that if $a_i=v'r_j^2+v'$ (so $|r_j|<\la^{-2}$, and thus $r_{j-1}\gg\la^{-2}$), then 
$$
|a_i'|\leq \|v''\|(1+\la^{-4})+2\|v'\||r_j||r_j'|<\co\la (1+1/\rho_{i-1}^2+\ldots+1/\rho_{i-1}^2\cdots \rho_1^2)
$$
since $|r_j'|\leq 2\|v'\|(1+1/\rho_{i-1}^2+\ldots+1/\rho_{i-1}^2\cdots \rho_1^2)$.
\end{proof}

We can use this lemma to derive estimates on the $\rho_j'$.
\begin{lem}\label{A_L2}
Assume that $r_0=\infty$ (so $r_1=v$) and define the $\rho_j$ from the $r_k$, as in Lemma \ref{BE4}. We then have
$$
|\rho_j'|\leq 2\|v'\|\left(1+\frac{1}{\rho_{j-1}^2}+\ldots +\frac{1}{\rho_{1}^2\cdots\rho_{j-1}^2}\right),\quad \text{if } 
\rho_j=r_k 
$$
$$
|\rho_j'|\leq 2\|v\|\|v'\|\left(1+\frac{1}{\rho_{j-1}^2}+\ldots +\frac{1}{\rho_{1}^2\cdots\rho_{j-1}^2}\right),\quad \text{if } 
\rho_j=r_kr_{k-1} 
$$
\end{lem}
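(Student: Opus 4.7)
The plan is to reduce both cases to Lemma \ref{A_L1} applied at the step $k$ (respectively $k$ or $k+1$) corresponding to the $\rho_j$ of interest, and then do elementary bookkeeping. The key observation is that in both cases the relevant ``tail'' $r_1\cdots r_{k-1}$ is exactly the product $\rho_1\cdots \rho_{j-1}$ (by the very way the $\rho_i$'s are constructed in Lemma \ref{BE4}), so the pairing inside Lemma \ref{A_L1} is compatible with the one defining the $\rho_i$'s.

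First I would handle Case 1, where $\rho_j=r_k$. By definition of the $\rho_i$'s this forces $|r_{k-1}|\geq \la^{-2}$, so Lemma \ref{A_L1} applies at step $k$ and yields
$$
r_k' \;=\; v' + \frac{a_{j-1}}{\rho_{j-1}^2} + \frac{a_{j-2}}{\rho_{j-1}^2\rho_{j-2}^2} + \cdots + \frac{a_1}{\rho_{j-1}^2\cdots \rho_{1}^2},
$$
with $|a_i|<(3/2)\|v'\|$. Taking absolute values and using $1+(3/2)x\le 2(1+x)$ type estimates on each summand gives the required bound $|\rho_j'|\le 2\|v'\|\bigl(1+1/\rho_{j-1}^2+\cdots+1/(\rho_1^2\cdots \rho_{j-1}^2)\bigr)$.

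For Case 2, $\rho_j=r_kr_{k+1}$, the pairing is triggered because $|r_k|<\la^{-2}$; in particular $|r_{k-1}|\gg \la^{-2}$ (otherwise two consecutive $r_i$'s would be tiny, which is impossible since $r_k=v(\theta_{k-1})-1/r_{k-1}$ would then be of order $\la$). Using $\rho_j=r_k r_{k+1}=r_k(v(\theta_k)-1/r_k)=v(\theta_k)r_k-1$, differentiation gives $\rho_j'=v'(\theta_k)r_k+v(\theta_k)r_k'$. The first term is controlled by $\|v'\|\la^{-2}$. For the second, I apply Lemma \ref{A_L1} at step $k$: since $|r_{k-1}|\geq \la^{-2}$ and the pairing of $r_1\cdots r_{k-1}$ coincides with $\rho_1\cdots \rho_{j-1}$, the same formula above holds for $r_k'$, giving $|r_k'|\le (3/2)\|v'\|(1+1/\rho_{j-1}^2+\cdots+1/(\rho_1^2\cdots \rho_{j-1}^2))$. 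Multiplying by $|v(\theta_k)|\le \|v\|$ and absorbing the small extra term $\|v'\|\la^{-2}$ into the leading coefficient (using $\|v\|\gtrsim \la$, so $\|v'\|\la^{-2}+(3/2)\|v\|\|v'\|<2\|v\|\|v'\|$) yields the claimed bound.

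The step I expect to require the most care is Case 2: one has to verify that the $\rho_i$-pairing for $r_1\cdots r_{k-1}$ produced inside Lemma \ref{A_L1} really is the same pairing as for $\rho_1\cdots \rho_{j-1}$, so that the same denominators appear. This is straightforward from the inductive ``first small index'' construction in the proof of Lemma \ref{BE4}, but it is the one genuine structural point; everything else is routine constant chasing using (\ref{A_v}).
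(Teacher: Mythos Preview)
Your proposal is correct and follows essentially the same route as the paper: invoke Lemma~\ref{A_L1} to control $r_k'$ (the pairing of $r_1\cdots r_{k-1}$ indeed coincides with $\rho_1\cdots\rho_{j-1}$ by the left-to-right greedy construction in Lemma~\ref{BE4}), and in the paired case use the identity $\rho_j=v\,r_{k}-1$ and differentiate. The paper's proof is just the one-line ``This follows immediately from the previous lemma, since $r_kr_{k-1}=vr_{k-1}-1$''; your write-up simply spells out the constant bookkeeping.
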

\begin{proof}This follows immediately from the previous lemma, since $r_kr_{k-1}=vr_{k-1}-1$.
\end{proof}

\end{subsection}
\begin{subsection}{Global upper bounds}
Here we derive global upper bounds on $r_k'$ and $r_k''$, provided that $|r_k|$ is not too large. They are formulated in the way we shall need them.
\end{subsection}
\begin{prop}\label{A_GB}
Assume that $|r_0'|,|r_0''|<\la^2$. Then for all $k\geq 1$ such that $|r_{k}|<  \la^{5/4}$ we have the bounds
$$
|r'_k|< \la^{3(k+1)} \quad \text{and} \quad |r_k''|<\la^{6(k+1)}.
$$
\end{prop}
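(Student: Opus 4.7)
I would argue by induction on $k$, using the recursions
\begin{equation*}
r_{k+1}' = v'(\theta_k) + r_k'/r_k^2, \qquad r_{k+1}'' = v''(\theta_k) + r_k''/r_k^2 - 2(r_k')^2/r_k^3
\end{equation*}
obtained by differentiating $r_{k+1} = v(\theta_k) - 1/r_k$ (where $v = \la f - E$ satisfies $\|v\|_{C^2} \le \co\la$). The base case $k=0$ is immediate from $|r_0'|, |r_0''| \le \la^2$.

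Two observations drive the analysis for $\la$ large enough (depending only on $f$): the constraint $|r_{k+1}| < \la^{5/4}$ forces $|r_k| \ge \la^{-5/4}/2$ (since $1/|r_k| \le |v(\theta_k)| + |r_{k+1}|$), yielding $1/r_k^2 \le 4\la^{5/2}$ and $1/|r_k|^3 \le 8\la^{15/4}$; and $|r_k|, |r_{k+1}|$ cannot both exceed $\la^{5/4}$, since otherwise $|r_{k+1}| \le \co\la + \la^{-5/4} < \la^{5/4}$. I would strengthen the induction to carry, alongside the proposition's bounds in the case $|r_k| < \la^{5/4}$, the auxiliary estimates $|r_k'| \le \la^{3k+1} r_k^2$ and $|r_k''| \le \la^{6k+1}|r_k|^3$ when $|r_k| \ge \la^{5/4}$. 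The routine cases are then (i) both $|r_k|, |r_{k+1}| < \la^{5/4}$, where substitution of the main IH into the recursions gives $|r_{k+1}'| \le \co\la + 4\la^{3(k+1)+5/2} \ll \la^{3(k+2)}$ and $|r_{k+1}''| \le \co\la + 4\la^{6(k+1)+5/2} + 16\la^{6(k+1)+15/4} \ll \la^{6(k+2)}$; and (ii) $|r_{k+1}| \ge \la^{5/4}$ (which forces $|r_k| \le 2\la^{-5/4}$, so $1/r_k^2 \le 4r_{k+1}^2$ and $1/|r_k|^3 \le 8|r_{k+1}|^3$), where the main IH at $k$ propagates directly into the auxiliary bound at $k+1$.

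The main obstacle is the remaining case $|r_k| \ge \la^{5/4}$ together with $|r_{k+1}| < \la^{5/4}$: here naively plugging the auxiliary IH into the recursion gives a term of order $\la^{6k+1}|r_k|$, which is unbounded since $|r_k|$ may be arbitrarily large. My plan to overcome this is to bypass the large intermediate iterate via the identity
\begin{equation*}
r_{k+1} = v(\theta_k) - \frac{r_{k-1}}{v(\theta_{k-1})\, r_{k-1} - 1},
\end{equation*}
which follows from $1/r_k = r_{k-1}/(v(\theta_{k-1}) r_{k-1} - 1)$. Since $|r_k| \ge \la^{5/4}$ forces $|r_{k-1}| \le 2\la^{-5/4}$, the denominator $|v(\theta_{k-1})r_{k-1} - 1| \ge 1 - 2\co\la^{-1/4} \ge 1/2$ is bounded away from zero; differentiating this identity once and twice and invoking the main IH at $k-1$ (giving $|r_{k-1}'| \le \la^{3k}$, $|r_{k-1}''| \le \la^{6k}$) yields polynomial-in-$\la$ bounds well within the targets $\la^{3(k+2)}$ and $\la^{6(k+2)}$. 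The algebraic content is the cancellation of the leading divergences in $r_k''/r_k^2$ and $2(r_k')^2/r_k^3$, which the identity makes manifest.
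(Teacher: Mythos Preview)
Your proof is correct and follows essentially the same route as the paper: the two-step identity $r_{k+1} = v(\theta_k) - r_{k-1}/(v(\theta_{k-1})r_{k-1}-1)$ with denominator bounded away from zero is the key device in both. Two minor points: your auxiliary hypothesis is established in case~(ii) but never consumed (case~(iii) bypasses it via the identity), so it can be dropped---the paper simply splits on whether $|r_k|\ge\la^{-3/2}$, stepping by one when yes and by two (via the identity, from $k$ to $k+2$) when no; and your case~(iii) at $k=0$ invokes a nonexistent $r_{-1}$, but this is trivially patched using the hypothesis $|r_0'|,|r_0''|<\la^2$ directly, since $|r_0|\ge\la^{5/4}$ makes $1/r_0^2$ and $1/|r_0|^3$ small.
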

\begin{proof}Assume that for some $k\geq 0$ we have $|r_k'|<(1/2)(\la^3+\la^6+\ldots+\la^{3(k+1)})$
and $|r_k''|<\la^{6(k+1)}$ (this holds, in particular, for $k=0$). We get two cases:

If $|r_k|\geq \la^{-3/2}$, then, since $|v'|<\co\la\ll\la^3/2$, 
$$
|r_{k+1}'|=|v'+r_k'/r_k^2|<(1/2)(\la^3+\la^6+\ldots+\la^{3(k+2)})
$$
and, since $|r_k'|<\la^{3(k+1)}$, 
$$
|r_{k+1}''|=|v''+r_k''/r_k^2-2(r_k')^2/r_k^3|<\la^2+\la^{6(k+1)+3}+2\la^{6(k+1)+9/2}<\la^{6(k+2)}.
$$

If $|r_k|<\la^{-3/2}$, then $|r_{k+1}|>\la^{3/2}/2>\la^{5/4}$. Moreover, since $\|v\|_{C^0(\T)}<\co \la$, we have $|r_kv|<\co \la^{-1/2}$. Writing $r_{k+2}$ as 
$$
r_{k+2}=v-\frac{1}{v-1/r_k}=v-\frac{r_k}{r_kv-1},
$$
easy calculations, using the bounds on $|r_k'|,|r_k''|$, show that $|r_{k+2}'|<(1/2)(\la^3+\la^6+\ldots+\la^{3(k+3)})$ and $|r_{k+2}''|<\la^{6(k+3)}$.
\end{proof}

\begin{subsection}{Abstract estimates}
Now we shall use the above formulae to 
derive estimates under the assumption that we have a certain control on the iterates of $r_0=\infty$. 

In the first lemma we assume that all the iterates $r_1,r_2,\ldots, r_l$ are large.

\begin{lem}\label{A_01}
Assume that $r_0=\infty$ and that we for some $\theta\in \T$ have $|r_k|\geq \la^{3/4}$ for all $k\in [1,l]$, some $l\geq 1$. 
Let 
$$
h(\theta)=\frac{1}{r_1^2\cdots r_l^2} \text{ and } g(\theta)=\frac{1}{r_1^2\cdots r_{l-1}^2 r_l}.
$$
Then have the
following estimates:
$$
|g'(\theta)|,|g''(\theta)|<\frac{1}{\la^{l/5}}, \quad\text{and} \quad
|h'(\theta)|<\sqrt{h(\theta)}, ~|h''(\theta)|<\frac{1}{\la^{l/5}}.
$$
\end{lem}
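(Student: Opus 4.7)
The plan is to exploit the hypothesis $|r_k|\geq \lambda^{3/4}$ for $k\in[1,l]$ so that no ``pairing'' is needed: in the notation of Lemma \ref{BE4} each $\rho_j$ equals $r_j$ itself, and we are in the simplest setting of the formulas (\ref{prod_form1}) and (\ref{prod_form2}). The whole proof then reduces to plugging uniform bounds on $|r_j'|/|r_j|$ and $|r_j''|/|r_j|$ into those formulas and checking that the resulting powers of $\lambda$ beat $\lambda^{-l/5}$ (resp. $\sqrt{h}$).

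First I would record the pointwise estimates $|r_j'|,|r_j''|<\mathrm{const.}\,\lambda$ for every $j\in[1,l]$. For the first derivative this is immediate from Lemma \ref{A_L2} (with $\rho_j=r_j$): the geometric-type series $1+1/r_{j-1}^2+\cdots+1/(r_1^2\cdots r_{j-1}^2)$ is bounded by $1+\lambda^{-3/2}+\lambda^{-3}+\cdots<2$, so $|r_j'|\leq 4\|v'\|<\mathrm{const.}\,\lambda$. For the second derivative one differentiates the recursion $r_k''=v''+r_{k-1}''/r_{k-1}^2-2(r_{k-1}')^2/r_{k-1}^3$; using $|r_{k-1}|\geq\lambda^{3/4}$ and the already established bound on $|r_{k-1}'|$, this yields $|r_k''|\leq \mathrm{const.}\,\lambda+\lambda^{-3/2}|r_{k-1}''|+\mathrm{const.}\,\lambda^{-1/4}$, and a trivial induction gives $|r_k''|<\mathrm{const.}\,\lambda$. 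Consequently
\[
\max_{1\leq j\leq l}\frac{|r_j'|}{|r_j|},\;\max_{1\leq j\leq l}\frac{|r_j''|}{|r_j|}\;<\;\mathrm{const.}\,\lambda^{1/4}.
\]

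Next I would apply (\ref{prod_form1}) and (\ref{prod_form2}) directly, with $g_j=r_j$. For $h=1/(r_1^2\cdots r_l^2)$ we get
\[
|h'|\leq 2l\,h\cdot\mathrm{const.}\,\lambda^{1/4},\qquad |h''|\leq 8l^2\,h\cdot\mathrm{const.}\,\lambda^{1/2}.
\]
Since $\sqrt{h}=1/(|r_1|\cdots|r_l|)\geq h\cdot\lambda^{3l/4}$, the first inequality gives $|h'|/\sqrt{h}\leq 2l\,\mathrm{const.}\,\lambda^{1/4-3l/4}<1$ for $\lambda$ large (the worst case is $l=1$, where the exponent is $-1/2$). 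For $h''$ we use $h\leq \lambda^{-3l/2}$, so $|h''|\leq \mathrm{const.}\,l^2\lambda^{1/2-3l/2}$, which for every $l\geq 1$ is much smaller than $\lambda^{-l/5}$ (the exponent $1/2-3l/2+l/5=1/2-13l/10$ is $\leq -4/5$). Exactly the same computation, with the single factor $r_l$ in the denominator of $g$ replacing $r_l^2$, yields $|g|\leq\lambda^{-3(l-1)/2-3/4}$, $|g'|\leq \mathrm{const.}\,l\,\lambda^{1-3l/2}$ and $|g''|\leq \mathrm{const.}\,l^2\lambda^{5/4-3l/2}$; in each case the resulting exponent is strictly less than $-l/5$ for $l\geq 1$ once $\lambda$ is taken large enough (depending only on $f$, since $\|v\|_{C^2(\T)}<\mathrm{const.}\,\lambda$).

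There is essentially no ``hard'' step here: the only mild subtlety is getting a uniform $\mathrm{const.}\,\lambda$ bound on $|r_j''|$, which is handled by the straightforward induction sketched above (and one could alternatively invoke Proposition \ref{A_GB} since all iterates are $\geq\lambda^{3/4}$, in particular $<\lambda^{5/4}$ is not needed because the hypothesis is one-sided -- but the direct inductive estimate is cleaner and avoids the exponential $\lambda^{3(k+1)}$ that Proposition \ref{A_GB} would produce). Everything else is arithmetic on the exponents of $\lambda$.
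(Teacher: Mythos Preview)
Your proof is correct and follows essentially the same approach as the paper: establish uniform bounds on $|r_k'|$ and $|r_k''|$ by induction on the recursion $r_{k+1}=v-1/r_k$, then feed the resulting ratios $|r_j'|/|r_j|$, $|r_j''|/|r_j|$ into the product-differentiation formulas (\ref{prod_form1}) and (\ref{prod_form2}). The paper obtains the slightly sharper constants $|r_k'|\leq 2\|v'\|$ and $|r_k''|\leq 2\|v''\|$ rather than your $\mathrm{const.}\,\lambda$, but this makes no difference for the final estimates, and your explicit exponent arithmetic is a fine substitute for the paper's ``follows easily'' remark.
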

\begin{proof}We will show that we have 
$$
|r'_k|\leq 2\|v'\|
\text{ and } |r''_k|\leq 2\|v''\| \text{ for all } k\in [1,l].
$$
From this all the estimates follow easily from the bounds (\ref{A_v}) and formulae (\ref{prod_form1}) and (\ref{prod_form2}).

Using  formula (\ref{firstDerF}) and the given bounds $|r_j|\geq \la^{3/4}$ ($j\in [1,k-1]$), we  
see that $|r'_k|\leq 2\|v'\|$ for all $k\in [1,l]$.

To obtain the bound on the second derivative we proceed by induction. By definition we have $r_1=v$, so $r_1''=v''$.
Let $a=\|v''\|+8\|v'\|^2/\la^{9/4}$. Note that $|r_1''|\leq a$.
Assume that we for some $k\geq 1$ have shown that $|r_k''|\leq a(1+1/\la^{3/2}+\ldots +1/\la^{(3/2)(k-1)})$. Then 
$$
\begin{aligned}
|r''_{k+1}|=|v''+r''_k/r_k^2-2(r_k')^2/r_k^3|\leq a+|r_k''|/\la^{3/2}\leq \\
\leq a(1+/\la^{3/2}+\ldots +1/\la^{(3/2)(k)}).
\end{aligned}
$$
Thus, for each $k\geq 1$ we have $|r''_k|\leq 
a(1+1/\la^{3/2}+\ldots)<2\|v''\|$.
\end{proof}

In the following lemma we shall derive similar estimates as in the previous one, but under weaker conditions on the iterates.

\begin{lem}\label{A02}
Assume that $r_0=\infty$  
and that for some $\theta\in \mathbb{T}$ and $l\geq 10$ 
we have the estimate
$$
|\rho_1|^{\alpha_i}\cdots |\rho_l|^{\alpha_l}\geq \la^{2l/3}\quad \text{ for any choices of } 
\alpha_1,\ldots, \alpha_l\in [1,6]. 
$$ 
Let
$$
h(\theta)=\frac{1}{\rho_1^2\cdots \rho_{l}^2}  \text{ and } g(\theta)=\frac{1}{\rho_1^2\cdots \rho_{l-1}^2\rho_l}.
$$
Then 
$$
|g'(\theta)|,~|g''(\theta)|<\frac{1}{\la^{l/5}} \quad\text{and}
\quad
|h'(\theta)|<\sqrt{h(\theta)}, ~|h''(\theta)|<\frac{1}{\la^{l/5}}.
$$
\end{lem}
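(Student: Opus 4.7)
The overall strategy mirrors the proof of Lemma \ref{A_01} but has to cope with the fact that individual $\rho_i$ are no longer uniformly bounded below; the only information available is the joint product estimate, and the flexibility in the exponents $\alpha_i\in[1,6]$ is what will save us.

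The first and easy step is to bound the functions themselves. Taking $\alpha_i=2$ for $1\le i\le l$ in the hypothesis gives $|\rho_1\cdots\rho_l|^2\ge\la^{2l/3}$, hence $|h|\le\la^{-2l/3}$; taking $\alpha_i=2$ for $i<l$ and $\alpha_l=1$ gives $|g|\le\la^{-2l/3}$. I would then apply the logarithmic derivative
$$
\frac{g'}{g}=-2\sum_{i=1}^{l-1}\frac{\rho_i'}{\rho_i}-\frac{\rho_l'}{\rho_l},\qquad \frac{h'}{h}=-2\sum_{i=1}^{l}\frac{\rho_i'}{\rho_i},
$$
(equivalently, use formula (\ref{prod_form1})), so the whole problem reduces to controlling $\max_j|\rho_j'/\rho_j|$, which I multiply by $|g|$ (or by $|h|\cdot|\rho_1\cdots\rho_l|$ in the case of the $\sqrt{h}$ bound).

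The key technical step is the bound on $|\rho_j'|$. From Lemma \ref{A_L2} one has
$$
|\rho_j'|\le\co\la^{2}\Bigl(1+\sum_{k=1}^{j-1}\frac{1}{\rho_k^2\cdots\rho_{j-1}^2}\Bigr),
$$
and $|\rho_j|\ge\la^{-2}$ by Lemma \ref{BE4}. The heart of the argument is thus an upper bound on each partial product $1/|\rho_k\cdots\rho_{j-1}|^2$, and this is where the freedom in the exponents $\alpha_i\in[1,6]$ enters. For a fixed block $[k,j-1]$ of length $m=j-k$, I would choose $\alpha_i=6$ on $[k,j-1]$ and $\alpha_i=1$ off it (for $m$ small) or $\alpha_i=2$ on $[k,j-1]$ and $\alpha_i=1$ off it (for $m$ large), and combine with the a priori upper bound $|\rho_i|\le\la^2$. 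This produces two lower bounds
$$
|\rho_k\cdots\rho_{j-1}|\ge\max\bigl\{\la^{m-2l/3},\,\la^{m/3-2l/9}\bigr\},
$$
so in every case $1/|\rho_k\cdots\rho_{j-1}|^2\le\la^{4l/9-2m/3}$ for the relevant regime. Summing over $k$ (i.e.\ over $m$) the geometric decay in $m$ makes the sum comparable to its largest term, giving
$$
\sum_{k=1}^{j-1}\frac{1}{|\rho_k\cdots\rho_{j-1}|^2}\le\co\,\la^{4l/9-2/3},
$$
and hence $|\rho_j'|/|\rho_j|\le\co\la^{4l/9+\co}$.

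Plugging this into $|g'|\le 2l\,|g|\,\max_j|\rho_j'/\rho_j|$ and using $|g|\le\la^{-2l/3}$ yields $|g'|\le\co l\,\la^{-2l/3+4l/9+\co}=\co l\,\la^{-2l/9+\co}$, which for $l\ge 10$ and $\la$ large is well below $\la^{-l/5}$, as required. The estimate $|h'|<\sqrt{h}$ is obtained from the same bound on $\sum_i|\rho_i'/\rho_i|$ after noting that $\sqrt{h}=1/|\rho_1\cdots\rho_l|\ge\la^{-l/3}$ by the hypothesis with $\alpha_i=2$. The second-derivative bounds $|g''|,|h''|<\la^{-l/5}$ follow from formula (\ref{prod_form2}) together with an analogous bound on $|\rho_j''|$ obtained by differentiating the expression for $\rho_j'$ in Lemma \ref{A_L2} once more and applying the same partial-product estimates (this only doubles the exponents appearing, so the same slack between $2l/3$ and $l/5$ absorbs everything).

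The main obstacle is the partial-product step: the naive bound $|\rho_i|\ge\la^{-2}$ is far too weak term by term, and the passage to the optimal lower bound on $|\rho_k\cdots\rho_{j-1}|$ really requires exploiting the full $\alpha_i\in[1,6]$ freedom (both the "linear" choice $\alpha=2$ and the "high penalty" choice $\alpha=6$ for the negative-exponent indices). Once this is in place, everything else is a routine combination with the formulae in Appendix A.
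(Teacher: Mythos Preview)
Your strategy is the same as the paper's, but the quantitative execution has a gap that makes the argument fail precisely in the range $l\ge 10$ that the lemma requires.

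The problem is your separate estimate on the partial products. From the choice $\alpha_i=6$ on $[k,j-1]$ and $\alpha_i=1$ outside, together with $|\rho_i|\le\la^2$ off the block, you get $1/|\rho_k\cdots\rho_{j-1}|^2\le\la^{4l/9-2m/3}$, and hence $\sum_k 1/(\rho_k^2\cdots\rho_{j-1}^2)\le\co\,\la^{4l/9}$. Combined with the $\la^2$ from Lemma~\ref{A_L2} and the $\la^2$ from $|\rho_j|\ge\la^{-2}$, this gives $|\rho_j'/\rho_j|\le\co\,\la^{4l/9+10/3}$. Plugging into~(\ref{prod_form1}) with $|g|\le\la^{-2l/3}$ yields
\[
|g'|\;\le\;\co\, l\,\la^{-2l/3+4l/9+10/3}\;=\;\co\, l\,\la^{-2l/9+10/3}.
\]
For this to be $<\la^{-l/5}$ you would need $2l/9-l/5=l/45$ to absorb the constant $10/3$, i.e.\ $l\gtrsim 150$. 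At $l=10$ the exponent is $-20/9+30/9=10/9>0$, so the bound is $>1$ and the argument fails. (Your inequality $\sqrt h\ge\la^{-l/3}$ is also in the wrong direction: $\alpha_i=2$ gives $\sqrt h\le\la^{-l/3}$, and from $\alpha_i=1$ one even has $\sqrt h\le\la^{-2l/3}$; but either way the same exponent deficit appears.) The second-derivative estimate is worse still, since there the partial-product factor enters squared.

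The loss comes from bounding the partial product in isolation: you pay $|\rho_i|\le\la^2$ on every index outside the block, which is very wasteful. The paper avoids this by \emph{not} estimating the partial products separately. Instead it sets
\[
m=\max\Bigl\{1,\ \max_q\frac{1}{|\rho_q|},\ \max_{p\le q}\frac{1}{\rho_p^2\cdots\rho_q^2|\rho_{q+1}|}\Bigr\},
\]
so that $|\rho_j'/\rho_j|\le\co\,\la^2 l\, m$ directly from Lemma~\ref{A_L2}. The point is that the product $m\cdot|g|$ (or $m/|\rho_1\cdots\rho_l|$ for $h$) is itself of the form $1/\prod_i|\rho_i|^{\alpha_i}$ with all $\alpha_i\in[1,4]$, so the hypothesis bounds it by $\la^{-2l/3}$ with no loss. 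This gives $|g'|\le\co\, l^2\la^{2-2l/3}$, and the slack $2l/3-l/5=7l/15$ is ample for $l\ge 10$. For the second derivatives one gets $m^2$, and $m^2\cdot|g|$ has exponents in $[1,6]$, which is exactly why the hypothesis allows $\alpha_i$ up to $6$.
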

\begin{proof}Let 
$$
m=\max\left\{1, 
\max_{1\leq q\leq l}\left\{\frac{1}{|\rho_q|}\right\},
\max_{1\leq p\leq q\leq l-1}\left\{\frac{1}{\rho_p^2\cdots \rho_{q}^2|\rho_{q+1}|}\right\}
\right\}.
$$
We will show that we always have the bounds
\begin{equation}\label{A02_eq1}
\frac{|\rho_i'|}{|\rho_i|}<\co \la^2lm, \quad i=1,\ldots, l
\end{equation}
\begin{equation}\label{A02_eq2}
\frac{|\rho_i''|}{|\rho_i|}<\co \la^4l^3m^2, \quad i=1,\ldots, l.
\end{equation}
Applying the estimate (\ref{prod_form1}) we therefore have
$$
\frac{1}{\sqrt{h(\theta)}}|h'(\theta)|<\frac{2l}{|\rho_1\cdots \rho_l|}(\co \la^2lm)<
\frac{\co l^2\la^2}{\rho_1^{\alpha_1}\cdots \rho_l^{\alpha_l}}
$$
and
$$
|g'(\theta)|<\frac{\co l^2\la^2}{\rho_1^{\alpha_1}\cdots \rho_l^{\alpha_l}}
$$
for some choice of $\alpha_i\in [1,4]$ ($i=1,\ldots, l$). Since $l\geq 10$, the assumptions on the products imply that
$$
\frac{\co l^2\la^2}{\rho_1^{\alpha_1}\cdots \rho_l^{\alpha_l}}<\la^{-l/5}.
$$
Similarly, using estimate (\ref{prod_form2}), we get
$$
|h''(\theta)|, |g'(\theta)|<\frac{\co l^5\la^4m^2}{\rho_1^2\cdots \rho_{l-1}^2|\rho_l|^{1,2}}<\frac{\co l^5\la^4}{\rho_1^{\alpha_1}\cdots \rho_l^{\alpha_l}}<\la^{-l/5}
$$
for some $\alpha_i\in [1,6]$ ($i=1,\ldots, l$).

It remains to verify (\ref{A02_eq1}) and (\ref{A02_eq2}). First, (\ref{A02_eq1}) follows immediately from Lemma \ref{A_L2}, since the estimate for $\rho_i'$ contains at most
$l$ terms. To get (\ref{A02_eq2}) we first note that (\ref{A02_eq1}), Lemmas \ref{A_L1} and \ref{A_L2}, and estimate (\ref{prod_form2}) imply
that for all $1\leq p\leq q\leq l-1$ we have
$$
\frac{1}{|\rho_{q+1}|}\left|\frac{\partial}{\partial \theta}\left(\frac{a_p}{\rho_q^2\cdots\rho_p^2}\right)\right|<\co\la^3l^2m^2.
$$
If $\rho_i=r_j$, it follows, by differentiating the expression for $r_j'$ in Lemma \ref{A_L1}, that 
$$
\frac{|\rho_i''|}{|\rho_i|}<\co \la^3l^3m^2.
$$
If $\rho_i=r_jr_{j+1}=r_jv-1$, where $|r_j|<\la^{-2}$, then we get
$$
\frac{|\rho_i''|}{|\rho_i|}<\co \la^4l^3m^2.
$$
Thus, in either case we we have the estimate (\ref{A02_eq2}).
\end{proof}

\bigskip
The next proposition in this section gives us control on the first and second derivative (w.r.t. $\theta$) of the $k$th iterate $r_k$, $k>0$, 
under suitable assumptions on the previous iterates $r_0,\ldots,r_{k-1}$. Roughly it says that  
$r_k(\theta)$ is close to $\la f(\theta+(k-1)\omega)-E$ in $C^2$-norm under some conditions.

\begin{prop}\label{A1}
Assume that $r_0=\infty$ and that we for some $\theta\in \T$ 
and $k\geq 1$ have $|r_{k-1}|\geq \la^{-2}$, so  $r_1\cdots r_{k-1}$ can be written as $\rho_1\cdots \rho_{l-1}$, 
and for each $i\in [1,l-1]$ we have
$$
\begin{aligned}
|\rho_i|^{\alpha_i}\cdots |\rho_{l-1}|^{\alpha_{l-1}}\geq\lambda^{2(l-i)/3}\quad \text{ for any choices of } 
\alpha_i,\ldots, \alpha_{l-1}\in [1,2]. 
\end{aligned}
$$
Then 
$$
|r_{k}'(\theta)-\lambda f'(\theta_{k-1})|<\lambda^{-1/4}\quad\text{and}
\quad |r_k''(\theta)-\lambda f''(\theta_{k-1})|<\la^{1/2}.
$$
\end{prop}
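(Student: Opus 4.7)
The plan is to start from the explicit formula (\ref{firstDerF}) for $r_k'(\theta)$, namely
$$
r_k'(\theta)=v'(\theta_{k-1})+\sum_{j=1}^{k-1}\frac{v'(\theta_{j-1})}{r_j^2\cdots r_{k-1}^2},
$$
and observe that since $v=\la f-E$, the leading term is precisely $\la f'(\theta_{k-1})$. Hence proving the first claim amounts to bounding the tail sum by $\la^{-1/4}$. I would then invoke Lemma \ref{A_L1} to reorganize the tail via the $\rho_i$ representation of $r_1\cdots r_{k-1}$, obtaining
$$
r_k'(\theta)=v'(\theta_{k-1})+\sum_{i=1}^{l-1}\frac{a_i}{\rho_i^2\cdots\rho_{l-1}^2},\qquad |a_i|\le (3/2)\|v'\|\le \co\,\la.
$$

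The key step is to apply the hypothesis with the uniform choice $\alpha_i=\alpha_{i+1}=\cdots=\alpha_{l-1}=1$, which yields
$$
|\rho_i\rho_{i+1}\cdots\rho_{l-1}|\ge \la^{2(l-i)/3}\ge 1,
$$
and squaring gives the decisive bound $|\rho_i^2\cdots\rho_{l-1}^2|\ge \la^{4(l-i)/3}$. Each term of the tail is therefore majorized by $\co\,\la\cdot \la^{-4(l-i)/3}$, and a geometric summation gives
$$
\left|\sum_{i=1}^{l-1}\frac{a_i}{\rho_i^2\cdots\rho_{l-1}^2}\right|\le \co\,\la\sum_{j=1}^{l-1}\la^{-4j/3}\le 2\co\,\la^{-1/3}<\la^{-1/4}
$$
for $\la$ large, which establishes the first estimate.

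For the second derivative I would differentiate the above representation once more, producing three contributions: (i) the leading term $v''(\theta_{k-1})=\la f''(\theta_{k-1})$; (ii) a sum $\sum a_i'/(\rho_i^2\cdots\rho_{l-1}^2)$; and (iii) a sum $\sum a_i\,\partial_\theta\bigl(1/(\rho_i^2\cdots\rho_{l-1}^2)\bigr)$. For (ii), I would use the bound $|a_i'|\le \co\,\la\bigl(1+1/\rho_{i-1}^2+\cdots+1/(\rho_1^2\cdots\rho_{i-1}^2)\bigr)$ from Lemma \ref{A_L1}; combining each tail term $1/(\rho_p^2\cdots\rho_{i-1}^2)$ with the factor $1/(\rho_i^2\cdots\rho_{l-1}^2)$ yields $1/(\rho_p^2\cdots\rho_{l-1}^2)\le \la^{-4(l-p)/3}$, so the whole double sum telescopes to $\le \co\,\la^{-1/3}$. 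For (iii), I would apply (\ref{prod_form1}) to convert $\partial_\theta(1/(\rho_i^2\cdots\rho_{l-1}^2))$ into a sum of terms $\rho_j'/\rho_j$ times $1/(\rho_i^2\cdots\rho_{l-1}^2)$, estimate each $|\rho_j'|$ by Lemma \ref{A_L2}, and then rearrange the resulting double sum over $(i,j)$ with $i\le j$ so that for each fixed $j$ the inner sum is controlled by the hypothesis applied at start index $j+1$.

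The main obstacle will be the careful bookkeeping of (iii): bounding $|\rho_j'|/|\rho_j|$ uniformly is loose, so one must pair factors coming from Lemma \ref{A_L2} with factors from the product bound, and verify that after this pairing the cumulative estimate stays well below $\la^{1/2}$. Fortunately, the target bound $\la^{1/2}$ is far weaker than the first-derivative target $\la^{-1/4}$, so there is substantial slack — once the correct grouping is made, the strong decay $\la^{-4(l-i)/3}$ from the hypothesis easily absorbs the polynomial losses coming from $|\rho_j'|$ and from the factor $1/|\rho_j|$ (which is at worst $\la^2$ by Lemma \ref{BE4}), yielding the required bound.
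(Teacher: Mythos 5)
Your first-derivative argument is the same as the paper's: reorganize via Lemma \ref{A_L1}, square the hypothesis with $\alpha_j=1$ to get $|\rho_i|^2\cdots|\rho_{l-1}|^2\geq\la^{4(l-i)/3}$, and geometrically sum. That is correct, as is your treatment of contribution (ii) in the second-derivative estimate.

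The gap is in contribution (iii). You plan to bound $\max_j|\rho_j'|/|\rho_j|$ by the crude product of $|\rho_j'|\lesssim\la^2$ (Lemma \ref{A_L2}) and $1/|\rho_j|\le\la^2$ (Lemma \ref{BE4}), i.e.\ by $\la^4$. Together with $|a_i|\lesssim\la$, the factor $2(l-i)$ from (\ref{prod_form1}), and the decay $1/(\rho_i^2\cdots\rho_{l-1}^2)\le\la^{-4(l-i)/3}$, this makes the $i$-th term of order $(l-i)\la^{5-4(l-i)/3}$, which at $i=l-2$ is $\sim\la^{7/3}$ --- well above the target $\la^{1/2}$. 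The ``substantial slack'' you invoke is not there for the last couple of indices. What actually closes the argument in the paper is that $1/|\rho_j|$ must never be estimated in isolation. Either $\rho_j=r_m r_{m+1}$ is a double, in which case $|\rho_j|>1/2$ so $1/|\rho_j|$ costs only a constant (and $|\rho_j'|\lesssim\la^2$ is the real price); or $\rho_j=r_m$ is a single, in which case $|\rho_j'|\lesssim\la$ only, and the extra $1/|\rho_j|$ raises the exponent on $\rho_j$ in the denominator $\rho_i^2\cdots\rho_{l-1}^2$ to $3$. You then split $3=1+2$ and apply the hypothesis with two admissible exponent vectors $(\alpha_m)\in[1,2]^{l-i}$ --- namely all $1$'s, and all $1$'s except a $2$ at position $j$ --- so the denominator is still $\ge\la^{4(l-i)/3}$ with no loss. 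This is the whole reason the hypothesis is stated with the range $[1,2]$ and not the single exponent $1$; the same trick is needed to absorb the tail $(1+1/\rho_{j-1}^2+\cdots)$ of Lemma \ref{A_L2}, whose overlap with $[i,l-1]$ pushes exponents up to $4$. Your proposed ``hypothesis at start index $j+1$'' only controls $\rho_{j+1},\ldots,\rho_{l-1}$ and does not touch the $|\rho_j|^3$ factor, so it does not address the problem. Once the absorption is done correctly the $i$-th term is $\lesssim(l-i)^2\la^3\la^{-4(l-i)/3}$, which sums to $\lesssim\la^{1/3}<\la^{1/2}$.
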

\begin{proof}
By Lemma \ref{A_L1}, applying the given estimates on the products, we immediately get
$$
|r_{k}'(\theta)-\la f'(\tht_{k-1})|\leq 2\|v'\|
\left(\frac{1}{\rho_{l-1}^2}+\ldots \frac{1}{\rho_{l-1}^2\cdots \rho_{1}^2}\right)<
\frac{\co\la}{\la^{4/3}}=\frac{\co}{\la^{1/3}}.
$$

We now turn to the estimate of the second derivative. Using Lemma \ref{A_L1}, we see that differentiation of $r_k'$ yields
$$
r_{k}''-v''=\sum_{i=1}^{l-1}\frac{\partial}{\partial\theta}\left(\frac{a_i}{\rho_{l-1}^2\cdots \rho_{i}^2}\right).
$$
The terms in the RHS can be estimated as follows. In the case where $i=l-1$, we have by assumption that 
$|\rho_{l-1}|\geq \la^{2/3}$. Thus $\rho_{l-1}=r_{k-1}$ and $a_{l-1}=v'$. We therefore have, using Lemma \ref{A_L2},
$$
\left|\frac{\partial}{\partial\theta}\left(\frac{a_{l-1}}{\rho_{l-1}^2}\right)\right|\leq
\frac{\|v''\|}{\la^{4/3}}+\frac{2\|v'\|^2}{\rho_{l-1}^3}
\left(1+\frac{1}{\rho_{l-2}^2}+\ldots +\frac{1}{\rho_{l-2}^2\cdots\rho_1^2}\right)<\co 
$$
For $i<l-1$ we do as follows. We have
$$
\left|\frac{\partial}{\partial\theta}\left(\frac{a_i}{\rho_{l-1}^2\cdots \rho_{i}^2}\right)\right|\leq
\frac{|a_i'|}{\rho_{l-1}^2\cdots \rho_{i}^2}+
|a_i|\left|\frac{\partial}{\partial\theta}\left(\frac{1}{\rho_{l-1}^2\cdots \rho_{i}^2}\right)\right|.
$$
By Lemma \ref{A_L1} the first term can be estimated by
$$
\co\la \left(\frac{1}{\rho_{l-1}^2\cdots \rho_i^2}+\ldots +\frac{1}{\rho_{l-1}^2\cdots \rho_1^2}\right)<\co 
\frac{\la}{\la^{4/3(l-i)}}.
$$
Next, by Lemma \ref{A_L2} we see that 
$$
|\rho_i'|<\co \la^2\left(1+\frac{1}{\rho_{i-1}^2}+\ldots +
\frac{1}{\rho_{i-1}^2\cdots \rho_{1}^2}
\right),
$$
and for $\iota\in [i+1,l-1]$ we have
$$
|\rho_\iota'|<\co \la^2\left(1+\frac{1}{\rho_{\iota-1}^2}+\ldots +
\frac{1}{\rho_{\iota-1}^2\cdots \rho_i^2}+\frac{1}{\rho_{\iota-1}^2\cdots \rho_{i-1}^2}+\ldots+\frac{1}{\rho_{\iota-1}^2\cdots \rho_{1}^2}
\right).
$$
Thus, using the assumtion on the products, we can conclude that for any  $\iota\in [i,l-1]$
$$
\frac{1}{\rho_{l-1}^2\cdots \rho_i^2}\frac{|\rho_\iota'|}{|\rho_\iota|}<\co\frac{\la^2(l-i)}{\la^{(4/3)(l-i)}}.
$$
It therefore follows from the estimate (\ref{prod_form1}) that the second term can be bounded from above by
$$
\co |a_i| \frac{(l-i)^2\la^2}{\la^{(4/3)(l-i)}}<\co \frac{(l-i)^2\la^3}{\la^{(4/3)(l-i)}}.
$$ 
Putting these estimates together shows that $|r_k''-v''|<\la^{1/2}$.

\end{proof}

\bigskip
The last propostion in this section will be used when we want to ``shadow'' orbits. In these cases we will 
control the iterates of $s_0=s_0(\theta)$ by comparing them with the iterates of the constant  $r_0(\theta)=\infty$. Our settings
will be the following. 

\begin{prop}\label{A5}
Assume that $s_0=s_0(\theta)$ is any given function, and let 
$r_0=r_0(\theta)=\infty$. Assume that for some $\theta\in \T$ and some $k\geq 1$ we have the following: 
$r_1\cdots r_k$ can be written as $\rho_1\cdots \rho_l$ (i.e., $|r_k|\geq \la^{-2}$), and
$$
|r_j|\geq \la^{3/4},\quad j\leq 10,\quad\text{and}
$$ 
$$
|\rho_1|^{\alpha_1}\cdots |\rho_i|^{\alpha_i}\geq \la^{2i/3} \text{ for any } i\leq l \text{ and any choices of } 
\alpha_1,\ldots \alpha_i\in [1,6].
$$
Then we can write $s_{k+1}$ as
\begin{equation}\label{A_eq6}
s_{k+1}=r_{k+1}-\frac{h(\theta)}{(s_0-w(\theta))}
\end{equation}
where the functions $h$ and $w$ satisfy
$$
\begin{aligned}
\la^{-4l}< h(\theta)\leq\la^{-4l/3}, ~&|h'(\theta)|<\sqrt{h(\theta)}, ~|h''(\theta)|< 1; \text{and} \\
 |w(\theta)|< \frac{1.1}{\la^{3/4}},  ~&|w'(\theta)|,|w''(\theta)|<1/10.
\end{aligned}
$$
\end{prop}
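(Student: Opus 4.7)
The plan is to read off $h$ and $w$ directly from formula \eqref{A_form1} and then reduce everything to quantitative estimates on the products $\rho_1 \cdots \rho_l$. With $r_0 \equiv \infty$, \eqref{A_form1} becomes exactly
\[
s_{k+1} = r_{k+1} - \frac{h(\theta)}{s_0 - w(\theta)}, \quad h(\theta) = \frac{1}{r_1^2 \cdots r_k^2}, \quad w(\theta) = \sum_{j=1}^{k} \frac{1}{r_1^2 \cdots r_{j-1}^2 r_j},
\]
so the proposition reduces to proving the six estimates on $h, h', h''$ and $w, w', w''$.

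The bounds on $h$ are essentially free. Writing $r_1 \cdots r_k = \rho_1 \cdots \rho_l$ (Lemma \ref{BE4}), one has $h = 1/(\rho_1^2 \cdots \rho_l^2)$; then $h \leq \la^{-4l/3}$ from the hypothesis with $\alpha_i \equiv 2$, and $h \geq \la^{-4l}$ from $|\rho_i| < \la^2$. The derivative bounds come from Lemma \ref{A02} when $l \geq 10$, and from Lemma \ref{A_01} when $l < 10$ (in that case the assumption $|r_j| \geq \la^{3/4}$ for $j \leq 10$ precludes any pairing, so $l = k \leq 10$ and $\rho_i = r_i$).

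The step I expect to be the main obstacle is $w$, because individual summands can blow up whenever some $|r_j|$ is small (possibly as tiny as $\la^{-2}$). The key idea is to regroup the sum along the $\rho$-partition. A short computation using $r_j r_{j+1} + 1 = r_j v(\theta_j)$ shows that whenever $\rho_m = r_j r_{j+1}$,
\[
\frac{1}{r_1^2 \cdots r_{j-1}^2 r_j} + \frac{1}{r_1^2 \cdots r_j^2 r_{j+1}} \;=\; \frac{v(\theta_j)}{\rho_1^2 \cdots \rho_{m-1}^2 \rho_m},
\]
so after regrouping
\[
w(\theta) = \sum_{i=1}^{l} \frac{a_i(\theta)}{\rho_1^2 \cdots \rho_{i-1}^2 \rho_i},
\]
where $a_i = 1$ if $\rho_i$ is a single $r$ and $a_i = v(\theta_\bullet)$ if $\rho_i$ is a pair; in particular $|a_i|, |a_i'|, |a_i''| \leq \co \la$ by \eqref{A_v}. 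Each summand is now exactly of the form treated in Lemmas \ref{A_01} and \ref{A02}.

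With this representation, the remaining estimates follow by splitting the sum at $i = 10$. For $i \leq 10$ the hypothesis forces $\rho_i = r_i$ with $|r_i| \geq \la^{3/4}$ and $a_i = 1$, so each summand is bounded by $\la^{-(3/4)(2i-1)}$; the $i = 1$ term dominates, contributing $\la^{-3/4}$ to $|w|$ and, via Lemma \ref{A_01} applied to $g_i := 1/(\rho_1^2 \cdots \rho_{i-1}^2 \rho_i)$, at most $\co \la^{-1/2}$ to $|w'|$ and $|w''|$. For $i > 10$, Lemma \ref{A02} gives $|g_i^{(s)}| < \la^{-i/5}$ ($s = 0,1,2$), and combining with the bounds on $a_i$ yields terms of size $\co \la \cdot \la^{-i/5}$ — a rapidly convergent geometric tail. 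Summing the two pieces produces $|w| < 1.1 \la^{-3/4}$ and $|w'|, |w''| < 1/10$, completing the proof.
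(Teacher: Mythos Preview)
Your proof is correct and follows essentially the same route as the paper's own argument: read off $h$ and $w$ from formula~\eqref{A_form1}, regroup the $w$-sum along the $\rho$-partition via the identity $r_jr_{j+1}+1=r_jv$, and then estimate term by term using Lemmas~\ref{A_01} and~\ref{A02} with the split at $i=10$. The only cosmetic slip is that Lemma~\ref{A_01} as stated yields $|g_i'|,|g_i''|<\la^{-i/5}$ rather than the sharper $\co\la^{-1/2}$ you quote (which does follow from its proof); either bound suffices for $|w'|,|w''|<1/10$.
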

\begin{proof}
Let 
$$
h(\theta)=\frac{1}{r_{1}^2\cdots r_{k}^2}=\frac{1}{(\rho_1\cdots\rho_l)^2}
$$
and 
$$
w(\theta)=\frac{1}{r_1}+\frac{1}{r_1^2r_2}+\frac{1}{r_1^2r_2^2r_3}+\ldots 
+ \frac{1}{r_1^2\cdots r_{k-1}^2r_{k}}.
$$
From formula (\ref{A_form1}) we then see that indeed $r_{k+1}$ can be written as in (\ref{A_eq6}).

By the assumption on the products, the estimates on $h$ immediately follow from Lemmas (\ref{A_01}) and (\ref{A02}) 
(by Lemma \ref{BE4} we always have the trivial upper bound $|\rho_1\cdots \rho_l|<\la^{2l}$).

Turning to the function $w$, we claim that it can be written as
\begin{equation}\label{A5_eq1}
w(\theta)=\frac{1}{\rho_1}+\frac{1}{\rho_1^2\rho_2}+\ldots+\frac{1}{\rho_1^2\cdots \rho_9^2\rho_{10}}+
\frac{a_{11}}{\rho_1^2\cdots \rho_{10}^2\rho_{11}}+\ldots
+\frac{a_l}{\rho_1^2\cdots\rho_{l-1}^2\rho_l},
\end{equation}
where the $a_i=1$ or $a_i=v$ (evaluated at some $\theta$).
Indeed, let $j\geq 1$ be the first $j$ such that $|r_j|<\la^{-2}$ (if there is one). We have assumed that 
$|r_k|\geq \la^{-2}$ so $j<k$. Hence we can write $r_1\cdots r_{j+1}=\rho_1\cdots \rho_i$. We note that we have
$$
\begin{aligned}
\frac{1}{r_1^2\cdots r_{j-1}^2r_j}+\frac{1}{r_1^2\cdots r_{j-1}^2r_j^2r_{j+1}}=
\frac{r_jr_{j+1}+1}{r_1^2\cdots r_{j-1}^2r_j^2r_{j+1}}=&\frac{r_jv}{r_1^2\cdots r_{j-1}^2r_j^2r_{j+1}}=\\
=&\frac{v}{\rho_1^2\cdots \rho_{i-1}^2\rho_{i}}.
\end{aligned}
$$
Continuing, pairing terms like above, gives the result.

Using the estimates on the products, we get $|w(\theta)|<1.1/\la^{3/4}$. 
Moreover, the bounds on $w',w''$ follow by differentiation of each term in (\ref{A5_eq1}), and applying  Lemmas (\ref{A_01}) and (\ref{A02}).
\end{proof}

We end this subsection with a well-known result which we prove using the formulae in the previous proposition.

\begin{lem}\label{A_Osc}
Let $r_0=\infty$. Assume that for some $\theta\in \T$ we have the estimate
$$
|\rho_1\cdots \rho_l|\to\infty \text{ as } l\to \infty.
$$ 
Then there exists $w\in \mathbb{R}$ such that for all $s_0\neq w$ we have
$$
\min\{|r_k-s_k|,|r_{k+1}-s_{k+1}|\}\to 0 \text{ as } k\to \infty.
$$
\end{lem}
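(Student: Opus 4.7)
The plan is to start from formula (\ref{A_form1}), specialized to $r_0=\infty$: for any $s_0\in\widehat{\mathbb{R}}$ and every $k\geq 1$,
\[
r_{k+1}-s_{k+1}=\frac{1}{(r_1\cdots r_k)^2(s_0-w_k)},\qquad w_k=\sum_{j=1}^{k}\frac{1}{r_1^2\cdots r_{j-1}^2\,r_j},
\]
with the analogous expression for $r_k-s_k$ in terms of $w_{k-1}$. The candidate for $w$ in the statement is $w=\lim_{k\to\infty} w_k$, and once its existence has been established, the min-conclusion will follow from estimating the denominators $(r_1\cdots r_{k-1})^2$ and $(r_1\cdots r_k)^2$ via the $\rho$-pairing of Lemma \ref{BE4}.

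First I would establish that the partial sums $w_k$ converge. The direct difference $w_k-w_{k-1}=1/\bigl((r_1\cdots r_{k-1})^2 r_k\bigr)$ is not uniformly small because $r_k$ can be very small. However, a short calculation using the recursion $r_{j+1}=v(\theta_j)-1/r_j$ and the identity $r_jr_{j+1}+1=r_jv(\theta_j)$ yields the telescoping identity
\[
w_{k+1}-w_{k-1}=\frac{v(\theta_k)}{(r_1\cdots r_{k-1})(r_1\cdots r_{k+1})}.
\]
Whenever $|r_k|$ is small, $r_kr_{k+1}$ is automatically a pair $\rho_l$ in the sense of Lemma \ref{BE4}, and the denominator on the right collapses, up to bounded non-vanishing factors, to $P_{l-1}P_l$ where $l=l(k)$ is the $\rho$-index corresponding to time $k+1$; a similar analysis handles the case when $k$ itself is ``good''. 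Since $|P_l|\to\infty$ by hypothesis, one obtains $|w_{k+1}-w_{k-1}|\to 0$. Restricting to the sub-sequence of $w_k$ at $\rho$-stopping times gives a Cauchy sequence (the differences being controlled by the pairing-based estimate above), and one checks that the intermediate $w_k$ stay close to the adjacent stopping values. This defines $w$.

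With $w$ in hand, the rest is bookkeeping. The key input from Lemma \ref{BE4} is that two consecutive $r$-indices cannot both satisfy $|r_j|<\la^{-2}$, so at every large $k$ at least one of the products $r_1\cdots r_{k-1}$ and $r_1\cdots r_k$ completes a full $\rho$-product and has magnitude $|P_{l(k)}|$, with $l(k)\to\infty$ as $k\to\infty$. Fix any $s_0\neq w$; for $k$ large enough we have $|s_0-w_k|,|s_0-w_{k-1}|\geq |s_0-w|/2>0$, and therefore
\[
\min\{|r_k-s_k|,|r_{k+1}-s_{k+1}|\}\leq \frac{2}{|s_0-w|\cdot\max\{|r_1\cdots r_{k-1}|^2,|r_1\cdots r_k|^2\}}\leq \frac{2}{|s_0-w|\,|P_{l(k)}|^2}\longrightarrow 0.
\]

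The main obstacle will be establishing the convergence of $w_k$. The naive term-by-term bound on $w_k-w_{k-1}=1/(b_kb_{k+1})$ is useless at bad indices, so one really has to exploit the precise cancellation visible in the telescoping identity above, reducing the problem to summability of $|v(\theta_k)|/|P_{l-1}P_l|$ along $\rho$-stopping times and using the pairing to control the intermediate $w_k$. Once this convergence is in hand, the pairing-based denominator estimate and the final chain of inequalities are straightforward.
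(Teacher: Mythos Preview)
Your approach is essentially identical to the paper's: the telescoping identity $w_{k+1}-w_{k-1}=v(\theta_k)/\bigl((r_1\cdots r_{k-1})(r_1\cdots r_{k+1})\bigr)$ is precisely the pairing computation recorded in (\ref{A5_eq1}), and your restriction to ``$\rho$-stopping times'' is the paper's subsequence $(k_j)$ of indices with $|r_{k_j}|\geq\la^{-2}$, along which the paper rewrites $w_j$ as $\sum a_i/(\rho_1^2\cdots\rho_{i-1}^2\rho_i)$, shows $w_j\to w$, and then reads off $|r_{k_j+1}-s_{k_j+1}|\to 0$ from formula (\ref{A_form1}). The only cosmetic difference is that you spell out the final ``$\min$'' step via the observation that two consecutive indices cannot both be bad, which the paper leaves implicit.
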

\begin{proof}We let $(k_j)_{j\geq 1}$ be the indices such that $|r_{k_j}|\geq \la^{-2}$. We also let
$$
w_j=\frac{1}{r_1}+\frac{1}{r_1^2r_2}+\frac{1}{r_1^2r_2^2r_3}+\ldots 
+ \frac{1}{r_1^2\cdots r_{k_j-1}^2r_{k_j}} \text{ and } h_j=\frac{1}{r_1^2\cdots r_{k_j}^2}.
$$
As in the proof of the previous proposition we can write
$$
w_j=\frac{a_1}{\rho_1}+\frac{a_2}{\rho_1^2\rho_2}+\ldots
+\frac{a_{l_j}}{\rho_1^2\cdots\rho_{l_{j}-1}^2\rho_{l_j}},
$$
where each $|a_i|<\co\la$. Since, by definition, all the $|\rho_i|\geq \la^{-2}$ (see Lemma \ref{BE4}), we can use the assumptions and derive
$|w_{j+1}-w_j|=\frac{|a_{l_{j+1}}|}{\rho_1^2\cdots\rho_{l_{j+1}-1}^2|\rho_{l_{j+1}}|}\to 0$ as $j\to\infty$.
Thus it follows that there exists $w\in\mathbb{R}$ such that
$w_j\to w \text{ as } j\to\infty$. Furthermore, $h_{j}\to 0$ as $j\to\infty$.

Now, if $s_0\neq w$, then there exists $\ve>0$ and a $j_0>0$ such that $|s_0-w_j|>\ve$ for all $j\geq j_0$. We conclude that, using formula (\ref{A_form1}),
we have
$$
|r_{k_j+1}-s_{k_j+1}|=\frac{h_{j}}{s_0-w_j}\to 0 \text{ as } j\to \infty.
$$
\end{proof}


\end{subsection}

\begin{subsection}{Derivative with respect to the parameter $E$} To show that a certain configuration can only happen at a gap edge (more precisely, Case 2 in Section 
\ref{the_proof}), we will have do differentiate with respect to the parameter $E$.

Since $v=\la f-E$ we have $\partial_E v=-1$. Therefore, if $r_0=\infty$, we get, exactly as in (\ref{firstDerF}), that
\begin{equation}\label{DerE}
\partial_E r_k=-\left(1+\frac{1}{r_{k-1}^2}+\ldots +\frac{1}{r_{k-1}^2\cdots r_1^2}\right), ~k>0.
\end{equation}
\begin{prop}\label{A_E}
The statements of Proportions \ref{A_GB} and \ref{A5} concerning the first derivatives hold 
if we replace $\theta$-derivative by $E$-derivative, i.e., if we mean $'=\partial_E$.
Moreover, in Proposition \ref{A1} we also have the conclusion $|\partial_E r_k(\theta,E)-(-1)|<1$.
\end{prop}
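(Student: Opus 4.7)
The guiding principle is that the map $E \mapsto v$ is affine with $\partial_E v \equiv -1$, so every higher $\theta$-derivative of $v$ is annihilated: $\partial_E v' = \partial_E v'' = 0$. Consequently, every chain-rule computation producing a formula for $\partial_\theta r_k$ has a structurally identical counterpart for $\partial_E r_k$, obtained by the textual substitution $v' \mapsto -1$, $v'' \mapsto 0$. Since in all the original estimates the factors $\|v'\|$ and $\|v''\|$ are bounded by $\co \lambda$, this substitution only shrinks each bound by (at least) a factor of $\lambda$. Thus every first-derivative bound in Propositions \ref{A_GB} and \ref{A5} persists with room to spare.

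\textbf{Proposition \ref{A_GB} (first derivative).} The recursion $\partial_E r_{k+1} = -1 + (\partial_E r_k)/r_k^2$ (valid when $r_k \neq 0$) is exactly parallel to $r_{k+1}' = v' + r_k'/r_k^2$, with $v'$ replaced by $-1$. The inductive scheme of the original proof --- maintaining the running bound $|r_k'| < \tfrac{1}{2}(\la^3 + \la^6 + \cdots + \la^{3(k+1)})$ --- goes through verbatim since the bound $|v'| < \la^3/2$ is only used as an upper bound for the ``constant term'', and $1 \leq \la^3/2$ a fortiori. The branch $|r_k| < \la^{-3/2}$ is handled exactly as before by differentiating $r_{k+2} = v - r_k/(r_k v - 1)$ with respect to $E$; every $v'$ appearing is replaced by $-1$, making the arithmetic only easier.

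\textbf{The new conclusion for Proposition \ref{A1}.} This follows straight from the explicit formula (\ref{DerE}),
\[
\partial_E r_k + 1 \;=\; -\sum_{j=1}^{k-1} \frac{1}{r_{k-1}^2\cdots r_j^2}.
\]
Apply the pairing device of Lemma \ref{A_L1}: whenever $|r_j| < \la^{-2}$, so that $\rho_i = r_j r_{j+1}$, combine two consecutive terms into
\[
\frac{-1}{r_{k-1}^2\cdots r_{j+1}^2} + \frac{-1}{r_{k-1}^2\cdots r_j^2} = \frac{-(r_j^2+1)}{\rho_{l-1}^2\cdots \rho_i^2},
\]
so that the rewritten sum has coefficients $b_i$ with $|b_i| \leq 1 + \la^{-4} < 3/2$ in place of the $a_i = O(\|v'\|)$ of Lemma \ref{A_L1}. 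Taking $\alpha_j = 2$ in the product hypothesis of Proposition \ref{A1} gives $(\rho_i^2\cdots \rho_{l-1}^2)^{-1} \leq \la^{-2(l-i)/3}$, hence
\[
|\partial_E r_k + 1| \;\leq\; \tfrac{3}{2}\sum_{j=1}^{l-1} \la^{-2j/3} \;<\; 1
\]
for $\la$ large.

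\textbf{Proposition \ref{A5} (first derivatives of $h$ and $w$).} Neither $h = (\rho_1\cdots\rho_l)^{-2}$ nor the paired expression (\ref{A5_eq1}) for $w$ involves $s_0$, so $\partial_E$ acts only on the $r_j$'s via the $\rho_i$'s. The $E$-analogs of Lemmas \ref{A_L2}, \ref{A_01}, and \ref{A_02} are obtained by the same $v' \mapsto -1$ substitution; in particular, the upper bound in (\ref{A02_eq1}) improves from $\co \la^2 l m$ to $\co l m$. Feeding this into (\ref{prod_form1}) and then into the computations giving $|h'| < \sqrt{h}$ and $|w'| < 1/10$ yields the corresponding $\partial_E$ estimates with an extra factor of $\la^{-1}$, which is far more than the stated bounds require. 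The only verification needed is that this formal substitution is indeed valid, which is immediate from $\partial_E v \equiv -1$ and $\partial_E v' \equiv 0$. No genuine difficulty arises --- the $E$-derivative is simply a milder version of the $\theta$-derivative bookkeeping already carried out.
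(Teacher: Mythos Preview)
Your proof is correct and follows essentially the same approach as the paper, which dispatches the proposition in a single sentence (``By using the above formula, all the estimates are the same''). You have simply fleshed out that sentence: the substitution $v'\mapsto -1$ in the chain-rule formulae (\ref{firstDerF}) and (\ref{A_form1}) yields (\ref{DerE}), and since every first-derivative bound in Propositions \ref{A_GB}, \ref{A1}, and \ref{A5} ultimately rests on $\|v'\|\leq \co\la$, replacing $\|v'\|$ by $1$ only strengthens each inequality.
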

\begin{proof} By using the above formula, all the estimates are the same. Of course the statements also hold for the second derivative (with respect to $E$), 
but we do not need them.
\end{proof}
\end{subsection}



\section{} Here we collect some elementary results of a more geometrical nature. 
We will focus on functions $\psi$, defined on some interval $I$, of the form 
\begin{equation}\label{B_eq0}
\psi(\theta)=s(\theta)-\frac{h(\theta)}{g(\theta)}, ~\theta\in I,
\end{equation}
(under various conditions on $s,h,g$; recall the shadowing formula (\ref{A_eq6})). The aim is to derive information about sets of the form
$$
\{\theta\in I: |\psi(\theta)|\leq C\},
$$
as well as estimates on $\psi$ on these sets. We shall in this appendix always assume that $I$ is an interval, and that the functions $s,h$ and $g$ are $C^2$ on $I$.

First note that we have the following formulae: Given any $\varepsilon\in \mathbb{R}$, let 
\begin{equation}\label{B_eq1}
\varphi_\varepsilon(\theta)=\frac{h(\theta)}{s(\theta)-\varepsilon}.
\end{equation}
A straightforward computation shows that if $t\in I$ is such that 
$$\varphi_\varepsilon(t)=g(t),$$ 
then we have 
$$\psi(t)=\ve,$$
\begin{equation}\label{B_eq2}
\psi'(t)=\frac{(s(t)-\ve)^2}{h(t)}\left(g'(t)-\varphi_\ve'(t)\right); \quad\text{and}
\end{equation}
\begin{equation}\label{B_eq3}
\psi''(t)=\frac{(s(t)-\ve)^2}{h(t)}\left(g''(t)-\varphi_\ve''(t) +2\left(\frac{s'(t)}{s(t)-\ve}-\frac{s(t)-\ve}{h(t)}\cdot g'(t)\right)
(g'(t)-\varphi_\ve'(t))\right).
\end{equation}

We shall assume that $\lambda$ is large (larger than some numerical constant).

\bigskip
In the first two lemmas the functions $h$ and $s$ are assumed to satisfy the following conditions on $I$: there is a constant
$\delta_2$, satisfying $0< \delta_2\leq 1$, such that 
\begin{equation}\label{B_cond1}
\begin{aligned}
|s(\theta)|\geq \la^{3/4} \text{ and } \|s\|_{C^2(I)}<\co\la; \\
0< h(\theta)\leq\delta_2, ~|h'(\theta)|\leq \sqrt{h(\theta)} \text{ and } ~|h''(\theta)|<1.
\end{aligned}
\end{equation}
An easy calculation shows that under these conditions we have the following estimates for the function $\vp_\ve$ defined in (\ref{B_eq0}): 
\begin{equation}\label{B_est1}
|\vp_\ve(\theta)|<\frac{h(\theta)}{\sqrt{\la}}, ~|\vp'_{\ve}(\theta)|<\sqrt{h(\theta)/s(\theta)} \text{ and } 
~|\vp_\ve''(\theta)|<1 \quad \text{for all } |\ve|\leq \sqrt{\la}.
\end{equation}

\begin{lem}\label{L_B1} 
Assume that $s$ and $h$ satisfy condition (\ref{B_cond1}), and
that $g$ satisfies 
$$
g(I)\supset [-\delta_2,\delta_2], \text{ and } ~ g'(\theta)\geq D_1 \text{ on } I
$$
for some constant $D_1\geq \sqrt{\la}$. If we let $\psi(\theta)$ be defined as in (\ref{B_eq0}),
then the set
$$
J=\{\theta\in I: |\psi(\theta)|\leq 2\la^{-3/4}\}
$$
is a non-degenerate interval, $\psi(J)=[-2\la^{-3/4},2\la^{-3/4}]$, and for all $\theta\in J$ we have 
$$
\psi'(\theta)>\frac{\la D_1}{\delta_2}.
$$
\end{lem}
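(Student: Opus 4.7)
The plan is to exploit the explicit formulas (\ref{B_eq2}) and the bounds (\ref{B_est1}) on $\vf_\ve$. The essential observation is that when $\psi(t)=\ve$, the denominator $h(t)$ in (\ref{B_eq2}) is the small quantity, while the factor $(s(t)-\ve)^2$ provides an enormous multiplicative gain of order $\la^{3/2}$; this is what produces the claimed derivative lower bound once we know $g'-\vf_\ve'$ is close to $g'$.

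First I would fix any $\ve\in [-2\la^{-3/4},2\la^{-3/4}]$ (in particular $|\ve|\ll\sqrt{\la}$) and observe that, by (\ref{B_est1}),
\begin{equation*}
|\vf_\ve'(\theta)|<\sqrt{h(\theta)/|s(\theta)|}\leq \sqrt{\delta_2}\,\la^{-3/8}\ll\sqrt{\la}\leq D_1.
\end{equation*}
Hence the function $G_\ve:=g-\vf_\ve$ satisfies $G_\ve'(\theta)\geq D_1-\vf_\ve'(\theta)\geq D_1/2>0$ on $I$, so $G_\ve$ is strictly increasing. Since $g(I)\supset [-\delta_2,\delta_2]$ and $|\vf_\ve|<h/\sqrt{\la}\leq \delta_2/\sqrt{\la}\ll\delta_2$ by (\ref{B_est1}), the function $G_\ve$ changes sign on $I$ and therefore has a unique zero $t_\ve\in I$. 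By (\ref{B_eq2}) this zero is precisely the (unique) point where $\psi(t_\ve)=\ve$, and at this point
\begin{equation*}
\psi'(t_\ve)=\frac{(s(t_\ve)-\ve)^2}{h(t_\ve)}(g'(t_\ve)-\vf_\ve'(t_\ve))\geq \frac{(\la^{3/4}-\sqrt{\la})^2}{\delta_2}\cdot\frac{D_1}{2}>\frac{\la D_1}{\delta_2},
\end{equation*}
the last inequality holding easily for large $\la$.

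Applying this with $\ve=-2\la^{-3/4}$ and $\ve=+2\la^{-3/4}$ produces two points $t_-,t_+\in I$ with $\psi(t_\pm)=\pm 2\la^{-3/4}$, and the derivative bound just proved shows $t_-<t_+$. To conclude $J=[t_-,t_+]$ and $\psi(J)=[-2\la^{-3/4},2\la^{-3/4}]$, I would argue that at every $\theta\in J$ the same derivative bound applies (since $\theta$ is a point where $\psi$ takes a value in $[-2\la^{-3/4},2\la^{-3/4}]$, so the computation above, with $\ve=\psi(\theta)$, gives $\psi'(\theta)>\la D_1/\delta_2$). Therefore $\psi$ is strictly increasing on $J$; uniqueness of the preimages $t_\pm$ forces $J$ to coincide with the interval $[t_-,t_+]$, and monotonicity gives $\psi(J)=[-2\la^{-3/4},2\la^{-3/4}]$.

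There is no real obstacle here: the only thing one needs to check with care is that the bound $|\vf_\ve'|<\sqrt{h/|s|}$ coming from (\ref{B_est1}) is indeed much smaller than $D_1\geq\sqrt{\la}$, which is immediate from $h\leq \delta_2\leq 1$ and $|s|\geq\la^{3/4}$. Everything else is an application of (\ref{B_eq2}), the intermediate value theorem for the strictly monotone function $G_\ve$, and the elementary estimate $(s-\ve)^2/h\geq \la^{3/2}/\delta_2$.
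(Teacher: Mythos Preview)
Your proof is correct and follows essentially the same approach as the paper: fix $\ve\in[-2\la^{-3/4},2\la^{-3/4}]$, use the bounds (\ref{B_est1}) on $\vf_\ve$ together with the assumptions on $g$ to obtain a unique $t$ with $g(t)=\vf_\ve(t)$, and then read off the derivative bound from formula (\ref{B_eq2}). The paper's proof is terse and leaves the details implicit; you have simply spelled them out.
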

\begin{rem}Note that if we would have the assumption $g'(\theta)\leq -D_1$ instead (this situation  we will indeed have when we apply the lemma in Section \ref{the_induction}), 
then we can apply the lemma
to $\tilde{\psi}(\theta)=-\psi(\theta)=(-s(\theta))-h(\theta)/(-g(\theta))$, and obtain $\psi'(\theta)<-(\la D_1)/\delta_2$. 

The same comment applies to all the lemmas below; we formulate them only in one case, for the sake of readability, and then the "mirror image case" is obtained by applying them to $-\psi$. 
\end{rem}

\begin{proof} Fix $\ve\in [-2\la^{-3/4},2\la^{-3/4}]$ and let $\vp_\ve(\theta)$ be defined as in (\ref{B_eq1}).
We have the estimates (\ref{B_est1}).
From the assumptions on $g$ it therefore follows that there must be a unique $t\in I$ such that
$\vp_\ve(t)=g(t)$ (and hence $\psi(t)=\ve$). Using formulae (\ref{B_eq2})  yields the estimates.

\end{proof}

\begin{lem}\label{L_B2} 
Assume that $s,h$ satisfy condition (\ref{B_cond1}), and that $g$ satisfies 
$$
\begin{cases}
g(a),g(b)\geq \delta_2 ; ~\text{and} \\ 
\min_{\theta\in I}\max\{|g'(\theta)|, g''(\theta)\}\geq D_1;
\end{cases}
$$ 
where $I=[a,b]$ and  $D_1\geq \sqrt{\la}$.
If we let $\psi(\theta)$ be defined as in (\ref{B_eq0}) and let $J$ be the set
$$
J=\{\theta\in I: |\psi(\theta)|\leq 2\la^{-3/4}\},
$$
then we have the estimates
$$
\min_{\theta\in J} \max\{|\psi'(\theta)|, \psi''(\theta)\}>\sqrt{\frac{D_1}{\delta_2}}. 
$$
Moreover, the set $J$ is either 
\begin{itemize}
\item[$(1)$] empty;  
\item[$(2)$] a single point; 
\item[$(3)$] a single interval $[a',b']$ and $\psi(a')=\psi(b')=2\la^{-3/4}$; or
\item[$(4)$] consists of two intervals, $J^1, J^2$, and $\psi(J^i)=[-2\la^{-3/4},2\la^{-3/4}]$ ~$(i=1,2)$, and
 $\psi'(\theta)>0$ on $J^1$, $\psi'(\theta)<0$ on $J^2$.
\end{itemize}
\end{lem}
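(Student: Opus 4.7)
The plan is to exploit the convexity of $g$ guaranteed by the hypothesis, together with the relations at level sets of $\psi$ encoded in (\ref{B_eq2}) and (\ref{B_eq3}). First I would analyze the structure of $g$: the condition $\min_{\theta \in I}\max\{|g'|, g''\} \geq D_1$ forces $g'' \geq D_1 > 0$ wherever $|g'| < D_1$, so $g'$ is strictly increasing in a neighborhood of any point where it is small. Consequently $g$ has at most one critical point in $I$, and if present it is a non-degenerate minimum $c$ with $g''(c) \geq D_1$. Since $|s| \geq \lambda^{3/4}$ and $s$ is continuous, $s$ has constant sign on $I$; I would assume WLOG $s > 0$ (the case $s < 0$ either leads trivially to $J = \emptyset$ via $\psi \leq s < -\lambda^{3/4}$ on the set where $g>0$, or is handled by a symmetric argument).

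For the structural classification of $J$, since $g(a), g(b) \geq \delta_2$ gives $h/g \leq 1$ at the endpoints, one has $|\psi(a)|, |\psi(b)| \geq \lambda^{3/4} - 1 \gg 2\lambda^{-3/4}$, so $a, b \notin J$; if $g \geq \delta_2$ throughout $I$ the same bound yields $J = \emptyset$, which is case $(1)$. Otherwise, for each fixed $\varepsilon \in [-2\lambda^{-3/4}, 2\lambda^{-3/4}]$ I would rewrite $\psi(t) = \varepsilon$ as $F_\varepsilon(t) := g(t) - \varphi_\varepsilon(t) = 0$. The estimates (\ref{B_est1}) give $|\varphi'_\varepsilon|, |\varphi''_\varepsilon| \ll D_1$, so $F_\varepsilon$ satisfies $F''_\varepsilon \geq D_1/2$ wherever $|F'_\varepsilon| < D_1/2$; hence $F_\varepsilon$ has at most one critical point in $I$ and therefore at most two zeros. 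This means $|\psi^{-1}(\varepsilon)| \leq 2$ for every such $\varepsilon$, which together with $a, b \notin J$ and the observation that $g(t) > 0$ at every $t \in J$ (since $h/g = s - \psi > 0$) forces $J$ into exactly one of the four described forms, with the sign pattern of $\psi'$ in case $(4)$ dictated by the monotonicity of $\psi$ on the two components.

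For the derivative estimate, fix $t \in J$ with $\psi(t) = \varepsilon$. The identity $g(t) = h(t)/(s(t) - \varepsilon)$ yields the key relation $(s - \varepsilon)^2/h(t) = (s - \varepsilon)/g(t) \geq \lambda^{3/2}/(4\delta_2)$, since $g(t) \leq 2\delta_2/\lambda^{3/4}$. If $|g'(t)| \geq D_1$, formula (\ref{B_eq2}) immediately gives $|\psi'(t)| \geq \lambda^{3/2} D_1/(8\delta_2) \gg \sqrt{D_1/\delta_2}$. Otherwise the hypothesis forces $g''(t) \geq D_1$; and if in addition $|\psi'(t)| \leq \sqrt{D_1/\delta_2}$, I would estimate $\psi''(t)$ directly from
\[
\psi''(t) = s'' - h''/g + 2h'g'/g^2 + (s-\varepsilon)g''/g - 2(s-\varepsilon)g'^2/g^2.
\]
The dominant term is $(s - \varepsilon)g''/g \geq \lambda^{3/2} D_1/(4\delta_2)$. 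The crucial observation is that, using the identity $g'(t) - \varphi'_\varepsilon(t) = \psi'(t)\, h(t)/(s - \varepsilon)^2$ from (\ref{B_eq2}), the potentially dangerous term $-2(s-\varepsilon)g'^2/g^2$ reduces, modulo $\varphi'_\varepsilon$-corrections that are negligible by (\ref{B_est1}), to $-2\psi'(t)^2/(s - \varepsilon)$, which under the assumption $|\psi'(t)| \leq \sqrt{D_1/\delta_2}$ has size at most $\sim D_1/(\delta_2 \lambda^{3/4})$---vastly smaller than the main term. The remaining terms ($s''$, $h''/g$, $2h'g'/g^2$) are controlled similarly using the hypotheses on $s$ and $h$ together with $(s-\varepsilon)/h = 1/g$ at $t$, yielding $\psi''(t) \geq \lambda^{3/2} D_1/(8\delta_2)$.

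The main obstacle is the delicate bookkeeping in this last step: a naive term-by-term bound on $2h'g'/g^2$ or $-2(s-\varepsilon)g'^2/g^2$ yields quantities of order $D_1/\sqrt{h}$ or $\lambda D_1^2/h^2$ that threaten the estimate precisely when $h(t)$ is small---and $h(t) = g(t)(s - \varepsilon)$ is indeed small at $t \in J$ since $g(t)$ is small there. Only by re-expressing these through the identity $g'(t) - \varphi'_\varepsilon(t) = \psi'(t)\, h(t)/(s-\varepsilon)^2$, which exposes the hidden factors of $\psi'(t)$, do the cancellations become visible and the bounds turn out to be uniform.
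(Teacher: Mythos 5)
Your argument is correct, and it reaches the same conclusion as the paper's proof, but the case split and the treatment of the second-derivative bound are genuinely different and worth comparing. For the structural classification you argue that each level set $\{\psi = \varepsilon\}$ has at most two points because $F_\varepsilon = g - \varphi_\varepsilon$ inherits the convexity property of $g$; the paper instead exploits the monotonicity $\varepsilon \mapsto \varphi_\varepsilon(\theta)$ (so the level sets $\{g = \varphi_\varepsilon\}$ sweep monotonically as $\varepsilon$ varies), which makes the four cases fall out a bit more mechanically, but both routes work. For the derivative bound the paper splits on whether $|g'(t)| \geq \tfrac{1}{10}\sqrt{D_1 h(t)/s(t)}$, a threshold calibrated so that in the first case formula (\ref{B_eq2}) immediately gives $|\psi'|$ large, and in the complementary case the cross term $2\bigl(\tfrac{s'}{s-\varepsilon} - \tfrac{s-\varepsilon}{h}g'\bigr)(g' - \varphi'_\varepsilon)$ in (\ref{B_eq3}) is a subunit multiple of $g''$, so the leading $g''$-term wins. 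You split more crudely on $|g'(t)| \geq D_1$, and in the subcase $|g'(t)| < D_1$ you further condition on $|\psi'(t)| \leq \sqrt{D_1/\delta_2}$ (otherwise there is nothing to prove). That extra conditioning is precisely what makes the raw expansion of $\psi''$ tractable: solving $g' = \varphi'_\varepsilon + \psi'\,h/(s-\varepsilon)^2$ and substituting into the quadratic-in-$g'$ terms $-2(s-\varepsilon)g'^2/g^2$ and $2h'g'/g^2$ converts them into quantities carrying a factor of $\psi'$ (plus $\varphi'_\varepsilon$-remainders suppressed by (\ref{B_est1})), and these are then dwarfed by the $(s-\varepsilon)g''/g \geq (s-\varepsilon)^2 D_1/h$ term. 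Both routes deliver the bound; the paper's is shorter because the threshold is tuned so no secondary conditioning on $\psi'$ is needed, while yours makes more explicit \emph{why} the cancellations occur, namely that $|g'(t)|$ is forced to be small whenever $|\psi'(t)|$ is small. One small remark: you should note, as you implicitly do via the explicit positivity of $(s-\varepsilon)^2/h$ and $g''$, that the sign of $\psi''$ comes out positive regardless of the sign of $s$, which is needed since the lemma asserts a lower bound on $\psi''$ itself, not $|\psi''|$.
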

\begin{proof}Set $\ve_0=2\la^{-3/4}$. We shall focus on $\ve\in [-\ve_0,\ve_0]$, and we shall by $\vp_\ve$ denote the function defined in (\ref{B_eq1}). 
We first notice that we have the following monotonicity property (since $h(\theta)>0$ on $I$): if $\ve,\ve'\in [-\ve_0,\ve_0]$ and $\ve>\ve'$, then
$\vp_\ve(\theta)>\vp_{\ve'}(\theta)$ for all $\theta\in I$.

From the assumptions on $g$ and the estimates in (\ref{B_est1}) we have
$$\min_{\theta\in I}\max\{|g'(\theta)-\vp_\ve'(\theta)|, g''(\theta)-\vp_\ve''(\theta)\}> D_1/2\geq \sqrt{\la}/2.$$
and $g(a)-\vp_\ve(a), g(b)-\vp_\ve(b) >\delta_2/2$ for all $\ve\in [-\ve_0,\ve_0]$.
Thus, the equation $g(\theta)=\vp_\ve(\theta)$ can have $0, 1$ or $2$ solutions.

By using the monotonicity property we thus get the following: if $g(\theta)>\vp_{\ve_0}(\theta)$ for all $\theta\in I$, then $J=\emptyset$;
if $g(t)=\vp_{\ve_0}(t)$ for some $t\in I$, and $g(\theta)>\vp_{\ve}(\theta)$ for all $\theta\in I$ and all $\ve<\ve_0$ then $J$ consists of one single point;
if $g(t)<\vp_{\ve_0}(t)$ for some $t\in I$, and $g(\theta)\geq\vp_{-\ve_0}(\theta)$ for all $\theta$, then $J$ consists of one interval;
if $g(t)<\vp_{-\ve_0}(t)$ for some $t\in I$, then $J$ consists two intervals.

Now we derive the estimates on the derivatives. If there is a $t\in I$ such that $\vp_\ve(t)=g(t)$ (for some $\ve\in [-\ve_0,\ve_0]$), then, by using formulae
(\ref{B_eq2}) and (\ref{B_eq3}), together with the estimates on $\vp_\ve$ in (\ref{B_est1}), we get the following:

(1) if $|g'(t)|\geq \frac{1}{10}\sqrt{D_1h(t)/s(t)}$, then $|\psi'(t)|>\sqrt{D_1}/\sqrt{\delta_2}$, and $\psi'(t)$ has the same sign
as $g'(t)$; 

(2) if $|g'(t)|< \frac{1}{10}\sqrt{D_1h(t)/s(t)} (\ll D_1)$, we must, by our assumptions on $g$, have $g''(t)\geq D_1$, which implies $\psi''(t)>(\la D_1)/\delta_2$. 
This gives us the lower bounds.

\end{proof}


The subsequent two lemmas are to be used in the the resonant cases in the inductive construction. The first of them is adapted for the geometric part of 
the base case (Lemma \ref{BC4}).
\begin{lem}\label{L_B5}
Assume that $g, h$ and $s$ satisfy:
$$\|g\|_{C^2(I)},\|s\|_{C^2(I)} <\co\la;$$ 
$$
\begin{aligned}
s(I)&\supset [-\la^{3/4},\la^{3/4}],  \text{ and } ~s'(\theta)>\co \la^{7/8} \text{ if } s(\theta)\in [-\la^{3/4},\la^{3/4}]; \\
g(I)&\supset [-\la^{3/4},\la^{3/4}], \text{ and } ~g'(\theta)<-\co \la^{7/8} \text{ if } g(\theta)\in [-\la^{3/4},\la^{3/4}]; \text{ and}
\end{aligned}
$$
$$
0<\delta\leq h(\theta)\leq 1, ~|h'(\theta)|\leq \sqrt{h(\theta)}, \text{ and } |h''(\theta)|<1.
$$
Let $\psi$ be defined as in (\ref{B_eq0}). Then there exists two (non-empty) intervals $I^1,I^2\subset I$ such that 
either we have
\begin{enumerate} 
\item $|\psi'(\theta)|>\sqrt{\la}$ on $I^1\cup I^2$, $\psi'$ having opposite signs on $I^1$ and $I^2$, 
$$
\psi(I^i)=[-\sqrt{\la},\sqrt{\la}] ~(i=1,2),
$$
and $|\psi(\theta)|>\sqrt{\la}$ if $\theta\notin I^1\cup I^2$ ; \ or

\item writing $I^i=[a_i,b_i]$ ($i=1,2$) we have 
$$
\psi(a_1)=\psi(b_1)=-\la^{2/3} \text{ and } \psi(a_2)=\psi(b_2)=\la^{2/3};
$$
$$
\min_{\theta\in I^1}\max\{|\psi'(\theta)|,-\psi''(\theta)\}> \sqrt{\la} ~\text{ and }~ \min_{\theta\in I^2}\max\{|\psi'(\theta)|,\psi''(\theta)\}>\sqrt{\la};
$$
$$
\min_{\theta\in I^2}\psi(\theta)-\max_{\theta\in I^1}\psi(\theta)>\co\frac{\sqrt{\delta}}{\la^{1/4}};
$$
and $|\psi(\theta)|>\la^{2/3}$ if $\theta\notin I^1\cup I^2$.
\end{enumerate}
\end{lem}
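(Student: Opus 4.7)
The plan is to analyse the shape of $\psi=s-h/g$ near the unique zero $\theta_g\in I$ of $g$; its existence and uniqueness follow from $g(I)\supset[-\la^{3/4},\la^{3/4}]$ together with $g'<-\co \la^{7/8}$ on the band where $|g|\leq\la^{3/4}$. Outside a small neighbourhood of $\theta_g$ the denominator satisfies $|g|\gg\sqrt{h}$, so $|h/g|$ is tiny and $\psi\approx s$; in particular $\psi$ inherits from $s$ a monotonic crossing of $[-\la^{3/4},\la^{3/4}]$ with slope $\geq\co \la^{7/8}$, and $\psi(a),\psi(b)$ are essentially $s(a),s(b)$. All non-trivial behaviour is therefore concentrated in a region of width $\leq\co \la^{-1/8}$ around $\theta_g$.

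Next I would locate the critical points of $\psi$ in that neighbourhood by solving $\psi'=s'-h'/g+hg'/g^2=0$. The dominant balance $s'\approx -hg'/g^2$ shows that $\psi'$ has exactly two zeros: a local maximum $\theta^-$ to the left of $\theta_g$ and a local minimum $\theta^+$ to its right, both satisfying $g(\theta^\pm)^2\approx h(\theta^\pm) s'(\theta^\pm)/|g'(\theta^\pm)|$. Setting $M^-=\psi(\theta^-)$, $m^+=\psi(\theta^+)$, direct computation gives $M^-\approx s(\theta^-)-\sqrt{hs'/|g'|}$ and $m^+\approx s(\theta^+)+\sqrt{hs'/|g'|}$. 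Two quantitative estimates are then crucial: the lower bound
\[
m^+-M^-\geq 2\sqrt{hs'/|g'|}\geq\co\sqrt{\delta}/\la^{1/4},
\]
coming from $h\geq\delta$, $s'\geq\co \la^{7/8}$ and $|g'|\leq\co \la$ (this supplies the gap in case (2)), and the complementary upper bound $m^+-M^-\leq\co\sqrt{h}\,\la^{3/16}\ll\la^{2/3}$, which shows that $M^-$ and $m^+$ are always close on the $\la^{2/3}$ scale.

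The proof then splits into three exhaustive cases according to where $M^-$ and $m^+$ sit relative to $\pm\la^{2/3}$, using $m^+>M^-$ to cover all possibilities. If $M^-\leq-\la^{2/3}$, the upper bound forces $m^+<-\sqrt{\la}$; the left branch then has $\psi\leq M^-<-\sqrt{\la}$ and contributes nothing, while on the right branch $\psi$ descends from $+\infty$ through the band to $m^+<-\sqrt{\la}$ and then re-ascends monotonically (slope $\approx s'$) out to $\psi(b)\approx s(b)\gtrsim\la^{3/4}$, yielding the two monotonic crossings of opposite sign required by case (1). If $m^+\geq\la^{2/3}$ the picture is symmetric, both crossings lying on the left branch. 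Otherwise $M^->-\la^{2/3}$ and $m^+<\la^{2/3}$, so both critical values lie in $(-\la^{2/3},\la^{2/3})$; the set $\{|\psi|\leq\la^{2/3}\}$ then consists of an upward bend $I^1$ on the left of $\theta_g$ (containing $\theta^-$, with $\psi=-\la^{2/3}$ at its endpoints) and a downward bend $I^2$ on the right (containing $\theta^+$, with $\psi=\la^{2/3}$ at its endpoints), matching case (2); the required gap $m^+-M^->\co\sqrt{\delta}/\la^{1/4}$ was already proved.

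The main obstacle will be verifying the derivative and concavity bounds $\max\{|\psi'|,\mp\psi''\}>\sqrt{\la}$ on the bends $I^1,I^2$ in case (2). To handle this I would use formulae (\ref{B_eq2}) and (\ref{B_eq3}) with $\varepsilon=M^-$ at $\theta=\theta^-$: the critical-point condition $g(\theta^-)=\varphi_{M^-}(\theta^-)$ makes (\ref{B_eq2}) vanish, while (\ref{B_eq3}) collapses to $\psi''(\theta^-)=(s-M^-)^2 h^{-1}(g''-\varphi''_{M^-})$; a careful computation shows $\varphi''_{M^-}(\theta^-)\sim hs'^2/(s-M^-)^3$ dominates $g''$ and yields $\psi''(\theta^-)\lesssim -\la^2/\sqrt{h}\ll-\sqrt{\la}$. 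Away from $\theta^-$ inside $I^1$ the slope $|\psi'|>\sqrt{\la}$ follows from (\ref{B_eq2}) via the imbalance $|g'-\varphi'_{M^-}|$; the analogous estimates with opposite concavity handle $I^2$. The conclusion $|\psi|>\la^{2/3}$ outside $I^1\cup I^2$ is immediate from the monotonic $s$-driven growth established in the first paragraph, since outside the tiny bend regions $\psi$ is pushed either toward $\pm\infty$ near $\theta_g$ or back toward $\psi(a),\psi(b)$, whose absolute values are at least $\la^{3/4}\gg\la^{2/3}$.
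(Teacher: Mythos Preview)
Your overall architecture is sound and close to the paper's: isolate the pole of $h/g$ at the unique zero $p$ of $g$, note that $\psi\approx s$ away from it, and then split into a ``transversal'' case yielding (1) and a ``tangential'' case yielding (2), with the gap estimate coming from $h\geq\delta$. The paper does exactly this, but with one important difference: it splits on the directly accessible quantity $|s(p)|$ (threshold $4\sqrt{\la}$), whereas you split on the extremal values $M^-,m^+$ (threshold $\la^{2/3}$). In the overlap region both choices work, so this is largely a matter of bookkeeping.

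There is, however, a genuine gap in your argument. Your two ``quantitative estimates'' --- the claim that $\psi'$ has \emph{exactly} two zeros, located at $|g|\approx\sqrt{h|g'|/s'}$, and the upper bound $m^+-M^-\leq\co\sqrt{h}\,\la^{3/16}$ --- both tacitly assume $s'>\co\la^{7/8}$ near $p$, i.e.\ that $p$ lies in the band $|s|\le\la^{3/4}$. The hypotheses of the lemma do not force this: one can have $|s(p)|>\la^{3/4}$, and then $s'$ near $p$ is only known to satisfy $|s'|<\co\la$, possibly with the wrong sign. In that regime the dominant-balance location of $\theta^{\pm}$ and the upper bound on $m^+-M^-$ both break down, and cases A and B as you argue them (which invoke the upper bound to force $m^+<-\sqrt{\la}$ or $M^->\sqrt{\la}$) are no longer justified. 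The paper avoids this entirely: in its case $|s(p)|\geq 4\sqrt{\la}$ it never locates critical points, but argues directly that one branch stays outside $[-\sqrt{\la},\sqrt{\la}]$ (using $w$ has a definite sign there and $s\ge 4\sqrt{\la}$ monotonically past $p$), and finds both crossings on the other branch by tracking $\psi\approx s$ outside a small set $L=\{|g|\le\la^{1/8}\}$ and the blow-up inside. Your ``Otherwise'' case, by contrast, is essentially the paper's case $|s(p)|<4\sqrt{\la}$: there $p$ \emph{is} in the band, $s'>\co\la^{7/8}$ near $p$, and your critical-point and gap computations are valid and match the paper's treatment.

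So the fix is either to replace your split variable by $|s(p)|$ (as the paper does), or to handle the regime $|s(p)|\ge\la^{3/4}$ separately by the direct sign-of-$w$ argument, without appealing to the upper bound on $m^+-M^-$.
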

\begin{figure}
\psfrag{t}{$\theta$}
\psfrag{p}{$p$}
\psfrag{psi}{$\psi$}
\includegraphics[width=5cm]{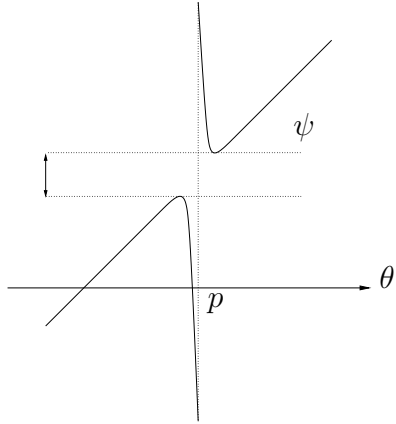}
\caption{The graph $y=\psi(\theta)$ in Lemmas \ref{L_B5} and \ref{L_B6}}\label{fig3}
\end{figure}

\begin{proof}
This is just an easy computation, but we include the details for completness. We let 
$$
L=\{\tht\in I: |g(\tht)|\leq \la^{1/8}\}.
$$
From the assumptions on $g$ it follows that $L$ is an interval, which lies in the interior of $I$, and
$$
|L|<\co \la^{-3/4}. 
$$
We shall write $I=[a,b]$ and $L=[a',b']$.

Let $p\in I$ be the (unique) zero of $g$, i.e., $g(p)=0$. We also let $w(\theta)=-h(\theta)/g(\theta)$, so we can write the function $\psi$ as
$$
\psi(\theta)=s(\theta)+w(\theta).
$$ 
From the assumptions on $h$ and $g$ it immediately follows that $w(\theta)>0$ for $\theta>p$, $w(\theta)<0$ for $\theta<p$, and
$w(\theta)\to -\infty$ as $\theta\to p^-$ and $w(\theta)\to +\infty$ as $\theta\to p^+$.

Easy calculations, using the assumptions on $g$ and $h$, show that we have the estimates
\begin{equation}\label{B4_eq1}
|w(\tht)|\leq \la^{-1/8}, |w'(\tht)|<\co \la^{3/4}  \text{ and } |w''(\tht)|<\co\la^{13/8} \text{ for all } \theta\in I\setminus L.
\end{equation}

We first handle the situation when $|s(p)|\geq 4\sqrt{\la}$. The plan is to show that (1) holds. 
We take the case when $s(p)\geq 4\sqrt{\la}$, the other case being symmetric. 
In this case we have $s(\theta)>3\sqrt{\la}$ for all $\theta\in L$,
which follows from the assumption on $s'(\theta)$ and the length estimate on $L$. Since $w(\theta)>0$ for $\theta>p$, we know that
$\psi(\theta)>s(\tht)\geq 4\sqrt{\la}$ for all $\theta>p$. The last inequality follows from the assumptions on $s$.
Thus, we need only to concentrate on the interval $[a,p)$. From the assumptions on $s$, and the estimates (\ref{B4_eq1}) on $w$ outside $L$, it follows 
that there exists an interval $I^1\subset [a,a']$ such that
$
\psi(I^1)=[-\sqrt{\la},\sqrt{\la}]
$  
and $\sqrt{\la}<\psi'(\theta)$ on $I^1$,
and $\psi(\tht)\notin[-\sqrt{\la},\sqrt{\la}]$ 
for all $\theta\in [a,a']\setminus I^1$. 

It remains to check what happens in the interval $[a',p)\subset L$. We know (from the above estimates) that $\psi([a',p))\supset (-\infty,2\sqrt{\la}]$. 
If $t\in [a',p)$ is such that $\psi(t)\in [-\sqrt{\la},\sqrt{\la}]$, then we must have $w(t)\in [-\co\la,-2\sqrt{\la}]$, since $3\sqrt{\la}<s(\theta)<\co \la$ on $L$.
An easy computation shows  (recalling that $w(t)=-h(t)/g(t)$) that we then have 
$$w'(t) < -\co \la^{1+7/8}\ll -\la.$$
From this it follows that there exists an interval $I^2$ such that $\psi(I^2)=[-\sqrt{\la},\sqrt{\la}]$, 
$\psi'(\theta)<-\sqrt{\la}$ on $I^2$ and $\psi(\theta)\notin [-\sqrt{\la},\sqrt{\la}]$ if $\theta\in [a',p)\setminus I^2$. 
We thus know that (1) holds.

\bigskip
Next we treat the situation when $|s(p)|<4\sqrt{\la}$. We shall prove that in this case statement (2) holds. We first notice (exactly as we did above) that 
we have
$|s(\theta)|<5\sqrt{\la}$ for all $\theta\in L$. We also recall that $\psi(\theta)\to-\infty$ as $\theta\to p^-$. 
Since $|w(\theta)|\leq \la^{-1/8}$ outside $L$, we have $|\psi(a')|<6\sqrt{\la}$ and (from the assumptions on $s$ ) $\psi(a)<-\la^{2/3}$.
We conclude that there must be an interval $I^1=[a_1,b_1]\subset [a,p)$ such that $\psi(a_1)=\psi(b_1)=-\la^{2/3}$ and $\psi(\theta)>-\la^{-2/3}$ for all 
$\theta\in(a_1,b_1)$. Moreover, there must be a turning point $q_1\in I^1$ such that $\psi'(q_1)=0$. By assumption we have 
$s'(\theta)>c_1\la^{7/8}$ for all
$\theta\in I$ such that $s(\theta)\in [-\la^{3/4},\la^{3/4}]$. Since $|s(p)|<4\sqrt{\la}$, we conclude that $s(\theta)\in [-\la^{3/4},\la^{3/4}]$
for all $\theta\in [a_1,p]$, and thus we have $s'(\theta)>\co\la^{7/8}$ for all $\theta\in [a_1,p]$. 
Moreover, we have $\psi'(a_1)>\co\la^{7/8}$ (since $w'(\theta)$ is small outside $L$). 

Assume that $t\in [a_1,p)$ is a point where $|\psi'(t)|\leq \sqrt{\la}$. Then, since $s'(t)>\co\la^{7/8}$, we must have $w'(t)<-\co\la^{7/8}$. 
For this to hold we must have
$0<g(t)<\co\sqrt{h(t)}\la^{1/16}$ (since otherwise $|w'(t)|<\co \la^{7/8}$, which an easy computation shows). 
Using this one sees that $w''(t)<-\co \la^{25/16}$. Hence we have
$$
\max\{|\psi'(\theta)|,-\psi''(\theta)\}> \sqrt{\la} \text{ for all } \theta\in I_1.
$$
This analysis also shows that $\psi(\theta)<-\la^{2/3}$ for all $\theta\in [a,p)\setminus I_1$.

Analogously one shows that there exists an interval $I^2=[a_2,b_2]\subset (p,b]$ such that 
$\psi(a_2)=\psi(b_2)=\la^{2/3}$, $\psi(\theta)>\la^{-2/3}$ for all $\theta\in (p,b]\setminus I^2$ and
$$
\max\{|\psi'(\theta)|,\psi''(\theta)\}> \sqrt{\la} \text{ for all } \theta\in I_2.
$$

Let $q_2\in I^2$ be the unique turning point (i.e., $\psi'(q_2)=0$). Then $$\min_{\theta\in I^2}\psi(\theta)-\max_{\theta\in I^1}\psi(\theta)=\psi(q_2)-\psi(q_1).$$
Since $w(q_1)<0$ and $w(q_2)>0$, we have 
$$\psi(q_2)-\psi(q_1)=s(q_2)+w(q_2)-(s(q_1)+w(q_1))>s(q_2)-s(q_1)>\co \la^{7/8}(q_2-q_1).$$ We must have $|g(q_i)|>\sqrt{\delta}\la^{-1/8}$, since otherwise
we would have $|w'(q_i)|>\co\la^{9/8}$ and thus we would not have $\psi'(q_i)=0$ ($i=1,2$). This implies that $q_2-q_1>\co \sqrt{\delta}\la^{-9/8}$. Consequently, 
the estimate in (2) holds.

\end{proof}

The last lemma in this appendix is an analogue of the previous one. It will be used in the inductive step.

\begin{lem}\label{L_B6}
Assume that the $C^2$-functions $g, h$ and $s$ are defined on an interval $I$ and that they satisfy the following: there are constants $D>d>\la$ such that
$\|g\|_{C^2(I)},\|s\|_{C^2(I)} <D$, 
$$
\begin{aligned}
s(I)&\supset [-\la^{-1},\la^{-1}],  \text{ and } ~s'(\theta)>d\ \text{ if } s(\theta)\in [-\la^{-1},\la^{-1}]; \\
g(I)&\supset [-\la^{-1},\la^{-1}], \text{ and } ~g'(\theta)<-d \text{ if } g(\theta)\in [-\la^{-1},\la^{-1}]; \text{and}
\end{aligned}
$$
$$
0<\delta\leq h(\theta)<\frac{1}{D^4}, ~ |h'(\theta)|\leq \sqrt{h(\theta)}, |h''(\theta)|<1.
$$
Let $\psi$ be defined as in (\ref{B_eq0}). Then there exists two (non-empty) intervals $I^1,I^2\subset I$ such that 
either we have
\begin{enumerate} 
\item $|\psi'(\theta)|>d/2$ on $I^1\cup I^2$, $\psi'$ having opposite signs on $I^1$ and $I^2$, 
$$
\psi(I^i)=[-1/(3\la),1/(3\la)] ~(i=1,2),
$$
and $|\psi(\theta)|>1/(3\la)$ if $\theta\notin I^1\cup I^2$ ; \ or

\item writing $I^i=[a_i,b_i]$ ($i=1,2$) we have $\psi(a_1)=\psi(b_1)=-1/(2\la)$ and $\psi(a_2)=\psi(b_2)=1/(2\la)$,
$$
\min_{\theta\in I^1}\max\{|\psi'(\theta)|,-\psi''(\theta)\}> d/2 ~\text{ and }~ \min_{\theta\in I^2}\max\{|\psi'(\theta)|,\psi''(\theta)\}>d/2;
$$
$$
\min_{\theta\in I^2}\psi(\theta)-\max_{\theta\in I^1}\psi(\theta)>d\frac{\sqrt{\delta}}{D^{3/2}};
$$
and $|\psi(\theta)|>1/(2\la)$ if $\theta\notin I^1\cup I^2$.
\end{enumerate}
\end{lem}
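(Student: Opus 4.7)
My plan is to mimic the structure of the proof of Lemma~\ref{L_B5} step by step, replacing the scale pair $(\la^{7/8}, \la^{3/4})$ by $(d, 1/\la)$ throughout and absorbing the new parameters $\delta, D$ in the obvious places. I would first write $\psi(\theta) = s(\theta) + w(\theta)$ with $w = -h/g$, denote by $p \in I$ the unique zero of $g$, and introduce the critical interval
\[L = \{\theta \in I : |g(\theta)| \leq 2/\sqrt{dD^3}\}.\]
Using the hypotheses $\sqrt{h} \leq 1/D^2$, $h \leq 1/D^4$ together with $d, D > \la$, a direct computation gives $|L| \leq 4/(d^{3/2} D^{3/2})$, $|s - s(p)| < 1/\la$ on $L$, and the estimates $|w| < 1/D^2$, $|w'| < d/2$, $|w''| < D$ on $I \setminus L$; inside $L$, the function $w$ runs monotonically from $0^-$ down to $-\infty$ as $\theta \to p^-$ and, symmetrically, from $+\infty$ down to $0^+$ as $\theta$ moves from $p^+$ to the right boundary of $L$. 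In particular $\psi' \approx s'$ outside $L$ in the region $\{|s| \leq 1/\la\}$.

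With these preparatory estimates in hand, the case split will be exactly that of Lemma~\ref{L_B5}, governed by the size of $s(p)$. If $|s(p)| \geq 2/\la$ (I treat $s(p) > 0$, the other sign being symmetric), then $\psi > 1/(3\la)$ on all of $[p, b]$ and on the portion of $L \cap [a, p)$ where $w$ has not yet become sufficiently negative; the monotone swing of $\psi$ from a value $\approx s(p) \geq 1/\la$ at the left boundary of $L$ down to $-\infty$ at $p^-$ will then produce exactly one interval $I^2 \subset L \cap [a, p)$ on which $\psi^{-1}([-1/(3\la), 1/(3\la)])$ lies, with $\psi' < -d/2$ (since $|w'| \gg d$ dominates $s'$ there), while $\psi \approx s$ outside $L$ on the left yields a second interval $I^1$ with $\psi' > d/2$. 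This will give conclusion~(1). If instead $|s(p)| < 2/\la$, one obtains the parabola-like configuration of conclusion~(2): there will be exactly two turning points $q_1 \in [a, p)$ and $q_2 \in (p, b]$ at which $\psi' = 0$, and the intervals $I^1, I^2$ will be the connected components of $\{\psi \leq -1/(2\la)\}$, resp.\ $\{\psi \geq 1/(2\la)\}$, containing them; the derivative bounds $\max\{|\psi'|, \mp\psi''\} > d/2$ follow as in Lemma~\ref{L_B5}, since near $q_i$ the condition that $|\psi'|$ be small forces the $w''$ term to dominate with the correct sign.

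The main obstacle will be the gap estimate $\min_{I^2}\psi - \max_{I^1}\psi > d\sqrt{\delta}/D^{3/2}$. Since $q_i \in L$ and $\psi'(q_i) = 0$ force $|w'(q_i)| = s'(q_i) \leq D$, and since the explicit expression $w' = -h'/g + hg'/g^2$ together with $h(q_i) \geq \delta$, $|g'(q_i)| \geq d$, $|h'(q_i)| \leq 1/D^2$ gives
\[\frac{\delta d}{g(q_i)^2} - \frac{1}{D^2|g(q_i)|} \leq |w'(q_i)| \leq D,\]
a comparison of the two error terms will force $|g(q_i)|^2 \geq \delta d/(2D)$, i.e.\ $|g(q_i)| \geq \sqrt{\delta d/(2D)}$. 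Combining with the Lipschitz bound $|g(q_i)| \leq D|q_i - p|$ yields $|q_i - p| \geq \sqrt{\delta d}/(\sqrt{2}D^{3/2})$, so $q_2 - q_1 \geq \sqrt{2\delta d}/D^{3/2}$. Finally, since $w(q_1) < 0 < w(q_2)$ and $|s| \leq 1/\la$ throughout $[q_1, q_2]$ (so $s' > d$), one obtains $\psi(q_2) - \psi(q_1) > s(q_2) - s(q_1) \geq d(q_2 - q_1) \geq d\sqrt{2\delta d}/D^{3/2}$, which majorizes $d\sqrt{\delta}/D^{3/2}$ as $d > \la \geq 1$, completing the proof.
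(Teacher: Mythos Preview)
Your approach is essentially the paper's: write $\psi=s+w$ with $w=-h/g$, isolate a tiny interval $L$ around the zero $p$ of $g$, show $|w'|$ is small outside $L$, and split on the size of $|s(p)|$. The paper uses the variable threshold $L=\{|g|\le \sqrt{|g'h|}\}$ and splits at $|s(p)|=0.4/\la$, you use the fixed threshold $|g|\le 2/\sqrt{dD^3}$ and split at $|s(p)|=2/\la$; both choices work and the rest of the argument is the same. Two small points:

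\textbf{(i)} Your claim $|w''|<D$ on $I\setminus L$ is false: the term $2h(g')^2/|g|^3$ can be as large as $(1/D^4)\cdot D^2\cdot (dD^3)^{3/2}/8 = d^{3/2}D^{5/2}/8\gg D$. But you never use this estimate (outside $L$ you only need $|w'|<d/2$, which is correct), so this is harmless.

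\textbf{(ii)} In the gap estimate there is a genuine slip. By bounding $|h'(q_i)|\le 1/D^2$ rather than $|h'(q_i)|\le\sqrt{h(q_i)}$, your displayed inequality
\[
\frac{\delta d}{g(q_i)^2}-\frac{1}{D^2|g(q_i)|}\le D
\]
only yields $|g(q_i)|\ge x_0$ where $x_0\approx\min\{\sqrt{\delta d/D},\,\delta d D^2\}$. When $\delta dD^5<1$ (which is exactly the regime of the application, since there $\delta=\la^{-4\nu_{n+1}}$ with $\nu_{n+1}\ge 2K_n^2$ while $dD^5\le \la^{36K_n}$) the second alternative wins and $x_0\ll\sqrt{\delta d/(2D)}$, so your ``comparison of the two error terms'' does not close. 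The fix is immediate: keep the sharper bound $|h'|\le\sqrt{h}$, so that
\[
\frac{h(q_i)d}{g(q_i)^2}-\frac{\sqrt{h(q_i)}}{|g(q_i)|}\le |w'(q_i)|\le D.
\]
Setting $u=\sqrt{h(q_i)}/|g(q_i)|$ this reads $du^2-u\le D$, whence $u\le (1+\sqrt{1+4dD})/(2d)<2\sqrt{D/d}$, i.e.\ $|g(q_i)|>\tfrac12\sqrt{h(q_i)d/D}\ge\tfrac12\sqrt{\delta d/D}$. This is the paper's bound (the paper states the slightly weaker $|g(q_i)|>\sqrt{\delta/D}$), and from here your Lipschitz step $|q_i-p|\ge |g(q_i)|/D$ and the final chain $\psi(q_2)-\psi(q_1)>s(q_2)-s(q_1)\ge d(q_2-q_1)$ give the required gap $>d\sqrt{\delta}/D^{3/2}$.
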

\begin{proof}The proof is similar to the previous one. We write $I=[a,b]$. Furthermore, we let $w=-h/g$, so that we can write $$\psi=s+w.$$
By using the assumptions on $g$ and $h$, it follows that the (tiny) set
$$
L=\{\theta\in I: |g(\theta)|\leq \sqrt{|g'(\theta)h(\theta)|}\}
$$
is an interval. We write $L=[a',b']$. From the estimates on $g'$ and $h$ we have $|g(\theta)|\leq \sqrt{|g'(\theta)h(\theta)|}<D^{-3/2}\ll \la^{-1}$ on $L$.
Thus $g'(\theta)<-d$ on $L$, and we can conclude that 
$$
|L|<1/(dD^{3/2}).
$$
We note that the bounds on $s$ therefore imply that
\begin{equation}\label{L_B6_eq1}
|s(\theta)-s(\theta')|<D\cdot 1/(dD^{3/2})=1/(d\sqrt{D})<\la^{-3/2}, \text{ for all } \theta,\theta'\in L.
\end{equation}
Next we estimate $g(\theta)$ outside $L$. Either $|g(\theta)|\geq 1/\la$, or $|g(\theta)|<1/\la$. In the latter case we have,
by assumption, that $g'(\theta)<-d$. Thus it follows that $|g(\theta)|>\sqrt{|g'(\theta)|h(\theta)}>\sqrt{d h(\theta)}$.
From this we conclude that
\begin{equation}\label{L_B6_eq2}
|w(\theta)|< 1/(D^2\sqrt{d})<\la^{-2} \text{ and } |w'(\theta)|<2 \text{ if } \theta \notin L.
\end{equation}
Moreover, an elementary calculation shows that 
\begin{equation}\label{L_B6_eq3}
|w''(\theta)|> \sqrt{d} D^2 \text{ for } \theta\in L
\end{equation}
(the dominant term in the estimate of $w''(\theta)$ is the term $g'(\theta)^2h(\theta)/g(\theta)^3$).

We let $p\in I$ be the unique point such that $g(p)=0$. Then $\lim_{\theta\to p+}w(\theta)=\infty$ and $\lim_{\theta\to p-}w(\theta)=-\infty$. As in the proof of the previous lemma, we
divide the analysis into two cases.

Case 1. Assume that $|s(p)|>0.4\la^{-1}$. We handle the case $s(p)>0.4\la^{-1}$. From (\ref{L_B6_eq1}) it follows that $s(\theta)>0.39\la^{-1}$ on $L$. 
Moreover, since $w$ and $w'$ are small
outside $L=[a',b']$ (by estimate (\ref{L_B6_eq2})), we have
$\psi([a,a'])\supset [-1/(3\la),1/(3\la)]$ and $\psi'(\theta)>d/2$ if $\psi(\theta)\in  [-1/(3\la),1/(3\la)]$. Furthermore, since $w(\theta)>0$ on $(p,b]$, we have 
$\psi(\theta)>1.3\la^{3/4}$ for all $\theta\in (p,b]$. We recall that $\lim_{\theta\to p-}\psi(\theta)=-\infty$. Now, if $\theta\in L$ is such that $\psi(\theta)\leq 1/(3\la)$, we must have
$w(\theta)<-0.05\la^{-1}$ (since $s(\theta)>0.39\la^{-1}$). This in turn implies that $w'(\theta)<-D^2$, and thus $\psi'(\theta)<D-D^2\ll -D^2/2$. These estimates show that 
case (1) in the statement of the lemma holds.

Case 2. Here we assume that $|s(p)|\leq 0.4\la^{-1}$. We shall first analyze $\psi$ on $L=[a',b']$. From (\ref{L_B6_eq1}) and the assumptions on $s$ it follows that
$s'(\theta)>d$ on $L=[a',b']$. The estimates on $w$ in (\ref{L_B6_eq2}-\ref{L_B6_eq3}) give us that $\psi'(a'),\psi'(b')>d-2$, and  
$$|\psi''(\theta)|>\sqrt{d}D^2-D>D^2 \text{ on } L.$$
Moreover, we recall that 
$\lim_{\theta\to p+}\psi(\theta)=\infty$ and $\lim_{\theta\to p-}\psi(\theta)=-\infty$. Thus there must be exactly two points $q_1<q_2$ in $L$ such that
$\psi'(q_i)=0$; one in $(a',p)$ and one in $(p,b')$. We estimate the size of the "gap" $\psi(q_2)-\psi(q_1)$. First we note that we have, exactly as in the proof
of the lemma above, 
$
\psi(q_2)-\psi(q_1)>s(q_2)-s(q_1)>d(q_2-q_1).
$
Since we must have $|g(q_i)|>\sqrt{\delta/D}$ (otherwise $|w'(q_i)|\gg D$, contradicting $\psi'(q_i)=0$), that is
$g(q_2)-g(q_1)>2\sqrt{\delta/D}$, we have $q_2-q_1>2\sqrt{\delta/D}/D$. Thus we have
$$
\psi(q_2)-\psi(q_1)>d\sqrt{\delta}/D^{3/2}.
$$
Finally, since (\ref{L_B6_eq2}) holds, we have, using the assumptions on $s$, that $\psi(a)<-0.9\la^{-1}$ and 
$\psi(b)>0.9\la^{-1}$, and
$\psi'(\theta)>d/2$ for all $\theta\in I\setminus L$ such that $|\psi(\theta)|\leq 0.9\la^{-1}$.  Taken together, this shows that case (2) holds.

\end{proof}

\bigskip
\noindent{\bf Acknowledgement:} I would like to thank H{\aa}kan Eliasson for many stimulating and fruitful discussions during the years. Several
of the ideas used in this paper are based on these discussions. 
I would also like to thank Russell Johnson for answering numerous questions, and for sharing with me his great knowledge. 
Financial support from the Swedish Research Council (VR) is gratefully acknowledged.


\end{document}